\DeclareFontFamily{U}{rsf}{}
\DeclareFontShape{U}{rsf}{m}{n}{
  <5> <6> rsfs5 <7> <8> <9> rsfs7 <10->  rsfs10}{}
\DeclareMathAlphabet{\mathscr}{U}{rsf}{m}{n}
\newtheoremstyle{citing}
  {}
  {}
  {\itshape}
  {}
  {\bfseries}
  {\textbf{.}}
  {.5em}
  {\thmnote{#3}}
\theoremstyle{citing}
\newtheorem{theorem}[subsection]{Theorem}
\newtheorem{lemma}[subsection]{Lemma}
\newtheorem{proposition}[subsection]{Proposition}
\newtheorem{corollary}[subsection]{Corollary}
\theoremstyle{definition}
\newtheorem{definition}[subsection]{Definition}
\newtheorem{construction}[subsection]{Construction}
\newtheorem{example}[subsection]{Example}
\theoremstyle{remark}
\newtheorem{remark}[subsection]{Remark}
\numberwithin{equation}{section}
\definecolor{forestgreen}{rgb}{0.13, 0.55, 0.13}
\newcommand{\NN} {\mathbb{N}}
\newcommand{\ZZ} {\mathbb{Z}}
\newcommand{\QQ} {\mathbb{Q}}
\newcommand{\RR} {\mathbb{R}}
\newcommand{\CC} {\mathbb{C}}
\newcommand{\PP} {\mathbb{P}}
\newcommand {\shL} {\mathcal{L}}
\newcommand {\shM} {\mathcal{M}}
\newcommand {\shX} {\mathcal{X}}
\newcommand {\shY} {\mathcal{Y}}
\newcommand {\shZ} {\mathcal{Z}}
\newcommand {\an} {\mathrm{an}}
\newcommand {\Aut} {\operatorname{Aut}}
\newcommand{\Amp}{\operatorname{Amp}}
\newcommand {\Bir} {\operatorname{Bir}}
\newcommand {\Bs} {\operatorname{Bs}}
\newcommand {\C} {\mathbb{C}}
\newcommand{\contr}{\operatorname{contr}}
\newcommand{\YP}{Y_\mathscr{P}}
\newcommand{\YT}{Y_\mathscr{T}}
\newcommand{\YYP}{\shY_\mathscr{P}}
\newcommand{\YYT}{\shY_\mathscr{T}}
\newcommand {\eps} {\varepsilon}
\newcommand {\End} {\operatorname{End}}
\newcommand {\Eff} {\operatorname{Eff}}
\newcommand {\id} {\operatorname{id}}
\renewcommand {\Im} {\operatorname{Im}}
\renewcommand {\ker } {\operatorname{ker}}
\newcommand {\Mov} {\operatorname{Mov}}
\newcommand {\NS} {\operatorname{NS}}
\newcommand {\Nef}{\operatorname{Nef}}
\newcommand{\NE}{\operatorname{NE}}
\renewcommand{\O} {\mathcal{O}}
\newcommand {\Pic} {\operatorname{Pic}}
\newcommand {\Proj} {\operatorname{Proj}}
\newcommand {\sing} {\mathrm{sing}}
\newcommand {\Sing} {\operatorname{Sing}}
\newcommand {\Spec} {\operatorname{Spec}}
\newcommand {\Spf} {\operatorname{Spf}}
\newcommand {\supp} {\operatorname{supp}}
\newcommand {\X} {\shX}
\newcommand {\Y} {\shY}
\def\mydate{\ifcase\month \or January\or February\or March\or
April\or May\or June\or July\or August\or September\or October\or 
November\or December\fi \space\number\day,\space\number\year}
\newlength{\picwidth} \setlength{\picwidth}{.75\textwidth}
\newlength{\miniwidth} \setlength{\miniwidth}{.5\textwidth}
\newlength{\nanowidth} \setlength{\nanowidth}{.33\textwidth}
\newlength{\melowidth} \setlength{\melowidth}{.88\textwidth}
\newlength{\leftminiwidth} \setlength{\leftminiwidth}{.45\textwidth}
\newlength{\rightminiwidth} \setlength{\rightminiwidth}{.45\textwidth}
\newlength{\minipagewidth} \setlength{\minipagewidth}{.45\textwidth}
\let\oldeqref\eqref
\RenewDocumentCommand\eqref{s m}{%
  \IfBooleanTF#1%
  {\textup{\tagform@{\ref*{#2}}}}
  {\oldeqref{#2}}
}
\newcommand {\Morifan}{\operatorname{MF}}
\newcommand{\BB}{{\mathrm{BB}}}
\newcommand{\opp}{{\mathrm{opp}}}
\newcommand{\rc}{{\mathrm{rc}}}
\newcommand{\nef}{{\mathrm{nef}}}
\newcommand{\conv}{{\mathrm{conv}}}
\newcommand{\Fghks}[1][2d]{{\sF_{#1}^{\scriptscriptstyle \mathrm{GHKS}}}}
\newcommand{\Fbb}[1][2d]{{\sF_{#1}^\mathrm{BB}}}
\newcommand{\Ftwod}[1][2d]{{\sF_{#1}}}
\newcommand{\Fcox}[1][2d]{{\sF_{#1}^{\scriptscriptstyle \mathrm{Cox}}}}
\newcommand{\FAET}[1][2d]{{\sF_{#1}^{\scriptscriptstyle \mathrm{AET}}}}
\newcommand{\mghks}[1][2d]{{\Sigma_{#1}^{\scriptscriptstyle \mathrm{GHKS}}}}
\newcommand{\mghksnef}[1][2d]{{\Sigma_{#1}^{\scriptscriptstyle \mathrm{GHKS, nef}}}}
\newcommand{\vf}[1][2d]{{\Sigma_{#1}^{\scriptscriptstyle \mathrm{Cox}}}}
\newcommand{\VD}{{\mathrm{VD}}}
\newcommand{\Gammabar}[1][2d]{{\ol\Gamma_{#1}^+}}
\newcommand{\Pbar}{{\ol P_{2d}^+}}
\newcommand{\Pbartwo}{{\ol P_{2}^+}}
\newcommand{\Cusp}{{\mathrm{Cusp}}}
\newcommand{\sC}{{\mathcal C}}
\newcommand{\sD}{{\mathcal D}}
\newcommand{\sF}{{\mathcal F}}
\newcommand{\sH}{{\mathcal H}}
\newcommand{\sL}{{\mathcal L}}
\newcommand{\sM}{{\mathcal M}}
\newcommand{\sN}{{\mathcal N}}
\newcommand{\sO}{{\mathcal O}}
\newcommand{\sX}{{\mathcal X}}
\newcommand{\sY}{{\mathcal Y}}
\newcommand{\sZ}{{\mathcal Z}}
\newcommand{\scrG}{{\mathscr G}}
\newcommand{\scrL}{{\mathscr L}}
\newcommand{\scrP}{{\mathscr P}}
\newcommand{\scrS}{{\mathscr S}}
\newcommand{\scrT}{{\mathscr T}}
\newcommand{\scrX}{{\mathscr X}}
\newcommand{\IP}{{\mathbb P}}
\newcommand{\Q}{{\mathbb Q}}
\newcommand{\R}{{\mathbb R}}
\newcommand{\IS}{{\mathbb S}}
\newcommand{\Z}{{\mathbb Z}}
\newcommand{\gothp}{{\mathfrak p}}
\newcommand{\gothL}{{\mathfrak L}}
\newcommand{\gothX}{{\mathfrak X}}
\newcommand{\gothY}{{\mathfrak Y}}
\newcommand{\abs}[1]{{\left|#1\right|}}
\newcommand{\chern}{{\rm c}}
\newcommand{\Def}{{\operatorname{Def}}}
\newcommand{\del}{\partial}
\newcommand{\Exz}{{\operatorname{Exc}}}
\renewcommand{\Im}{\operatorname{Im}}
\newcommand{\img}{\operatorname{im}}
\newcommand{\into}{{\, \hookrightarrow\,}}
\newcommand{\isom}{{\ \cong\ }}
\newcommand{\Nefe}{{\operatorname{Nef}^{{\operatorname{e}}}}}
\renewcommand{\O}{{\rm O}}
\newcommand{\ohne}{{\ \setminus \ }}
\newcommand{\ol}[1]{{\overline{#1}}}
\newcommand{\rat}[1][]{{\stackrel{\ #1\ }{\ratl}}}
\newcommand{\ratl}{\dashrightarrow}
\newcommand{\rk}{{\rm rk}}
\newcommand{\Sch}[1]{{{\rm Sch} / #1}}
\newcommand{\Set}{{\rm Set}}
\newcommand{\SO}{{\rm SO}}
\renewcommand{\to}[1][]{\xrightarrow{\ #1\ }}
\newcommand{\tensor}{\otimes}
\newcommand{\veps}{\varepsilon}
\newcommand{\vphi}{\varphi}
\newcommand{\vrho}{{\varrho}}
\newcommand{\wh}[1]{{\widehat{#1}}}
\newcommand{\wt}[1]{{\widetilde{#1}}}
\newcommand*{\da@rightarrow}{\mathchar"0\hexnumber@\symAMSa 4B }
\newcommand*{\da@leftarrow}{\mathchar"0\hexnumber@\symAMSa 4C }
\newcommand*{\xdashrightarrow}[2][]{%
  \mathrel{%
    \mathpalette{\da@xarrow{#1}{#2}{}\da@rightarrow{\,}{}}{}%
  }%
}
\newcommand{\xdashleftarrow}[2][]{%
  \mathrel{%
    \mathpalette{\da@xarrow{#1}{#2}\da@leftarrow{}{}{\,}}{}%
  }%
}
\newcommand*{\da@xarrow}[7]{%
  \sbox0{$\ifx#7\scriptstyle\scriptscriptstyle\else\scriptstyle\fi#5#1#6\m@th$}%
  \sbox2{$\ifx#7\scriptstyle\scriptscriptstyle\else\scriptstyle\fi#5#2#6\m@th$}%
  \sbox4{$#7\dabar@\m@th$}%
  \dimen@=\wd0 %
  \ifdim\wd2 >\dimen@
    \dimen@=\wd2 %
  \fi
  \count@=2 %
  \def\da@bars{\dabar@\dabar@}%
  \@whiledim\count@\wd4<\dimen@\do{%
    \advance\count@\@ne
    \expandafter\def\expandafter\da@bars\expandafter{%
      \da@bars
      \dabar@ 
    }%
  }%
  \mathrel{#3}%
  \mathrel{%
    \mathop{\da@bars}\limits
    \ifx\\#1\\%
    \else
      _{\copy0}%
    \fi
    \ifx\\#2\\%
    \else
      ^{\copy2}%
    \fi
  }%
  \mathrel{#4}%
}
\newtheoremstyle{citing}
  {}
  {}
  {\itshape}
  {}
  {\bfseries}
  {\textbf{.}}
  {.5em}
  {\thmnote{#3}}
\theoremstyle{plain}
\theoremstyle{remark}
\theoremstyle{definition}
\numberwithin{equation}{section}
\theoremstyle{remark}
\theoremstyle{citing}
\newsavebox\myboxA
\newsavebox\myboxB
\newlength\mylenA
\newcommand*\xtilde[2][0.8]{%
    \sbox{\myboxA}{$\m@th#2$}%
    \setbox\myboxB\null
    \ht\myboxB=\ht\myboxA%
    \dp\myboxB=\dp\myboxA%
    \wd\myboxB=#1\wd\myboxA
    \sbox\myboxB{$\m@th\widetilde{\copy\myboxB}$}
    \setlength\mylenA{\the\wd\myboxA}
    \addtolength\mylenA{-\the\wd\myboxB}%
    \ifdim\wd\myboxB<\wd\myboxA%
       \rlap{\hskip 0.5\mylenA\usebox\myboxB}{\usebox\myboxA}%
    \else
        \hskip -0.5\mylenA\rlap{\usebox\myboxA}{\hskip 0.5\mylenA\usebox\myboxB}%
    \fi}
\newbox\usefulbox
\def\getslant #1{\strip@pt\fontdimen1 #1}
\def\xxtilde #1{\mathchoice
 {{\setbox\usefulbox=\hbox{$\m@th\displaystyle #1$}%
    \dimen@ \getslant\the\textfont\symletters \ht\usefulbox
    \divide\dimen@ \tw@ 
    \kern\dimen@ 
    \xtilde{\kern-\dimen@ \box\usefulbox\kern\dimen@ }\kern-\dimen@ }}
 {{\setbox\usefulbox=\hbox{$\m@th\textstyle #1$}%
    \dimen@ \getslant\the\textfont\symletters \ht\usefulbox
    \divide\dimen@ \tw@ 
    \kern\dimen@ 
    \xtilde{\kern-\dimen@ \box\usefulbox\kern\dimen@ }\kern-\dimen@ }}
 {{\setbox\usefulbox=\hbox{$\m@th\scriptstyle #1$}%
    \dimen@ \getslant\the\scriptfont\symletters \ht\usefulbox
    \divide\dimen@ \tw@ 
    \kern\dimen@ 
    \xtilde{\kern-\dimen@ \box\usefulbox\kern\dimen@ }\kern-\dimen@ }}
 {{\setbox\usefulbox=\hbox{$\m@th\scriptscriptstyle #1$}%
    \dimen@ \getslant\the\scriptscriptfont\symletters \ht\usefulbox
    \divide\dimen@ \tw@ 
    \kern\dimen@ 
    \xtilde{\kern-\dimen@ \box\usefulbox\kern\dimen@ }\kern-\dimen@ }}%
 {}}
\newcommand*\xoverline[2][0.75]{%
    \sbox{\myboxA}{$\m@th#2$}%
    \setbox\myboxB\null
    \ht\myboxB=\ht\myboxA%
    \dp\myboxB=\dp\myboxA%
    \wd\myboxB=#1\wd\myboxA
    \sbox\myboxB{$\m@th\overline{\copy\myboxB}$}
    \setlength\mylenA{\the\wd\myboxA}
    \addtolength\mylenA{-\the\wd\myboxB}%
    \ifdim\wd\myboxB<\wd\myboxA%
       \rlap{\hskip 0.5\mylenA\usebox\myboxB}{\usebox\myboxA}%
    \else
        \hskip -0.5\mylenA\rlap{\usebox\myboxA}{\hskip 0.5\mylenA\usebox\myboxB}%
    \fi}
\def\xxoverline #1{\mathchoice
 {{\setbox\usefulbox=\hbox{$\m@th\displaystyle #1$}%
    \dimen@ \getslant\the\textfont\symletters \ht\usefulbox
    \divide\dimen@ \tw@ 
    \kern\dimen@ 
    \overline{\kern-\dimen@ \box\usefulbox\kern\dimen@ }\kern-\dimen@ }}
 {{\setbox\usefulbox=\hbox{$\m@th\textstyle #1$}%
    \dimen@ \getslant\the\textfont\symletters \ht\usefulbox
    \divide\dimen@ \tw@ 
    \kern\dimen@ 
    \xoverline{\kern-\dimen@ \box\usefulbox\kern\dimen@ }\kern-\dimen@ }}
 {{\setbox\usefulbox=\hbox{$\m@th\scriptstyle #1$}%
    \dimen@ \getslant\the\scriptfont\symletters \ht\usefulbox
    \divide\dimen@ \tw@ 
    \kern\dimen@ 
    \xoverline{\kern-\dimen@ \box\usefulbox\kern\dimen@ }\kern-\dimen@ }}
 {{\setbox\usefulbox=\hbox{$\m@th\scriptscriptstyle #1$}%
    \dimen@ \getslant\the\scriptscriptfont\symletters \ht\usefulbox
    \divide\dimen@ \tw@ 
    \kern\dimen@ 
    \xoverline{\kern-\dimen@ \box\usefulbox\kern\dimen@ }\kern-\dimen@ }}%
 {}}
\subjclass[2020]{14J10, 14J28 (primary), 14D06, 14D20, 14E30, (secondary).}
\keywords{Moduli space, K3 surface, compactification, mirror symmetry, birational geometry, degeneration}
\begin{document}

\title
[Compactification of degree two K3 surfaces]
{On the GHKS compactification of the moduli space of K3 surfaces of degree two}
\author{Klaus Hulek}
\author{Christian Lehn}
\author{Carsten Liese}
\address{Klaus Hulek\\Institut f\"ur Algebraische Geometrie, Leibniz Universit\"at Hannover, Welfengarten 1, 30167 Hannover,
Germany}
\email{hulek@math.uni-hannover.de}
\address{Christian Lehn\\ Fakult\"at f\"ur Mathematik\\ Technische Universit\"at Chemnitz\\
Reichenhainer Stra\ss e 39, 09126 Chemnitz, Germany}
\email{christian.lehn@mathematik.tu-chemnitz.de}
\address{Carsten Liese\\Institut f\"ur Algebraische Geometrie, Leibniz Universit\"at Hannover, Welfengarten 1, 30167 Hannover,
Germany}
\email{liese@math.uni-hannover.de}
\begin{abstract}
We investigate a toroidal compactification of the moduli space of K3 surfaces of degree $2$ originating from the program 
formulated by Gross-Hacking-Keel-Siebert. This construction uses Dolgachev's formulation of mirror symmetry and the birational geometry of the mirror family. Our main result in an analysis 
of the toric fan. For this we use the methods developed by two of us in a previous paper.
\end{abstract}
\maketitle

\tableofcontents
\section{Introduction}\label{section introduction}

We investigate a new toroidal compactification of the moduli space of K3 surfaces of degree $2$ inspired by mirror symmetry and examine its properties. This construction uses Dolgachev's formulation of mirror symmetry \cite{Dol96} and the birational geometry of the mirror family. It originates from the construction proposed by Gross-Hacking-Keel-Siebert \cite{GHKS} and our analysis relies on previous work by two of us \cite{HL}.

Moduli spaces of polarized K3 surfaces have been a center of interest ever since. In \cite{PSS71}, Pjatecki\v{\i}-\v{S}apiro and \v{S}afarevi\v{c} have shown that the moduli functor of degree $2d$ polarized K3 surfaces is coarsely representable. Thanks to their global Torelli theorem, the corresponding moduli space can be described as
\begin{equation}\label{eq f2d intro}
\sF_{2d}:= \Gamma_{2d} \backslash \sD_{2d}
\end{equation}
where $\sD_{2d}$ is the period domain and $\Gamma_{2d}$ is an arithmetic group acting on it; we refer to Section~\ref{section moduli spaces} for more details. A cusp of the Baily--Borel compactification $\sF_{2d}^\BB$ determines a certain cone and toroidal or semitoric compactifications of $\sF_{2d}$ are determined by (toric or semitoric) fans supported on these cones and equivariant for the action of $\Gamma_{2d}$. 

Let us assume that $2d$ is square free. In this case there is a unique zero dimensional cusp in $\sF_{2d}^\BB$ and there is a certain lattice $M_{2d}$ of signature $(1,18)$ such that the cone in question is a connected component $C_{2d}$ of the cone of positive vectors in $M_{2d,\R}:=M_{2d}\tensor \R$ or rather its rational closure $C_{2d}^\rc:=\conv\left(\ol{C_{2d}} \cap M_{2d}\right)$. Now a semitoric (respectively toroidal) compactification of $\sF_{2d}$ is determined by a semitoric (respectively toric) fan in $M_{2d,\R}$ whose support is $C_{2d}^\rc$ and which is equivariant for the action of a certain subgroup $\Gammabar \subset O(M_{2d})$. 

There is a canonical such semitoric fan, the Coxeter fan $\vf$, whose maximal dimensional cones are the fundamental domains for the Weyl group action on $C_{2d}^\rc$, see \cite{Vin75,Vin85,AET19}. The fan we consider in this paper is a refinement of the Coxeter fan obtained by the birational geometry of the mirror family to the moduli space $\sF_{2d}$, see Definition~\ref{definition cusp model GHKS fan}. We refer to it as the \emph{Gross--Siebert--Hacking--Keel fan} or \emph{GHKS fan}. Our first result is

\begin{theorem}[See Theorem~\ref{theorem fan and compactification}]\label{theorem semitoric}
Let $2d$ be an even square-free positive integer, let $\mghks$ be the GHKS fan in degree $2d$, and let $\Fghks$ be the associated semitoric compactification of $\sF_{2d}$. Then $\mghks$ is a refinement of the Coxeter fan $\vf$ and there is a bimeromorphic morphism
\[
\Fghks \to \Fcox.
\]
\end{theorem}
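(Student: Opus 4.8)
The plan is to split the statement into its two assertions and prove them in order: first that $\mghks$ is a genuine refinement of the Coxeter fan $\vf$, and then that any $\Gammabar$-equivariant refinement of semitoric fans with common support $C_{2d}^{\rc}$ induces the asserted bimeromorphic morphism. The second step is formal once the first is in place, so the real content is the comparison of the two fans.

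For the refinement, I would begin from the definition of $\mghks$ (Definition~\ref{definition cusp model GHKS fan}) and from Vinberg's description of $\vf$, whose maximal cones are the fundamental domains (Weyl chambers) for the reflection group $W \subset \O(M_{2d})$ acting on $C_{2d}^{\rc}$. Since a refinement is checked cone by cone, and since both cone collections are $\Gammabar$-equivariant with $W \subset \Gammabar$, it suffices to show that every maximal cone $\sigma$ of $\mghks$ lies in a single Weyl chamber; equivalently, that no cone of $\mghks$ crosses a reflection hyperplane $r^\perp$ of a root $r$ of $M_{2d}$. Concretely I would prove that each such reflection hyperplane, intersected with $C_{2d}^{\rc}$, is a union of codimension-one cones of $\mghks$, so that the codimension-one skeleton of $\vf$ is contained in that of $\mghks$. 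This is exactly where the birational geometry of the mirror family enters: the cones of $\mghks$ arise from the nef/movable chamber decomposition of the mirror degeneration, and the reflection hyperplanes of $W$ correspond to the walls across which the relevant polarization ceases to be nef. Using the analysis of \cite{HL} one identifies these walls explicitly and checks that the Coxeter walls are among the GHKS walls, while the additional GHKS walls lie in the \emph{interior} of the Weyl chambers; together with the equality of supports $|\mghks| = |\vf| = C_{2d}^{\rc}$ (part of $\mghks$ being an admissible semitoric fan), this yields the refinement.

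Granting the refinement, the morphism follows from the functoriality of the semitoric compactification construction (in the sense used in \cite{AET19}): a $\Gammabar$-equivariant subdivision of a fan supported on $C_{2d}^{\rc}$ induces a proper morphism of the associated semitoric compactifications that restricts to the identity on the common open dense part $\sF_{2d}$. Properness is automatic because the two fans share the same support, and the fact that the map is the identity on $\sF_{2d}$ shows it is bimeromorphic. This gives the morphism $\Fghks \to \Fcox$.

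The main obstacle is the first step. Because $\mghks$ is defined through the birational geometry of the mirror family rather than directly in terms of reflection hyperplanes, the heart of the argument is the explicit matching of its wall-and-chamber structure with the Vinberg/Coxeter decomposition — in particular verifying that no GHKS cone straddles a root hyperplane. This requires the detailed combinatorial control of the mirror movable cone developed in \cite{HL} together with the specific arithmetic of the signature $(1,18)$ lattice $M_{2d}$ in the square-free degree-two case, and it is here, rather than in the formal passage to compactifications, that the bulk of the work lies.
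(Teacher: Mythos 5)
Your plan locates the difficulty in the wrong place, and the route you propose for the ``hard part'' would not go through in general degree. In the paper the refinement of the Coxeter fan is essentially tautological: by Lemma~\ref{lemma action on nef} the nef cone $\Nef(\sY_\eta)$ \emph{is} the fundamental Weyl chamber $V_{2d}$, the collections $\Sigma_Y^\nef$ (and hence $\mghksnef$) are supported exactly on $\Nef(\sY_\eta)$ (Lemma~\ref{lemma cusp model fan}), and $\mghks$ is \emph{defined} in Definition~\ref{definition cusp model GHKS fan} as the set of $W_{2d}$-translates $s(\sigma)$ of cones $\sigma\in\mghksnef$. Every cone of $\mghks$ therefore lies inside the single Weyl chamber $s(V_{2d})$, which is a maximal cone of $\vf$; no cone can straddle a root hyperplane \emph{by construction}, and no wall-by-wall comparison is needed. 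Your proposed alternative --- showing that each reflection hyperplane intersected with $C_{2d}^\rc$ is a union of codimension-one GHKS cones via the explicit combinatorics of \cite{HL} --- is not only redundant but problematic for general square-free $2d$: for $d>1$ the nef cone of $\sY_\eta$ is not finitely polyhedral (Remark~\ref{remark morifan}), $\mghks$ is only a semitoric fan (a locally rational decomposition), and an explicit enumeration and matching of walls in the sense you describe is not available. The degree-two arithmetic you invoke is irrelevant to the theorem as stated.

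The place where real verification is needed --- and which your proposal takes for granted --- is that $\mghks$ satisfies the axioms of a semitoric fan (Definition~\ref{definition semitoric fan}) so that $\Fghks$ exists at all: $\Gammabar$-equivariance (which requires realizing $\Pbar$ by birational automorphisms of $\sY_\eta$, Proposition~\ref{proposition automorphisms} and Lemma~\ref{lemma:equivariance}), the fact that the isotropic rays $J^+(h)$ occur as cones (proved via elliptic fibrations and an MMP argument), and local rationality of the decomposition. This is the content of Proposition~\ref{proposition:justification}. Your treatment of the second assertion --- that a $\Gammabar$-equivariant refinement of semitoric fans with common support induces a proper bimeromorphic morphism restricting to the identity on $\sF_{2d}$ --- does match the paper, which invokes exactly this feature of Looijenga's theory.
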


We build on and extend the investigation of the Dolgachev--Nikulin--Voisin family associated to a polarized K3 surface of degree $2$ over the complex numbers that has been initiated in \cite{HL}. 
It turns out that for  $2d=2$ the GHKS fan $\mghks[2]$ is indeed a fan. By construction, it is a refinement of the Coxeter fan $\vf[2]$ which in this case is also an honest toric fan. Alexeev, Engel, and Thompson constructed in \cite{AET19} a semitoric coarsening of $\vf[2]$. Altogether, this leads to the

\begin{corollary}
There is a sequence of morphisms 
\[
\Fghks[2] \to \Fcox[2] \to \FAET[2]
\]  
where $\Fghks[2]$ and $\Fcox[2]$ are toroidal and $\FAET[2]$ is  a semitoric compactification.
\end{corollary}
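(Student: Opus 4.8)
The plan is to assemble the chain from Theorem~\ref{theorem semitoric}, specialized to $2d=2$, together with the degree-two facts recorded just before the corollary and the construction of \cite{AET19}, using throughout the standard dictionary between $\Gammabar$-equivariant refinements of fans supported on $C_2^\rc$ and morphisms of the associated (semi)toric compactifications (a refinement inducing a morphism from the finer to the coarser compactification). First I would note that $2d=2$ is even and square-free, so Theorem~\ref{theorem semitoric} applies and produces both the refinement relation between $\mghks[2]$ and $\vf[2]$ and the bimeromorphic morphism $\Fghks[2] \to \Fcox[2]$, which is the first arrow. To upgrade this to a morphism of \emph{toroidal} compactifications I would invoke the two special features of degree two: by \cite{AET19} the Coxeter fan $\vf[2]$ is here an honest toric fan, i.e.\ its cones have trivial stabilizer in $\Gammabar$, while by the analysis preceding the corollary $\mghks[2]$ is a genuine fan refining it; a refinement of a toric fan is again a toric fan, so both $\Fghks[2]$ and $\Fcox[2]$ are toroidal.

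For the second arrow I would appeal directly to \cite{AET19}, where Alexeev, Engel, and Thompson construct a semitoric fan which $\vf[2]$ refines, equivalently a semitoric coarsening $\FAET[2]$ of $\Fcox[2]$. Coarsening reverses the direction of the induced map on compactifications, yielding $\Fcox[2] \to \FAET[2]$, with $\FAET[2]$ genuinely semitoric since its maximal cones acquire infinite stabilizers. Composing the two arrows gives the asserted sequence $\Fghks[2] \to \Fcox[2] \to \FAET[2]$.

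The one step carrying real content is the assertion that $\mghks[2]$ is a fan in the first place: in arbitrary square-free degree the GHKS data need not assemble into an honest fan, and I expect this to be the main obstacle. Verifying the fan axioms in degree two requires the explicit description of the Dolgachev--Nikulin--Voisin family and the birational geometry of its mirror developed in \cite{HL} and in the body of this paper. Once that is in hand, the remaining assertions follow formally from the refinement and coarsening relations among $\mghks[2]$, $\vf[2]$, and the AET fan.
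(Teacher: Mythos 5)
Your argument follows the paper's own route: Theorem~\ref{theorem semitoric} (i.e.\ Theorem~\ref{theorem fan and compactification}) supplies the first arrow, the degree-two statements supply toroidality, the coarsening of \cite{AET19} supplies the second arrow, and you correctly isolate the one step with real content, namely that $\mghks[2]$ is an honest fan. One correction to your justification: the distinction between a toric and a semitoric fan here is not a matter of stabilizers of cones in $\Gammabar[2]$ (trivial versus infinite) but of whether the cones are finite rational polyhedral rather than merely locally rational; in the paper the fact that $\vf[2]$ and $\mghks[2]$ are genuine toric fans (Proposition~\ref{proposition degree two semitoric fan is fan} and Corollary~\ref{corollary fan and compactification degree two}) is deduced from the finite polyhedrality of $\Nef(\sY_\eta)$, which rests on Nikulin's theorem that the lattice $M_2$ is reflexive, not on \cite{AET19}, whose role is only to provide the semitoric coarsening $\FAET[2]$.
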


Our main result is

\begin{theorem}[See Corollary~\ref{corollary fan and compactification degree two}, Corollary~\ref{corollary morifan cusp degree 2}, Corollary~\ref{corollary morifan cusp degree 2 orbits}]\label{theorem main}
The toric fan $\mghks[2]$ has $31$ maximal cones inside the fundamental domain of the Coxeter fan.
The $\Gammabar[2]$-action induces a residual $S_3$-action on the set of these cones with $17$ orbits. 
\end{theorem}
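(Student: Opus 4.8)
The plan is to reduce the statement to an explicit enumeration of the chambers of the Mori fan of the Dolgachev--Nikulin--Voisin family in the degree two cusp, and then to make the residual lattice symmetry act on this finite set. By the construction of the GHKS fan (Definition~\ref{definition cusp model GHKS fan}), the maximal cones of $\mghks[2]$ lying inside the fundamental domain $\sigma$ of the Coxeter fan $\vf[2]$ are exactly the maximal cones of the Mori fan $\Morifan$ of the relevant mirror degeneration, that is, the nef chambers of the birational models produced by running the minimal model program on the DNV family. Since $\sigma$ is a single maximal cone of $\vf[2]$, realized as the Weyl/Coxeter chamber of the lattice $M_{2}$, the first task is to describe $\sigma$ concretely and to identify the chamber walls (the flopping and divisorial-contraction walls) that cut it into the cones of $\mghks[2]$.

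First I would set up the combinatorial model from \cite{HL}: record the generators of the movable cone $\Mov$ inside $\sigma$ together with the finite collection of walls separating adjacent chambers. Each wall corresponds to an elementary birational modification, and two chambers are adjacent precisely when they share such a wall. Building the adjacency graph of chambers and traversing it---starting from the distinguished chamber and crossing one wall at a time---yields the complete list, which I expect to close up after producing exactly $31$ maximal cones. This is the computational heart of the argument and, I expect, the main obstacle: one must verify that the list is both complete (no unexplored walls remain) and non-redundant, and it is here that the explicit control over the birational geometry of the DNV family afforded by \cite{HL} is essential.

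Having enumerated the $31$ cones, I would then determine the residual symmetry. The stabilizer $\Stab_{\Gammabar[2]}(\sigma)$ acts on $\sigma$ and hence permutes the cones of $\mghks[2]$ contained in it; this stabilizer is the symmetry group of the Coxeter polytope $\sigma$, and I would compute it and identify it with $S_3$. Concretely this amounts to exhibiting the generating symmetries (a transposition and a $3$-cycle) as isometries of $M_{2}$ preserving $\sigma$ and permuting its walls, and checking that there are no further symmetries.

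Finally, with the explicit $S_3$-action on the set of $31$ cones in hand, the orbit count is a matter of bookkeeping: I would record, for each cone, its image under the generators, partition the set into orbits of sizes dividing $6$, and confirm that there are $17$ orbits (equivalently, solving $n_1 + 2n_2 + 3n_3 + 6n_6 = 31$ with $n_1 + n_2 + n_3 + n_6 = 17$ through the explicit orbit sizes, or via a Burnside-type count of the cones fixed by each element). The only subtlety is to match each cone with its birational model so that the effect of $S_3$ is unambiguous; once the action is pinned down the orbit decomposition follows mechanically.
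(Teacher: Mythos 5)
There is a genuine gap, and it occurs at the very first reduction. You identify the maximal cones of $\mghks[2]$ inside the fundamental domain with ``the nef chambers of the birational models produced by running the minimal model program on the DNV family'', i.e.\ with the maximal cones of $\Morifan(\sY)$ for a regular model $\sY$. That is the wrong object: those cones are the nef cones of the small $\Q$-factorial modifications of $\sY$, they are $21$-dimensional (since $\vrho(\sY)=21$ in degree $2$), and they are far more numerous than $31$. The cones actually being counted are the maximal cones of $\Morifan(\sY')$ for a \emph{cusp model} $\sY'$ (Definition~\ref{definition cusp model fan}, Corollary~\ref{corollary morifan cusp degree 2}); under the section of Definition~\ref{definition section} these correspond to the \emph{cuspidal} cones of $\Morifan(\sY)$, which are $19$-dimensional faces on the boundary of $\Mov(\sY)$ (Remark~\ref{remark cuspidal cones}), i.e.\ to pairs consisting of a marked minimal model $\sX$ of the DNV family together with a divisorial contraction $\sX\to\sY'$ killing two of the three components of $\sX_c$. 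Not every chamber of $\Morifan(\sY)$ contributes such a cone, and some contribute two, so a wall-crossing traversal of the chambers of $\Morifan(\sY)$ would not terminate at $31$ and would not even be counting the right thing.

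Even granting the corrected target, the second and decisive gap is that the entire enumeration --- which you yourself flag as ``the computational heart'' --- is left as a black box: you assert the adjacency graph ``closes up after producing exactly $31$ maximal cones'' without any mechanism for proving completeness. The paper does not perform a wall-crossing traversal at all (which would be delicate, since the relevant walls live in the $19$-dimensional nef cone of the singular scheme $\sY'$). Instead it classifies, via the curve structures of \cite{HL}, which regular models admit a cusp model: Propositions~\ref{proposition picard noncontracted}, \ref{proposition leq2}, \ref{proposition special t not contracted} and \ref{smoothcomp2} force each contracted component $Y$ to satisfy $|\Gamma_Y|\le 2$, which pins the possible models down to the explicit finite lists of Propositions~\ref{proposition classification of models class p} and~\ref{proposition classification of models class t}; one then counts, for each model, the number of cuspidal cones in its nef cone ($1$ or $2$, Propositions~\ref{proposition cuspidal cones model p two regular}--\ref{proposition p2} and \ref{modelT:unique}) weighted by the orbit length under $\Bir(\sY)$, arriving at $93=3\cdot 31$ in Theorem~\ref{thm:count}. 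Your plan for the $S_3$-action and the orbit bookkeeping is sound in outline (the residual group is $\Pbartwo\isom\O^+(M_2)/W_2\isom S_3$, realized by automorphisms permuting the components), but without the classification above there is no list of $31$ cones on which to let it act.
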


The ultimate goal is to examine modularity of the compactification $\Fghks$, in particular for degree $2$ polarized K3 surfaces. 

\subsection{Strategy of the proof and outline of the paper}\label{section outline}
Let us outline the construction of the fan $\mghks$. The first observation is that the lattice $M_{2d}$ is precisely the Picard group of a very general  member in Dolgachev's mirror moduli space $\check{\sF}_{2d}$. 
This moduli space is one dimensional whereas $\sF_{2d}$ is $19$-dimensional.  Observe that the Picard rank of the very general element of $\check{\sF}_{2d}$ has rank $19$ as opposed to $1$ for the very general element of ${\sF}_{2d}$. 
To be independent of any choices we consider the mirror family over a neighborhood of the unique cusp of $\check{\sF}_{2d}$ --- this is a uniquely determined projective K3 surface $\scrS$ over $\C((t))$ which, following GHKS, we call the \emph{Dolgachev--Nikulin--Voisin} family or DNV family for short, see also \cite[Remark 1.17 and Definition 1.18]{HL}.

We can then realize $M_{2d}$ as the Picard group of $\scrS$ together with its intersection pairing. Next we consider certain degenerations $\sY \to S:=\Spec\C[[t]]$ with generic fiber $\sY_\eta=\scrS$, so-called \emph{models} of the DNV family. Recall that in \cite{HL}, the so-called Mori fan of $\sY$ was studied. Its cones are of the form $f^*\Nefe(\sY')$ where $f:\sY \ratl \sY'$ are rational contractions of $\sY$ over $S$, see Definition~\ref{definition rational contraction}, and $\Nefe$ denotes the effective nef cone, see \eqref{eq definition nefe}. Let us denote by $\iota:\sY_\eta \to \sY$ the inclusion of the generic fiber. The fan $\mghks$ is now roughly constructed in two steps:
\begin{itemize}
	\item The Mori fan with support on the movable cone $\Mov(\sY)$ is pulled back to $\Nef(\sY_\eta)$ via certain sections of the restriction $\iota^*$ constructed via birational geometry. This is done in Section \ref{section ghks fan}.
	\item The thus obtained fan $\Sigma'$ with support on the nef cone of $\sY_\eta$ cone is shown to be equivariant under the action of the subgroup of $\Gammabar$ preserving the nef cone. It will be extended to a fan on the whole positive cone using the Weyl group action, see Definition~\ref{definition cusp model fan}.
\end{itemize}

In Section~\ref{section moduli spaces}, we recall the basics on moduli spaces of K3 surfaces and their compactifications. Possibly only Section~\ref{section semitoric compactifications} is non-standard here and explains Looijenga's construction of semitoric compactifications. In Section~\ref{section degenerations}, we recall the theory of Friedman--Kulikov--Pinkham--Persson of degenerations of K3 surfaces and adapt it to our setting. In particular, we provide the necessary algebraization statements and analysis of the Picard groups. Section~\ref{section dnv family} is devoted to the study of the DNV family. We recall the Mori fan in Section~\ref{section morifan}. The central notion is that of a cusp model, introduced in Section~\ref{section cusp models}. These are certain birational morphisms $\Y \to \sY'$ and are used to define the GHKS refinement of the Coxeter semitoric fan and hence a semitoric compactification of $\Ftwod$ in Section~\ref{section ghks fan}. Sections~\ref{sec:cuspidal_cones} to~\ref{section counting cones} are devoted to the classification of cusp models. The idea is that the existence of a cusp model $\sY\to \sY'$ imposes strong restrictions on the Picard group of the components of the central fiber of $\sY$. While in Section~\ref{sec:cuspidal_cones} we recall the notion of a curve structure from \cite{HL} and obtain necessary conditions for the existence of cusp models, Sections~\ref{section class p} and~\ref{section class t} give a more detailed analysis depending on the dual intersection complex of the central fiber of $\sY$. 
The classification of cusp models and the counting of cones in degree $2$ is carried out in Section~\ref{section counting cones} where our main result, Theorem~\ref{theorem main}, is proven.

\subsection{Previous work}\label{section previous work}

This article clearly relies on the Gross--Hacking--Keel--Siebert construction suggested in  \cite{GHKS}.
Even though the fan we consider is not exactly the same as theirs, see Remark~\ref{remark not exactly ghks}, the construction is, of course, basically due to them. In the attempt to be self-contained, we prove some of the statements that one can also find in \cite{GHKS} and give a bit more details here and there, but no originality is claimed in the construction of the fan. We also decided to take a slightly different perspective: while GHKS use the construction to refine a given toric fan, we see it as a method to construct a semitoric fan in the sense of Looijenga \cite{Loo03,Loo03a}. This possibility has also been pointed out in \cite[Remark~0.11]{GHKS} but was not further pursued there. 
The new contribution of the present paper is  the detailed analysis of the degree $2$ case where the GHKS-construction gives an actual toric fan and thus a honest toroidal compactification.

\subsection*{Notation and terminology}

We will denote $S:= \Spec\C[[t]]$ and write $\eta$ for its generic point and $c$ for its closed point. A \emph{lattice} will be a torsion-free $\Z$-module of finite rank together with an integral non-degenerate symmetric bilinear form.

\subsection*{Acknowledgements}

We are grateful to Mark Gross, Paul Hacking, Sean Keel, and Bernd Siebert for sharing their unpublished manuscript \cite{GHKS}. 
The second named author would like to thank Ben Bakker, Philip Engel, and Luca Giovenzana for helpful discussions. We are grateful to Valery Gritsenko, Slava Nikulin, and Alessandra Sarti for answering our questions by email.

Klaus Hulek was partially supported by DFG grant Hu 337/7-1. 
Christian Lehn was supported by the DFG through the research grants Le 3093/2-2 and  Le 3093/3-1.

\section{Moduli spaces of K3 surfaces and their toroidal compactifications}\label{section moduli spaces}

In this section, we recall some basics about K3 surfaces, their moduli spaces, and compactifications. For K3 surfaces and their moduli spaces, we refer to Huybrecht's textbook \cite{Huy16}, in particular its Sections 5 and 6, and references therein. For compactifications we refer to \cite{AMRT} and \cite{Sca87}.

\subsection{K3 surfaces and their moduli spaces} \label{section k3 moduli}

The results stated in this section are well-known and we refer to \cite{BHPV} or \cite{Huy16} for proofs and additional references. Recall that a \emph{K3 surface} is a smooth compact complex surface with trivial canonical bundle $\sO_X \isom \omega_X$ and vanishing irregularity $h^1(X,\sO_X)$. A \emph{polarization} on a K3 surface $X$ is an ample divisor $H$ on $X$, a polarized K3 surface is a pair $(X,H)$ consisting of a K3 surface together with a polarization. If $H$ is only assumed big and nef, we refer to it as a \emph{quasi-polarization} and to the pair $(X,H)$ as a quasi-polarized K3 surface. The \emph{degree} of a polarization $H$ is the integer $H^2\in 2\Z$. Sometimes we relax the smoothness hypothesis and ask $X$ to have at most ADE-singularities. We speak of \emph{K3 surfaces with ADE-singularities} in this case. The notions of a polarization and a quasi-polarization extend to this setup.

Let $\sM_{2d}: (\Sch \C)^\opp \to \Set$ be the functor of degree $2d$ polarized K3 surfaces with ADE-singularities. Thanks to \cite{PSS71}, the moduli functor is coarsely representable. Pjatecki\v{\i}-\v{S}apiro and \v{S}afarevi\v{c} use the Global Torelli Theorem to show this, see also \cite{Vie90} for a different approach in the spirit of GIT. We refer to \cite{Huy16}, especially Sections~5 and 6, for more details and references.

\subsection{Periods of K3 surfaces} \label{section k3 periods}
For any lattice $L$ we define 
\begin{equation}\label{eq period domain}
\sD_L:= \{ x \in \IP(L \tensor \C) \mid (x,x) = 0, (x,\bar x) > 0 \}.
\end{equation}
Then $\sD_L$ is the period domain for weight $2$ Hodge structures on the lattice $L$ with $h^{2,0}=1$ such that the restriction of the pairing to the real space underlying $H^{2,0}\oplus H^{0,2}$ is positive definite and orthogonal to $H^{1,1}$. The geometry of the period domain, and in particular of the action of the orthogonal group $\O(L)$ on it, is very sensitive to the signature of the lattice. If $L$ has signature $(3,n)$, which is the case relevant for K3 surfaces, $\sD_L$ is connected and the group action has dense orbits and one does not have a reasonable quotient. It is worthwhile noting that precisely this ill-behaved group action can also be exploited to give a different proof of Torelli's theorem, see e.g. \cite[Theorem 1.1]{BL18}.

If $L$ has signature $(2,n)$, which is the case relevant for \emph{polarized} K3 surfaces, then $\sD_L$ is a hermitian symmetric domain of type IV and has two connected components. The quotient $O(L)\backslash \sD_L$ is a quasiprojective variety, and we will briefly discuss the vast theory of compactifications of this quotient beginning with Section~\ref{section bb compactification}. 

\subsection{Marked K3 surfaces and the Torelli theorem} \label{section torelli}
Let $X$ be a smooth K3 surface. Together with the intersection pairing, the group $H^2(X,\Z)$ is known to be isomorphic to the so-called \emph{K3 lattice}
\begin{equation}\label{eq k3 lattice}
\Lambda:= E_8(-1)^{\oplus 2} \oplus U^{\oplus 3}
\end{equation}
where $E_8(-1)$ stands for the negative definite root lattice of type $E_8$ and $U$ is the hyperbolic plane. The properties of the intersection pairing show that the weight $2$ Hodge structure of $X$ lies in $\sD_{H^2(X,\Z)}\isom \sD_\Lambda$. As the identification of the cohomology of the K3 surface with the K3 lattice $\Lambda$ is not canonical, one has to choose a \emph{marking}, i.e. an isometry $\mu:H^2(X,\Z)\to \Lambda$. A pair $(X,\mu)$ consisting of a K3 surface and a marking is called a \emph{marked K3 surface}.  Let $\sM_\Lambda$ be the \emph{marked moduli space}, i.e. the space of all isomorphism classes of marked K3 surfaces $(X,\mu)$, where isomorphisms have to be compatible with the markings. 

The Global Torelli Theorem was proven in \cite{PSS71} for algebraic K3 surfaces and for K\"ahler K3 surfaces in \cite{BR75}. After Verbitsky's proof \cite{Ver13} of the Global Torelli Theorem for irreducible symplectic manifolds, the following formulation has become popular:
\begin{theorem}\label{theorem torelli k3}
Consider the period map $\wp:\sM_\Lambda \to \sD_\Lambda$ and let $\omega \in \sD_\Lambda$. Then all $(X,\mu), (X',\mu') \in \wp^{-1}(\omega)$ satisfy $X \isom X'$. Moreover, the marked moduli space $\sM_\Lambda$ has two connected components and the restriction of the period map $\wp$ to such a component $\sN$ is surjective and injective over the complement of a countable union of hyperplanes.
\end{theorem}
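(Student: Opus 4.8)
The statement packages together the three classical pillars of K3 theory: the \emph{local Torelli theorem}, the \emph{surjectivity} of the period map, and the \emph{global Torelli theorem} in its strong form. I would first reduce each assertion to one of these. The equality $X \isom X'$ for $(X,\mu),(X',\mu')\in\wp^{-1}(\omega)$ is weak Torelli: the composite $(\mu')^{-1}\circ\mu\colon H^2(X,\Z)\to H^2(X',\Z)$ is an isometry sending $H^{2,0}(X)$ to $H^{2,0}(X')$, hence a Hodge isometry, and weak Torelli says such a Hodge isometry forces the surfaces to be isomorphic. Surjectivity of $\wp|_{\sN}$ is exactly surjectivity of the period map onto the (connected) domain $\sD_\Lambda$. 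The two connected components of $\sM_\Lambda$ and the injectivity over the complement of the hyperplanes are then extracted from strong Torelli together with an analysis of the $(-2)$-classes, as explained below.

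\textbf{Local Torelli and surjectivity.} For local Torelli I would use that the Kuranishi family of a K3 surface is unobstructed, so the base is smooth of dimension $h^1(X,T_X)=20$, and compute the differential of $\wp$ as the composite of the Kodaira--Spencer isomorphism identifying the tangent space of the Kuranishi base with $H^1(X,T_X)$ and the contraction $H^1(X,T_X)\to\Hom(H^{2,0}(X),H^{1,1}(X))$ against a generator $\sigma$ of $H^{2,0}$. Because the holomorphic two-form $\sigma$ induces $T_X\isom\Omega^1_X$, this contraction is the Hodge isomorphism $H^1(X,T_X)\isom H^1(X,\Omega^1_X)=H^{1,1}$, so $d\wp$ is bijective and $\wp$ is a local isomorphism. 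For surjectivity onto $\sD_\Lambda$ I would pass to the K\"ahler setting and use twistor lines: by Yau's theorem a K\"ahler class yields a hyperk\"ahler metric and hence a $\PP^1$ of complex structures whose periods sweep out a \emph{twistor line} in $\sD_\Lambda$. The image of $\wp$ is open by local Torelli, it is a union of twistor lines, and since any two points of the connected domain $\sD_\Lambda$ can be joined by a chain of such lines, the image is all of $\sD_\Lambda$.

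\textbf{Global Torelli and the remaining assertions.} The heart is the strong Torelli theorem: a Hodge isometry $f\colon H^2(X',\Z)\to H^2(X,\Z)$ carrying one K\"ahler class of $X'$ into the K\"ahler cone of $X$ is induced by a \emph{unique} isomorphism $X\to X'$. I would establish this first for Kummer surfaces $X=\mathrm{Km}(A)$, where a Hodge isometry descends to a Hodge isometry of $H^2(A,\Z)$ and classical Torelli for the abelian surface $A$ produces the isomorphism; one then checks compatibility with the K\"ahler cone. The general case follows by a density-and-specialization argument: special (e.g.\ Kummer) K3 surfaces are dense in $\sM_\Lambda$, and one propagates the isomorphism across a family using local Torelli and a rigidity/limiting argument. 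Weak Torelli is then deduced by composing an arbitrary Hodge isometry with reflections $s_\delta$ in $(-2)$-classes $\delta\in H^{1,1}\cap H^2(X,\Z)$ and with $\pm\id$ so as to map the K\"ahler cone to the K\"ahler cone, after which strong Torelli applies. The two components of $\sM_\Lambda$ arise because a marked surface records an orientation of the positive-definite three-space $\langle\Re\sigma,\Im\sigma,\kappa\rangle$ (for $\kappa$ a K\"ahler class), a locally constant two-valued invariant swapped by an orientation-reversing isometry. Finally, over the complement of the hyperplanes $\delta^\perp\cap\sD_\Lambda$ with $\delta^2=-2$, the surface $X=\wp^{-1}(\omega)$ carries no integral $(-2)$-class of type $(1,1)$, so its positive cone has no walls and the K\"ahler cone is a full positive-cone component; this removes the reflection ambiguity in $(-2)$-classes that is responsible for non-injectivity, and $\wp|_{\sN}$ is injective there.

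\textbf{Main obstacle.} The decisive difficulty is the strong Torelli theorem, and within it the passage from the special surfaces---where the statement can be checked directly---to all K3 surfaces. Making the density and specialization step rigorous, controlling how an isomorphism behaves in a limit and ruling out jumps, is where the real work lies; surjectivity via twistor lines is also delicate, the subtle point being the closedness and connectivity that upgrade the open image to all of $\sD_\Lambda$.
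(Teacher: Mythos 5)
Your proposal is a correct outline, but it is worth saying up front that the paper does not prove this statement at all: Theorem~\ref{theorem torelli k3} is quoted as a classical result, with \cite{PSS71}, \cite{BR75} and \cite{Ver13} cited for the Torelli statement and \cite[Proposition~7.5.5]{Huy16} for the count of connected components. What you have written is instead a faithful sketch of the standard proof as it appears in the literature the paper points to: local Torelli via unobstructedness of the Kuranishi family and the contraction $H^1(X,T_X)\to\Hom(H^{2,0},H^{1,1})$; surjectivity of $\wp|_{\sN}$ by the Todorov--Siu twistor-line argument; strong Torelli by the Burns--Rapoport/Looijenga--Peters route through Kummer surfaces, density of their periods, and a specialization argument; weak Torelli by correcting an arbitrary Hodge isometry with reflections in $(-2)$-classes and $\pm\id$; and generic injectivity from the absence of $(-2)$-walls off the hyperplanes $\delta^\perp$. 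All of this is sound, and you correctly identify the genuinely hard step (propagating the isomorphism from the dense set of special surfaces). The one place where your sketch is thinner than it reads is the claim that $\sM_\Lambda$ has \emph{exactly} two components: the orientation of the positive three-space $\langle\Re\sigma,\Im\sigma,\kappa\rangle$ only shows there are \emph{at least} two, and the upper bound is not a soft topological fact --- it requires combining surjectivity, connectedness of $\sD_\Lambda$, and the full strength of strong Torelli to show that any two marked surfaces with the same orientation invariant lie in one component (this is precisely the content of \cite[Proposition~7.5.5]{Huy16}). In short: the paper buys the statement by citation; your route buys an actual proof at the cost of the substantial machinery you name, and it is the standard one.
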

We refer to \cite[Proposition 7.5.5]{Huy16} for the statement about the number of connected components.

\subsection{Moduli spaces as quotients of period domains} \label{section type four}

For all hermitian symmetric domains $\sD$ together with an arithmetic group $\Gamma \subset \Aut(\sD)$, the quotient $\Gamma\backslash \sD$ is a quasiprojective variety \cite{BB66}. We continue to denote by $\Lambda$ the K3 lattice from \eqref{eq k3 lattice}. As explained in Section~\ref{section k3 periods}, the period domain $\sD_\Lambda$ for K3 surfaces is \emph{not} hermitian symmetric, but the period domains for \emph{polarized} K3 surfaces are. Let $(X,H,\mu)$ be a polarized marked $K3$ surface, i.e. a marked K3 surface $(X,\mu)$ together with a polarization $H$. Then its period lies in the hyperplane 
$$\sD_{v^\perp} = \sD_\Lambda \cap \IP(v^{\perp}) \quad \textrm{where } v =\mu(H)\in \Lambda.$$

By \cite[Theorem]{Jam68} there is a unique $\O(\Lambda)$-orbit of primitive vectors $v$ with $v^2=2d$. We can therefore choose $v=e+df$ where $e,f$ is a basis of a $U$ summand in \eqref{eq k3 lattice} so that $v^\perp$ can be identified with 
\begin{equation}\label{eq lattice l2d}
L_{2d} :=  E_8(-1)^{\oplus 2} \oplus U^{\oplus 2} \oplus \left\langle -2d \right\rangle.
\end{equation}
Here $\left\langle m\right\rangle$ stands for a rank one lattice with generator of square $m$. As mentioned in Section~\ref{section k3 periods}, the period domain 
\[
\sD_{v^\perp}\isom \sD_{L_{2d}} = \sD_{2d} \amalg \sD_{2d}'
\]
has two connected components and we choose one of them. Accordingly, we denote by 
\[
\O^+(L_{2d}\tensor \R) \subset \O(L_{2d}\tensor \R) \quad \textrm{ and } \O^+(L_{2d}) \subset \O(L_{2d}) 
\]
the subgroups with real spinor norm $1$, which are the subgroups preserving $\sD_{2d}$. Using the Torelli theorem one can show that the moduli functor $\sM_{2d}$ is represented by 
\begin{equation}\label{eq f2d}
\Ftwod := \Gamma_{2d} \backslash \sD_{L_{2d}} = \Gamma_{2d}^+ \backslash \sD_{2d}
\end{equation}
where $\Gamma_{2d}\subset \O(\Lambda)$ is the subgroup fixing $v=e+df$, see \cite[Corollary~6.4.3 and Remark~6.4.5]{Huy16}, and $\Gamma_{2d}^+ = \O^+(\Lambda) \cap \Gamma_{2d}$. Here the action of $\Gamma_{2d}$ on $\sD_{L_{2d}}$ is through the canonical restriction homomorphism $\Gamma_{2d}\to \O(L_{2d})$. It can be shown that the restriction yields an isomorphism $\Gamma_{2d}\to \wt\O(L_{2d})$ with the finite index subgroup $\wt\O(L_{2d}) \subset \O(L_{2d})$ of elements acting trivially on the discriminant $L_{2d}^\vee/L_{2d}$. The group $\wt\O(L_{2d})$ is called the \emph{stable orthogonal group}. We will henceforth always refer to $\Ftwod$ as the \emph{moduli space of degree $2d$ K3 surfaces}. Keep in mind that it also parametrizes ADE-singular K3 surfaces as explained in Section~\ref{section k3 moduli}. 

\subsection{Baily--Borel compactification}\label{section bb compactification}

As all arithmetic quotients of hermitian symmetric domains, the moduli space $\Ftwod$ of polarized K3 surfaces has a canonical compactification, the Baily--Borel compactification $\sF_{2d}^\BB$, see \cite{BB66}. We also refer to Scattone's book \cite{Sca87} for a general reference concerning $\Fbb$. We will give a rough idea of how it is constructed. Let us consider the quadric $Q_{2d} \subset \IP(L_{2d}\tensor \C)$ cut out by the quadratic form and consider the closure of $\sD_{2d}$ inside $Q_{2d}$. Boundary components are the maximal subsets of the topological boundary $\del \sD_{2d}$ that are connected by analytic arcs. A boundary component $F$ is called rational, if its stabilizer subgroup $N(F)\subset \O^+(L_{2d}\tensor \R)$ is defined over $\Q$. The rational closure $\sD_{2d}^\rc$ is defined to be the union of $\sD_{2d}$ with all rational boundary components, endowed with the horocyclic topology. Then
\[
\Fbb:= \Gamma_{2d}^+\backslash \sD_{2d}^\rc.
\]
There is an alternative description as the $\Proj$ of a ring of modular forms, from which one immediately sees that it is projective. Moreover, $\Ftwod\subset\Fbb$ is a Zariski open, dense subset.

The (rational) boundary components for type IV domains turn out to be either of dimension one respectively zero, and they correspond to (rational) isotropic planes respectively lines in $L_{2d}$. Accordingly, also the complement $\sF_{2d}^\BB \ohne \sF_{2d}$ has a canonical stratification into so-called cusps which for type IV domains are either of dimension $1$ or $0$. 

\subsection{Toroidal compactifications}\label{section toroidal compactifications}

Apart from the Baily--Borel compactification, the moduli space $\Ftwod$ of polarized K3 surfaces has plenty of toroidal compactifications, see \cite{AMRT}. These depend on certain collections $\Sigma$ of fans similar to the fans in toric geometry and by construction map to the Baily--Borel compactification
\[
\beta:\sF_{2d}^\Sigma \to \Fbb.
\]

Let us be a bit more precise. For notational simplicity, we will abbreviate by $\Gamma:=\Gamma^+_{2d} \subset \O^+(L_{2d})$. 
For a (rational) boundary component $F$, recall that $N(F) \subset \O^+(L_{2d}\tensor \R)$ was the stabilizer. By $U(F) \subset N(F)$ we denote the center of its unipotent radical. Then $U(F) \isom \R^k$ for some $k$. Let us abbreviate $N(F)_\Gamma:=N(F)\cap \Gamma$, $U(F)_\Gamma:=U(F)\cap \Gamma$, and
\[
\ol\Gamma_F:=\Im\left(N(F)\cap \Gamma \to \Aut(U(F))\right).
\]

For every boundary component $F$, we consider $F/N(F)_\Gamma \subset \Fbb$. Then the preimage $\beta^{-1}(F)$ is the quotient of partial compactification of $\Gamma\backslash \sD$ inside a relative toric variety over a fiber bundle over $F$. This toric variety is determined by a fan whose support is the rational closure $C(F)^\rc$ of a certain cone $C(F)$ inside $U(F)$. A toroidal compactification is then determined by what we call a \emph{$\Gamma$-admissible collection of fans}\footnote{See \cite[III, Definition~5.1]{AMRT} where this is called a $\Gamma$-admissible collection of polyhedra.}
$$\Sigma:=\{\Sigma_F \mid F \textrm{ rational boundary component}\}$$
 where $\Sigma_F$ is a $\ol\Gamma_F$-fan\footnote{See \cite[II, Definition~4.10]{AMRT} where this is called a $\ol\Gamma_F$-admissible polyhedral decomposition. Sometimes we will also speak of a \emph{toric fan} instead of a fan, in particular if we want to emphasize the contrast to a semitoric fan, see Section~\ref{section semitoric compactifications}.} in $U(F)$ with support the rational closure $C(F)^\rc$. This means that $\Sigma$ has a certain equivariance for the group $\Gamma$ and that the group
$\ol\Gamma_F$ acts on each $\Sigma_F$ so that there are only finitely many cones up to the action of $\ol\Gamma_F$. 

As mentioned before, the boundary components for type IV domains have dimension $1$ or $0$ and correspond either to an isotropic plane $E$ or an isotropic line $\ell$ in $L_{2d}$. It turns out that $U(E)$ is one dimensional, see e.g. \cite[Lemma 2.25]{GHS07}, and there is the $\Gamma$-equivariance condition is automatic. On the other hand, $U(\ell)\isom \ell^\perp/\ell$ which is a hyperbolic lattice of signature $(1,18)$.

\subsection{The square free case}\label{section square free}
Let us assume from now on that $2d$ is square free. Then up to the action of $\Gamma_{2d}$, there is a unique isotropic line $\ell \subset L_{2d}$, see~\cite[Theorem~4.0.1]{Sca87}. We can therefore choose $\ell$ to be spanned by one of the basis vectors in one of the $U$ summands of $L_{2d}$ and identify $U(\ell)=\ell^\perp/\ell$ with the lattice
\begin{equation}\label{eq lattice m2d}
M_{2d}=E_8(-1)^{\oplus 2} \oplus U \oplus \left\langle -2d \right\rangle.
\end{equation}
The cone $C(\ell)$ is then realized as a connected component $C_{2d}$ of the cone of positive vectors in $M_{2d,\R}:=M_{2d}\tensor \R$ and its rational closure is
\begin{equation}\label{eq rational closure}
C_{2d}^\rc=\conv\left(\ol{C_{2d}} \cap M_{2d}\right).
\end{equation}
Let us denote $\Gammabar:=\ol{(\Gamma_{2d}^+)}_{\ell} \subset \O^+(M_{2d})$ the image of $N(\ell) \cap \Gamma_{2d}^+\to\O^+(M_{2d})$. Note that $\ol\Gamma_{2d}^+$ preserves the cone $C_{2d}$. As explained in Section~\ref{section toroidal compactifications}, the $\Gamma_{2d}^+$-compatibility with the fans $\Sigma_E$ corresponding to isotropic planes $E$ is automatic. Moreover, since there is only one orbit of isotropic lines, we may summarize our discussion as follows.
\begin{proposition}\label{proposition toroidal compactification}
A toroidal compactification of $\sF_{2d}$ for square free $2d$ is determined by a $\Gammabar$-fan in $M_{2d,\R}$ whose support is $C_{2d}^\rc$.
\end{proposition}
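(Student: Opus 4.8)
The plan is to unwind the general definition of a $\Gamma$-admissible collection of fans $\Sigma=\{\Sigma_F\}$ (with $\Gamma := \Gamma_{2d}^+$) from \cite[III, Definition~5.1]{AMRT} and show that, in the square-free case, all of its data collapses to a single fan on $C_{2d}^\rc$. Recall that such a collection assigns to every rational boundary component $F$ a $\ol\Gamma_F$-fan $\Sigma_F$ in $U(F)$ with support $C(F)^\rc$, subject to (i) $\Gamma$-equivariance across the $\Gamma$-orbits of boundary components, and (ii) face-compatibility between $\Sigma_F$ and $\Sigma_{F'}$ whenever $F'\subset\ol F$. For type IV domains the rational boundary components are exactly the isotropic lines $\ell$ (the rank-$0$ cusps, with $U(\ell)\cong M_{2d,\R}$) and the isotropic planes $E$ (the rank-$1$ cusps, with $U(E)$ one-dimensional), so I would organize the argument around these two types and their incidence.

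First I would treat the rank-$0$ cusps. By Scattone's theorem \cite[Theorem~4.0.1]{Sca87}, square-freeness of $2d$ forces a single $\Gamma_{2d}$-orbit of isotropic lines, and I fix the representative $\ell$ from \eqref{eq lattice m2d} for which $U(\ell)=\ell^\perp/\ell$ is identified with $M_{2d}$, $C(\ell)=C_{2d}$, and $\ol\Gamma_\ell=\Gammabar$. The $\Gamma$-equivariance condition (i) then transports the datum $\Sigma_\ell$ to every other isotropic line $\ell'=\gamma\cdot\ell$, so that the collection of all rank-$0$ fans is determined by $\Sigma_\ell$ alone; and the residual constraint that remains on $\Sigma_\ell$ is precisely that it be stable under the stabilizer image $\ol\Gamma_\ell=\Gammabar$, i.e.\ that it be a $\Gammabar$-fan with support $C_{2d}^\rc$. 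Thus the rank-$0$ data is equivalent to a single $\Gammabar$-fan on $C_{2d}^\rc$.

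Next I would dispose of the rank-$1$ cusps and the compatibility (ii). Since $U(E)$ is one-dimensional \cite[Lemma~2.25]{GHS07}, the cone $C(E)^\rc$ is a single ray and the only fan supported on it is $\{0,\,C(E)^\rc\}$; hence $\Sigma_E$ is forced, involves no choice, and the $\ol\Gamma_E$-equivariance is automatic (as already noted after \eqref{eq lattice m2d}). It then remains to check that these forced fans are compatible with $\Sigma_\ell$ for $\ell\subset E$. Under the inclusion $U(E)\hookrightarrow U(\ell)=M_{2d,\R}$, the subspace $U(E)$ is the line $\R m$ spanned by a primitive isotropic vector $m\in M_{2d}$ generating the line $E/\ell$, and $C(E)^\rc=\R_{\ge0}m$ is a rational null ray lying in $\del C_{2d}^\rc$. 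Intersecting the cones of any fan $\Sigma_\ell$ supported on $C_{2d}^\rc$ with the line $\R m$ can only return $\{0,\,\R_{\ge0}m\}$, since the cones of a fan are strongly convex with apex at the origin and the support meets $\R m$ only in $\R_{\ge0}m$; this matches $\Sigma_E$, so condition (ii) holds for free. I expect this inter-cusp compatibility to be the only delicate point in the argument, and the one-dimensionality of $U(E)$ is exactly what makes it vanish.

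Putting the two analyses together would establish that a $\Gamma$-admissible collection of fans for $\sF_{2d}$ is equivalent to the datum of a single $\Gammabar$-fan in $M_{2d,\R}$ supported on $C_{2d}^\rc$: every such fan extends uniquely and automatically to an admissible collection, and conversely every admissible collection restricts to one. By the correspondence between admissible collections and toroidal compactifications recalled in Section~\ref{section toroidal compactifications}, this proves the proposition. The only genuinely external input is Scattone's orbit-uniqueness; everything else is bookkeeping dictated by the structure of type IV boundary components.
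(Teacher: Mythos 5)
Your proposal is correct and follows essentially the same route as the paper, which presents this proposition as a summary of the discussion in Sections~\ref{section toroidal compactifications} and~\ref{section square free}: uniqueness of the $\Gamma_{2d}$-orbit of isotropic lines via \cite[Theorem~4.0.1]{Sca87}, the identification $U(\ell)=M_{2d}$, and the observation that the one-dimensionality of $U(E)$ makes the data at the rank-one cusps forced and the compatibility automatic. Your explicit verification of the inter-cusp compatibility condition is a welcome elaboration of a point the paper leaves implicit, but it is not a different argument.
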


\subsection{Semitoric compactifications}\label{section semitoric compactifications}

 In \cite{Loo03}, \cite{Loo03a} Looijenga has introduced the notion of semitoric compactifications which generalizes both toroidal compactifications and the Baily--Borel compactification. Similarly to toroidal compactifications, these depend on an admissible collection of \emph{semitoric fans} and are also constructed relatively over the Baily--Borel compactification. Here we shall recall the basic notions necessary for this construction. Since we shall only apply this in the case of square-free $d$ we will restrict to this case, as
this allows us to keep the discussion considerably simpler.   

We start with a lattice $L$ of signature $(2,n)$ (which will later become $L_{2d}$) and an arithmetic group $\Gamma^+ \subset \O^+(L)$ (which will later become $\Gamma^+_{2d}$). We will make the assumption that 
$\Gamma^+$ operates transitively on the set of rational isotropic lines. Let $\ell$ be such an isotropic line and set $M:= U(\ell)=\ell^\perp/\ell$.  
Then $M$ (which will later be $M_{2d}$) is a hyperbolic lattice of signature $(1,n-1)$ and we denote by $M_{\RR}$ the associated real vector space. 
Let $C:=C(\ell)$ with rational closure $C^\rc$. The following definitions will be crucial. As described above the group $\Gammabar[]:=\ol{(\Gamma^+)}_{\ell} \subset \O^+(M)$ acts on  $C^\rc$.

\begin{definition}
By a {\em rational cone} we mean a rational polyhedral cone which is nondegenerate (i.e. does not contain an affine line).
A {\em rational cone system} in $C^\rc$ is a finite collection $\Sigma$  of  rational cones in $C^\rc$ such that
\begin{enumerate}
\item If $\sigma, \tau \in \Sigma$, then $\sigma \cap \tau \in \Sigma$
\item If $\sigma \in \Sigma$ and $\tau$ is a face of $\sigma$, then $\tau \in \Sigma$.  
\end{enumerate}
\end{definition}

\begin{definition}
A {\em locally rational cone} in $C^\rc$ is a cone $\sigma \subset C^\rc$  such that the restriction of $\sigma$ to any rational polyhedral subcone of $C^\rc$ is a 
rational cone.
\end{definition}

\begin{definition}\label{def:locallyrationalcollection}
A {\em locally rational decomposition of $C^\rc$  } is a collection $\Sigma$ of convex cones with the following properties: 
\begin{enumerate}
	\item $\cup_{\sigma \in \Sigma} \sigma = C^\rc$.
	\item The restriction of $\Sigma$ to any rational subcone of $C^\rc$ is a rational cone system.
\end{enumerate}
\end{definition}

If $h \supset \ell$ is a rational isotropic plane, then $J(h):=h/\ell$ is a rational isotropic line in $M_{\RR}$ (and every rational isotropic line in $M_{\RR}$ arises this way). We denote the half line of $J(h)$ which 
belongs to $C^\rc$ by $J^+(h)$.   

\begin{definition}\label{definition semitoric fan}
A {\em semitoric fan}  (for the group $\Gamma^+$) is a collection $\Sigma$ of cones which decompose the rational closure $C^\rc$  with the following properties:
\begin{enumerate}
\item $\Sigma$ is $\ol\Gamma^+$-equivariant.
\item The origin $\{0\}$ and the half lines $J^+(h)$, where $h \supset \ell$ is a rational isotropic plane are cones belonging to $\Sigma$. 
\item The cones of $\Sigma$ define a locally rational decomposition of  $C^\rc$.  
\end{enumerate} 
\end{definition}

This is Looijenga's definition adapted to the special case where we have only one isotropic rational line up to the action of the group $\Gamma^+$. If there are several such lines, then one must choose a decomposition
for all $C^\rc(\ell)$, or more intrinsically of the conical locus in the sense of Looijenga, and these must fulfill a (fairly complicated to formulate) compatibility condition. As we will not need this, we have omitted the details, which 
can be found in \cite[Section~6]{Loo03a}.

By Looijenga's theory every semitoric fan as above gives rise to a compactification of the quotient
\[
\sF = \Gamma^+ \backslash \sD^+. 
\]
We shall call the resulting compactification, which we will also denote by $\sF^{\Sigma}$ the {\em semitoric compactification} given by the semitoric fan $\Sigma$. This is a normal  compact complex space (possibly projective). 
Refinements of semitoric fans correspond to blow-ups of semitoric compactifications and all semitoric  compactifications have a natural map to the Baily--Borel compactification $\sF^{\mathrm{BB}}$ (as will become clear from 
the second example below). The following are the most important examples for us.
 
\begin{example}
Every fan for the group $\Gamma^+$ is also a semitoric fan for this group, in particular, every toroidal compactification is also a semitoric compactification.
\end{example} 

The Baily--Borel compactification itself is also a semitoric compactification:

\begin{example} 
The coarsest semitoric fan $\Sigma$ consists simply of  the cones $\{0\}$, the half lines $J^+(h)$ and $C^\rc$. This leads to the 
{\em Baily--Borel compactification} $\sF^{\mathrm{BB}}$.
\end{example} 

An important class of semitoric compactifications is defined via {\em hyperplane arrangements}, indeed, these were the motivating examples for Looijenga's work.

\begin{example}\label{example:Coxeter}
We consider a set $\sH$ of rational hyperplanes in $L_{\RR}$ of signature $(2,n-1)$ which has the property that it consists of only 
finitely many $\Gamma$-orbits (and thus is locally finite). If $H \in \mathcal H$ is a hyperplane containing our chosen isotropic line $\ell$, then $H/\ell$ defines a hyperplane in $M_{\RR}$. 
Intersecting the cone $C^\rc$  with these
hyperplanes and taking the closures of the  connected components defines a locally rational decomposition $\Sigma=\Sigma(\sH)$ of $C^\rc$.   
Special cases are:
\begin{enumerate}
\item Again, we note that for $\mathcal H=\emptyset$ we once again recover the {\em Baily--Borel compactification}. 
\item We can take the set of all roots $r\in L$ and the collection of hyperplanes $\mathcal H_{\operatorname{root}}:= \cup_rH_r$ where $H_r=r^{\perp}$. We shall call the resulting semitoric fan the {\em Coxeter semitoric fan}. 
\end{enumerate}
\end{example}

At this point we would like to make a comment on terminology. In the literature one finds both names Coxeter chambers and Vinberg chambers. Indeed, both authors have contributed decisively to this theory. In order to avoid 
confusion in the literature we have decided to stay with the name Coxeter fan as in \cite{AET19}. 

Applying the above discussion to the lattice $\Gamma_{2d}$ we obtain the semitoric fan analog of Proposition~\ref{proposition toroidal compactification}.

\begin{proposition}\label{proposition semitoric compactification}
A semitoric compactification of $\sF_{2d}$ for square free $2d$ is determined by a $\Gammabar$ semitoric fan in $M_{2d,\R}$ whose support is $C_{2d}^\rc$.
\end{proposition}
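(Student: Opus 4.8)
The plan is to reduce Proposition~\ref{proposition semitoric compactification} to the general formalism of Looijenga's semitoric compactifications recalled above, by specializing the abstract data $(L,\Gamma^+,\ell,M,C)$ to the K3 setting $(L_{2d},\Gamma_{2d}^+,\ell,M_{2d},C_{2d})$. The essential point is that everything has already been set up: the square-free hypothesis guarantees via \cite[Theorem~4.0.1]{Sca87} that $\Gamma_{2d}$ acts transitively on rational isotropic lines, which is precisely the standing assumption under which the semitoric formalism of Section~\ref{section semitoric compactifications} was developed. So the proof should be short and consist mainly in checking that the hypotheses of that formalism are met and then invoking it.

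First I would verify the transitivity hypothesis: since $2d$ is square free, there is a unique $\Gamma_{2d}$-orbit of rational isotropic lines $\ell \subset L_{2d}$ by the cited theorem of Scattone, so $\Gamma^+ = \Gamma_{2d}^+$ indeed operates transitively on rational isotropic lines. This allows us to fix a single representative $\ell$, spanned by a basis vector of one of the $U$-summands, and identify $M = U(\ell) = \ell^\perp/\ell$ with the hyperbolic lattice $M_{2d}=E_8(-1)^{\oplus 2} \oplus U \oplus \langle -2d\rangle$ of signature $(1,18)$, exactly as in \eqref{eq lattice m2d}. Next I would match up the cones: the cone $C = C(\ell)$ is realized as the connected component $C_{2d}$ of the positive cone in $M_{2d,\R}$, with rational closure $C_{2d}^\rc$ as in \eqref{eq rational closure}, and the group acting on $C^\rc$ is $\Gammabar = \ol{(\Gamma_{2d}^+)}_\ell \subset \O^+(M_{2d})$. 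With these identifications, Definition~\ref{definition semitoric fan} of a semitoric fan for $\Gamma_{2d}^+$ becomes exactly the notion of a $\Gammabar$ semitoric fan in $M_{2d,\R}$ supported on $C_{2d}^\rc$.

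With the dictionary in place, the proposition is a direct application of Looijenga's theory as summarized after Definition~\ref{definition semitoric fan}: every semitoric fan gives rise to a semitoric compactification of $\sF_{2d} = \Gamma_{2d}^+ \backslash \sD_{2d}^+$, and conversely the admissible-collection formalism shows these are the only data producing semitoric compactifications in the square-free case. The crucial simplification, which must be explicitly invoked, is that because there is a single orbit of isotropic lines one need not impose the complicated cross-cusp compatibility condition among different $C^\rc(\ell)$; the datum collapses to a single $\Gammabar$-equivariant decomposition of $C_{2d}^\rc$. This is the exact semitoric analogue of Proposition~\ref{proposition toroidal compactification}, and the parallel structure of the two proofs should be emphasized.

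The one step that requires genuine care, rather than mere translation, is confirming that the general semitoric construction (which a priori depends on a full $\Gamma^+$-admissible collection of semitoric fans indexed by \emph{all} rational boundary components, including the one-dimensional cusps corresponding to isotropic planes) reduces in our situation to the single fan on $C_{2d}^\rc$. I would handle this as in Section~\ref{section square free}: the one-dimensional boundary components correspond to isotropic planes $E$ with $U(E)$ one-dimensional, for which the $\Gamma_{2d}^+$-equivariance is automatic and imposes no additional choice, while condition~(2) of Definition~\ref{definition semitoric fan}, requiring the half-lines $J^+(h)$ to be cones of $\Sigma$, precisely encodes the compatibility with these smaller cusps internally to the single decomposition of $C_{2d}^\rc$. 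Thus no independent data are needed, and the main obstacle is purely expository: making transparent that the apparent multiplicity of boundary data collapses to one fan, so that the stated one-to-one correspondence is clean.
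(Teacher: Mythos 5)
Your proposal is correct and follows essentially the same route as the paper, which states this proposition without a separate proof as the immediate specialization of the general semitoric formalism of Section~\ref{section semitoric compactifications} (set up precisely under the transitivity hypothesis guaranteed by the square-free assumption) to $(L_{2d},\Gamma_{2d}^+,\ell,M_{2d},C_{2d})$. Your added discussion of why the cross-cusp compatibility and the one-dimensional boundary components impose no extra data simply makes explicit what the paper already handled in Sections~\ref{section toroidal compactifications} and~\ref{section square free}.
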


\section{Birational geometry of degenerations}\label{section degenerations}

In this section we provide basic facts about the birational geometry of models. Much of this is standard and we claim no originality for this.

As before we denote $S=\Spec\C[[t]]=\{c,\eta\}$. We will study degenerations of projective K3 surfaces over $S$. Before we get more specific, let us observe that such a degeneration always comes from an algebraic one.

\begin{proposition}\label{proposition general algebraization}
Let $\sX \to S$ be a flat, projective morphism of schemes. Then there exists a quasi-projective variety $C$, a projective morphism $\gothX \to C$, and a cartesian diagram
\[
\xymatrix{
\wh\sX \ar[r]\ar[d] & \gothX \ar[d]\\
\wh S \ar[r]^\vphi & C
}
\]
in the category of formal schemes where $\wh\sX\to\wh S:=\Spf \C[[t]]$ denotes the formal completion of $\sX \to S$ along the central fiber.
\end{proposition}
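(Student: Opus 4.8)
The plan is to produce the algebraic model as a pullback of the universal family over a Hilbert scheme, cutting the base down to an integral variety by means of a scheme-theoretic image. Denote by $\pi\colon \sX \to S$ the given morphism. First I would use projectivity to choose a line bundle $\sL$ on $\sX$ that is very ample relative to $S$. Since $S$ is the spectrum of a local ring and $\pi$ is flat, the sheaf $\pi_*\sL^{\otimes m}$ is free over $\sO_S$ for $m \gg 0$; choosing a basis gives a closed embedding $\sX \hookrightarrow \PP^N_S$ with $N + 1 = \rank \pi_*\sL^{\otimes m}$. As $\pi$ is flat and $S$ is connected, the fibres all share the same Hilbert polynomial $P$ with respect to $\sO(1)$.

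Next I would invoke Grothendieck's existence theorem for the Hilbert scheme: $H := \Hilb^P_{\PP^N/\C}$ is a projective $\C$-scheme carrying a flat universal family $\sU \subset \PP^N_H$. The closed subscheme constructed above is then classified by a morphism $\psi \colon S \to H$ for which $\sX \isom \sU \times_H S$, by the universal property of $H$.

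The key step is to arrange an integral base. Since $S = \Spec\C[[t]]$ is integral, the scheme-theoretic image $Z$ of $\psi$ is an integral closed subscheme of $H$, namely the reduced closure of the image of the generic point, and $\psi$ factors through $Z$. Being a closed integral subscheme of the projective scheme $H$, the base $Z$ is a projective, hence quasi-projective, variety; I set $C := Z$ and $\gothX := \sU \times_H C$, which is projective over $C$, so that $\sX \isom \gothX \times_C S$. Completing along the central fibre, $\sX \times_S \Spec(\C[t]/t^n) \isom \gothX \times_C \Spec(\C[t]/t^n)$ for every $n$, and passing to the colimit gives $\wh\sX \isom \gothX \times_C \wh S$, where $\vphi\colon \wh S \to C$ is the formal completion of $\psi$. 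This is precisely the asserted cartesian diagram of formal schemes.

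I expect the only genuinely delicate inputs to be the constancy of the Hilbert polynomial, which rests on flatness together with the connectedness of $S$, and the identification $\sX \isom \gothX \times_C S$ over an integral base, which uses both that the scheme-theoretic image of the integral scheme $S$ is integral and that formal completion commutes with this base change. The remaining verifications are routine manipulations with the universal property of the Hilbert scheme, and no approximation-theoretic machinery (such as Artin approximation) is needed.
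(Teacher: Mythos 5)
Your argument is correct, but it takes a genuinely different route from the paper. The paper fixes an ample line bundle $L$ on the central fiber, invokes Grothendieck's existence theorem to produce a versal deformation of the pair $(\sX_c,L)$ over a local Noetherian base, and then applies Artin's algebraization theorem to realize the formal completion as induced from a projective scheme over a finite type base. You instead embed $\sX$ into $\PP^N_S$ (using that $\pi_*\sL^{\otimes m}$ is free over the local ring $\C[[t]]$ for $m\gg 0$), classify the family by a morphism $\psi\colon S\to \Hilb^P_{\PP^N/\C}$, and take $C$ to be the scheme-theoretic image of $\psi$, which is integral because $S$ is, and projective because it is closed in the Hilbert scheme. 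This avoids Artin approximation entirely and in fact proves more than is asserted: you obtain a cartesian square of honest schemes $\sX\isom\gothX\times_C S$, of which the claimed diagram of formal schemes is just the completion along the central fiber. The trade-off is that the paper's deformation-theoretic setup is the one that generalizes to the finer statement needed later (Proposition~\ref{proposition algebraic model}, via \cite[Proposition 1.13]{HL}), where one must control line bundles on the total space and the étaleness of $\vphi$; the Hilbert scheme gives no handle on that. One small terminological point: the existence and projectivity of $\Hilb^P_{\PP^N/\C}$ is due to Grothendieck (FGA), but it is not what is usually called ``Grothendieck's existence theorem''; the latter refers to the formal GAGA statement that the paper actually uses.
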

\begin{proof}
We fix an ample line bundle $L$ on the central fiber $X:=\sX_c$. By Grothendieck's existence theorem \cite{Gro95}, see also \cite[Theorem~2.5.13]{Ser06}, there is a deformation $\scrX \to \scrS$ of $X$ over a local noetherian scheme $(\scrS,c)$ and a line bundle $\scrL$ on $\scrX$ such that the pair $(\scrX\to \scrS,\scrL)$ is versal for $(X,L)$ at $c$. By Artin's result \cite[Theorem~1.6]{Art69}, the formal completion of $\scrX \to \scrS$ at $c$ is induced from a projective scheme $\gothX \to C$ as claimed.
\end{proof}

Observe that in the statement of the proposition, the scheme theoretic image of $\vphi$ need not be a curve in general.

\subsection{Kulikov models}\label{section kulikov}

A flat, projective morphism $\sY\to S$ from a normal scheme $\sY$ such that the generic fiber $\sY_{\eta}$ is a $K3$ surface will be called a \emph{degeneration of K3 surfaces}. Such a degeneration is called $K$-trivial if the canonical sheaf is the trivial line bundle: $\omega_{\sY}\isom \sO_X$. 

\begin{definition}\label{definition kulikov}
A \emph{Kulikov model} is a $K$-trivial degeneration of K3 surfaces $\sY\to S$ such that $\sY$ is a regular $3$-fold and the central fiber $\sY_c$ is a reduced normal crossing divisor.
\end{definition}  

The central fibers of such degenerations are classified by \cite{Per77,Kul77,PP81}\footnote{To be precise, these references treat degenerations over a disk $\Delta \subset \C$ in the analytic category. However, given $\sY\to S$ as above, one can always reduce to this situation by Proposition~\ref{proposition general algebraization}.} into type I, II, and III, see also \cite[\S 5]{Fri83} for more details and further references. Conversely, one may ask which normal crossing surfaces admit a smoothing to a K3 surface. For this purpose, Friedman introduced the notion of \emph{$d$-semistability}, which is defined as the triviality of the (intrinsically defined) infinitesimal normal bundle, see \cite[Definition~(1.13)]{Fri83}. It is easily seen to be satisfied by the central fiber of a Kulikov model and for the converse Friedman proved in \cite[Theorem~(5.10)]{Fri83} that $d$-semistable $K3$ surfaces always admit a smoothing. As we are most interested in Kulikov models whose central fiber is of type III, we recall the following definition.

\begin{definition}\label{definition typeiiik3}
A \emph{$d$-semistable $K3$ surface of type III} is a reduced, projective normal crossing surface $Y$ such that
\begin{enumerate}
	\item $Y$ is $d$-semistable,
	\item the dualizing sheaf $\omega_Y$ is trivial,
	\item the irreducible components are rational surfaces such that the preimage of the double curves form cycles of rational curves on the normalization, and
	\item the dual intersection complex of $Y$ is a triangulation of the $2$-sphere $\IS^2$.
\end{enumerate} 
\end{definition}

As we will not consider type I or II degenerations in this article, we will usually use the term \emph{Kulikov models} as a synonym to Kulikov model of type III.

\subsection{Algebraization}\label{section gaga}

One parameter smoothings $\sY\to S$ of a given $d$-semistable K3 surface are highly non-unique. One can obtain uniqueness (up to pullbacks along finite covers) by enforcing $\sY$ to have a high rank Picard group which leads to a strong algebraicity property of so-called maximal Kulikov models, see Proposition~\ref{proposition algebraic model}. 

\begin{definition}\label{definition maximal degeneration}
Recall that $\sY \to S$ is called \emph{maximal}, if the restriction homomorphism
\begin{equation}\label{eq restriction}
\Pic(\sY) \to \Pic(\sY_c)
\end{equation}
to the central fiber is an isomorphism. 
\end{definition}
Recall that a Kulikov model $\sY \to S$ is maximal if and only if $\sY_c$ is \emph{maximal} in the sense that its Carlson invariant is trivial. We refer to the discussion in \cite[p. 11]{HL} for more details and further references. We will see in Proposition~\ref{proposition picard group central fiber} that maximality is preserved under birational transformations. One can show that for a maximal Kulikov model one always has
\begin{equation}\label{eq picard rank maximal}
\vrho(\sY)=\vrho(\sY_c)= 18+n,
\end{equation}
where $n$ is the number of irreducible components of $\sY_c$, see e.g. \cite[Section~3.1]{Laz08}. 

By deformation theory, we can always realize a \emph{maximal} Kulikov model as a base change from a finite type scheme, more precisely:

\begin{proposition}\label{proposition algebraic model}
Let $\sY\to S$ be a maximal Kulikov model of type III. Then there is a diagram
\begin{equation}\label{eq diagram algebraization}
\xymatrix{
\sY \ar[r]\ar[d] & \gothY \ar[d]\\
 S \ar[r]^\vphi & C
}
\end{equation}
such that the following holds.
\begin{enumerate}
\item\label{proposition algebraic model item one} On the right, $\gothY$ is a smooth $3$-fold, and $C$ is a smooth, affine curve.
	\item\label{proposition algebraic model item two} The morphism $\gothY \to C$ is flat and projective, smooth over $C^*:=C\ohne\{\vphi(c)\}$, and for $t\neq \vphi(c)$ the fiber $\gothY_t$ is a K3 surface over $k(t)$.
\item\label{proposition algebraic model item three} The morphism $\vphi$ is \'etale at $c\in S$ and \eqref{eq diagram algebraization} is cartesian.
\item\label{proposition algebraic model item four} If $\sY_c = Y_1\cup \ldots \cup Y_n = \gothY_{\vphi(c)}$ are the irreducible components of the central fiber and $\eta_C\in C$ denotes the generic point, then the diagram 
\begin{equation}\label{eq picard groups}
\xymatrix{
0 \ar[r] & \sum_{i=1}^m \Z Y_i \ar[r]\ar@{=}[d] & \Pic(\gothY)/\Pic(C) \ar[r]\ar[d] & \Pic(\gothY_{\eta_C}) \ar[d]^\alpha\ar[r] & 0\\
0 \ar[r] & \sum_{i=1}^m \Z Y_i \ar[r] & \Pic(\sY) \ar[r] & \Pic(\sY_{\eta}) \ar[r] & 0\\
}
\end{equation}
has exact rows and the vertical morphisms (obtained by restriction) are isomorphisms.
\item\label{proposition algebraic model item five} For $t\in C$ the restriction $\Pic(\gothY) \to[\rho_t] \Pic(\gothY_t)$ induces a canonical injection 
\begin{equation}\label{eq canonical iso picard}
\bar\rho_t: \Pic(\sY_\eta) \to[\alpha^{-1}] \Pic(\gothY_{\eta_C}) \to[\isom] \Pic(\gothY)/(\Pic(C) + \sum_{i=1}^m \Z Y_i) \to  \Pic(\gothY_t)
\end{equation}
which for very general $t\in C$ is an isomorphism.
\end{enumerate}
\end{proposition}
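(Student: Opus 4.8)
The plan is to begin with the general algebraization of Proposition~\ref{proposition general algebraization} and then to exploit maximality in order to cut the base down to a curve; once the base is a curve, the remaining assertions follow by standard arguments. Concretely, I would first choose a relatively ample line bundle on $\sY\to S$ and apply Proposition~\ref{proposition general algebraization} to the resulting polarized family, obtaining a quasi-projective base $C$, a projective morphism $\gothY\to C$, and a formal cartesian square identifying the completion $\wh\sY$ with the pullback of $\gothY$ along a formal arc $\vphi\colon\wh S\to C$. A priori $C$ is the algebraized versal base of the polarized central fibre, so that $\dim C>1$ and the scheme-theoretic image of $\vphi$ need not be a curve; the crux of the proof is to show that maximality forces this image to be one-dimensional.

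This is the step where maximality is essential, and I expect it to be the main obstacle. By Definition~\ref{definition maximal degeneration} and \eqref{eq picard rank maximal} one has $\vrho(\sY)=18+n$; discarding the single relation $\sum_i Y_i=0$ leaves $n-1$ independent vertical classes, so the generic fibre satisfies $\vrho(\sY_\eta)=19$. These $19$ independent algebraic classes persist on the nearby fibres of $\gothY\to C$ and therefore cut out a Noether--Lefschetz locus $Z\subset C$ through which $\vphi$ factors. A very general point of $Z$ parametrises a complex K3 surface of Picard rank $19$, whose transcendental lattice $T$ has rank $3$ and signature $(2,1)$; the associated period domain $\sD_T$ is then one-dimensional, which forces $\dim Z=1$. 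Replacing $C$ by the normalisation of a suitable affine neighbourhood of $\vphi(c)$ in the component of $Z$ through $\vphi(c)$, I may assume that $C$ is a smooth affine curve. The delicate point is to match the Hodge-theoretic count of moduli with the base of the algebraization, i.e. to know that the arc really sweeps out the whole Noether--Lefschetz curve rather than a thickened point.

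With $C$ a curve, the rest is routine. Shrinking $C$, regularity of $\sY$ propagates to give $\gothY$ smooth and $\gothY\to C$ smooth away from $\vphi(c)$, which is (\ref{proposition algebraic model item one}) and the first half of (\ref{proposition algebraic model item two}); the smooth fibres are $K$-trivial deformations of a K3 surface, hence K3 surfaces, giving the remainder of (\ref{proposition algebraic model item two}). Since the central fibre is reduced and $\vphi$ is versal in the single smoothing direction, one checks that $\vphi$ induces an isomorphism $\wh\sO_{C,\vphi(c)}\isom\C[[t]]$, so $\vphi$ is \'etale at $c$ and \eqref{eq diagram algebraization} is cartesian by construction, which is (\ref{proposition algebraic model item three}). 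For (\ref{proposition algebraic model item four}) I would invoke the localization sequence for the Picard group of the smooth variety $\gothY$: over a general point of $C$ the fibre is irreducible with class a pullback from $C$, so modulo $\Pic(C)$ only the components $Y_1,\dots,Y_n$ of $\gothY_{\vphi(c)}$ remain, producing the top row, while the bottom row is the same sequence over $S$, where $\Pic(S)=0$. The vertical restriction maps are isomorphisms because the \'etale map $\vphi$ identifies the formal neighbourhoods and line bundles on the proper formal completions algebraize uniquely by Grothendieck existence, whereas maximality ensures that restriction to the central fibre is an isomorphism on both sides.

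Finally, for (\ref{proposition algebraic model item five}), the map $\bar\rho_t$ is the composite of the isomorphisms of (\ref{proposition algebraic model item four}) with restriction to $\gothY_t$, and is injective because specialization cannot lower the Picard rank. By construction every fibre over $t\in C$ carries the $19$ extended classes, so $\vrho(\gothY_t)\ge 19$; conversely the standard Noether--Lefschetz argument shows that for very general $t$ the Picard group of $\gothY_t$ equals the image of $\Pic(\gothY)$, whence $\vrho(\gothY_t)=19$ and $\bar\rho_t$ is an isomorphism.
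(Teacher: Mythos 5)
Your overall route differs from the paper's: the paper does not start from Proposition~\ref{proposition general algebraization} at all, but imports the whole construction from the stronger result \cite[Proposition~1.13]{HL}, which builds the diagram \eqref{eq diagram algebraization} over a smooth affine curve directly from the versal deformation of the central fiber \emph{together with} a choice of line bundles $L_1,\dots,L_{19}$ on $\sY$ generating $\Pic(\sY_\eta)$; the output is a family $\gothY\to C$ carrying line bundles $\sL_1,\dots,\sL_{19}$ restricting to the $(L_i)\vert_{\sY_c}$. That extension of the nineteen bundles to the total space $\gothY$ is exactly the place where maximality enters, and it is also the input for \eqref*{proposition algebraic model item four} and \eqref*{proposition algebraic model item five} (linear independence of the $\chern_1(\sL_i)$ on the fibers gives $\vrho(\gothY_t)\geq 19$ for all $t\in C^*$, and the Friedman--Scattone sequence gives injectivity and saturatedness of $\bar\rho_t$).

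The gap in your argument is the step you yourself flag, and it is not a technicality. The sentence ``these $19$ independent algebraic classes persist on the nearby fibres of $\gothY\to C$ and therefore cut out a Noether--Lefschetz locus $Z\subset C$ through which $\vphi$ factors'' is unjustified: the $L_i$ live on $\sY$, hence only on the formal pullback $\gothY\times_C\wh S$, and there is no reason they extend to $\gothY$ or to fibers of $\gothY\to C$ away from the image of the formal arc. Without that, $Z$ is not defined as a closed subvariety containing the arc, and the dimension count via the period domain of the rank-three transcendental lattice does not apply to anything. One can partially repair this by arguing with the scheme-theoretic image $Z_0$ of the arc (its geometric generic fiber has Picard rank $\geq 19$, so a very general closed fiber over $Z_0$ does too, forcing $\dim Z_0\leq 1$ provided one also controls the fibers of the classifying map $C\to\Def$), but even then you land on a curve over which you still do not know that the $L_i$ extend to line bundles on the new total space --- and that extension is needed for \eqref*{proposition algebraic model item four}, for the lower bound $\vrho(\gothY_t)\geq 19$, and hence for \eqref*{proposition algebraic model item five}. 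Closing this requires redoing the algebraization for the tuple (central fiber, nineteen line bundles) rather than for a single polarization, which is precisely the content of the cited \cite[Proposition~1.13]{HL}. The remaining points of your sketch (localization sequence for \eqref*{proposition algebraic model item four}, base change of $C$ to make $\vphi$ \'etale, the very-general-fiber argument for \eqref*{proposition algebraic model item five}) are in line with the paper once this main step is supplied.
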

\begin{proof}
This is a deformation theoretic argument using Grothendieck's existence theorem and work of Friedman-Scattone \cite{FS86}. The proof is easily obtained from the stronger result \cite[Proposition 1.13]{HL} where also more precise references can be found. Explicitly, the proof of the aforementioned result gives a cartesian diagram as in \eqref{eq diagram algebraization} which (after possibly shrinking $C$) satisfies \eqref*{proposition algebraic model item one} and \eqref*{proposition algebraic model item two} together such that for a choice of line bundles $L_1, \ldots, L_{19}$ on $\sY$ whose restrictions generate the Picard group of $\sY_\eta$, there are line bundles $\sL_1, \ldots, \sL_{19}$ on $\gothY$ such that $\left(\sL_i\right)\vert_{\gothY_c}$ is isomorphic to $(L_i)\vert_{\sY_c}$ on $\sY_c=\gothY_c$. This is where maximality is needed. Claim \eqref*{proposition algebraic model item three} follows by replacing $C$ with a finite cover and possibly shrinking it further, so let us prove the statements about the Picard group.

The horizontal morphisms in \eqref{eq picard groups} form short exact sequences if there are no nontrivial divisors on $\sY$ respectively $\gothY$ whose support is contained in a fiber different from the special fiber. This however holds, as the morphisms are smooth outside the special fibers and have connected fibers. By construction, $\alpha$ is surjective and by \cite[\href{https://stacks.math.columbia.edu/tag/0CC5}{Lemma 0CC5}]{SP20} it is also injective, hence \eqref*{proposition algebraic model item four} follows. The existence of the morphisms in \eqref{eq canonical iso picard} is clear. Let us denote by $\gothY^* \subset \gothY$ the complement of the special fiber. By the localization sequence, $\Pic(\gothY^*)/\Pic(C^*) \isom \Pic(\gothY_{\eta_C})$ and thus it follows from the sequence (4.11) in \cite{FS86} that $\bar\rho_t$ is injective and its image is saturated. As the $\chern_1(\sL_1),\ldots,\chern_1(\sL_{19})$ are linearly independent in $H^2((\gothY^*)^\an,\Z)$, the Picard rank of $\gothY_t$ is $\geq 19$ for every $t\in C^*$. As $\gothY \to C$ is a type III degeneration, for every $t\in C^*$ the classifying map $(C,t) \to \Def(\gothY_t)$ is not constant, hence maps onto the Hodge locus. By the geometry of the period domain, a very general period in this Hodge locus has Picard number 19 so that by saturatedness of the image, $\rho_t$ is also surjective for very general $t\in C^*$.
\end{proof}

Note that this statement is much stronger than the standard algebraicity statement coming from Grothendieck's existence theorem and Artin algebraicity result, cf. Proposition~\ref{proposition general algebraization}.

\begin{remark}\label{remark pic ns}
One could formulate the above statement equivalently in terms of the N\'eron--Severi group. In fact, the only non-trivial $\Pic^0$ in \eqref{eq picard groups} is the one of $C$. This is because for a variety $X$ the tangent space to $\Pic^0(X)$ is $H^1(X,\sO_X)$ which vanishes for all varieties above except $C$ above by Lemma~\ref{lemma properties cusp model}.
\end{remark}

The exact sequence \eqref{eq picard groups} crucially needs the regularity assumption (or actually factoriality). For a singular model we can however still compare the Picard group to the one of an algebraization. We use the following notation. A pointed scheme $(X,x)$ is a scheme $X$ together with a distinguished closed point $x \in X$. We write $f:(X,x)\to (Y,y)$ for a morphism of schemes with $f(x)=y$.

\begin{proposition}\label{proposition picard group algebraic model}
Let $(C,c)$ be a smooth algebraic curve and let $\gothp:\gothY \to \gothY'$ be a birational $C$-morphism of normal schemes projective over $C$ that is an isomorphism over $C\ohne\{c\}$. Given a flat morphism $\vphi: (S,c) \to (C,c)$, we consider the diagram of pullback squares
\[
\xymatrix{
\sY \ar[r]^\pi\ar[d]_j& \sY' \ar[r]\ar[d]_{j'}& (S,c) \ar[d]\\
\gothY \ar[r]^\gothp & \gothY' \ar[r]& (C,c).\\
}
\]
Then the canonical morphism
\begin{equation}\label{eq picard fiber product}
\Pic(\gothY') \to \Pic(\sY') \times_{\Pic(\sY)} \Pic(\gothY)
\end{equation}
is an isomorphism. In particular, the canonical morphism $\Pic(\gothY')/\Pic(C) \to \Pic(\sY')$ is an isomorphism in the situation of Proposition~\ref{proposition algebraic model}.
\end{proposition}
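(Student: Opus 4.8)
The plan is to prove that the comparison homomorphism $N \mapsto \big((j')^{*}N,\ \gothp^{*}N\big)$ from $\Pic(\gothY')$ into $\Pic(\sY')\times_{\Pic(\sY)}\Pic(\gothY)$ is bijective, treating injectivity and surjectivity separately. The structural facts I would record at the outset are that the square is cartesian (indeed $\sY=\gothY\times_{\gothY'}\sY'$), that the vertical maps $j$ and $j'$ are flat, being base changes of the flat morphism $\varphi$, and that the central fibers coincide, $\sY_{c}=\gothY_{c}$ and $\sY'_{c}=\gothY'_{c}$, with $\pi$ and $\gothp$ restricting to the very same map there. Since $\gothY'$ is normal and $\gothp$ is proper and birational, one has $\gothp_{*}\sO_{\gothY}=\sO_{\gothY'}$, and flat base change then yields $\pi_{*}\sO_{\sY}=\sO_{\sY'}$. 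Injectivity is then immediate: if $\gothp^{*}N\cong\sO_{\gothY}$, the projection formula together with $\gothp_{*}\sO_{\gothY}=\sO_{\gothY'}$ gives $N\cong\gothp_{*}\gothp^{*}N\cong\sO_{\gothY'}$, so the second component alone already detects the kernel.

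For surjectivity I would take a pair $(L',M)$ with $\pi^{*}L'\cong j^{*}M$ and set $N:=\gothp_{*}M$. Flat base change along the cartesian square gives $(j')^{*}\gothp_{*}M\cong\pi_{*}j^{*}M\cong\pi_{*}\pi^{*}L'\cong L'$, by the projection formula and $\pi_{*}\sO_{\sY}=\sO_{\sY'}$; hence $(j')^{*}N\cong L'$ as soon as $N$ is known to be invertible. The heart of the argument is exactly this invertibility. Over $C\setminus\{c\}$ the morphism $\gothp$ is an isomorphism, so there is nothing to check. At a point $y\in\gothY'_{c}$ I would exploit $\sY'_{c}=\gothY'_{c}$: the point $y$ is the image of a point $x\in\sY'$, and the induced local homomorphism $\sO_{\gothY',y}\to\sO_{\sY',x}$ is flat and local, hence faithfully flat. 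As $N_{y}\otimes\sO_{\sY',x}\cong L'_{x}$ is free of rank one, faithfully flat descent of freeness for finitely presented modules forces $N_{y}$ to be free of rank one, so $N\in\Pic(\gothY')$. Finally the adjunction morphism $\gothp^{*}N\to M$ is an isomorphism over $C\setminus\{c\}$, and its pullback along the flat map $j$ is the counit $\pi^{*}\pi_{*}\pi^{*}L'\to\pi^{*}L'$, which is an isomorphism because $\pi_{*}\sO_{\sY}=\sO_{\sY'}$; the same faithfully flat local descent at the points of $\gothY_{c}=\sY_{c}$ then shows $\gothp^{*}N\cong M$. Thus $N$ maps to $(L',M)$ and surjectivity follows.

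I expect the invertibility of $N=\gothp_{*}M$ at the boundary to be the main obstacle. The feature that makes it go through cleanly in the stated generality is the coincidence $\sY_{c}=\gothY_{c}$ combined with the flatness of $j$ and $j'$: it lets one replace the customary argument via the theorem on formal functions and cohomology-and-base-change by a single application of faithfully flat descent over the local rings at boundary points.

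For the concluding assertion, in the situation of Proposition~\ref{proposition algebraic model} the canonical map $\Pic(\gothY')\to\Pic(\sY')$ is, under the isomorphism just established, the projection of the fiber product onto its first factor. Surjectivity follows by lifting $\pi^{*}L'$ through the isomorphism $\Pic(\gothY)/\Pic(C)\cong\Pic(\sY)$ of that proposition, and the kernel is exactly the set of classes $(q')^{*}M_{C}$ with $M_{C}\in\Pic(C)$, where $q'\colon\gothY'\to C$. Since the fibers of $q'$ are connected one has $q'_{*}\sO_{\gothY'}=\sO_{C}$, so $(q')^{*}$ is injective, and therefore $\Pic(\gothY')/\Pic(C)\cong\Pic(\sY')$.
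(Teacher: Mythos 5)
Your proof is correct and follows essentially the same route as the paper's: injectivity via $\gothp_*\sO_{\gothY}=\sO_{\gothY'}$ and the projection formula, surjectivity by taking $N=\gothp_*M$, identifying $(j')^*N$ with $L'$ by flat base change, and checking invertibility of $N$ and the adjunction map $\gothp^*N\to M$ along the central fiber. The only (harmless) divergence is that you obtain $\pi_*\sO_{\sY}=\sO_{\sY'}$ by flat base change from $\gothp_*\sO_{\gothY}=\sO_{\gothY'}$ and make the faithfully flat descent at points of the central fiber explicit, whereas the paper invokes normality of $\sY'$ (read off from completions of local rings) and leaves that descent step implicit.
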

\begin{proof}
By normality of $\gothY$, the morphism \eqref{eq picard fiber product} is injective because its composition with the projection to $\Pic(\gothY)$ is. To prove surjectivity, let $L'$ be a line bundle on $\sY'$ and $\gothL$ be a line bundle on $\gothY$ such that $j^*\gothL = \pi^*L'$. As normality can be read off from the completion of a local ring, also $\sY$ and $\sY'$ are normal. Then ${j'}^*\gothp_*\gothL = \pi_*{j}^*\gothL = \pi_*\pi^*L' = L'$ by flat base change and normality. In particular, $\gothL':=\gothp_*\gothL$ is locally free of rank one along the central fiber $\gothY'_c$ and thus a line bundle satisfying ${j'}^*\gothL' = L'$. Moreover, the canonical morphism $\gothp^*\gothL' \to \gothL$ is an isomorphism over $C\ohne\{c\}$ and pulls back to an isomorphism under $j$. Thus, $\gothp^*\gothL' = \gothL$ and \eqref{eq picard fiber product} is surjective.
The last claim is now immediate from Proposition~\ref{proposition algebraic model}.
\end{proof}

We continue with the following simple observation.

\begin{lemma}\label{lemma algebraization of contractions}
Let $\sY \to S$ be a maximal Kulikov model and let $\gothY \to C$ be an algebraization of $\sY$ as in Proposition~\ref{proposition algebraic model}. Then the following hold.
\begin{enumerate}
	\item If $\gothp:\gothY \to \gothY'$ is a birational $C$-morphism of relative Picard rank $k$, then the base change of $\gothp$ to $S$ is a birational $S$-morphism of relative Picard rank $k$.
	\item Conversely, if $\pi:\sY \to \sY'$ is a birational $S$-morphism and $\sY' \to S$ is projective, then after possibly shrinking $C$ there is a birational $C$-morphism $\gothp:\gothY\to\gothY'$ where $\gothY'\to C$ is projective and $\pi$ is the base change of $\gothp$ to $S$. Again, the relative Picard ranks of $\pi$ and $\gothp$ coincide.
\end{enumerate}
\end{lemma}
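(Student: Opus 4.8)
The plan is to handle the two directions separately, transporting relative Picard ranks across the base change $\varphi\colon S\to C$ by means of the Picard-group comparisons in Propositions~\ref{proposition algebraic model} and~\ref{proposition picard group algebraic model}, and to prove the existence statement in the converse by a spreading-out argument. Throughout, write $g\colon\gothY\to C$ and $f\colon\sY\to S$ for the structure morphisms and recall that $\varphi$ factors through the completed local ring of $C$ at $\varphi(c)$, so that shrinking $C$ around $\varphi(c)$ leaves the base change to $S$ unchanged.

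For the first assertion I would begin with birationality: since $\gothp$ is a birational $C$-morphism it is an isomorphism over the generic point $\eta_C$, and as $\varphi(\eta)=\eta_C$ its base change $\pi\colon\sY\to\sY'$ is an isomorphism over $\eta$, hence birational, while projectivity of $\sY'\to S$ is inherited by base change and normality of $\sY'$ follows from that of $\gothY'$ by passing to completions, exactly as in the proof of Proposition~\ref{proposition picard group algebraic model}. The non-isomorphism locus of $\gothp$ is closed and meets $C$ in finitely many points, so after shrinking $C$ I may assume $\gothp$ is an isomorphism over $C\smallsetminus\{\varphi(c)\}$; this places us (as $\gothY$ is smooth and $\gothY'$ normal) in the situation of Proposition~\ref{proposition picard group algebraic model}, which yields $\Pic(\gothY')/\Pic(C)\cong\Pic(\sY')$ by restriction, while Proposition~\ref{proposition algebraic model} gives $\Pic(\gothY)/\Pic(C)\cong\Pic(\sY)$. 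These restriction isomorphisms are compatible with the pullbacks $\gothp^*$ and $\pi^*$, and since $\gothp^*\Pic(\gothY')$ already contains the image of $g^*\Pic(C)$, dividing it out gives
\[
\Pic(\gothY)/\gothp^*\Pic(\gothY')\;\cong\;\Pic(\sY)/\pi^*\Pic(\sY').
\]
The relative Picard ranks of $\gothp$ and $\pi$ are the ranks of these two quotients, so they agree and both equal $k$.

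For the converse I would descend the contraction from $S$ to $C$. Pick a relatively ample bundle $A'$ on $\sY'/S$; then $L:=\pi^*A'$ is relatively semiample, and since $\pi_*\sO_{\sY}=\sO_{\sY'}$ (Zariski's main theorem, $\sY'$ being normal) one has $\sY'\cong\Proj_S\bigoplus_m H^0(\sY,L^{\otimes m})$ after replacing $A'$ by a power. Via the isomorphism $\Pic(\sY)\cong\Pic(\gothY)/\Pic(C)$ of Proposition~\ref{proposition algebraic model}, choose a line bundle $\gothL$ on $\gothY$ restricting to $L$. On the central fibre $\gothY_{\varphi(c)}=\sY_c$ the restriction of $\gothL$ equals $L|_{\sY_c}$, which is semiample, so some power $\gothL^{\otimes N}$ is globally generated there; by cohomology and base change relative global generation is an open condition on $C$ containing $\varphi(c)$, so after shrinking $C$ the bundle $\gothL^{\otimes N}$ is relatively globally generated and defines a projective birational $C$-morphism $\gothp\colon\gothY\to\gothY'$, where $\gothY':=\Proj_C\bigoplus_m g_*\gothL^{\otimes Nm}$.

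It then remains to identify the base change of $\gothp$ with $\pi$ and to match ranks. Shrinking $C$ once more so that the sheaves $g_*\gothL^{\otimes Nm}$ are locally free with formation commuting with base change, the relative $\Proj$ commutes with $\varphi$, giving $\gothY'\times_C S\cong\Proj_S\bigoplus_m H^0(\sY,L^{\otimes Nm})\cong\sY'$ compatibly with $\gothp$ and $\pi$; the equality of relative Picard ranks then follows by applying the first part to the morphism $\gothp$ just constructed. The main obstacle is exactly this last spreading-out over the non--finite-type base $\Spec\C[[t]]$: one must guarantee simultaneously that relative semiampleness propagates from the central fibre to a Zariski neighbourhood of $\varphi(c)$ and that the relative section algebra, and hence the target $\gothY'$, is compatible with the base change to $S$. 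Both are cohomology-and-base-change statements over the curve $C$, made available by the fact that $\varphi$ is \'etale at $c$ and identifies $\C[[t]]$ with the completed local ring of $C$ at $\varphi(c)$.
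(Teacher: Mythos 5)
Your argument is correct and follows essentially the same route as the paper: part (1) by base change together with the Picard-group comparisons of Propositions~\ref{proposition algebraic model} and~\ref{proposition picard group algebraic model}, and part (2) by lifting the pullback of a (relatively) ample bundle from $\sY'$ to a line bundle $\gothL$ on $\gothY$ via maximality and using that base-point-freeness/global generation on fibers is an open condition on $C$ containing $\vphi(c)$. The only difference is that you spell out, via relative $\Proj$ and cohomology-and-base-change, why the target of the resulting $C$-morphism pulls back to $\sY'$ — a point the paper's proof leaves implicit.
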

\begin{proof}
In both cases, the statement about the Picard rank follows from Proposition~\ref{proposition picard group algebraic model}.
\begin{enumerate}
	\item It is clear that a $C$-morphism $\gothp:\gothY \to \gothY'$ induces an $S$-morphism $\pi:\sY \to \sY'$ by base change and that $\pi$ is birational if $\gothp$ is. 
\item Given $\pi:\sY \to \sY'$, we chose a very ample line bundle on $\sY'$ and denote $L$ its pullback to $\sY$. By \eqref{eq picard groups} again, there is a line bundle $\gothL$ on $\gothY$ whose pull back to $\sY$ is $L$. The set of points $t\in C$ where the restriction $\gothL_t$ to the fiber $\gothY_t$ is base point free is Zariski open and certainly contains the origin as $\gothL_0 = L_c$ on $\gothY_0=\sY_c$. Thus, after possibly shrinking $C$ we get a $C$-morphism $\gothp:\gothY \to \gothY'$ whose base change is $\pi$ and which therefore must be birational.
\end{enumerate}
\end{proof}

\begin{remark}\label{remark mmp} 
Proposition \ref{proposition algebraic model} and Lemma~\ref{lemma algebraization of contractions} allow us to treat maximal Kulikov models $\sY \to S$ as if they were finite type schemes. For example, we will make use of the MMP at various places without justifying it every time. This is possible as we may lift the Kulikov model to a morphism $\gothY \to C$ as in the proposition, perform MMP there, and then base change to obtain a Kulikov model again.
\end{remark}

We now want to study the birational geometry of models. We recall the definition of a birational contraction. Later we will also use the notion of a rational contraction due to Hu--Keel \cite[Definition~1.1]{HK} which generalizes it.
\begin{definition}\label{definition rational contraction}
Let $T$ be a scheme and $X, Y$ be normal projective $T$-schemes. A rational $T$-map $f:X \ratl Y$ is called a \emph{birational contraction} if its inverse does not contract any divisor. This can be reformulated as follows. If $X \xleftarrow{\ p\ } W \to[q] Y$ is a resolution of indeterminacy, i.e. $p,q$ are proper and $p$ is birational, then $f$ is a birational contraction if and only if every $p$-exceptional divisor is $q$-exceptional. A rational $T$-map $f$ is called a \emph{rational contraction} if, with the same notation every $p$ exceptional divisor is $q$-fixed, i.e. no effective Cartier divisor $D$ whose support is $p$-exceptional is $q$-moving. This means that the natural map $\sO_Y \to q_*\sO_W(D)$ is an isomorphism.
\end{definition}

\begin{lemma}\label{lemma flop is regular}
Let $\sY \to S$ be a maximal Kulikov model. If $\vphi:\sY \ratl \sY'$ is a birational contraction, then there is a maximal Kulikov model $\sX\to S$ and a diagram
\[
\xymatrix{
\sY \ar@{-->}[dr]_{\vphi} \ar@{-->}[rr]^f&& \sX\ar[dl]^{\psi} \\
&\sY' &\\
}
\]
over $S$ such that $f$ is a composition of flops and $\psi$ is a regular contraction.
\end{lemma}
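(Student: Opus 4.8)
The plan is to realize the birational contraction $\vphi$ through a single movable divisor class and then to read off the factorization from the chamber decomposition of the movable cone. First I would fix an ample divisor $H'$ on $\sY'$ and set $D:=\vphi^*H'$, meaning $D=p_*(q^*H')$ for a resolution of indeterminacy $\sY \xleftarrow{\ p\ } W \to[q] \sY'$. Because $\vphi$ is a \emph{birational} contraction, every $p$-exceptional divisor is $q$-exceptional, so $D$ carries no fixed divisorial part and is therefore movable; since $\vphi$ is birational and $H'$ is ample, $D$ is also big, and one has $R(\sY,D)\cong R(\sY',H')$, so that $\sY'=\Proj_S R(\sY,D)$. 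By Remark~\ref{remark mmp} we may run the MMP for $\sY/S$ as if it were of finite type; note that $\sY$ is regular (hence $\Q$-factorial) and relatively $K$-trivial, so it is a relative minimal model over $S$.

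Next I would invoke the decomposition of the movable cone $\Mov(\sY)$ into the nef chambers $g^*\Nefe(\sX')$ of the small modifications $g:\sY \ratl \sX'$ of $\sY$ over $S$; this is precisely the structure of the Mori fan recalled in Section~\ref{section morifan} (relative Kawamata--Morrison theory for $K$-trivial threefolds over a base). The class $[D]$ lies in $\ol{\Mov(\sY)}$, hence in a closed chamber $\ol{f^*\Nefe(\sX)}$ for some small modification $f:\sY \ratl \sX$. Since all the varieties involved are $\Q$-factorial, relatively $K$-trivial, and isomorphic in codimension one, the chambers meeting along a common wall differ by a flop; walking from the nef chamber of $\sY$ to the chamber containing $[D]$ crosses finitely many such walls, which exhibits $f$ as a composition of flops.

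On $\sX$ the strict transform $D_\sX:=f_*D$ is then nef (as $[D]\in f^*\Nefe(\sX)$) and big (since $f$ is an isomorphism in codimension one). As $K_\sX\equiv_S 0$, the divisor $aD_\sX-K_\sX=aD_\sX$ is nef and big, so the relative base-point-free theorem shows $D_\sX$ is semiample and defines a birational $S$-morphism $\psi:\sX \to \sY''$ with $\sY''=\Proj_S R(\sX,D_\sX)=\Proj_S R(\sY,D)=\sY'$. Thus $\psi:\sX\to\sY'$ is a regular contraction and $\vphi=\psi\circ f$, which is exactly the asserted diagram.

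It remains to see that $\sX$ is again a maximal Kulikov model, and this is where I expect the real work to be. $K$-triviality is preserved by flops, and maximality is preserved under birational transformations by Proposition~\ref{proposition picard group central fiber}, so the only delicate point is that each flop keeps us within type III Kulikov models, i.e. that $\sX$ is once more a \emph{regular} threefold whose central fiber is a reduced normal crossing surface, rather than merely a terminal $\Q$-factorial $K$-trivial threefold. The main obstacle is therefore verifying that the flops occurring in the chain are the elementary modifications along the double curves classified in the Kulikov--Friedman theory of type III degenerations (as used in \cite{HL}), which preserve regularity and the normal crossing structure of the central fiber; granting this, every intermediate model is again a type III Kulikov model and the induction closes. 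The remaining ingredients are the standard MMP package.
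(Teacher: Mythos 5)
Your argument follows essentially the same route as the paper: the pullback of an ample divisor from $\sY'$ lies in the effective movable cone, Kawamata's decomposition of $\Mov(\sY)$ into the nef chambers of marked minimal models (\cite[Theorem~2.3]{Kaw97}, which is the chamber structure you invoke) produces a small $\Q$-factorial modification $f:\sY\ratl\sX$ that is a composition of flops and makes the class nef, and the base point free theorem then makes $\psi:\sX\to\sY'$ regular. The one step you leave open --- that $\sX$ is again a \emph{regular} threefold, which you describe as ``the real work'' and propose to settle by identifying each flop in the chain with a Kulikov--Friedman elementary modification along a double curve --- is exactly the step the paper closes by a single citation: by \cite[Theorem~2.4]{Kol89} (see also \cite[Theorem~6.15]{KM98}) a flop of a smooth threefold is again smooth, with no need to classify the flopping curves or to relate them to the type~I/II modifications of the degeneration. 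So your plan is sound, but the gap you flag has a much cheaper resolution than the case analysis you anticipate; once regularity of $\sX$ is known, reducedness of $\sX_c$ is automatic (the flops are isomorphisms in codimension one) and maximality is preserved as you say, so $\sX\to S$ is again a maximal Kulikov model.
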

\begin{proof}
For an ample divisor $A$ on $\sY'$, the pull back to $\sY$ is contained in the effective movable cone $\Mov(\sY)$, see \eqref{eq definition move}.
Therefore, there is a marked minimal model $f:\sY \ratl \sX$ over $S$ such that the pullback of $A$ to $\sX$ is nef, see \cite[Theorem 2.3]{Kaw97}. This implies that $\sX \ratl \sY'$ is actually regular by Kawamata's base point free theorem. Moreover, it follows from the proof of loc. cit. that $f$ is a composition of flops. 

A priori $\sX$ only has $\Q$-factorial terminal singularities, but by \cite[Theorem 2.4]{Kol89} (see also \cite[Theorem 6.15]{KM98}) flops preserve regularity so that $\sX$ is smooth as well. Clearly, maximality is preserved under flops so that $\sX\to S$ is also a maximal Kulikov model.
\end{proof}

\begin{lemma}\label{lemma properties cusp model}
Let $\sY \to S$ be a maximal Kulikov model and let $\Y \ratl \sY'$ be a birational contraction. Then the following holds:
\begin{enumerate}
	\item\label{lemma properties cusp model item one} $K_{\sY'}=0$ and $\sY'$ has canonical, hence rational singularities.
	\item\label{lemma properties cusp model item two} $\sY'$ and the central fiber $\sY'_c$ are Cohen Macaulay.
	\item\label{lemma properties cusp model item three} We have $h^i(\sY'_c,\sO_{\sY'_c})=\begin{cases} 1 & \textrm{ for } i=0,2,\\ 0& \textrm{ for } i=1.\\ \end{cases}$
	\item\label{lemma properties cusp model item four} Base change holds, i.e. $H^i(\sY',\sO_{\sY'}) \tensor_{\C[[t]]} \C \to[\isom] H^i(\sY'_c,\sO_{\sY_c'})$, 	and moreover $$H^i(\sY',\sO_{\sY'})=\begin{cases} \C[[t]] & \textrm{ for } i=0,2,\\ 0& \textrm{ for } i=1.\\ \end{cases}$$
\end{enumerate}
\end{lemma}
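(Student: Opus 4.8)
The plan is to first turn the rational map into a morphism. By Lemma~\ref{lemma flop is regular} there is a maximal Kulikov model $\sX \to S$, a composition of flops $f:\sY \ratl \sX$, and a \emph{regular} birational contraction $\psi:\sX \to \sY'$ onto the same target $\sY'$. Since flops preserve maximality and the type, $\sX$ is again a smooth threefold with $\omega_\sX \cong \sO_\sX$ and central fibre $\sX_c$ a $d$-semistable K3 of type III. It therefore suffices to prove all four assertions using $\psi$, transporting the cohomology of the well-understood fibre $\sX_c$ to $\sY'_c$ along $\psi$.

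\textbf{Parts (1) and (2).} As $K_\sX = 0$, the morphism $\psi$ is a $K_\sX$-trivial contraction of the minimal model $\sX$, so by the standard theory of crepant contractions (the Cone and Contraction Theorem, see e.g.\ \cite{KM98}) $K_{\sY'}$ is $\Q$-Cartier with $K_\sX = \psi^* K_{\sY'}$. Comparing discrepancies over the smooth $\sX$ gives $a(E,\sY') = a(E,\sX) \geq 0$ for every divisor $E$ over $\sY'$, i.e.\ $\sY'$ has canonical singularities; in characteristic zero these are rational, and rational singularities are Cohen--Macaulay, which proves the first claim in part (2) and the ``hence rational'' in part (1). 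Kempf's criterion then yields $\omega_{\sY'} = \psi_*\omega_\sX = \psi_*\sO_\sX = \sO_{\sY'}$, so $K_{\sY'}=0$. For $\sY'_c$: since $\sY'$ is integral, normal and dominates $S$, the parameter $t$ is a non-zero-divisor, so $\sY'\to S$ is flat and $\sY'_c = V(t)$ is an effective Cartier divisor; cutting the Cohen--Macaulay scheme $\sY'$ by the regular element $t$ preserves Cohen--Macaulayness, so $\sY'_c$ is Cohen--Macaulay.

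\textbf{Parts (3) and (4).} The engine is that, by part (1), $\sY'$ has rational singularities and $\psi$ is a resolution (or an isomorphism), whence $R\psi_*\sO_\sX = \sO_{\sY'}$. The key point is to restrict this to the central fibre. The square with vertical maps $\psi,\psi_c$ and horizontal closed immersions $\sX_c = V(t)\hookrightarrow \sX$ and $\sY'_c = V(t)\hookrightarrow \sY'$ is Cartesian, and since $t$ is a non-zero-divisor on both $\sX$ and $\sY'$ the square is Tor-independent; derived base change then gives $R\psi_{c*}\sO_{\sX_c} = \sO_{\sY'_c}$, and the Leray spectral sequence yields $H^i(\sY'_c,\sO_{\sY'_c}) \cong H^i(\sX_c,\sO_{\sX_c})$. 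For $\sX_c$ one has $h^0 = h^2 = 1$ (connectedness, and Serre duality using that $\sX_c$ is Gorenstein with $\omega_{\sX_c}\cong\sO_{\sX_c}$) while $h^1 = 0$ follows from $\chi(\sO_{\sX_c}) = \chi(\sO_{\sX_\eta}) = 2$ by flatness; this proves part (3). For part (4) I run cohomology-and-base-change over the discrete valuation ring $\C[[t]]$ for $\sY'\to S$: the vanishing $H^1(\sY'_c,\sO_{\sY'_c})=0$ forces $H^1(\sY',\sO_{\sY'})\otimes_{\C[[t]]}\C = 0$, hence $H^1(\sY',\sO_{\sY'})=0$ by Nakayama; $H^0 = \C[[t]]$ since $\sY'$ is integral and proper over $\C[[t]]$; and in the top degree the base-change map $H^2\otimes_{\C[[t]]}\C \to H^2(\sY'_c,\sO_{\sY'_c})$ is always surjective, hence an isomorphism, so $H^2(\sY',\sO_{\sY'})$ is cyclic of generic rank one (as $\sY'_\eta$ is a K3 surface with $h^2(\sO)=1$), i.e.\ $\cong \C[[t]]$. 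The same cascade shows every base-change map is an isomorphism.

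\textbf{Main obstacle.} The delicate step is the passage $R\psi_*\sO_\sX = \sO_{\sY'} \leadsto R\psi_{c*}\sO_{\sX_c} = \sO_{\sY'_c}$: one must check Tor-independence of the fibre square before invoking derived base change, and this rests precisely on $t$ being a non-zero-divisor on the total spaces, i.e.\ on flatness over $S$ established in part (2). Once this identity is in hand, everything else is either standard minimal model theory (part (1)) or a formal application of cohomology-and-base-change over a discrete valuation ring.
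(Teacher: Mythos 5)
Your proof is correct and follows the paper's overall strategy: reduce to a regular contraction via Lemma~\ref{lemma flop is regular}, deduce canonical (hence rational, hence Cohen--Macaulay) singularities from crepancy, compute the cohomology of the central fibre, and then obtain (4) from (3) by cohomology and base change over $\C[[t]]$. The genuine divergence is in parts (1) and (3). For (1), the paper simply observes that a crepant contraction of the $K$-trivial smooth $\sX$ must be an isomorphism over the regular locus of $\sY'$, so $K_{\sY'}=0$ outright; you instead descend $K_\sX$ through the contraction theorem and recover $\omega_{\sY'}\cong\sO_{\sY'}$ via Grauert--Riemenschneider/Kempf --- heavier, but sound. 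For (3), the paper works directly on $\sY'_c$: $h^0=1$ by connectedness and reducedness, $h^2=1$ by Serre duality (using $\omega_{\sY'_c}\cong\sO_{\sY'_c}$ from adjunction), and $h^1=0$ from constancy of the Euler characteristic; you instead transport the computation from the semistable fibre $\sX_c$ using $R\psi_*\sO_\sX=\sO_{\sY'}$ and Tor-independent derived base change along $V(t)$, and then run the same Euler-characteristic argument on $\sX_c$. Your route avoids having to identify $\omega_{\sY'_c}$ at the price of the derived base-change step (whose Tor-independence you correctly reduce to $t$ being a non-zero-divisor); the paper's route is more elementary. Both arguments are complete, and your treatment of (4) (Nakayama for $H^1$, surjectivity of base change in top degree plus generic rank one for $H^2$) fills in exactly the details the paper delegates to the reference.
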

\begin{proof} By Lemma \ref{lemma flop is regular} we may assume $\sY \to \sY'$ to be regular. The morphism $\sY \to \sY'$ is an isomorphism over the regular locus of $\sY'$, so $K_{\sY'}=0$. In particular, $K_{\sY'}$ is Cartier. Consequently, $\sY \to \sY'$ is crepant and, in particular, $\sY'$ has canonical singularities. Canonical singularities are rational by \cite{Elk81} and rational singularities are Cohen Macaulay, see \cite[Theorem 5.10]{KM98}. Being an effective Cartier divisor in $\sY'$ also $\sY'_c$ is Cohen Macaulay. As it is connected and reduced, we have $H^0(\sO_{\sY'_c})=\C$ and from Serre duality we infer $H^2(\sO_{\sY'_c})=\C$. Finally, $H^1(\sO_{\sY'_c})=0$ follows as the generic fiber is a K3 surface and the Euler characteristic is constant in families. The last statement follows now from \eqref{lemma properties cusp model item three} and base change, see e.g. \cite[Theorem III.12.11]{Har77}.
\end{proof}

Let $\sY \to S$ be a maximal Kulikov model and let $f:\sY\ratl \sY'$ be a birational contraction. Lemma~\ref{lemma algebraization of contractions} and Lemma~\ref{lemma flop is regular} imply that there is an algebraization $\gothY \ratl \gothY' \to (C,c)$. We denote by $\Delta \subset C^\an$ a small disk centered at $c$ and by ${\gothY'}^\an_\Delta$ the base change to $\Delta$ of the analytification.

\begin{proposition}\label{proposition picard group central fiber}
Let $\sY \to S$ be a maximal Kulikov model and let $f:\sY\ratl \sY'$ be a birational contraction. Then the following holds:
\begin{enumerate}
\item\label{item one proposition picard group central fiber} If $\gothY' \to (C,c)$ is an algebraization, the morphism 
$$\Pic(\gothY')/\Pic(C) \to \Pic\left({\gothY'}^\an_\Delta\right), \quad L\mapsto L^\an \tensor_{\sO_C}\sO_\Delta$$ is an isomorphism. In particular, there is a canonical isomorphism $\Pic(\sY')\to \Pic\left({\gothY'}^\an_\Delta\right)$.
	\item\label{item two proposition picard group central fiber} The degeneration $\sY'\to S$ is maximal, i.e. the restriction 
$$\Pic(\sY')\to \Pic(\sY'_c)$$
is an isomorphism.
\item\label{item three proposition picard group central fiber} If $f:\sY\to \sY'$ is a morphism, the pullbacks
\[
f^*: \Pic(\sY')\to \Pic(\sY) \qquad \textrm{ and } \qquad  f^*: \Pic(\sY_c')\to \Pic(\sY_c) 
\]
are injective of the same corank.
\end{enumerate}
\end{proposition}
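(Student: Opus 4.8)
I will establish the assertions in the logical order ``injectivity in \eqref{item three proposition picard group central fiber} $\Rightarrow$ \eqref{item two proposition picard group central fiber} $\Rightarrow$ \eqref{item one proposition picard group central fiber}, then the corank statement''. Throughout, $\sY'$ carries the properties recorded in Lemma~\ref{lemma properties cusp model}: it is normal with rational (hence Cohen--Macaulay) singularities, $K_{\sY'}=0$, and $h^1(\sY'_c,\sO_{\sY'_c})=0$. Whenever an argument requires $f$ to be a morphism while only a birational contraction is given, I first apply Lemma~\ref{lemma flop is regular} to replace $\sY$ by a maximal Kulikov model $\sX$ admitting a regular contraction $\psi\colon\sX\to\sY'$; this changes neither $\sY'$ nor its central fibre.

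The injectivity statements come from the behaviour of $\psi_*\sO$. As $\psi$ is a proper birational morphism onto the normal scheme $\sY'$ we have $\psi_*\sO_\sX=\sO_{\sY'}$, so the projection formula gives $\psi_*\psi^*=\id$ and hence $\psi^*\colon\Pic(\sY')\to\Pic(\sX)$ is injective. Since $\sY'$ has rational singularities and $\psi$ is crepant, $R\psi_*\sO_\sX=\sO_{\sY'}$; because $\sX_c\subset\sX$ and $\sY'_c\subset\sY'$ are Cartier divisors flat over $S$, base change yields $\psi_{c*}\sO_{\sX_c}=\sO_{\sY'_c}$, and the same formula makes $\psi_c^*$ injective. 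For \eqref{item two proposition picard group central fiber} I then run a diagram chase on
\[
\psi^*\colon\Pic(\sY')\to\Pic(\sX),\qquad \psi_c^*\colon\Pic(\sY'_c)\to\Pic(\sX_c),
\]
compatible with the restrictions $r'\colon\Pic(\sY')\to\Pic(\sY'_c)$ and $r\colon\Pic(\sX)\to\Pic(\sX_c)$, the latter an isomorphism by maximality of $\sX$. Injectivity of $r'$ is immediate from the chase. For surjectivity, given $\bar L'\in\Pic(\sY'_c)$ I lift $\psi_c^*\bar L'$ to $L\in\Pic(\sX)$ via $r^{-1}$; since $\psi$ is an isomorphism over $\eta$, every curve contracted by $\psi$ lies in $\sX_c$, where $L$ restricts to the pullback $\psi_c^*\bar L'$ and is therefore trivial on all fibres of $\psi$. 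Hence $L$ descends to some $L'\in\Pic(\sY')$, and $\psi_c^*(r'L'-\bar L')=0$ forces $r'L'=\bar L'$. Thus $\sY'\to S$ is maximal.

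Assertion \eqref{item one proposition picard group central fiber} is then a comparison over the central fibre. Because $h^1(\sY'_c,\sO_{\sY'_c})=0$ and the smooth fibres are K3 surfaces, base change shows $R^1\pi_*\sO_{\gothY'}=0$ near $c$ for the structure morphism $\pi\colon\gothY'\to C$, so over a small disk $\Delta$ the first Chern class embeds $\Pic({\gothY'}^\an_\Delta)$ into $H^2({\gothY'}^\an_\Delta,\Z)$; as ${\gothY'}^\an_\Delta$ deformation-retracts onto $\sY'_c$ and the relative Picard scheme is unramified over $C$ near $c$, restriction to the central fibre together with $\Pic(\Delta^\an)=0$ and GAGA on the proper fibre $\sY'_c$ yields an isomorphism $\Pic({\gothY'}^\an_\Delta)\isom\Pic(\sY'_c)$. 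On the other hand Proposition~\ref{proposition picard group algebraic model} gives $\Pic(\gothY')/\Pic(C)\isom\Pic(\sY')$, and \eqref{item two proposition picard group central fiber} identifies $\Pic(\sY')$ with $\Pic(\sY'_c)$ by restriction. The analytification map of \eqref{item one proposition picard group central fiber} is the composite of these canonical isomorphisms, hence itself an isomorphism, and the ``in particular'' follows.

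Finally, the corank statement in \eqref{item three proposition picard group central fiber} is numerical: for the given morphism $f\colon\sY\to\sY'$ both pullbacks are injective by the above, so their coranks are $\vrho(\sY)-\vrho(\sY')$ and $\vrho(\sY_c)-\vrho(\sY'_c)$ respectively, and these coincide because $\vrho(\sY)=\vrho(\sY_c)$ by maximality of $\sY$ and $\vrho(\sY')=\vrho(\sY'_c)$ by \eqref{item two proposition picard group central fiber}. I expect the surjectivity step in \eqref{item two proposition picard group central fiber} to be the main obstacle: a priori a line bundle on $\sY'_c$ could be obstructed, since $H^2(\sY'_c,\sO_{\sY'_c})=\C$ is nonzero, and the crucial point that removes the obstruction is that $f$ is an isomorphism over $\eta$, so that the exceptional locus is vertical and the lifted bundle is automatically fibrewise trivial and descends.
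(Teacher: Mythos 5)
Your strategy is genuinely different from the paper's. The paper proves all three parts simultaneously by a GAGA/exponential-sequence argument: two large commutative diagrams comparing $\Pic(\gothY')$, $\Pic({\gothY'}^\an_\Delta)$ and $\Pic({\gothY'}^\an_c)$, with the key inputs being Lojasiewicz's retraction theorem (so that ${\iota'}^*$ on $H^2(-,\Z)$ is bijective), the vanishing $R^1 f^\an_*\Z=0$ for rational singularities, and rationality of singularities to control $H^2(-,\sO)$; surjectivity of the analytic restriction to the central fibre is the hard point there. You instead prove \eqref{item two proposition picard group central fiber} first by a purely algebraic lift-and-descend argument through the maximal resolution $\sX$, and then \eqref{item one proposition picard group central fiber} only needs the \emph{injectivity} of $\Pic({\gothY'}^\an_\Delta)\to\Pic({\gothY'}^\an_c)$ (from the $c_1$-embedding and the retraction), since surjectivity of the composite comes for free from \eqref{item two proposition picard group central fiber}. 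If the descent works, this is an attractive simplification; in particular your remark about the ``unramified relative Picard scheme'' is both dubious (the obstruction space $H^2(\sY'_c,\sO_{\sY'_c})=\C$ is nonzero, so there is no a priori smoothness or \'etaleness of the Picard functor) and unnecessary — drop it.

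There are, however, two concrete gaps. First, the descent step ``$L$ is trivial on all fibres of $\psi$, hence descends to $\sY'$'' is the crux of your whole argument and is not a formal consequence of $\psi_*\sO_\sX=\sO_{\sY'}$: fibrewise (numerical) triviality of a line bundle under a proper birational morphism does not by itself produce a line bundle downstairs. You need the relative base point free theorem or \cite[Theorem~3.7(4)]{KM98}: here $L$ is $\psi$-numerically trivial and $L-K_\sX=L$ is $\psi$-nef and $\psi$-big, so $mL$ and $(m+1)L$ are $\psi$-free for $m\gg0$ and both are pullbacks of line bundles from $\sY'$ (after algebraizing as in Remark~\ref{remark mmp}); this must be said. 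Second, your justification that every $\psi$-contracted curve lies in $\sX_c$ uses that $\psi$ is an isomorphism over $\eta$, which is \emph{not} a hypothesis of the proposition: a birational contraction may contract $(-2)$-curves on the generic K3 fibre (the paper's own proof explicitly allows $\gothY'_t$ to be an ADE contraction of a K3 for $t\neq c$). This is repairable — for a contracted curve $C\subset\sX_\eta$ with closure $\ol C$, specialization of cycles gives $L\cdot C=L\cdot[\ol C_c]=\psi_c^*\bar L'\cdot[\ol C_c]=0$ because the components of $\ol C_c$ are $\psi_c$-contracted — but as written the argument does not cover the stated generality.
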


\begin{proof}
We will prove all statements at the same time. First we bound the size of $\Pic\left({\gothY'}^\an_\Delta\right)$. The fibers $\gothY'_t$ for $t\neq c$ are contractions of K3 surfaces and therefore satisfy $H^1(\gothY'_t,\sO_{\gothY'_t})=0$. Moreover, $H^1(\gothY'_c,\sO_{\gothY'_c})=0$ by Lemma~\ref{lemma properties cusp model}. By GAGA, the same vanishing holds true after analytification and therefore $H^1({\gothY'}^\an_\Delta,\sO_{{\gothY'}^\an_\Delta})=H^0({\gothY'}^\an_\Delta,R^1 h_*\sO_{{\gothY'}^\an_\Delta})=0$ by base change where $h:{\gothY'}^\an_\Delta \to \Delta$ is the projection. From the exponential sequence we thus obtain the diagram	

\begin{equation}\label{eq small exponential diagram}
\xymatrix{
H^1({\gothY'}^\an_\Delta,\sO^\times_{{\gothY'}^\an_\Delta}) \ar@{^(->}[r] \ar[d]  & H^2({\gothY'}^\an_\Delta,\Z_{{\gothY'}^\an_\Delta}) \ar[r] \ar[d]^{{\iota'}^*}  & H^2({\gothY'}^\an_\Delta,\sO_{{\gothY'}^\an_\Delta}) \ar[d] \\
H^1({\gothY'}^\an_c,\sO^\times_{{\gothY'}^\an_c}) \ar@{^(->}[r] & H^2({\gothY'}^\an_c,\Z_{{\gothY'}^\an_c}) \ar[r] & H^2({\gothY'}^\an_c,\sO_{{\gothY'}^\an_c})\\
}
\end{equation}
where the left horizontal maps are injections. The morphism ${\iota'}^*$ is an isomorphism thanks to Lojasiewicz's theorem \cite{Loj64}, see also \cite[Theorem I.8.8]{BHPV}. This implies that the restriction 
$$\Pic({\gothY_\Delta'}^\an) = H^1({\gothY'}^\an_\Delta,\sO^\times_{{\gothY'}^\an_\Delta}) \to H^1({\gothY'}^\an_c,\sO^\times_{{\gothY'}^\an_c}) = \Pic({\gothY'}^\an_c)$$ is injective. The analogous statements hold for $\gothY$.

Thanks to Lemma~\ref{lemma flop is regular} we may assume that $f:\sY\to\sY'$ is a morphism. Because of Proposition~\ref{proposition algebraic model}, we may replace $\Pic(\sY)$ by $\Pic(\gothY)/\Pic(C)$ and $\Pic(\sY_c)$ by $\Pic(\gothY_c)$ everywhere and the same for $\sY'$ by Proposition~\ref{proposition picard group algebraic model}. Let us consider the diagram
\begin{equation}\label{eq picard gaga diagram}
\xymatrix{
\Pic(\gothY')/\Pic(C) \ar[rr]^\eta \ar[dd]_\zeta\ar[dr]^\delta & & \Pic({\gothY_\Delta'}^\an) \ar@{^(->}[dd]|(0.49)\hole_(0.3)\vartheta\ar[dr]^\beta&\\
& \Pic(\gothY)/\Pic(C)\ar[rr]\ar[dd]^(0.3)\alpha && \Pic({\gothY_\Delta}^\an)\ar@{^(->}[dd]\\
\Pic(\gothY_c')\ar[rr]|(0.533)\hole^(0.3)\isom \ar[dr]^\eps&& \Pic({\gothY_c'}^\an) \ar[dr]^\gamma&\\
& \Pic(\gothY_c)\ar[rr]^\isom && \Pic({\gothY_c}^\an)\\
}
\end{equation}
where all maps are given by pullback along the canonical morphisms of ringed spaces. The two lower horizontal maps are isomorphisms by GAGA, the two right vertical maps are injective by the observations made so far. By maximality, $\alpha$ is an isomorphism and thus the front square consists of isomorphisms. We will see below that $\beta$ and $\gamma$ are injective. The map $\delta$ is injective by normality of $\gothY'$ and injectivity of $\eps$ follows from the bottom square admitting that $\gamma$ is injective. It follows from the diagram that $\zeta$ and hence $\eta$ are injective. 

We will show next the surjectivity of $\vartheta$ and injectivity of $\beta$ and $\gamma$. For this we consider the following diagram of cohomology groups induced by the exponential sequence. 
\begin{equation}\label{eq exponential diagram}
\xymatrix{
\Pic({\gothY'}_\Delta^\an) \ar@{^(->}[r] \ar@<-8mm>@/_2pc/[ddd]_\beta  \ar[d]_\vartheta  & H^2({\gothY'}_\Delta^\an,\Z_{{\gothY'}_\Delta^\an}) \ar@<-8mm>@/_2pc/[ddd]_{s} \ar[r] \ar[d]^{{\iota'}^*}  & H^2({\gothY'}_\Delta^\an,\sO_{{\gothY'}_\Delta^\an}) \ar[d] \ar@<8mm>@/^2pc/[ddd]^t\\
\Pic({\gothY'}^\an_c) \ar@{^(->}[r] \ar[d]_\gamma  & H^2({\gothY'}^\an_c,\Z_{{\gothY'}^\an_c}) \ar[r] \ar[d]  & H^2({\gothY'}^\an_c,\sO_{{\gothY'}^\an_c}) \ar[d] \\
\Pic({\gothY}_c^\an) \ar@{^(->}[r] & H^2({\gothY}_c^\an,\Z_{{\gothY}_c^\an}) \ar[r]^u & H^2({\gothY}_c^\an,\sO_{{\gothY}_c^\an}) \\
\Pic({\gothY}_\Delta^\an) \ar[u]_\isom \ar@{^(->}[r] & H^2({\gothY}_\Delta^\an,\Z_{{\gothY}_\Delta^\an}) \ar[r]^v \ar[u]_{{\iota}^*}   & H^2({\gothY}_\Delta^\an,\sO_{{\gothY}_\Delta^\an}) \ar[u]\\
}
\end{equation}
As above, injectivity of the first column of horizontal morphisms follows from Lemma \ref{lemma properties cusp model} and bijectivity of $\iota^*, {\iota'}^*$ from Lojasiewicz's theorem. Invoking Lemma \ref{lemma properties cusp model} once more, we conclude that $\sY'$ (and hence also $\gothY'$ and ${\gothY'}^\an_\Delta$) has rational singularities. Now a standard argument, see e.g. \cite[Lemma 2.1]{BL16}, shows that we have $R^1{f_\Delta^\an}_*\Z_{{\gothY_\Delta'}^\an}=0$ and hence $s$ is injective. This immediately implies injectivity of $\beta$ and $\gamma$. The kernel of $u$ is identified with the kernel of $v$ under $\iota^*$. Rationality of singularities implies that $t$ is an isomorphism and we obtain surjectivity of $\vartheta$ by a diagram chase. 

Now one shows literally as in Proposition~\ref{proposition picard group algebraic model} that the top square is cartesian, in particular, $\eta$ is an isomorphism. This implies \eqref{item one proposition picard group central fiber}. From \eqref{eq exponential diagram}, we deduce that also $\zeta$ is an isomorphism, so \eqref{item two proposition picard group central fiber} follows.
Putting everything together, we also obtain \eqref{item three proposition picard group central fiber}.
\end{proof}

\section{The DNV family and the Mori fan}\label{section dnv family}

In this section we introduce the DNV family in Definition~\ref{definition dnv} and review some of its basic properties. Afterwards in Section~\ref{section morifan} we recall the construction of the Mori fan from \cite{HL}, which will be crucial for the rest of the paper.

Let us briefly recall the notion of primitivity. Given a Kulikov model $\sY \to S$, we can always consider an analytic family $\gothY \to \Delta$ with the same central fiber, e.g. by analytifying the algebraic deformation from Proposition~\ref{proposition general algebraization}. By \cite[Theorem (0.5)]{FS86}, the monodromy action $T$ on the cohomology $H^2(\gothY_t,\Z)$ of a nearby fiber of $\gothY \to \Delta$ of this degeneration only depends on the central fiber and the logarithm $N:=\log T$ is integral, that is, $N$ is an endomorphism of $H^2(\gothY_t,\Z)$. Following Friedman-Scattone, we call $\sY\to S$ \emph{primitive}, if $N$ is primitive as a vector in $\End_\Z\left(H^2(\gothY_t,\Z)\right)$. 

It follows from \cite[Proposition~1.15]{HL} that the generic fiber of a primitive maximal Kulikov model $\sY \to S$ of a $d$-semistable K3 surface is uniquely defined up to isomorphism. The existence statement of \cite[Proposition~1.16]{HL} ensures that the following definition makes sense.

\begin{definition}\label{definition dnv} 
Fix a square free integer $d>0$. The generic fiber $\sY_\eta$ of a primitive maximal Kulikov model $\sY \to S$ of a $d$-semistable K3 surface is called the \emph{Dolgachev--Nikulin--Voisin family (DNV family) of degree $2d$} if $\Pic(\sY_\eta)$ is isomorphic to the lattice $M_{2d}$ from \eqref{eq lattice m2d}. We refer to $\sY\to S$ as a \emph{model of the DNV family} in this case. 
\end{definition}

The DNV family is thus a $K3$ surface $\sY_{\eta}$ over the field $\CC((t))$. If we simply speak of the {Dolgachev--Nikulin--Voisin family}, it is understood that we mean the Dolgachev--Nikulin--Voisin family of degree $2d$ for some square free $d$. 

\begin{corollary}\label{corollary flop is regular}
If in Lemma~\ref{lemma flop is regular} the morphism $\sY \to S$ is a semistable model of the DNV family, then $\sX\to S$ is so as well.
\end{corollary}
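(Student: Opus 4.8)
The plan is to trace through what "semistable model of the DNV family" requires and verify each ingredient survives the flops $f:\sY \ratl \sX$ produced in Lemma~\ref{lemma flop is regular}.
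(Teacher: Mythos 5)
What you have written is a statement of strategy, not a proof: no ingredient is actually identified, and nothing is verified. The strategy itself is the right one and is exactly the route the paper takes, but as it stands there is a genuine gap because the whole content of the corollary lies in the verification you have deferred.

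To close the gap you need to unwind Definition~\ref{definition dnv}: a model of the DNV family is a \emph{primitive maximal} Kulikov model whose generic fiber has Picard lattice $M_{2d}$. Lemma~\ref{lemma flop is regular} already gives you that $\sX$ is a maximal Kulikov model (regularity is preserved under flops by \cite[Theorem 2.4]{Kol89}, and maximality is preserved as noted there). What remains is to check primitivity and the identification of the generic fiber. For the latter, the composition of flops $f:\sY\ratl\sX$ is an isomorphism in codimension one over $S$ with flopping curves supported in the central fiber, so it restricts to an isomorphism $\sY_\eta\isom\sX_\eta$ (a birational map of smooth projective surfaces that is an isomorphism in codimension one is an isomorphism); hence $\Pic(\sX_\eta)\isom\Pic(\sY_\eta)\isom M_{2d}$. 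For primitivity, the family over the punctured base is unchanged by the flop, so the monodromy logarithm $N$ of \cite[Theorem (0.5)]{FS86} is unchanged, and $\sX\to S$ is primitive because $\sY\to S$ is. With these points in place one concludes, as the paper does, by invoking \cite[Proposition~1.15]{HL}.
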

\begin{proof}
Clearly, maximality and primitivity are preserved under flops so that $\sX\to S$ is also a model of the DNV family by \cite[Proposition~1.15]{HL}.
\end{proof}

We recall the construction of these models, see \cite[\S 1.2]{HL} for more details.

\begin{example}\label{example dnv family}
Recall that there is a bijection between triangulations of the sphere $\IS^2$ such that no vertex has valency greater than 6 and maximal $d$-semistable K3 surfaces of type III in so-called $(-1)$-form, see \cite[\S 5.1]{Laz08} and \cite[\S 3.9]{FS86}. 
Under this bijection, vertices correspond to irreducible components of the surface whose normalizations are weak del Pezzo surfaces of degree $s$ equal to the valency of the vertex, and the double locus of the central fiber gives rise to an anticanonical cycle on the normalizations of the components. For $s \leq 6$ there is a unique such anti-canonical pair $(\gothY_s, D_s)$ which is a component of a $d$-semistable 
$K3$ surface with trivial Carlson map, see \cite[Construction 1.23]{HL}, and also  
\cite[Proposition~5.2 and Lemma~5.14]{Laz08}.

There are precisely two different triangulations of $\IS^2$ with two triangles, and they correspond to dual intersection 
 complexes of degenerations with central fibers having three components. We denote by $\scrP$ the triangulation given by two triangles glued along the boundary and by $\scrT$ the one given by two triangles glued along one side to each other, see Figure \ref{ZETA}. The corresponding surfaces $\YP$ and $\YT$ look like this: $\YP$ is obtained by gluing three copies of $\mathfrak{Y}_2$ by identifying  components of the boundary cycle, and $\YT$ is a copy of $\mathfrak{Y}_4$, with two opposite components, say ${D}_1$ and ${D}_3$, of the anticanonical curve ${D}$ identified, and two copies of $\mathfrak{Y}_1$ glued to the (images) of ${D}_2$ and  $D_4$. We refer to \cite[Construction~1.23]{HL} for details.

The associated maximal smoothings $\YYP\to S$ and $\YYT\to S$ are models of the Dolgachev family in degree $2$. 

\begin{figure}\centering
\begin{tikzpicture}[scale=0.75]
\draw[]    (0,1) -- ++(0:1.5) node(C1)[below]{f}--++(0:1.5)  
-- ++(120:1.5) node(A1){}--++ (120:1.5) node(top1){} --++ (240:1.5)node(B1)[left]{e} --cycle; 
\draw[] (top1.center)--++(0:1.5) node(C2)[above]{e}--++(0:1.5) --++(-120:1.5) node[right]{f}--++(-120:1.5);
\draw node at (1.75,0) {$\mathscr{P}$};

\draw[]    (6,1) -- ++(0:1.5) node(C3)[below]{f}--++(0:1.5)  
-- ++(120:1.5) node(A3){}--++ (120:1.5) node(top2){} --++ (240:1.5)node(B3)[left]{f} --cycle; 
\draw[] (top2.center)--++(0:1.5) node(C4)[above]{e}--++(0:1.5) --++(-120:1.5) node[right]{e}--++(-120:1.5);
\draw node at (7.75,0) {$\mathscr{T}$};
\end{tikzpicture}\caption{The triangulations $\mathscr{P}$ and $\mathscr{T}$.}
\label{ZETA}
\end{figure}
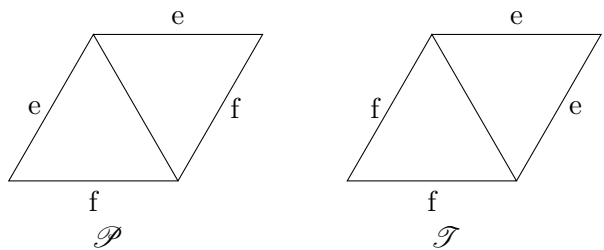
\end{example}

For later use we record

\begin{lemma}\label{lemma irreducible components}
Let $\sY \to S$ be a regular model of the DNV family of degree $2d$. Then the number of irreducible components of the central fiber $\sY_c$ is $d+2$.
\end{lemma}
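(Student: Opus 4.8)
The plan is to translate the count of components into a combinatorial invariant of the dual intersection complex, and then to pin that invariant down using the degree, which is visible only in the \emph{discriminant} of $\Pic(\sY_\eta)$ and not in its rank.

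First I would record that, since $\sY_c$ is a $d$-semistable K3 surface of type III, its dual intersection complex is a triangulation of $\IS^2$ whose vertices correspond bijectively to the components $Y_1,\dots,Y_n$ (Definition~\ref{definition typeiiik3} and Example~\ref{example dnv family}); thus $n=V$, the number of vertices. Writing $V,E,F$ for the numbers of vertices, edges and triangles, Euler's formula $V-E+F=2$ together with $3F=2E$ yields $F=2(V-2)$ and $E=3V-6$. I would stress at this point that the rank formula \eqref{eq picard rank maximal} is of no help here: combined with $\vrho(\sY_\eta)=\rk M_{2d}=19$ and the exact sequence \eqref{eq picard groups} it only reproduces the tautology $18+n=19+(n-1)$, so $n$ is invisible to ranks and the degree $2d$ must genuinely be used.

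The degree enters through the discriminant. By definition $\Pic(\sY_\eta)\isom M_{2d}=E_8(-1)^{\oplus 2}\oplus U\oplus\langle -2d\rangle$ from \eqref{eq lattice m2d}, and since $E_8(-1)$ and $U$ are unimodular, $M_{2d}$ has discriminant group $\Z/2d\Z$, so $|\operatorname{disc}\Pic(\sY_\eta)|=2d$; note that this quantity is intrinsic to $\sY_\eta$ and hence the same for every model. The plan is then to compute this discriminant from the central fibre. By maximality the restriction $\Pic(\sY)\to\Pic(\sY_c)$ is an isomorphism (Definition~\ref{definition maximal degeneration}), and $\Pic(\sY_\eta)$ is the quotient of $\Pic(\sY_c)$ by the classes of the vertical divisors $Y_i$. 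One computes $\Pic(\sY_c)$ from the normalisation $\bigsqcup_i Y_i\to\sY_c$ by a Mayer--Vietoris bookkeeping along the double curves, using that the components are the weak del Pezzo surfaces of Example~\ref{example dnv family}, that the double curves form anticanonical cycles, and that adjunction together with $K$-triviality gives $\sO_{\sY}(Y_i)|_{Y_i}\isom\omega_{Y_i}$. Carrying this out should identify $|\operatorname{disc}\Pic(\sY_\eta)|$ with the number $F$ of triple points of $\sY_c$.

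Granting $|\operatorname{disc}\Pic(\sY_\eta)|=F$, the lemma follows at once: $F=2d$, hence $2(V-2)=2d$ by the Euler relation, i.e. $n=V=d+2$. The main obstacle is precisely this middle step — showing that the discriminant of the Picard lattice of the generic fibre equals the number of triple points, equivalently that a degree-$2d$ DNV degeneration has exactly $2d$ triangles. This is the only place the degree is used and is the technical heart; I expect to establish it either by the explicit Mayer--Vietoris/adjunction computation sketched above or by invoking the monodromy classification of type III degenerations of Friedman--Scattone \cite{FS86}, to which the relevant lattice invariant is attached. As a sanity check one has the purely combinatorial identity $\sum_v(6-\operatorname{val}(v))=12$, reflecting that each component is a del Pezzo surface of degree equal to its valency; this is consistent with $\sum_v\operatorname{val}(v)=2E=6V-12$ but, like the rank count, does not by itself determine $V$.
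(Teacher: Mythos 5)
Your argument is essentially the paper's: the proof in the text likewise combines the Euler-characteristic count $t-e+n=2$, $3t=2e$ (giving $t=2n-4$ triple points) with a comparison of Picard lattices, citing \cite[Proposition~1.13]{HL} for the identity $-2n+4=-2d$, which is exactly your identification of the discriminant $2d$ of $M_{2d}$ with the number of triple points. The middle step you flag as the technical heart is precisely what the paper outsources to that citation (ultimately a Friedman--Scattone-type lattice computation on the central fiber), so your sketch matches the intended route.
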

\begin{proof}
This is proven e.g. in \cite[Proposition 6.3]{theta15}. We will give an independent argument. If $n$ is the number of components of $\Y_c$, then the number of triple points is $t=2n-4$. This follows from $t- e + n = 2$ where $e$ is the number of edges in the dual complex and the obvious relation $3t=2e$. Comparing the Picard lattices, it follows from \cite[Proposition 1.13]{HL} that $-2n+4=-2d$ so that $n=d+2$.
\end{proof}

\subsection{The Mori fan}\label{section morifan}

Let $\sX\to S=\Spec \C[[t]]$ be a projective morphism of a normal threefold $\sX$. Let us first discuss the various cones of interest inside $\NS(\sX)_\R$. As the Picard group of $S$ is trivial, the notion of relative and absolute ampleness for line bundles on $\sX$ coincide and similarly for nef line bundles. We denote by 
$$\Nef(\sX) \subset \NS(\sX)_\R$$ 
the nef cone of $\sX$, that is, the closure of the ample cone. We further denote by $\Eff(\sX)$ the convex cone generated by all effective line bundles and by
\begin{equation}\label{eq definition nefe}
\Nefe(\sX):=\Nef(\sX)\cap \Eff(\sX) \subset \NS(\sX)_\R
\end{equation}
the \emph{effective nef cone}. A line bundle $L$ is \emph{movable} if its base locus $\Bs(L) \subset \sX$ has codimenison at least $2$. We denote by $\ol{\sM}(\sX)$ the \emph{closed movable cone} defined as the closure of the  convex cone generated by movable line bundles and by $\Eff(\sX)$ the convex cone generated by all effective line bundles. The cone of interest to us will be
\begin{equation}\label{eq definition move}
\Mov (\shX)=\ol{\shM}(\shX)\cap\Eff(\shX).
\end{equation}
to which we refer as the \emph{effective movable cone}, sometimes also somewhat imprecisely just as the \emph{movable cone}.

We can now define the Mori fan.

\begin{definition}\label{definition morifan}

Let $\sY\to S$ be flat projective morphism from a $\Q$-factorial normal scheme $\sY$ with $\dim\sY=3$ whose generic fiber is the DNV family and whose relative canonical sheaf is the trivial line bundle. For a rational map $f:\sY \ratl \sY'$, we put $C(f):=f^*\Nefe(\sY')\subset \NS(\sY)$. Then the \emph{Mori fan} of $\sY$, denoted by $\Morifan(\sY)$, is defined as
\begin{equation}\label{eq definition morifan}
\Morifan(\sY):=\{C(f)  \mid f:\sY \ratl \sY' \textrm{ is a rational contraction}\}.
\end{equation}
Note that if $f$ is the map to a point, then $C(f)=\{0\}$.
\end{definition}

This fan was first considered by Hu and Keel for Mori dream spaces, see \cite[1.11~Proposition]{HK}. In our context, the definition is due to Gross--Hacking--Keel--Siebert, see \cite[Section 6]{GHKS}. 

\begin{remark}\label{remark maximal cones}
It follows readily from the definition of the Mori fan that its maximal cones are of the form $C(f)$ where $f:\sY \ratl \sY'$ is a small $\Q$-factorial modification. Recall that $f\colon \sY\ratl \sY'$ is called a \emph{small $\Q$-factorial modification} (over $S$) if $\sY'$ is $\Q$-factorial and $f$ is a birational $S$-map and an isomorphism in codimension one. 
\end{remark}

\begin{remark}\label{remark kovacs}
We emphasize that the nef cone is in general strictly larger than its subcone generated by effective nef divisors. However, for K3 surfaces $\sX$ containing curves of negative self intersection, the cones $\Nef(\sX)$ and $\Nefe(\sX)$ coincide by \cite[Corollary~1]{Kov94}.
\end{remark}

From \cite[Theorem~6.5]{GHKS}, we infer:

\begin{proposition}\label{proposition morifan}
The support of $\Morifan(\sY)$ is $\Mov(\sY)$ and $\Mov(\sY)$ is the rational closure of its interior.\qed
\end{proposition}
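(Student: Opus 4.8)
The plan is to prove the two assertions separately: that $\supp\Morifan(\sY) = \Mov(\sY)$, by a double inclusion carried out with the minimal model program, and that $\Mov(\sY)$ equals the rational closure of its interior, by analysing the chamber structure on that interior. Throughout I would pass to an algebraization $\gothY \to C$ as in Proposition~\ref{proposition algebraic model} and Remark~\ref{remark mmp} so that the MMP is available, and use that $K_{\sY}=0$ (Lemma~\ref{lemma properties cusp model}), so that every birational contraction over $S$ is crepant and every small $\Q$-factorial modification is a composition of flops (Lemma~\ref{lemma flop is regular}).

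For the inclusion $\supp\Morifan(\sY) \subseteq \Mov(\sY)$, I would fix a rational contraction $f:\sY\ratl\sY'$ and a class $D' \in \Nefe(\sY')$ and show $f^*D' \in \Mov(\sY)$. Choosing a resolution of indeterminacy $\sY \ot[p] W \to[q] \sY'$, the class $q^*D'$ is nef, hence a limit of movable classes, and the defining property of a rational contraction --- that no $p$-exceptional divisor is $q$-moving --- guarantees that $f^*D' = p_*q^*D'$ is again movable; effectivity passes from $D'$ to $f^*D'$ since pushforward of an effective class is effective. Hence $C(f)\subseteq\Mov(\sY)$ for every $f$, and the union of all these cones is contained in $\Mov(\sY)$.

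For the reverse inclusion I would take an effective movable class $D$ and run the $D$-MMP over $S$. Since $\sY$ is K-trivial and $D$ is movable, no divisorial contraction can occur, so the program is a finite sequence of flops producing a small $\Q$-factorial modification $f:\sY\ratl\sY'$ on which the strict transform $D'$ of $D$ is nef --- this is exactly the analysis of movable divisors on K-trivial threefolds already invoked in Lemma~\ref{lemma flop is regular}. Then $D = f^*D'$ with $D'$ nef and effective, so $D \in f^*\Nefe(\sY') = C(f)$. Combined with Remark~\ref{remark maximal cones}, this shows the maximal cones $C(f)$ attached to small $\Q$-factorial modifications already cover $\Mov(\sY)$, giving $\supp\Morifan(\sY)=\Mov(\sY)$.

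Finally, for the statement that $\Mov(\sY)$ is the rational closure of its interior, I would argue that the interior of $\Mov(\sY)$ is the locus of big movable classes, each of which becomes ample on a suitable small $\Q$-factorial modification, and that by the chamber decomposition above this interior is a union of relative interiors of the rational polyhedral cones $f^*\Nef(\sY')$. Since these chambers are rational polyhedral, the rational points are dense in $\ol{\Int\Mov(\sY)}$, and the possibly irrational supporting hyperplanes of $\Mov(\sY)$ --- coming from the isotropic directions of the hyperbolic lattice $\NS(\sY)$, exactly as for the nef cone of a K3 surface with negative curves (Remark~\ref{remark kovacs}) --- are recovered as limits of rational interior classes. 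This yields $\Mov(\sY)=\conv(\ol{\Int\Mov(\sY)}\cap\NS(\sY))$, the asserted rational closure. I expect the genuinely hard part to be this last point: it needs the local finiteness and rational polyhedrality of the chamber decomposition, which rests on a Kawamata--Morrison-type cone-conjecture finiteness for the small modifications of $\sY$, rather than merely on the MMP for a single class.
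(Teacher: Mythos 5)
The paper does not actually prove this proposition: it is quoted from the unpublished manuscript \cite{GHKS} (Theorem 6.5) with the proof omitted, so your argument can only be judged on its own terms. Your treatment of the first assertion, $\supp\Morifan(\sY)=\Mov(\sY)$, is essentially sound. For $C(f)\subseteq\Mov(\sY)$, note that $p_*$ already carries the closed movable cone of $W$ into that of $\sY$ for \emph{any} birational morphism onto a normal variety (the image of $\Exz(p)$ and of a codimension-two base locus both have codimension $\geq 2$); the rational-contraction condition is what makes $f^*:=p_*q^*$ well defined and independent of the resolution, not what produces movability. The reverse inclusion is exactly \cite[Theorem~2.3]{Kaw97}, already invoked in Lemma~\ref{lemma flop is regular}: a class of the effective movable cone admits no negative divisorial contraction, so the MMP is a chain of flops terminating in a marked minimal model where the strict transform is nef. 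The finiteness you flag at the end is not an additional hypothesis here: \cite{Kaw97} establishes the relevant cone-theoretic finiteness precisely for three-dimensional Calabi--Yau fiber spaces over a curve, which is the situation after the algebraization of Proposition~\ref{proposition algebraic model}.

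The genuine gap is in the second assertion. The rational closure $\conv\left(\ol{\Int\Mov(\sY)}\cap\NS(\sY)\right)$ is in general \emph{strictly smaller} than the topological closure $\ol{\sM}(\sY)\cap\ol{\Eff(\sY)}$: it omits the irrational isotropic rays on the round part of the boundary. Arguing that irrational supporting directions are ``recovered as limits of rational interior classes'' therefore proves at best an equality of topological closures, whereas the content of the statement is to decide exactly which boundary rays belong to $\Mov(\sY)=\ol{\sM}(\sY)\cap\Eff(\sY)$. Two points are missing. First, every integral class of $\ol{\sM}(\sY)\cap\ol{\Eff(\sY)}$ is effective, since it becomes nef on a marked minimal model and nef classes there are semi-ample (Remark~\ref{remark morifan}); this gives the inclusion of the convex hull of the integral points into $\Mov(\sY)$. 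Second, and this is what your sketch cannot reach, no irrational isotropic boundary ray is effective: if $D=\sum a_iE_i$ with $a_i>0$ and $E_i$ integral effective, and $D$ is nef with $D_\eta^2=0$ on the generic fiber, then $D_\eta\cdot(E_i)_\eta=0$ for all $i$, so $D_\eta$ lies in the radical of the negative semidefinite rational sublattice spanned by the $(E_i)_\eta$ inside $D_\eta^{\perp}$, which forces the ray $\R D_\eta$ to be rational. This lattice-theoretic exclusion (the analogue of Remark~\ref{remark kovacs}) is what yields $\Mov(\sY)\subseteq\conv\left(\ol{\Int\Mov(\sY)}\cap\NS(\sY)\right)$; without it that inclusion is unproved.
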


\begin{remark}\label{remark morifan}\

\begin{enumerate}
\item The fact that one can run an MMP for $\sY \to S$ implies that a small modification  $f \colon \sY\dashrightarrow \sY'$ factors into flops. Hence, $\sY'\to S$ is also a model of the DNV family.
	\item Similarly to \cite[Lemma 1.5]{Kaw97} one shows that if $f:\sY \ratl \sY'$ and $g:\sY \ratl \sY''$ are birational contractions such that $C(f)\cap C(g)$ has a point that is interior to both $C(f)$ and $C(g)$, then there is an isomorphism $\beta\colon\sY''\to \sY'$ with $f=\beta \circ g$,
and hence $C(f)=C(g)$.
\item Note that if $\sY'\to S$ is another regular model of the DNV family, then every birational map $\sY \ratl \sY'$ yields an identification of $\Morifan(\sY')$ with $\Morifan(\sY)$. We refer to the discussion of Section~2.2 of \cite{HL} for the fact that $\Morifan(\sY)$ is indeed a fan. This follows  from the fact that the lattice $M_{2}$ is a reflexive lattice, i.e. the quotient $\O^+(M_2)/W_2$ of the orthogonal group of $M_2$ preserving the positive cone by the Weyl group is 
finite. This was shown by Nikulin in \cite{Nik2el}. 
\item One can show that nef line bundles on $\Y$ are semi-ample, see \cite[Remark~2.6]{HL}.
\item The cone $\Nef(\sY_\eta)$ is finitely polyhedral if and only if $d=1$, see e.g. \cite[Remark~2.7]{HL}. 
\end{enumerate}
\end{remark}

\section{Cusp models}\label{section cusp models}

In this short section, we introduce cusp models, prove their existence, and introduce cuspidal cones which will be central in the definition of the GHKS compactification in the next section.

\begin{definition}\label{definition cusp model}
Let $f:\sY \to S$ be a model of the DNV family. A \emph{rational cusp model} for $f$ is a flat projective morphism $f':\sY'\to S$ together with a birational contraction $\vphi:\sY \ratl \sY'$ over $S$ such that the following holds:
\begin{enumerate}
	\item The scheme $\sY'$ is normal with $\Q$-factorial singularities, 
	\item the map $\vphi$ is an isomorphism over the generic point of $S$, and 
	\item the central fiber $\sY'_c$ of $f'$ is irreducible.
\end{enumerate}
A \emph{cusp model} is a rational cusp model such that $\vphi:\sY \to \sY'$ is a morphism. In both cases, if $Y \subset \sY_c$ is the unique component which is not contracted, we refer to $\sY'$ as a \emph{(rational) cusp model} for $Y$ or a \emph{$Y$-cusp model}. 
\end{definition}

Note that a cusp model is not a model of the DNV family in the terminology of Definition~\ref{definition dnv} since the total space of a model needs to be regular. Sometimes we will refer somewhat imprecisely to $\sY'$ itself as a cusp model. 
Lemma~\ref{lemma flop is regular} in particular applies to rational cusp models $\sY \ratl \sY'$. 
The following is a direct consequence of Proposition~\ref{proposition picard group central fiber}.

\begin{lemma}\label{lemma picard rank cusp model}
Let $f:\sY \to S$ be a model of the DNV family. If $\vphi:\sY \ratl \sY'$ is a rational cusp model, then $\sY'$ has Picard number $19$ and the restriction $\Pic(\sY') \to \Pic(\sY_\eta)$ to the generic fiber is an isomorphism. If $\vphi$ is a morphism, also the restriction $\Pic(\sY') \to \Pic(\sY'_c)$ to the special fiber is an isomorphism.
\end{lemma}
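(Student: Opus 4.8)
The idea is to deduce everything from Proposition~\ref{proposition picard group central fiber} after a preliminary reduction to the case of a morphism. First I would invoke Lemma~\ref{lemma flop is regular} to replace $\sY$ by a model obtained from it by a sequence of flops, so that the induced birational contraction onto $\sY'$ becomes a \emph{morphism} $\vphi\colon\sY\to\sY'$. Since flops are isomorphisms in codimension one and over $\eta$, this alters neither $\sY'$ nor the generic fibre, and the new total space is again a model of the DNV family by Corollary~\ref{corollary flop is regular}; as the first two assertions of the lemma only involve $\sY'$ and $\sY'_\eta$, there is no loss in this assumption. The third assertion is then immediate: Proposition~\ref{proposition picard group central fiber}\eqref{item two proposition picard group central fiber} says precisely that $\sY'\to S$ is maximal, i.e. that $\Pic(\sY')\to\Pic(\sY'_c)$ is an isomorphism.

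\textbf{The Picard number.} Here I would count exceptional divisors. Because $\vphi$ is an isomorphism over $\eta$ and $\sY'_c$ is irreducible, exactly one component of $\sY_c$ dominates $\sY'_c$, while the remaining $n-1$ components (where $n$ denotes the number of components of $\sY_c$) are contracted to lower-dimensional loci; as $\vphi$ is a birational contraction, these are exactly its exceptional prime divisors. By Proposition~\ref{proposition picard group central fiber}\eqref{item three proposition picard group central fiber} the pullback $\vphi^*\colon\Pic(\sY')\to\Pic(\sY)$ is injective of some corank, and for a birational morphism of $\Q$-factorial projective varieties this corank equals the number of exceptional prime divisors, namely $n-1$. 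Hence $\vrho(\sY')=\vrho(\sY)-(n-1)=19$, using $\vrho(\sY)=18+n$ from \eqref{eq picard rank maximal} (equivalently $n=d+2$ by Lemma~\ref{lemma irreducible components}).

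\textbf{The generic fibre.} Since $\vphi$ is an isomorphism over $\eta$ we have $\sY'_\eta\isom\sY_\eta$, so $\Pic(\sY'_\eta)\isom M_{2d}$ is free of rank $19$, and the restriction factors as $\Pic(\sY')\to[\vphi^*]\Pic(\sY)\to\Pic(\sY_\eta)$. Injectivity is formal: a class in the kernel is represented by a divisor supported on the irreducible central fibre, and since the whole fibre $\sY'_c=\div(t)$ is principal, such a class is torsion in $\Cl(\sY')$, hence torsion in $\Pic(\sY')$; but $\Pic(\sY')$ is torsion-free, as it embeds via $\vphi^*$ into $\Pic(\sY)$, which is an extension of the torsion-free lattice $M_{2d}$ by the free group on the components $[Y_i]$. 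A rank count (both groups have rank $19$) then shows the image has finite index in $M_{2d}$.

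\textbf{The main obstacle.} The one genuinely non-formal point is that this finite index equals $1$, i.e. integral surjectivity of $\Pic(\sY')\to\Pic(\sY_\eta)$. The splitting $\Pic(\sY)=\vphi^*\Pic(\sY')\oplus\langle[Y_2],\dots,[Y_n]\rangle$ that would give surjectivity at once is only available after tensoring with $\Q$, because $\sY'$ is merely $\Q$-factorial rather than regular. To upgrade it I would appeal to the saturation already underlying Proposition~\ref{proposition picard group central fiber}\eqref{item one proposition picard group central fiber}: through the canonical isomorphism $\Pic(\sY')\isom\Pic({\gothY'}^\an_\Delta)$ and the fact that $\gothY'_t\isom\gothY_t$ for $t\neq c$, the image of $\Pic(\sY')$ in the Picard group of a very general nearby fibre is saturated by the Friedman--Scattone argument (sequence (4.11) in \cite{FS86}, as used in the proof of Proposition~\ref{proposition algebraic model}\eqref{proposition algebraic model item five}). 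Since that Picard group is identified with $M_{2d}=\Pic(\sY_\eta)$, a saturated full-rank subgroup must be everything, which forces index one and hence the desired isomorphism.
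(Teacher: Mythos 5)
Your reduction to the case of a morphism via Lemma~\ref{lemma flop is regular} and Corollary~\ref{corollary flop is regular}, the deduction of the central-fibre isomorphism from Proposition~\ref{proposition picard group central fiber}\eqref{item two proposition picard group central fiber}, and the computation $\vrho(\sY')=\vrho(\sY)-(n-1)=19$ by counting exceptional prime divisors all agree with the paper's argument. The paper's own proof is in fact terser: it works with rational Picard groups, observes that $\vphi^*$ is injective, and uses $\Q$-factoriality of $\sY'$ to write $\vphi^*\vphi_*D=D$ up to the exceptional divisors, of which there are exactly $n-1$ in your indexing; the two isomorphism statements are presented as direct consequences of Proposition~\ref{proposition picard group central fiber}.

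The one place where your proposal goes beyond the paper is also where it breaks down. You rightly isolate integral surjectivity of $\Pic(\sY')\to\Pic(\sY_\eta)$ as the non-formal point ($\Q$-factoriality only gives the splitting after tensoring with $\Q$), but the saturation argument you offer is circular. The saturation established in the proof of Proposition~\ref{proposition algebraic model}\eqref{proposition algebraic model item five} via \cite{FS86} concerns the specialization map $\bar\rho_t\colon\Pic(\sY_\eta)\to\Pic(\gothY_t)$, which is an isomorphism for very general $t$. Since $\gothp$ is an isomorphism over $C\ohne\{c\}$, the restriction $\Pic(\gothY')\to\Pic(\gothY'_t)=\Pic(\gothY_t)$ factors through precisely this map; hence for very general $t$ the assertion ``the image of $\Pic(\sY')$ in $\Pic(\gothY_t)$ is saturated'' is a verbatim restatement of the assertion ``the image of $\Pic(\sY')$ in $\Pic(\sY_\eta)$ is saturated,'' not a consequence of the cited result. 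A non-circular argument would have to compare $\Pic(\gothY')$ with $\Pic(\gothY'_t)$ directly through the central fibre of the \emph{singular} total space, for instance by pushing further the exponential-sequence diagram \eqref{eq exponential diagram} from the proof of Proposition~\ref{proposition picard group central fiber}; neither your citation nor the paper's two-line proof carries this out. Note, however, that for the uses the paper makes of this lemma (the dimension of cuspidal cones and the identification of $\Mov(\sY')$ with $\Nef(\sY_\eta)$) the rational statement, which your rank count does establish, is what is actually needed.
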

\begin{proof}
As $\vphi$ is a contraction, the pullback $\vphi^*$ on (rational) Picard groups is injective. The Picard rank of $\sY$ is $19 +n$ if $n+1$ is the number of irreducible components of the central fiber $\sY_c$. Let $D$ be a divisor on $\sY$. As $\sY'$ is $\Q$-factorial, $\vphi^*\vphi_*D$ coincides with $D$ up to exceptional divisors. The claim follows because by definition of a cusp model there are exactly $n$ distinct exceptional divisors.
\end{proof}

\begin{remark}
Notice that unlike in \cite{theta15}, we ask cusp models to be $\QQ$-factorial. This turned out to be convenient; it is e.g. crucial for Lemma \ref{lemma picard rank cusp model}. 
\end{remark}

Recall from Remark \ref{remark mmp} that we have the MMP for Kulikov models at our disposal -- just as for ordinary threefolds over a curve.

\begin{proposition}
Let $\sY\to S$ be a model of the DNV family of degree $2d$ and let $Y$ be an irreducible component of $\sY_c$. Then there is a rational $Y$-cusp model $\sY\dashrightarrow \sY'$.
\end{proposition}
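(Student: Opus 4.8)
The plan is to produce the cusp model by running a relative minimal model program over $S$ that contracts every component of $\sY_c$ other than $Y$. Write $\sY_c = Y_1 \cup \dots \cup Y_{d+2}$ with $Y = Y_1$; there are $d+2$ components by Lemma~\ref{lemma irreducible components}, so exactly $d+1$ of them must be collapsed. By Remark~\ref{remark mmp} we may pass to an algebraization $\gothY \to (C,c)$ as in Proposition~\ref{proposition algebraic model}, carry out the MMP there on a genuine finite type threefold over a curve, and base change back to $S$, the relative Picard ranks and the behaviour over $\eta$ being controlled by Lemma~\ref{lemma algebraization of contractions} and Proposition~\ref{proposition picard group central fiber}. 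Since $\omega_\sY \cong \sO_\sY$, the canonical class plays no role and each step is a step of a $D$-MMP for an auxiliary divisor $D$; as extremal divisorial contractions and flips preserve $\Q$-factoriality, the output is automatically $\Q$-factorial.

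First I would choose an ample class $a$ on the K3 surface $\sY_\eta$ and a lift $\alpha \in \Pic(\sY)_\Q$. By Proposition~\ref{proposition algebraic model} the kernel of $\Pic(\sY)_\Q \to \Pic(\sY_\eta)_\Q$ is spanned by the fibre components $Y_i$, so one may modify $\alpha$ by classes supported on $\sY_c$ without altering $\alpha|_{\sY_\eta} = a$. The task is to choose such a modification $L = \alpha + \sum_i c_i Y_i$ so that, after flopping $\sY$ to a small $\Q$-factorial modification $\sX$ on which $L$ becomes nef (legitimate by Lemma~\ref{lemma flop is regular}, which keeps us inside the maximal Kulikov models by Corollary~\ref{corollary flop is regular}), the class $L$ is big on the strict transform of $Y_1$ and has numerical degree zero along a covering family of curves on each $Y_i$ with $i \ne 1$. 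Because nef line bundles on these models are semiample (Remark~\ref{remark morifan}), $L$ then defines a morphism $\psi\colon \sX \to \sY'$ over $S$. Since $L|_{\sX_\eta} = a$ is ample, $\psi$ is an isomorphism over $\eta$, while the vanishing of $L$ along curves covering every $Y_i$ with $i \ne 1$ forces each such component to map to something of dimension at most one; hence $\sY'_c = \psi(Y_1)$ is irreducible and the composite $\sY \ratl \sX \to \sY'$ is a birational contraction satisfying the axioms of a rational cusp model. Normality of $\sY'$ is automatic, $\Q$-factoriality follows from realising $\psi$ as the end product of the MMP, and as a consistency check Lemma~\ref{lemma picard rank cusp model} predicts $\vrho(\sY') = 19$, matching the contraction of exactly the $d+1$ divisors $Y_i$ ($i \ne 1$).

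The main obstacle is the construction of $L$, that is, showing one can contract \emph{precisely} the complementary components — neither collapsing part of $Y_1$ nor leaving any $Y_i$ uncontracted — while remaining an isomorphism over $\eta$. Equivalently, one must show that the face of $\Mov(\sY)$ obtained by imposing degree zero on all components except $Y_1$ is nonempty and meets the locus of classes ample on the generic fibre. This is where the geometry of a type III degeneration enters: the $Y_i$ are rational (weak del Pezzo) surfaces meeting in anticanonical cycles whose dual complex triangulates $\IS^2$ (Definition~\ref{definition typeiiik3}, Lemma~\ref{lemma irreducible components}), so each $Y_i$ with $i \ne 1$ carries covering families of curves that can be made $L$-trivial, and the three-dimensional MMP over $C$ terminates. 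Choosing the auxiliary divisor $D$ so that the $D$-MMP contracts exactly these $d+1$ components, together with the termination and semiampleness inputs already cited, is the crux; once $L$ is in hand the remaining verifications are formal.
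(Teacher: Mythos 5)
There is a genuine gap, and you have in fact flagged it yourself: the entire content of the proof is the step you defer as ``the crux''. You never construct the class $L$ (equivalently, never specify the auxiliary divisor $D$ whose MMP contracts exactly the components $Y_i$ with $i\neq 1$), and without that the argument reduces to the assertion that the desired face of $\Mov(\sY)$ is nonempty --- which is precisely what has to be proved. The surrounding material (algebraization, semiampleness, $\Q$-factoriality of MMP outputs, the formal verification that an irreducible central fiber results once the right divisors are contracted) is fine but routine.

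The paper closes this gap with a short and rather clever choice: set $\Delta:=\veps(\sY_c-Y)$ for $0<\veps\ll 1$ so that $(\sY,\Delta)$ is klt, and run the $(K_{\sY/S}+\Delta)$-log MMP over $S$. Since $K_{\sY/S}=0$ this is a $\Delta$-MMP; because $\kappa(\sY)\neq-\infty$ no Mori fiber space occurs, and every divisorial contraction that appears has exceptional divisor contained in $\supp\Delta$, so $Y$ survives birationally and no explicit nef class has to be exhibited in advance. The punchline, which is the ingredient your proposal is missing, is that the nef output $\Delta_N$ must vanish: if a component $Y''\neq Y'$ survived, one picks a curve $C\subset Y''$ with $C\cdot Y'>0$ (connectedness of the fiber) and uses numerical triviality of the full fiber class to get
\[
0=\veps\, C\cdot\sY'_c = C\cdot\Delta_N+\veps\, C\cdot Y' > C\cdot\Delta_N,
\]
contradicting nefness of $\Delta_N$. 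This is the mechanism that forces \emph{all} of $\sY_c-Y$ to be contracted, and it does not appear anywhere in your argument. If you want to salvage your $L$-based approach, you would need an argument of comparable substance showing the relevant face of the movable cone is realized; as written, the proposal assumes the conclusion at its key step.
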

\begin{proof}
We consider the $\R$-divisor $\Delta:=\veps (\sY_c-Y)$ for a sufficiently small $\veps >0$ such that the pair $(\sY,\Delta)$ is klt. Note that $K_{\sY/S}+\Delta=K_{\sY}+\Delta=\Delta$. We run a $(K_{\sY/S}+\Delta)$-log MMP over $S$ and obtain a sequence of rational $S$-maps
\[
\sY =\sX_0 \rat[\phi_1] \sX_1 \rat[\phi_1] \ldots \rat[\phi_{N}] \sX_N=:\sY',
\]
with the following properties. All schemes $\sX_i$ are $\Q$-factorial, and if we define $\Delta_0:=\Delta$ and inductively $\Delta_i:={\phi_{i-1}}_*\Delta_{i-1}$ for all $i$, then the pairs $(\sX_i,\Delta_i)$ are klt. In addition, the divisor $\Delta_N$ is nef. 

We claim that $K_{\X_i/S}=0$ and that $\phi_{i+1}$ is either a $\Delta_i$-flip or a divisorial contraction whose exceptional locus is a divisor contained in the support of $\Delta_i$ for all $i=1,\ldots,N$. 
Inductively, we may assume this to be the case for all $j<i$ where $i$ is fixed. By MMP, there is a $\Delta_{i-1}$-negative extremal ray $R$ and a contraction $c_R:\sX_{i-1} \to Z$ which 
contracts exactly those curves $C \subset \sX_{i-1}$ with $[C]\in R$ such that the following holds: either $c_R$ is a divisorial contraction or a Mori fiber space, $\sX_i=Z$, and $\phi_i=c_R$ or $c_R$ is 
small and $\phi_i$ is the $\Delta_{i-1}$-flip. As the Kodaira dimension $\kappa(\sX_{i-1})$ is not $-\infty$, the contraction $c_R$ cannot be a Mori fiber space. In the other two cases, we still have $K_{\sX_i/S}=0$. 
Thus, the exceptional locus of $c_R$ is contained in $\supp \Delta$ and the claim about $\phi_i$ follows. We deduce that the composition $\phi:=\phi_N \circ \ldots \circ \phi_1$ is birational when restricted to $Y$. 
We claim that $\phi:\sY \ratl \sY'$ is a cusp model for $Y$.

Being the outcome of an MMP, $\sY'$ is clearly $\Q$-factorial. It remains to show that $\Delta_N=0$. Suppose this is not the case and write the central fiber as $\sY'_c= Y' + Y''$ where $Y'\subset \sY'_c$ is the 
irreducible component such that $\phi: Y \ratl Y'$ is birational. Then we have $\Delta_N=\veps Y''$, i.e. $Y''$ is the support of $\Delta_N$. Choose a curve $C\subset Y''$ which is not contained in $Y'$ but has positive intersection with it: $C.Y' > 0$. Then
\[
0=\veps C.\sY_c'= C.\Delta_N + \veps C.Y' > C.\Delta_N
\]
and therefore $\Delta_N$ is not nef. We obtain a contradiction so that $\Delta_N=0$ and $\phi:\sY \ratl \sY'$ is a cusp model for $Y$.
\end{proof}

Recall the definition of $\Morifan(\sY)$ from Section~\ref{section morifan}.

\begin{definition}\label{definition cuspidal cones}
A cone $\sigma$ of $\Morifan(\sY)$ is called \emph{cuspidal} if there is a component $Y\subset \sY_c$  and a rational $Y$-cusp model $f\colon \sY\dashrightarrow\sY'$ such that $\sigma= C(f)$.
\end{definition}

\begin{remark}\label{remark cuspidal cones}\ 
\begin{enumerate}
	\item As a consequence of Lemma~\ref{lemma picard rank cusp model}, cuspidal cones are $19$ dimensional. 
	\item Lemma~\ref{lemma flop is regular} says that for every cuspidal cone $\sigma$ there is a marked minimal model $f:\sY \ratl \sX$ and a cusp model $\pi:\sX \to \sY'$ such that $\sigma=f^*\pi^*\Nefe(\sY')$. Cusp models are divisorial contractions of extremal rational faces\footnote{That is, intersections of $\Nefe(\sX)$ with a rational linear subspace not meeting the ample cone with non-empty relative interior.} of the nef cone of $\sX$ that lie on the boundary of the movable cone. Therefore, cuspidal cones have to lie on the boundary of the movable cone.
 \item The just mentioned fact that cuspidal cones correspond (not necessarily one-to-one) to marked minimal models, i.e. regular models of the DNV family, together with a divisorial contraction is what will allow us to classify cuspidal cones in Section~\ref{section counting cones}. The task there will be to classify marked minimal models allowing for a divisorial contraction of relative Picard rank two that is an isomorphism on the generic fiber.
\end{enumerate}
\end{remark}

\section{Construction of the fan}\label{section ghks fan}

Let $\sY \to S$ be a regular model of the DNV family of degree $2d$. We fix this model once and for all. Recall that the real N\'eron--Severi group $\NS(\sY_\eta)_\R=\NS(\sY_\eta)\tensor\R$ of the generic fiber of our model is isomorphic to $M_{2d,\R}$ where $M_{2d}$ is the lattice from \eqref{eq lattice m2d}. We  now fix an identification $\NS(\sY_\eta)\cong M_{2d,\R}$ once and for all.

The goal of this section is twofold. 
First, we introduce the \emph{GHKS fan} (see Definition~\ref{definition cusp model GHKS fan}), whose support is the rational closure $C_{2d}^\rc \subset M_{2d,\R} \isom \NS(\sY_\eta)_\R$ of the positive cone. 
In Theorem~\ref{theorem fan and compactification} we prove that the GHKS fan is a semitoric fan in the sense of Looijenga and thus gives rise to a semitoric compactification. 
This is implicit in \cite{GHKS}, but there the emphasis lies more in using the GHKS fan to refine a given toric fan which then gives rise to another toroidal compactification. 
The techniques we use here are clearly based on  \cite{GHKS}.

The second goal of this section is the analysis of the GHKS fan in degree $2d=2$. This case has two important features: the construction is somewhat simpler due to extra symmetries and the resulting fan is an actual toric fan, see Propositions~\ref{proposition degree two semitoric fan is fan} and~\ref{proposition degree two all fans equal}. 

This GHKS fan will be the common refinement of certain fans $\Sigma_{Y}$ coming from irreducible components $Y \subset \sY_c$ of the central fiber. We will first define their restriction to the nef cone.

\begin{definition}\label{definition cusp model fan}
Let $Y \subset \sY$ be an irreducible component of the central fiber. We choose a rational cusp model $\sY \ratl \sY'$ for $Y$ and denote by $\iota:\sY_\eta \to \sY'$ the inclusion. Then we define $\Sigma_{Y}^\nef$ on $\NS(\sY_\eta)_\R$ to be the pull back of the Mori fan of the cusp model along $\iota$, that is, the following collection of cones:
\begin{equation}\label{eq definition cusp model fan}
\Sigma_{Y}^\nef:=\{ \iota^* (\sigma) \mid \sigma \in \Morifan(\Y')\}.
\end{equation}
\end{definition}

\begin{lemma}\label{lemma cusp model fan}
The collection of cones $\Sigma_Y^\nef$ from Definition~\ref{definition cusp model fan} has support equal to $\Nef(\Y_\eta)$. Moreover, the restriction of $\Sigma_Y^\nef$ to any rational subcone of $C_{2d}^\rc$ is a rational cone system.
\end{lemma}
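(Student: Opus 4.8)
The plan is to exploit that, by Lemma~\ref{lemma picard rank cusp model}, the restriction $\iota^*\colon \NS(\sY')_\R \to \NS(\sY_\eta)_\R$ is a linear isomorphism respecting the lattices. Hence $\Sigma_Y^\nef = \iota^*(\Morifan(\sY'))$ is literally the image of the Mori fan of the cusp model $\sY'$ (its Mori fan is defined, since $\sY'$ is $\Q$-factorial, normal, $K$-trivial by Lemma~\ref{lemma properties cusp model}, with generic fiber the DNV family) under a $\Q$-linear isomorphism. Thus $\Sigma_Y^\nef$ is automatically a collection of rational polyhedral cones closed under passing to faces and under intersection, these properties being transported from $\Morifan(\sY')$. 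The two assertions then reduce to statements about $\Morifan(\sY')$: that its support is $(\iota^*)^{-1}(\Nef(\sY_\eta))$, and that it is locally rational.

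For the support I would prove $\iota^*(\Mov(\sY')) = \Nef(\sY_\eta)$, using $\supp\Morifan(\sY')=\Mov(\sY')$ from Proposition~\ref{proposition morifan}. The inclusion ``$\subseteq$'' is the easy direction: a movable class on the threefold $\sY'$ has base locus of dimension at most one, so its restriction to $\sY_\eta$ has base locus of codimension at least two and is therefore movable, hence nef, using $\Nef(\sY_\eta)=\Nefe(\sY_\eta)$ from Remark~\ref{remark kovacs} and closedness of both cones. The reverse inclusion is where the cusp-model hypothesis enters decisively. Given a rational ample class $\bar L$ on $\sY_\eta$, set $L := (\iota^*)^{-1}(\bar L)$. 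For $m\gg 0$ the base locus of $L^{\otimes m}$ misses the generic fiber and so is contained in the central fiber $\sY'_c$, which by Definition~\ref{definition cusp model} is irreducible and equals $\div(t)$, hence is trivial in $\Pic(\sY')$. A fixed divisorial component would force $H^0(\sY',L^{\otimes m}) = t\cdot H^0(\sY',L^{\otimes m})$, whence $H^0(\sY',L^{\otimes m})=0$ by Nakayama on the finitely generated $\C[[t]]$-module $H^0$, contradicting effectivity. So $L$ has no fixed divisor, is movable, and $\bar L\in\iota^*(\Mov(\sY'))$; passing to closures gives $\Nef(\sY_\eta)\subseteq\iota^*(\Mov(\sY'))$.

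It remains to verify local rationality, i.e. that for every rational polyhedral subcone $\tau\subseteq C_{2d}^\rc$ the collection $\{\sigma\cap\tau \mid \sigma\in\Sigma_Y^\nef\}$ is finite (it is then automatically a rational cone system, each $\sigma\cap\tau$ being rational polyhedral and the collection closed under faces and intersections). Since $\supp\Sigma_Y^\nef=\Nef(\sY_\eta)$, only the part $\tau\cap\Nef(\sY_\eta)$ matters, and the maximal cones of $\Sigma_Y^\nef$ are the images $\iota^*\Nef(\sZ)$ of the nef cones of the small $\Q$-factorial modifications $\sZ$ of $\sY'$ (Remark~\ref{remark maximal cones}), so crossing a wall of $\Sigma_Y^\nef$ is a flop. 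I would deduce finiteness from the fact that these flopping walls accumulate only along the irrational part of $\partial C_{2d}$: in the interior of the positive cone this is the standard discreteness statement (a compact subset meets only finitely many $(-2)$-walls, by the Hodge index theorem and discreteness of $M_{2d}$), while the rays of $\tau$, being rational, meet $\partial C_{2d}$ only in rational isotropic rays, near which the radial structure again leaves only finitely many cones meeting $\tau$. The conceptual input is that $\Morifan(\sY')$ is a genuine fan in this locally rational sense, which by Remark~\ref{remark morifan} rests on the reflexivity of $M_{2d}$, i.e. the finiteness of $\O^+(M_{2d})/W$ due to Nikulin, equivalently the Morrison--Kawamata cone conjecture for $\sY_\eta$.

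I expect the main obstacle to be precisely this last point, the behaviour of the decomposition along $\partial C_{2d}$. Interior local finiteness is routine, but controlling the finitely many cones meeting a rational subcone that reaches a rational isotropic ray genuinely requires the global finiteness-modulo-$\Gammabar$ of the fan; it is this, rather than the (clean, once the irreducibility of $\sY'_c$ is exploited) identification of the support, that carries the real content.
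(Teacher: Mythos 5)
Your identification of the support is correct and is in fact a more self-contained argument than the paper gives: the paper simply quotes \cite[Lemma~8.1]{GHKS} for the identification $\iota^*\colon\Mov(\sY')\xrightarrow{\ \isom\ }\Nef(\sY_\eta)$ and then invokes Proposition~\ref{proposition morifan}, whereas you reprove the key inclusion directly. Your mechanism --- the base locus of $L^{\otimes m}$ is forced into the irreducible central fiber $\sY'_c=\div(t)$, and a fixed divisorial component would give $H^0=t\cdot H^0$, hence $H^0=0$ by Nakayama --- is exactly the right use of the cusp-model hypothesis and is presumably the content of the cited lemma. The only soft spot is the final ``passing to closures'': $\Mov(\sY')$ is not closed (it is $\ol{\sM}\cap\Eff$), so you should either run the same Nakayama argument for every rational class of $\Nefe(\sY_\eta)=\Nef(\sY_\eta)$ (using semiampleness of nef classes on the K3 surface $\sY_\eta$, and that $\Mov(\sY')$ is the rational closure of its interior by Proposition~\ref{proposition morifan}) or say explicitly how the boundary is absorbed. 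This is a presentational issue, not a gap.

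The second assertion is where your proposal genuinely breaks down. First, the walls you propose to control are the wrong ones: the walls of $\Sigma_Y^\nef$ in the interior of $\Nef(\sY_\eta)$ are the images of flopping walls between small $\Q$-factorial modifications of the \emph{threefold} $\sY'$, i.e.\ orthogonals to classes of flopping curves in central fibers of models; they are not the $(-2)$-walls of the surface, and the Hodge-index/discreteness argument for root hyperplanes says nothing about their local finiteness. Second, your fallback --- reflexivity of $M_{2d}$, i.e.\ finiteness of $\O^+(M_{2d})/W_{2d}$ --- is only available for $2d=2$ (see Remark~\ref{remark morifan}); for general square-free $2d$ the nef cone of $\sY_\eta$ is not finitely polyhedral and the Mori fan has infinitely many cones, so no global finiteness statement is on offer. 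Even granting a cone-conjecture-type statement for $\sY_\eta$ over $\C((t))$, deducing that a rational polyhedral subcone meets only finitely many chambers requires Looijenga-type arguments that you do not supply. The paper's route is different and is the one that actually closes this step: it quotes \cite[Theorem~6.5]{GHKS} and \cite[Theorem~2.4]{HL}, whose technical engine is Kawamata's finiteness of (log) minimal models over Shokurov polytopes (Theorems~3 and~4 of \cite{Kaw11}, valid for $\Q$-factorial klt pairs, which is why the singular total space $\sY'$ is admissible after scaling an effective boundary to make the pair klt). That finiteness statement directly yields that the restriction of $\Morifan(\sY')$ to any rational polyhedral subcone is a finite rational cone system, uniformly in $d$; some such input is indispensable and is missing from your argument.
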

\begin{proof}
The second statement follows from \cite[Theorem 6.5]{GHKS}, see also \cite[Theorem~2.4]{HL}. In both references, the total space $\sY'$ is supposed to be regular, but this is not necessary. The main technical tools are Theorems~3 and~4 of \cite{Kaw11} which are proven for $\Q$-factorial klt pairs. What is used is that an effective divisor $B$ on $\sY'$ can always be scaled so as to make $(\sY, B)$ klt. Alternatively, we can deduce the first claim from the embedding of $\Morifan(\sY')$ into $\Morifan(\sY)$ as in Section~\ref{section embed into morifan} and argue for the Mori fan of the regular scheme $\sY$.

Moreover, the isomorphism $\iota^*: \NS(\sY') \to \NS(\sY_\eta)$ from Lemma \ref{lemma picard rank cusp model} defines an identification $\Mov(\sY')\cong \Nef(\Y_\eta)$, see also \cite[Lemma 8.1]{GHKS}. As the support of $\Morifan(\sY')$ is $\Mov(\sY')$, the first claim follows.
\end{proof}

The notation $\Sigma_Y^\nef$ is justified by the following lemma.

\begin{lemma}\label{lemma auxilliary fan independent of cusp model}
Let $Y \subset \sY$ be an irreducible component of the central fiber. Then the collection $\Sigma_Y^\nef$ is independent of the choice of a cusp model for $Y$.
\end{lemma}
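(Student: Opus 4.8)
The plan is to show that any two rational cusp models $\vphi_1\colon\sY\ratl\sY'_1$ and $\vphi_2\colon\sY\ratl\sY'_2$ for the same component $Y$ produce literally the same collection of cones in $\NS(\sY_\eta)_\R$. Write $\iota_i\colon\sY_\eta\to\sY'_i$ for the inclusions of the generic fibre and set $g:=\vphi_2\circ\vphi_1^{-1}\colon\sY'_1\ratl\sY'_2$. Since each $\vphi_i$ is an isomorphism over $\eta$ by part (2) of Definition~\ref{definition cusp model}, the map $g$ is a birational $S$-map that is the identity on the common generic fibre $\sY_\eta$; in particular $g\circ\iota_1=\iota_2$, so that
\[
\iota_1^*\circ g^* = \iota_2^*\colon \NS(\sY'_2)_\R \to \NS(\sY_\eta)_\R .
\]

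First I would check that $g$ is an isomorphism in codimension one. Both $\sY'_1\to S$ and $\sY'_2\to S$ are flat with irreducible central fibre, so on either side the unique prime divisor not dominating $S$ is the special fibre $\sY'_{i,c}=\div(t)$, while all remaining prime divisors are horizontal and are matched bijectively by $g$ because $g$ is an isomorphism over $\eta$. The indeterminacy locus of a rational map from a normal variety has codimension at least two, so $g$ is defined at the generic point of $\sY'_{1,c}$; and since $\vphi_i$ restricts to a birational map $Y\ratl\sY'_{i,c}$ (this is what it means for $Y$ to be the unique non-contracted component), the map $g$ induces a birational map $\sY'_{1,c}\ratl\sY'_{2,c}$ and hence does not contract the special fibre. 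The same reasoning applies to $g^{-1}$, so $g$ is an isomorphism in codimension one and $g^*\colon\NS(\sY'_2)_\R\to\NS(\sY'_1)_\R$ is an isomorphism. I expect this to be the crux of the argument, as it is exactly where the defining properties of a cusp model—flatness over $S$ together with irreducibility of the central fibre—enter.

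It then remains to see that $g^*$ identifies the two Mori fans, i.e.
\[
\Morifan(\sY'_1) = \{\, g^*\sigma \mid \sigma\in\Morifan(\sY'_2)\,\}.
\]
By Lemma~\ref{lemma properties cusp model} both $\sY'_i$ are $\Q$-factorial with trivial canonical class, so the previous step exhibits $g$ as a small $\Q$-factorial modification. Composing a rational contraction $h\colon\sY'_2\ratl\sY''$ with $g$ yields a rational contraction $h\circ g\colon\sY'_1\ratl\sY''$ with $C(h\circ g)=g^*C(h)$, and every rational contraction out of $\sY'_1$ arises this way by composing with $g^{-1}$; this is precisely the identification of Mori fans under a small modification already used in Remark~\ref{remark morifan}(3). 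Finally, applying $\iota_1^*$ and using $\iota_1^*\circ g^*=\iota_2^*$ gives
\[
\iota_1^*\big(\Morifan(\sY'_1)\big)=\iota_2^*\big(\Morifan(\sY'_2)\big),
\]
which is exactly the asserted equality of the two candidate collections $\Sigma_Y^\nef$. Hence $\Sigma_Y^\nef$ is independent of the chosen cusp model.
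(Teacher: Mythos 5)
Your proof is correct and follows essentially the same route as the paper's: the induced map $g=\vphi_2\circ\vphi_1^{-1}$ between the two cusp models is an isomorphism in codimension one, hence a small $\Q$-factorial modification, so $g^*$ identifies the two Mori fans compatibly with restriction to the generic fiber. The only difference is that you spell out why $g$ is an isomorphism in codimension one (horizontal divisors are matched since $g$ is an isomorphism over $\eta$, and the irreducible special fibers correspond birationally via $Y$), a step the paper asserts without detail.
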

\begin{proof}
Let $p_1:\sY \ratl \sY_1$, $p_2:\sY \ratl \sY_2$ be cusp models for $Y$. Then the induced map $p=p_2 \circ p_1^{-1} : \sY_1 \ratl \sY_2$ is an isomorphism in codimension one, thus a small $\Q$-factorial modification. The isomorphism $p^*:\NS(\sY_2)\to \NS(\sY_1)$ maps $\Morifan(\sY_2)$ isomorphically to $\Morifan(\sY_1)$ and is compatible with restriction to the central fiber so the claim follows.
\end{proof}

Our goal is  to construct a semitoric fan covering the rational closure of (a component of) the positive cone in $M_{2d,\R}$. For this we need to understand the action of the group $\Gammabar$ on the cones of $\Sigma_{Y}^\nef$.
We denote  the set of roots in $M_{2d}$ by $R_{2d}$, i.e.
\[
R_{2d}:=\{ v\in M_{2d} \mid v^2=-2\}.
\]
We choose a set of simple roots $\Delta \subset R$. It is well-known from the theory of reflection groups that the set 
\[
V_{2d}:=\{ v\in C^\rc_{2d} \mid v^2 \geq 0, \, \alpha.v \geq 0 \,\textrm{ for all } \alpha \in \Delta\}
\]
is a fundamental domain for the action of $W_{2d}$ on $C^\rc_{2d}$ and that the orthogonal group $\O(M_{2d})$ decomposes as a semidirect product $W_{2d} \rtimes P_{2d}$ where $W_{2d}$ is the Weyl group (generated by reflections in roots in $R_{2d}$) and $P_{2d}$ is the subgroup that fixes the fundamental domain $V_{2d}$ (\cite[Theorem 12.2]{Hum}). 
Note that reflections on $M_{2d}$ always extend to the K3 lattice so that $W_{2d}\subset \Gammabar$. Hence, the decomposition of the orthogonal group induces a decomposition
\begin{equation}\label{eq semidirect product}
\Gammabar = W_{2d} \rtimes \Pbar, \qquad \textrm{ where }\ \Pbar:=P_{2d} \cap \Gammabar.
\end{equation}

\begin{lemma}\label{lemma action on nef}
Under the identification $M_{2d,\R} \isom \Pic(\sY_\eta)$, a distinguished set of simple roots is given by the effective $(-2)$ classes. With this choice, the fundamental domain $V_{2d}$ is isomorphic to the cone $\Nef(\sY_\eta)$.
\end{lemma}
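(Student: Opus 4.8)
The plan is to deduce both assertions from the classical chamber theory for the N\'eron--Severi lattice of a K3 surface, transported to $\sY_\eta$ through the fixed identification $\NS(\sY_\eta)_\R \isom M_{2d,\R}$. Every object in the statement is numerical --- it involves only the lattice $M_{2d} \isom \NS(\sY_\eta)$, its form, the effective and nef cones, and the set $R_{2d}$ of $(-2)$-classes --- so, in order to apply the complex theory directly, I would pass to a very general fibre $\gothY_t$ of an algebraization $\gothY \to C$ as in Proposition~\ref{proposition algebraic model}. By item~\eqref*{proposition algebraic model item five} the map $\bar\rho_t$ is an isometry $\Pic(\sY_\eta) \isom \Pic(\gothY_t)$; since ampleness of a class on the generic fibre is equivalent to ampleness on a very general fibre, $\bar\rho_t$ carries the ample cone of $\sY_\eta$ onto that of the complex projective K3 surface $\gothY_t$, hence also effective roots to effective roots and $\Nef(\sY_\eta)$ to $\Nef(\gothY_t)$. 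This reduces everything to the complex case.

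I would then invoke two standard facts. First, Riemann--Roch on a K3 surface gives $\chi(\sO(\delta)) = 2 + \tfrac12\delta^2 = 1$ for $\delta \in R_{2d}$, so exactly one of $\pm\delta$ is effective; write $R_{2d}^+$ for the set of effective roots. Second, by the chamber description of the positive cone (see \cite[Chapter~8]{Huy16}), the group $W_{2d}$ is generated by the reflections $s_\delta$, $\delta \in R_{2d}$, the ample cone is the chamber of $C_{2d} \smallsetminus \bigcup_{\delta \in R_{2d}} \delta^\perp$ singled out by the positive system $R_{2d}^+$, its closure is $\Nef(\sY_\eta)$, and the codimension-one walls of this chamber lie on the hyperplanes $\alpha^\perp$ as $\alpha$ runs through the classes of irreducible $(-2)$-curves.

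For the first assertion, the distinguished choice of simple roots is the one for which the effective roots form the positive system: taking $\Delta$ to be the set of classes of irreducible $(-2)$-curves (equivalently, the indecomposable elements of $R_{2d}^+$), these form a set of simple roots whose positive system is exactly $R_{2d}^+$, because $W_{2d}$ permutes the chambers simply transitively and $R_{2d}^+$ is precisely the set of roots non-negative on the ample chamber. For the second assertion I would compare defining inequalities. By definition $V_{2d} = \{ v \in C_{2d}^\rc : v^2 \geq 0,\ \alpha \cdot v \geq 0 \text{ for all } \alpha \in \Delta \}$; the condition $v^2 \geq 0$ is automatic on $C_{2d}^\rc$, and for $v \in C_{2d}^\rc$ the remaining inequalities $\alpha \cdot v \geq 0$ for all $(-2)$-curve classes $\alpha$ are equivalent to nefness, since $v \cdot C \geq 0$ holds automatically for every effective curve $C$ of non-negative self-intersection (two classes of $C_{2d}^\rc$ pair non-negatively) while the curves of negative self-intersection on a K3 surface are exactly the $(-2)$-curves. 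Hence $V_{2d} = \Nef(\sY_\eta)$.

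The part I expect to require the most care is the passage between $\sY_\eta$ over $\C((t))$ and the complex fibre $\gothY_t$: one must check that $\bar\rho_t$ is compatible not only with the intersection form but with the notions of effectivity and ampleness, so that the set $R_{2d}^+$ and the wall structure of the nef cone genuinely agree on both sides. Once that compatibility is in place, the chamber theory of \cite{Huy16} applies verbatim and both assertions follow by translation.
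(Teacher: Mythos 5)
Your argument is correct and follows essentially the same route as the paper's (two-line) proof: Riemann--Roch plus Serre duality to see that exactly one of $\pm\delta$ is effective for each root $\delta$, and the Nakai--Moishezon/chamber description of the nef cone of a K3 surface to match $V_{2d}$ with $\Nef(\sY_\eta)$. The only addition is your explicit reduction to a very general complex fibre of an algebraization, which the paper leaves implicit by citing \cite[Chapter~8]{Huy16} directly; that step is sound and harmless.
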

\begin{proof}
From Riemann-Roch and Serre duality one deduces that effective $(-2)$ classes constitute a set of simple roots. It follows easily from the Nakai-Moishezon-Kleiman criterion, see e.g. \cite[8, Theorem~1.2]{Huy16} that $\Nef(\sY_\eta)$ cut out by their orthogonals inside the positive cone. 
\end{proof}

Recall that for complex K3 surfaces $S$, the morphism $\Aut(S)\to \O(H^2(S,\Z))$ is injective. With this in mind, we prove

\begin{proposition}\label{proposition automorphisms}
Under the identification $\Pic(\sY_\eta)\isom M_{2d}$, pullback via an automorphism induces a homomorphism $G:=\Aut(\sY_\eta) \to \O^+(M_{2d})$, whose image contains $\Pbar$. 
For $d=1$ we have a short exact sequence
$$
0\to \Z/2\Z \to G \to \Pbartwo \to 0
$$
where the kernel is given by a non-symplectic involution on $\sY_\eta$. 
\end{proposition}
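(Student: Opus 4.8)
The plan is to transport the question to a very general complex fibre and then apply the strong Torelli theorem. By Proposition~\ref{proposition algebraic model} I would algebraize $\sY\to S$ to $\gothY\to C$; for very general $t\in C$ the fibre $X:=\gothY_t$ is a complex K3 surface with $\Pic(X)\isom\Pic(\sY_\eta)\isom M_{2d}$, compatibly with the nef cones, and the action of $G=\Aut(\sY_\eta)$ on $M_{2d}$ is identified with that of $\Aut(X)$. Write $T:=T_{2d}:=M_{2d}^\perp\subset\Lambda$ for the transcendental lattice, of rank $3$ and signature $(2,1)$, and let $\omega$ denote the period. The decisive lattice input I would record first is that \emph{for very general $t$ the group of Hodge isometries of $T$ is $\{\pm\id_T\}$}: the period domain $\sD_{T_{2d}}$ has dimension $1$, and any fixed $g\neq\pm\id$ in the discrete group $\O(T)$ imposes the proper closed condition $g\cdot[\omega]=[\omega]$ on $\sD_{T_{2d}}$, so the periods admitting an extra Hodge isometry form a countable subset that a very general $t$ avoids; note $-\id_T$ is always a Hodge isometry since it fixes the line $\C\omega$.

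\textbf{The homomorphism and the inclusion $\Pbar\subseteq\Im$.} Since $\Aut(X)\into\O(H^2(X,\Z))$ preserves the Hodge structure, the sublattice $\Pic(X)=M_{2d}$, and the ample (hence nef) cone, restriction to $\Pic$ gives a homomorphism $G\to\O^+(M_{2d})$ whose image stabilizes $\Nef(\sY_\eta)=V_{2d}$; by Lemma~\ref{lemma action on nef} and the decomposition $\O(M_{2d})=W_{2d}\rtimes P_{2d}$ this forces $\Im\subseteq P_{2d}$. To see $\Im\supseteq\Pbar$, I would lift each $p\in\Pbar=P_{2d}\cap\Gammabar$ by strong Torelli. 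Using that elements of $\Gammabar$ act trivially on the discriminant group $M_{2d}^\vee/M_{2d}$ (i.e.\ $\Gammabar\subseteq\wt\O^+(M_{2d})$), the isometry $p$ and $\id_T$ agree on discriminants, so $p\oplus\id_T$ glues to $\tilde p\in\O^+(\Lambda)$. As $\tilde p$ fixes $\omega$ it is a Hodge isometry, it preserves $\Pic$, and it acts as $p$ on the nef cone; the strong Torelli theorem (see \cite{Huy16}, compare Theorem~\ref{theorem torelli k3}) then yields an automorphism of $X$ inducing $p$.

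\textbf{The case $d=1$.} Here $M_2^\vee/M_2\isom\Z/2$ has trivial automorphism group, so $\wt\O^+(M_2)=\O^+(M_2)$ and hence $\Gammabar=\O^+(M_2)$; combined with the previous paragraph this gives $\Im = P_2\cap\Gammabar=\Pbartwo$. For the kernel, let $g\in G$ act trivially on $\Pic=M_2$; then $g$ acts on $T_2$ by a Hodge isometry, so by the first paragraph $g|_{T_2}=\pm\id$. If $g|_{T_2}=\id_{T_2}$ then $g$ acts trivially on the finite-index sublattice $M_2\oplus T_2$, hence on $H^2$, so $g=\id$ by injectivity of $\Aut\into\O(H^2(X,\Z))$. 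If $g|_{T_2}=-\id_{T_2}$ then $g=\id_{M_2}\oplus(-\id_{T_2})$ on $H^2$; conversely this isometry \emph{is} integral on $\Lambda$ precisely because both discriminant groups are $\Z/2$, on which $+1$ and $-1$ coincide, so the gluing is respected. Being a Hodge isometry with $\omega\mapsto-\omega$ that fixes the nef cone, it is induced by a genuine non-symplectic involution $\iota$ of $\sY_\eta$ lying in the kernel. Thus the kernel is $\{\id,\iota\}\isom\Z/2\Z$, and exactness of $0\to\Z/2\Z\to G\to\Pbartwo\to 0$ follows.

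\textbf{Main obstacle.} The Hodge-theoretic and Torelli steps are formal once the inputs are in place; the delicate points are the two lattice-theoretic facts feeding the argument, namely $\Gammabar\subseteq\wt\O^+(M_{2d})$ (with equality $\Gammabar=\O^+(M_2)$ in degree two) relating the arithmetically defined group $\Gammabar$ to isometries realized by automorphisms, and the faithful transport of automorphisms of the generic fibre over $\C((t))$ to the very general complex fibre via Proposition~\ref{proposition algebraic model}. The precise discriminant bookkeeping is also exactly what isolates the clean exact sequence to $d=1$: for $d>1$ the class $p\oplus(-\id_T)$ can extend only when $p$ acts as $-1$ on $M_{2d}^\vee/M_{2d}$, so non-symplectic involutions need not act trivially on $\Pic$ and the image may exceed $\Pbar$.
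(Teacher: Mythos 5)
Your overall strategy coincides with the paper's: algebraize via Proposition~\ref{proposition algebraic model}, pass to a very general complex fibre $X=\gothY_t$, and settle everything by the Torelli theorem together with the fact that the only Hodge isometries of the transcendental lattice $T$ of $X$ are $\pm\id_T$. Within that framework you make some harmless substitutions: you prove the $\pm\id_T$ statement by a countability/dimension count on the one-dimensional period domain $\sD_T$, where the paper invokes the Mumford--Tate group of $T$ (both versions need, and the paper supplies, the observation that the classifying map $C\to\Def(\gothY_t)$ is non-constant, so that a very general $t$ really has a very general period); you produce the non-symplectic involution for $d=1$ by gluing $\id_{M_2}\oplus(-\id_{T})$ over the order-two discriminant groups, where the paper writes down an explicit isometry of $\Lambda$; and you lift $p\in\Pbar$ as $p\oplus\id_T$ rather than taking the tautological preimage in $\O(\Lambda)$, which is fine modulo the (true, but worth justifying) fact that $\Gammabar$ acts trivially on $M_{2d}^\vee/M_{2d}$.

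The genuine gap is the step you assert in your opening paragraph and then defer to the ``main obstacle'': the identification of the action of $G=\Aut(\sY_\eta)$ on $M_{2d}$ with that of $\Aut(X)$. Torelli hands you an automorphism $\vphi_t$ of the \emph{complex} surface $\gothY_t$ inducing $\tilde p$, but the proposition concerns automorphisms of the K3 surface $\sY_\eta$ over $\C((t))$, and nothing in Proposition~\ref{proposition algebraic model} transports $\vphi_t$ back to $\sY_\eta$. This is exactly where the paper does real work: it runs the Torelli argument for \emph{uncountably many} $t\in C$ and uses the countability of the components of the relative Hilbert scheme of $\gothY\times_C\gothY$ over $C$ to find a single irreducible cycle $\Gamma\subset\gothY\times_C\gothY$ whose fibre over uncountably many $t$ is the graph of $\vphi_t$; such a $\Gamma$ is then the graph of a birational automorphism of $\gothY$ over $C$, hence yields an automorphism of the generic fibre and, after base change along $S\to C$, of $\sY_\eta$. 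Without this (or some substitute spreading-out argument) your proof only computes the image of $\Aut(\gothY_t)\to\O^+(M_{2d})$, not of $G\to\O^+(M_{2d})$; the same issue affects the existence of the non-symplectic involution in the kernel for $d=1$. The remaining steps (image contained in $P_{2d}$ because automorphisms preserve $\Nef(\sY_\eta)=V_{2d}$; kernel of order at most two from the $\pm\id_T$ statement; identification of the image with $\Pbartwo$ for $d=1$) are as in the paper.
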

\begin{proof}
We take an algebraization $\gothY \to C$ over a quasi-projective curve $C$ such that $\sY \to S$ is the base change under a morphism $S \to C$, see Proposition~\ref{proposition algebraic model}. For $t\in C$, there is a canonical morphism $\Pic(\sY_\eta) \to\Pic(\gothY_t)$ which is an isomorphism for the very general point $t$, see \eqref{eq canonical iso picard}. For the fibers over those points, also the nef cone is isomorphic to the fundamental domain $V_{2d}$ by the same reasoning as in 
Lemma~\ref{lemma action on nef}. We will show first that $\Pbar$ is contained in the image of $G \to \O^+(M_{2d})$. Let us observe that an $s\in \Pbar$ lifts to an isometry $\tilde s\in \O(\Lambda)$ of the K3 lattice \eqref{eq k3 lattice}. By the effective Torelli theorem for complex K3 
surfaces there is an automorphism $\vphi_t$ of $\gothY_t$ which induces $\tilde s$. Let $\Gamma_t \subset \gothY_t\times \gothY_t$ be the graph of $\vphi_t$. By countability of components of 
the relative Hilbert scheme of $\gothY \times_C\gothY$ over $C$, there is an irreducible cycle $\Gamma \subset \gothY \times_C\gothY$ whose fiber over uncountably many $t\in C$ equals $\Gamma_t$. 
Therefore, $\Gamma$ is the graph of a birational automorphism of $\gothY$, thus an automorphism of $\sY_\eta$. By construction, it acts as $\tilde s$ on $\Nef(\sY_\eta)$.

The following argument shows that the kernel of $G\to \O^+(M_{2d})$ is at most $\Z/2\Z$. For a very general $t\in C$ the only Hodge isometries of the transcendental lattice are $\pm\id$ which can be verified using a Mumford--Tate group argument. The Mumford--Tate group of the transcendental lattice $T \subset H^2(\gothY_t,\Z)$ for very general $t\in C$ is $\SO(T\tensor\Q)$, a Hodge isometry is invariant under the Mumford--Tate group in $\End(T)$, and by elementary representation theory of the special orthogonal group the only such invariants are multiples of the identity. We refer to \cite[Chapter~15.2]{CMP17} for generalities on Mumford--Tate groups.

Let us now specialize to $d=1$ and show that there is a non-trivial element in the kernel. For the very general $\gothY_t$ as above, the transcendental lattice is given by $T=U \oplus \left\langle 2d\right\rangle$. We claim that there is an isometry of $\Lambda$ that restricts to $\id$ on $M_{2d}$ and to $-\id$ on $T$. Let us choose generators $U=\left\langle e,f \right\rangle$ and define an isometry $\alpha:U\to U$ by $e\mapsto -f$, $f\mapsto -e$. Putting $h:=e+df$ and $h'=h-df$ we may write $T=U\oplus \left\langle h\right\rangle$ and $M_{2d}=U\oplus 2E_8(-1) \oplus \left\langle h'\right\rangle$ and the isometry
\[
\id \oplus \alpha \oplus \id: U\oplus U \oplus \left(U \oplus 2E_8(-1)\right) \to U\oplus U \oplus \left(U \oplus 2E_8(-1)\right)
\]
restricts to $(-\id_T)\oplus\id_{M_{2d}}$ on the sublattice $T\oplus M_{2d} = U\oplus \left\langle h, h'\right\rangle \oplus \left(U \oplus 2E_8(-1)\right) \subset \Lambda$. This isometry is clearly Hodge and invoking Torelli once more we see as above that it is induced by an automorphism of $\sY_\eta$, which is a non-symplectic involution. 
\end{proof}

\begin{remark}\label{remark antisymplectic involution}
We recall that $\Pbartwo \isom \wt\O^+(M_{2})/W_2 =  \O^+(M_{2})/W_2 \isom S_3$. In this way we have recovered the well known result that the automorphism group  
of a $K3$ surface with $\NS(X)\cong U\oplus 2E_8(-1) \oplus \langle -2 \rangle$ is given by $\Aut(X)\cong S_3\times \mathbb Z/2\mathbb Z$, see \cite{Kon89}. Here $S_3$ is the group of symplectic automorphisms and the 
non-trivial element of $\mathbb Z/2\mathbb Z$ is the anti-symplectic isomorphism constructed above.
\end{remark}

\subsection{Embedding into the Mori fan}\label{section embed into morifan}
Our next goal is to embed the collections $\Sigma^\nef_Y$ into the Mori fan of $\sY$. This is a helpful feature as it allows to use the methods from \cite{HL} to analyze them. Most of this material can be found in \cite{GHKS}, only here and there we give some more details or choose a different presentation.

Given a rational cusp model $p:\sY \ratl \sY_1$, we embed the Morifan $\Morifan(\sY_1)$ into $\Morifan(\sY)$. Note however, that such an embedding is not given by pullback along $p$ as $p^*$ does not necessarily send cones to cones due to the lack of functoriality of pullbacks along rational maps. Indeed, given a cone $C(f) \in \Morifan(\sY_1)$, say coming from a contraction $f:\sY_1 \ratl \sX$, we have $C(f\circ p)\in\Morifan(\sY)$ and this cone is in general not equal to the set $p^*(C(f))$. While the map $p^*$ is linear, but does not preserve cones, the map we aim to construct respects cones, but will only be piecewise linear. The embedding $C(f)\mapsto C(f\circ p)$ leads to the following definition.

\begin{definition}\label{definition component section}
Let $\sY \to S$ be a model of the DNV family, let $Y$ be an irreducible component of the central fiber of $\sY\to S$, and let $p:\sY\ratl \sY_1$ be a rational cusp model for $Y$. Let $q:\sY_1 \ratl \sY_2$ be a small $\Q$-factorial modification. Then on $q^*\Nefe(\sY_2) \subset \Mov(\sY_1)$ we define
\[
s_{\sY_1}:\Mov(\sY_1) \to \Mov(\sY), \qquad s_{\sY_1}\vert_{q^*\Nefe(\sY_2)}:= (q \circ p)^* \circ q_*.
\]
\end{definition}

As the following lemma shows, this does not depend on the choices made. More than that, the composition with the restriction to the generic fiber only depends on the component $Y$.

\begin{lemma}\label{lemma section independent}
The map $s_{\sY_1}:\Mov(\Y_1) \to \Mov(\sY)$ from Definition \ref{definition component section} is well defined, piecewise linear, and continuous. If $\iota_1: \sY_\eta \into \sY_1$ is the inclusion, the composition $s_{\sY_1} \circ (\iota_1^*)^{-1}$ is a section of the restriction $\Mov(\sY) \to \Mov(\sY_\eta)=\Nef(\sY_\eta)$ and depends only on $Y$, but not on the rational cusp model. 
\end{lemma}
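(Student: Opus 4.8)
The plan is to give an intrinsic, cone-free formula for $s_{\sY_1}$ and then read off all four assertions from it. For a class $x \in \Mov(\sY_1)$, let $g_x \colon \sY_1 \ratl \sZ_x$ be the rational contraction whose cone $C(g_x) = g_x^*\Nefe(\sZ_x)$ contains $x$ in its relative interior, i.e.\ the ample model attached to $x$; this exists by Proposition~\ref{proposition morifan} together with the semiampleness of nef classes recorded in Remark~\ref{remark morifan}, and it is unique by the overlap property in Remark~\ref{remark morifan}. I claim that
\[
s_{\sY_1}(x) = (g_x \circ p)^*\bigl(g_{x*} x\bigr),
\]
a formula which refers to no choice of maximal cone and hence makes well-definedness transparent.

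To match this with Definition~\ref{definition component section}, I would fix a small $\Q$-factorial modification $q\colon \sY_1\ratl\sY_2$ with $x \in C(q) = q^*\Nefe(\sY_2)$; such $q$ exist and their cones cover $\Mov(\sY_1)$ by Remark~\ref{remark maximal cones} and Proposition~\ref{proposition morifan}. Setting $E := q_* x \in \Nefe(\sY_2)$ and using $q_* q^* = \id$ (valid because $q$ is an isomorphism in codimension one) gives $x = q^* E$. Since $E$ is nef, hence semiample, the morphism $h\colon \sY_2 \to \sZ_x$ it defines is regular, $E = h^*A$ with $A = h_*E$ ample on $\sZ_x$, and $g_x = h\circ q$. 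Contravariance of pullback for the morphism $h$ composed with the birational contraction $q\circ p$ then yields $(q\circ p)^*E = (h\circ q\circ p)^*A = (g_x\circ p)^*A$, while $g_{x*}x = h_*q_*x = A$; so both the defining expression $(q\circ p)^*\circ q_*$ and the displayed formula return $(g_x\circ p)^*A$. As $g_x$ depends only on $x$, the value is independent of the maximal cone, which proves well-definedness.

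Next I would address the remaining properties. On each maximal cone the map equals $(q\circ p)^*\circ q_*$, which is linear, so $s_{\sY_1}$ is piecewise linear; since the cones $C(q)$ tile $\Mov(\sY_1)$ and adjacent pieces agree on their common walls by the well-definedness above, $s_{\sY_1}$ is continuous. For the section property I restrict to the generic fibre: all of $p$, $q$, $g_x$ are isomorphisms over $\eta$, so under the restriction maps $\iota^*$ and $\iota_1^*$ (the latter an isomorphism by Lemma~\ref{lemma picard rank cusp model}) the class $s_{\sY_1}(x) = (q\circ p)^*E$ pulls back on $\sY_\eta$ to the same thing as $\iota_1^*x = \iota_1^*q^*E$. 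Hence $\iota^*\circ s_{\sY_1} = \iota_1^*$, i.e.\ $s_{\sY_1}\circ(\iota_1^*)^{-1}$ is a section of $\iota^*\colon \Mov(\sY)\to\Nef(\sY_\eta)$.

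Finally, independence of the cusp model falls out of the intrinsic formula. Two rational cusp models $p,p'$ for $Y$ differ by a small $\Q$-factorial modification $r\colon \sY_1\ratl\sY_1'$ (as in the proof of Lemma~\ref{lemma auxilliary fan independent of cusp model}), under which ample models are unchanged; so for corresponding classes $x' = r_*x$ one has $g'_{x'} = g_x\circ r^{-1}$ and $g'_{x'}\circ p' = g_x\circ p$ as rational maps, and the formula returns the same element of $\Mov(\sY)$ after the identifications $(\iota_1^*)^{-1}$, $({\iota_1'}^*)^{-1}$ with $\Nef(\sY_\eta)$. The main obstacle is exactly the well-definedness across the chamber structure of $\Morifan(\sY_1)$: the definition is only piecewise, and one must handle pullbacks along the rational (birational-contraction) maps $p$ and $q\circ p$ with care. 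The reformulation through ample models, combined with semiampleness of nef classes, is the crux that makes consistency on overlaps immediate.
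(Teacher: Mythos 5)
Your argument is essentially correct, but it packages the crucial well-definedness step differently from the paper. The paper proves agreement of $(q_2\circ p)^*\circ q_{2*}$ and $(q_3\circ p)^*\circ q_{3*}$ on $C(q_2)\cap C(q_3)$ by a three-case analysis: overlapping interiors (Lemma~\ref{lemma cones determine contractions}), a common face inside the big cone (using Kawamata's rational polyhedrality of the nef cone there, plus a common resolution), and a face on the boundary of the big cone, where the class induces an elliptic fibration and an MMP over its base reduces to the previous case. You instead attach to each $x$ its ample model $g_x$ and show every chamber formula computes $(g_x\circ p)^*A$ with $g_x^*A=x$; combined with semiampleness of effective nef classes (Remark~\ref{remark morifan}) this absorbs all three cases at once, including the boundary one, which is a genuine streamlining. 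The independence of the cusp model and the section property are handled as in the paper.

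Two points need repair or justification. First, your formula $A=g_{x*}x$, hence $s_{\sY_1}(x)=(g_x\circ p)^*(g_{x*}x)$, is ill-posed exactly in the case the paper works hardest on: when $x$ lies on the boundary of the big cone, $g_x$ is a fibration onto a lower-dimensional $\sZ_x$ and the pushforward of a divisor class is not a divisor class there. You should define $A$ as the unique (ample) class with $g_x^*A=x$ (it exists and is unique since $E=q_*x$ is semiample and $h^*$ is injective); with that reading your computation goes through, and you also need the uniqueness of $(\sZ_x,g_x)$ for non-birational contractions, which is not literally Remark~\ref{remark morifan}(2) but follows from the $\Proj$ of the section ring, invariant under small $\Q$-factorial modification. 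Second, the identity $(h\circ q\circ p)^*A=(q\circ p)^*h^*A$ for $h$ regular is true but not automatic -- the paper explicitly warns that pullback along rational maps is not functorial -- so you should verify it by choosing one resolution $\sY\xleftarrow{a}W\xrightarrow{b}\sY_2$ of $q\circ p$ and noting that $a_*(h\circ b)^*A=a_*b^*h^*A$. With these two fixes your proof is complete.
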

\begin{proof}
By \cite[Theorem~2.3]{Kaw97}, the nef cones of small $\Q$-factorial modifications cover $\Mov(\sY_1)$ so that it is sufficient to define $s_{\sY_1}$ on these cones. Let us show that $s_{\sY_1}$ does not depend on the choice of small $\Q$-factorial modification. Let $q_2:\sY_1 \ratl \sY_2$ and $q_3:\sY_1 \ratl \sY_3$ be small $\Q$-factorial modifications such that $q_2^*\Nefe(\sY_2) \cap q_3^*\Nefe(\sY_3) \neq \emptyset$. If the interiors of both cones intersect, then there is an isomorphism $h:Y_2 \to[\isom] Y_3$ such that $h \circ q_2 = q_3$, see Lemma~\ref{lemma cones determine contractions} below. In this case obviously $(q_2 \circ p)^*\circ {q_2}_* = (q_3 \circ p)^*\circ {q_3}_*$.

Otherwise, the intersection has to be contained in the boundary of both cones. Suppose first that $q_2^*\Nefe(\sY_2) \cap q_3^*\Nefe(\sY_3)$ intersects the interior of the big cone. By \cite[Theorem~5.7]{Kaw88} (see also \cite[Theorem~1.9]{Kaw97}), the nef cone is rational polyhedral inside the big cone so that there is a unique face $F$ of $q_2^*\Nefe(\sY_2)$, $q_3^*\Nefe(\sY_3)$ such that $q_2^*\Nefe(\sY_2) \cap q_3^*\Nefe(\sY_3)=F$ and a diagram
\begin{equation}\label{eq diagram contractions for well defined section}
\xymatrix{
\sY_2 \ar[dr]_{f_2} \ar@{-->}[rr] &&\sY_3 \ar[dl]^{f_3}\\
&\sZ&\\
}
\end{equation}
where $f_2, f_3$ are regular contractions associated to $F$. In particular, $$F=q_2^*\left(f_2^*\Pic(\sZ) \cap \Nefe(\sY_2)\right)=q_3^*\left(f_3^*\Pic(\sZ) \cap \Nefe(\sY_3)\right).$$

Take $\alpha \in F \subset \Mov(\sY_1)$ and  write $\alpha = q_2^* f_2^* \beta = q_3^*f_3^*\beta$ for some class $\beta$ on $\sZ$. Taking a resolution $W$ of indeterminacies of all the rational maps, and letting $q:=q_3 \circ q_2^{-1}$, we obtain a diagram 
\[
\xymatrix{
&& W \ar[dll]_{\pi_0}\ar[ddll]^{\pi_1}\ar[dd]^{\pi_2} \ar[ddr]^{\pi_3}\\
\sY \ar@{-->}[d]_p &&\\
\sY_1 \ar@{-->}[rr]^{q_2} \ar@/^1.5pc/@{-->}[rrr]^{q_3}&& \Y_2 \ar[dr]_{f_2}\ar@{-->}[r]_{q}& \sY_3\ar[d]^{f_3} \\
& & & \sZ &\\
}
\]
and we have to show that $(q_2 \circ p)^* {q_2}_* \alpha = (q_3 \circ p)^* {q_3}_* \alpha$. Let us observe that $\pi_3^*\circ f_3^* = \pi_2^*\circ f_2^*$ by commutativity of the diagram and therefore
\[
\begin{aligned}
(q_2 \circ p)^* {q_2}_* \alpha & = (q_2 \circ p)^* {f_2}^* \beta = {\pi_0}_*\pi_2^*f_2^* \beta = {\pi_0}_*\pi_3^*f_3^* \beta \\
& = (q_3 \circ p)^*f_3^*\beta = (q_3 \circ p)^*{q_3}_*\alpha.
\end{aligned}
\]
Now suppose that $q_2^*\Nefe(\sY_2) \cap q_3^*\Nefe(\sY_3)$ lies on the boundary of the big cone and let $\alpha$ be contained in this intersection. As $\Mov(\sY_1)$ is the rational closure of its interior, it is sufficient treat the case where $\alpha$ is rational, or even integral. Then $\alpha$ gives rise to a fibration $f_2:\sY_2\to B$ which is an elliptic fibration on the generic fiber over $S$. 
Let $A$ be the pullback along $q$ of an ample prime divisor. As $A$ is $f_2$-nef if and only if $A+f_2^*L$ is nef for a sufficiently ample divisor $L$ on $B$ and clearly $A+\pi^*L \in q^*\Amp(\sY_3)$, we deduce that for $A$ to be $f_2$-nef, we must have  that $q$ is an isomorphism by Lemma~\ref{lemma cones determine contractions} below. Let us assume this is not the case. Then we run a log MMP for the pair $(\sY_2,\veps A)$, where $\veps$ is small enough in order to make the pair klt, and obtain a sequence of flops over $B$ connecting $\sY_2$ to $\sY_3$. Thus, we may reduce to the case where $q$ is a flop over $B$:
\[
\xymatrix{
\sY_2\ar[dr]_{f_2}\ar@{-->}[rr]^q&& \sY_3 \ar[ld]^{f_3}\\
&B&\\
}
\]
In this case, $q_2^*\Nefe(\sY_2) \cap q_3^*\Nefe(\sY_3)$ is a facet (in both cones) and we are back in the previous case.
We thus conclude that $s_{\sY_1}$ is well-defined and continuous. It is piecewise linear by definition.

Next we show that $s_{\sY_1} \circ (\iota_1^*)^{-1}$ is independent of the cusp model. Let $p:\sY \ratl \sY_1$, $p':\sY \ratl \sY_1'$ be rational $Y$-cusp models and consider the commutative diagram
\[
\xymatrix@=0.5cm{
\sY_\eta \ar@{^(->}[dr]^\iota \ar@{^(->}@/^1pc/[drrr]^{\iota_1'} \ar@{^(->}@/_1pc/[dddr]_{\iota_1}&&\\
&\sY \ar@{-->}[rr]^{p'} \ar@{-->}[dd]_{p}&& \sY_1' \ar@{-->}[dd]^{f'}\\
&&&\\
&\sY_1 \ar@{-->}[rruu]^r \ar@{-->}[rr]^f&& \sY_2\\
}
\] 
where $f, f'$ are small $\Q$-factorial modifications and $r:=p' \circ p^{-1}$.  By $\Q$-factoriality of cusp models, also $r$ is a small $\Q$-factorial modification. Showing that $s_{\sY_1}\circ  (\iota_1^*)^{-1}=s_{\sY'_1} \circ (\iota_1'^*)^{-1}$ is tantamount to showing $s_{\sY_1'} = s_{\sY_1}\circ r^*$. Let us consider these maps on the cone $f'^*\Nefe(\sY_2)$. We have $f_*'=f_*\circ r^*$ as all these maps are small $\Q$-factorial modifications and thus 
$$
s_{\sY_1'} = (f'\circ p')^*\circ f'_* =  (f\circ p)^*\circ f_*\circ r^* =s_{\sY_1}\circ r^*
$$
on $f'^*\Nefe(\sY_2)$. It is straight forward to see that $s_{\sY_1}\circ  (\iota_1^*)^{-1}$ is a section of the restriction and so we conclude the proof.
\end{proof}

In the proof of the preceding lemma, we used the following well-known lemma. We record a proof for convenience.

\begin{lemma}\label{lemma cones determine contractions}
Let $\sY, \sY', \sY''$ be projective $S$-schemes. Let $f:\sY \ratl \sY'$ and $h:\sY \ratl \sY''$ be birational contractions such that the relative interiors of $f^*\Nefe(\sY')$ and $h^*\Nefe(\sY'')$ have a nonempty intersection. Then there exists an isomorphism $g :\sY'' \to \sY'$ such that $g \circ h = f$.
\end{lemma}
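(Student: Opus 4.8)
The plan is to reconstruct both $\sY'$ and $\sY''$ as the $\Proj$ over $S$ of one and the same graded ring built on $\sY$, namely the section ring of an integral class lying in the common relative interior; the resulting identification $\sY''\isom\sY'$ and, crucially, the relation $g\circ h=f$ then come out of the compatibility of the two reconstructions with the linear system on $\sY$. This mirrors \cite[Lemma~1.5]{Kaw97} and \cite[1.11~Proposition]{HK}.

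First I would choose the class and arrange it to descend to ample divisors. Pick an integral class $\alpha$ in the relative interior of $f^*\Nefe(\sY')\cap h^*\Nefe(\sY'')$; since $f^*$ and $h^*$ are injective with finite-index image on the N\'eron--Severi lattices, $\alpha$ may be taken in $f^*\NS(\sY')\cap h^*\NS(\sY'')$, so that $\alpha=f^*A'=h^*A''$ with $A',A''$ integral. I claim $A'$ and $A''$ are ample. Indeed, for any normal projective $S$-scheme $Z$ ample classes are effective (a high multiple of an ample divisor has sections), so $\Amp(Z)\subset\Nefe(Z)$; as $\Amp(Z)$ is open and $\Nef(Z)=\overline{\Amp(Z)}$ by Kleiman's criterion, we get $\operatorname{int}\Nefe(Z)=\Amp(Z)$, which also equals the relative interior of $\Nefe(Z)$. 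Hence $A'=(f^*)^{-1}\alpha$ and $A''=(h^*)^{-1}\alpha$ lie in $\Amp(\sY')$ and $\Amp(\sY'')$ respectively, and after replacing $\alpha$ by a positive multiple (which changes neither the statement nor the $\Proj$ below) we may take $A',A''$ to be ample Cartier.

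The key step, and the only place the contraction hypothesis enters, is the identification of section rings
\[
R(\sY,\alpha):=\bigoplus_{m\ge 0}H^0\bigl(\sY,\sO_\sY(m\alpha)\bigr)\isom\bigoplus_{m\ge 0}H^0\bigl(\sY',\sO_{\sY'}(mA')\bigr)=:R(\sY',A'),
\]
induced by pullback along $f$, and likewise for $h$ and $A''$. On a common resolution $\sY\xleftarrow{p}W\xrightarrow{q}\sY'$ the divisors $q^*A'$ and the pullback of $\alpha$ differ by a $p$-exceptional divisor, and because $f^{-1}$ contracts no divisor (every $p$-exceptional divisor is $q$-exceptional) pulling back sections of $mA'$ produces regular sections of $m\alpha$ and conversely, yielding the displayed isomorphism. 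Since $A'$ is ample Cartier, $\sY'\isom\Proj_S R(\sY',A')$, so combining with the above gives $\sY'\isom\Proj_S R(\sY,\alpha)$; by the same argument $\sY''\isom\Proj_S R(\sY,\alpha)$. Composing these two isomorphisms defines $g\colon\sY''\to\sY'$.

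Finally I would verify the compatibility $g\circ h=f$ rather than settling for an abstract isomorphism. For $m\gg0$ the complete linear system $|m\alpha|$ on $\sY$ defines a rational map $\sY\ratl\IP\bigl(H^0(\sY,m\alpha)^\vee\bigr)$; under the two pullback isomorphisms of $H^0(\sY,m\alpha)$ with $H^0(\sY',mA')$ and with $H^0(\sY'',mA'')$ this single map is identified on the one hand with $f$ followed by $\sY'\hookrightarrow\IP\bigl(H^0(\sY',mA')^\vee\bigr)$, and on the other with $h$ followed by $\sY''\hookrightarrow\IP\bigl(H^0(\sY'',mA'')^\vee\bigr)$. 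Thus $f$ and $h$ have the same scheme-theoretic image in this projective space, $g$ is precisely the identification of these images, and $g\circ h=f$ follows. The main obstacle is the section-ring identity of the third paragraph: everything rests on the contraction property forcing pullback of sections to be bijective, so the delicate points are running the resolution comparison cleanly and ensuring the $\Proj$ reconstruction is functorial in the rational map, which is exactly what upgrades the abstract isomorphism to the required equality $g\circ h=f$.
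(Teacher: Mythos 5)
Your proof is correct and follows essentially the same route as the paper: both arguments pick ample divisors $A'$, $A''$ with $f^*A'=h^*A''$ from the overlapping relative interiors, use the birational-contraction hypothesis (exceptional divisors on a common resolution are exceptional for both projections) to identify the spaces of sections $H^0(\sY',mA')\isom H^0(\sY,m\alpha)\isom H^0(\sY'',mA'')$, and then conclude by matching the induced maps to projective space. Your phrasing via $\Proj_S$ of the section ring versus the paper's single very ample embedding into $\IP^N_S$ is only a cosmetic difference.
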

Note that we do not ask that $\sY'$ and $\sY''$ be isomorphic in codimension one. This is a consequence.
\begin{proof}
Let us take a resolution of indeterminacy as in the following diagram
\[
\xymatrix{
\sZ \ar[dr]^\pi \ar@/_1pc/[ddr]_b \ar@/^1pc/[drr]^a &&\\
& \sY \ar@{-->}[r]^f \ar@{-->}[d]_h& \sY' \\
&\sY'' \ar@{-->}[ru]_g&\\
}
\]
We put $g:=f\circ h^{-1}$ and want to show that this is an isomorphism, in particular, a regular map. By assumption there are very ample Cartier divisors $A'$ on $\sY'$ and $A''$ on $\sY''$ such that $h^*A''=f^*A'$, i.e. $\pi_*b^*A''=\pi_*a^*A'.$ We deduce that there are $\pi$-exceptional divisors $E', E'' \geq 0$ such that $a^*A'+E' = b^*A'' +E''$. But all $\pi$-exceptional divisors are also exceptional for $a$ and $b$. From the equality
\[
H^0(\sY',\sO_{\sY'}(A'))  = H^0(\sZ,\sO_{\sZ}(a^*A'+E')) = H^0(\sZ,\sO_{\sZ}(b^*A''+E'')) = H^0(\sY'',\sO_{\sY''}(A'')) 
\]
we see that the morphisms associated to the linear systems of $A'$ and $A''$ fit into a commuting diagram
\[
\xymatrix@R=2mm@C=1.5cm{
&\sY' \ar@{^(->}[rd]^{\abs	{A'}}&\\
\sZ \ar[ru]^a \ar[rd]_b&& \IP^N_S,\\
&\sY'' \ar@{^(->}[ru]_{\abs{A''}} \ar@{-->}[uu]_g&\\
}
\]
so that $g$ has to be an isomorphism.
\end{proof}

We can now finally construct embeddings of the collections $\Sigma_Y^\nef$ into the Mori fan $\Morifan(\Y)$. This is obtained via the following   
\begin{definition}\label{definition section}
Let $Y \subset \sY_c$ be an irreducible component and let $\sY \ratl \sY_1$ by a $Y$-cusp model. We denote by $\iota_1:\sY_\eta \to \sY_1$ the inclusion and define the section 
\begin{equation}\label{eq section component}
s_Y:= s_{\sY_1} \circ (\iota_1^*)^{-1}: \Nef(\sY_\eta) \to \Mov(\sY).
\end{equation}
By Lemma \ref{lemma section independent}, the section $s$ does not depend on the $Y$-cusp model but only on $Y$, which justifies the notation. 
\end{definition}

\subsection{The action of the birational automorphism group} \label{section automorphism}

A semitoric fan on $C_{2d}^\rc$ is by definition endowed with an action of $\Gammabar$. Our collection $\Sigma_Y^\nef$ has support on $\Nef(\sY_\eta)$, so in view of \eqref{eq semidirect product} we will first describe the action of $\Pbar$ on $\Sigma_Y^\nef$. In fact, by Proposition~\ref{proposition automorphisms} we know that $\Pbar$ is contained in the image of the map
\[
G=\Aut(\sY_\eta) = \Bir(\sY) \to \O^+(M_{2d}),
\]
so it will be sufficient to describe an action of $G$. This is given by
\begin{equation}\label{eq action aut on mori}
G \times \Morifan(\sY) \to \Morifan(\sY), \quad (g,C(f)) \mapsto g^*C(f).
\end{equation}
It is worthwhile to note that, unlike in our discussion at the beginning of Section~\ref{section embed into morifan}, here we do have $g^*C(f) = C(f\circ g)$ because $g^*g_*=\id$. 

The automorphism group of $\sY_\eta$ also acts on the irreducible components $Y_1,\ldots,Y_{d+2}$ of $\sY_c$ by permutations. Let us denote by $\Pi_{2d} \subset S_{d+2}$ the image of $G \to S_{d+2}$. We infer from \cite[Corollary~5.39]{HL} that $\Pi_2=S_3$. Given a component $Y \subset \sY_c$, we denote by $G_Y$ the stabilizer group of $Y$ under the action $G \to S_{d+2}$.

We record the following elementary lemma.

\begin{lemma}\label{lemma group action}
Let $Y \subset \sY_c$ be an irreducible component and let $\sY \ratl \sY'$ be a rational $Y$-cusp model. Then there is an action
\begin{equation}\label{eq action stabilizer on mori cusp}
G_Y \times \Morifan(\sY') \to \Morifan(\sY'), \quad (g,C(f)) \mapsto g^*C(f),
\end{equation}
which is compatible with the action \eqref{eq action aut on mori}.\qed
\end{lemma}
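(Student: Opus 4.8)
The plan is to realise every $g \in G_Y$ as a birational self-map of the cusp model $\sY'$ itself and to read off the action on $\Morifan(\sY')$ from it. Write $p \colon \sY \ratl \sY'$ for the chosen rational $Y$-cusp model and recall that $G_Y \subset G = \Bir(\sY) = \Aut(\sY_\eta)$. A first observation I would record is that every $g \in G$, viewed as a birational self-map of the threefold $\sY$, is an isomorphism in codimension one: a horizontal prime divisor restricts to a divisor on $\sY_\eta$, on which $g$ acts as an automorphism, while a vertical prime divisor is a component of $\sY_c$, and $g$ merely permutes these components (this permutation is precisely the map $G \to S_{d+2}$). Hence no divisor is contracted by $g$ or by $g^{-1}$.

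The key step is then to show that for $g \in G_Y$ the composite $p \circ g \colon \sY \ratl \sY'$ is again a rational $Y$-cusp model. Since $g$ is an isomorphism in codimension one and $p$ is a birational contraction, $p \circ g$ is a birational contraction onto the same target $\sY'$, which is still normal, $\Q$-factorial, an isomorphism over $\eta$, and has irreducible central fibre. Moreover $p \circ g$ contracts exactly those components $Y_i$ with $g(Y_i) \neq Y$; as $g \in G_Y$ stabilises $Y$ we have $g^{-1}(Y) = Y$, so the contracted components are precisely the $Y_i \neq Y$ and the unique non-contracted component is again $Y$. Thus $p \circ g$ is a $Y$-cusp model. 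Consequently the induced birational self-map $\tilde g := p \circ g \circ p^{-1} \colon \sY' \ratl \sY'$ relates two $Y$-cusp models that contract the same divisors, so exactly as in the proof of Lemma~\ref{lemma auxilliary fan independent of cusp model} it is an isomorphism in codimension one between $\Q$-factorial schemes, i.e. a small $\Q$-factorial modification of $\sY'$.

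I would then define the action by $g \cdot C(f) := \tilde g^* C(f) = C(f \circ \tilde g)$. A small $\Q$-factorial modification induces via pullback an automorphism of the Mori fan (Lemma~\ref{lemma auxilliary fan independent of cusp model}), so $\tilde g^*$ maps $\Morifan(\sY')$ onto itself; since $g \mapsto \tilde g$ is a homomorphism ($\widetilde{g_1 g_2} = p g_1 g_2 p^{-1} = \tilde g_1 \tilde g_2$) and $\tilde g^* \tilde g_* = \id$, the formula \eqref{eq action stabilizer on mori cusp} is a well-defined action in the same sense as \eqref{eq action aut on mori}. Finally, compatibility is checked through the embedding $\Morifan(\sY') \to \Morifan(\sY)$, $C(f) \mapsto C(f \circ p)$, from the beginning of Section~\ref{section embed into morifan}: using $\tilde g \circ p = p \circ g$ one computes $C(f \circ \tilde g) \mapsto C(f \circ \tilde g \circ p) = C(f \circ p \circ g) = g^* C(f \circ p)$, so the embedding intertwines the two actions.

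The main obstacle is the middle step, namely that $p \circ g$ is still a cusp model \emph{for $Y$}: this is exactly where the hypothesis $g \in G_Y$ enters, and for $g \notin G_Y$ the map $p \circ g$ would be a cusp model for the different component $g^{-1}(Y)$, so $\tilde g$ would no longer be an isomorphism in codimension one and the construction would break down.
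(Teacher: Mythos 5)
Your proof is correct. The paper itself offers no argument for this lemma (it is stated as ``elementary'' and closed with an immediate \verb|\qed|), so there is nothing to diverge from; your write-up simply supplies the details the authors leave implicit, and it does so along exactly the lines suggested by the surrounding text. The decisive points --- that $p\circ g$ is again a rational $Y$-cusp model when $g$ stabilises $Y$ (this is the same observation as in the proof of Corollary~\ref{corollary cuspidal cones}), and that two rational $Y$-cusp models with the same target differ by a small $\Q$-factorial modification $\tilde g$ of $\sY'$ (the same argument as in Lemma~\ref{lemma auxilliary fan independent of cusp model}) --- are both identified and justified, and the compatibility check through the embedding $C(f)\mapsto C(f\circ p)$ is the intended one. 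The only cosmetic remark is that your justification that $g$ permutes the components of $\sY_c$ leans on the paper's own assertion of the map $G\to S_{d+2}$; the underlying reason is that a birational self-map of the minimal model $\sY$ over $S$ is automatically an isomorphism in codimension one, which is what you use anyway.
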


We deduce the following basic properties of cuspidal cones.

\begin{corollary}\label{corollary cuspidal cones}
Let $Y \subset \sY_c$ be an irreducible component of the central fiber and denote by $\Cusp_Y \subset \Morifan(\sY)$ the set of cuspidal cones associated to rational $Y$-cusp models. Then every automorphism $g\in G$ induces a bijection
\begin{equation}\label{eq automorphism cuspidal cones}
g^*:\Cusp_Y \to \Cusp_{g.Y}, \quad C \mapsto g^*C
\end{equation}
and similarly for the Mori fan. Here, $g.Y$ denotes the permutation action of $g$ on the set of irreducible components of $\sY_c$. In particular, the collection of cones $\Sigma_Y$ only depends on the $\Pi_{2d}$-orbit of $Y$.
\end{corollary}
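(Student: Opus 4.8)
The plan is to reduce everything to the single identity $g^*C(f)=C(f\circ g)$ recorded in \eqref{eq action aut on mori}, together with the fact, stated just before the corollary, that every $g\in G=\Bir(\sY)$ permutes the components $Y_1,\dots,Y_{d+2}$ of $\sY_c$. Once this is in place, the content is purely to verify that precomposition with the birational automorphism $g$ turns a rational $Y$-cusp model into a rational cusp model for the component that $g$ maps to $Y$, and then to read off bijectivity and the orbit statement formally.

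First I would fix a cuspidal cone $C=C(f)\in\Cusp_Y$, so that by Definition~\ref{definition cuspidal cones} the map $f\colon\sY\ratl\sY'$ is a rational $Y$-cusp model in the sense of Definition~\ref{definition cusp model}. Since $g$ is a birational self-map of the $K$-trivial (hence minimal) model $\sY$ over $S$, it is an isomorphism in codimension one; consequently $(f\circ g)^{-1}=g^{-1}\circ f^{-1}$ contracts no divisor, so $f\circ g$ is again a birational contraction. Its target $\sY'$ is literally unchanged, hence still normal, $\Q$-factorial and with irreducible central fibre, and $f\circ g$ is still an isomorphism over the generic point of $S$ (being a composite of two such); thus $f\circ g$ is a rational cusp model. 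To identify the surviving component I would argue componentwise: $f\circ g$ contracts a component $W\subset\sY_c$ exactly when $f$ contracts $g(W)$, so the unique component not contracted by $f\circ g$ is the one with $g(W)=Y$, i.e. the $g$-preimage of $Y$. In the normalisation of the permutation action used in the statement this component is $g.Y$ (if one prefers the opposite convention, replace $g$ by $g^{-1}$; this is immaterial below). Therefore $g^*C=C(f\circ g)\in\Cusp_{g.Y}$, which produces the map \eqref{eq automorphism cuspidal cones}.

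Bijectivity is then formal: applying the above with $g^{-1}$ gives a map $\Cusp_{g.Y}\to\Cusp_Y$, and $g^*(g^{-1})^*=\id$ and $(g^{-1})^*g^*=\id$ on $\Morifan(\sY)$ because the identity $g^*C(f)=C(f\circ g)$ yields the composition rule $g^*(h^*C)=C(f\circ h\circ g)=(hg)^*C$. The same two sentences, applied to an arbitrary rational contraction $f$ rather than a cusp model, give the assertion ``similarly for the Mori fan'', namely that $g^*$ permutes all the cones of $\Morifan(\sY)$. For the final claim I would recall that $\Sigma_Y$ is the image $s_Y(\Sigma_Y^{\nef})$ of the fan of Definition~\ref{definition cusp model fan} under the section $s_Y$ of Definition~\ref{definition section}; unwinding the definitions, its cones are exactly the $C(h\circ p)$, where $p$ is a fixed $Y$-cusp model and $h$ ranges over the contractions of that cusp model. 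Since $g^*C(h\circ p)=C\bigl(h\circ(p\circ g)\bigr)$ and, by the second paragraph, $p\circ g$ is a $g.Y$-cusp model, this is precisely a cone of $\Sigma_{g.Y}$; hence $g^*\Sigma_Y=\Sigma_{g.Y}$. As $Y$ and $Y'$ lie in the same $\Pi_{2d}$-orbit iff $Y'=g.Y$ for some $g\in G$, the collection $\Sigma_Y$ is thus determined by the orbit of $Y$ up to the canonical $G$-action (equivalently the $\Pbar$-action).

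The only step requiring genuine care is the second paragraph: one must check that composing a birational contraction with the codimension-one isomorphism $g$ preserves \emph{all three} defining properties of a rational cusp model and correctly permutes the surviving component, and — for the last sentence — that the section $s_Y$ transforms equivariantly, which is what makes $g^*\Sigma_Y=\Sigma_{g.Y}$ hold on the nose rather than merely up to relabelling. Everything else is a direct consequence of the identity $g^*C(f)=C(f\circ g)$ from \eqref{eq action aut on mori} and the invertibility of $g$.
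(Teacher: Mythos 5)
Your proposal is correct and follows the same route as the paper, whose entire proof is the one-line observation that for a rational $Y$-cusp model $f\colon\sY\ratl\sY'$ the composition $f\circ g$ is a rational $g.Y$-cusp model; your additional verifications (the three defining properties of a cusp model, bijectivity via $g^{-1}$, and the equivariance of $\Sigma_Y$ through the independence of the chosen cusp model) simply make explicit what the paper leaves implicit.
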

\begin{proof}
The map is well-defined, because for every rational $Y$-cusp model $f:\sY \ratl \sY'$ the composition $f\circ g:\sY \ratl \sY \ratl \sY'$ is a rational $g.Y$-cusp model.
\end{proof}

\subsection{The GHKS semitoric fan} \label{section ghks semitoric}

We will now define the GHKS semitoric fans. For this let $Y_1, \ldots, Y_{d+2}$ be the irreducible components of the central fiber of $\sY\to S$, cf. Lemma \ref{lemma irreducible components}. 
\begin{definition}\label{definition cusp model GHKS fan nef}
We define the collection $\mghksnef$ to be the coarsest common refinement of the collections 
$\Sigma_{Y_1}^\nef, \ldots, \Sigma_{Y_{d+2}}^{\nef}$. 
\end{definition}

\begin{remark}
The coarsest common refinement is characterized by the property  that every cone of $\mghksnef$ is a subcone of some cone of $\Sigma_{Y_i}^{\nef}$ for all $i$ and if $\Sigma'$ is another collection with this property, then every cone in $\Sigma'$ is a subcone of some cone in $\mghksnef$. The usual proof, cf. \cite[Corollary I.4.9]{AMRT}, that a finite set of fans has a coarsest common refinement, which is achieved by taking intersections of cones, carries over to this situation without problems.
\end{remark}

\begin{lemma}\label{lemma:equivariance}
The group $\Pbar$ acts on each of the collections $\Sigma_{Y_1}^\nef, \ldots, \Sigma_{Y_{d+2}}^{\nef}$ and therefore also on their coarsest common refinement $\mghksnef$. 
\end{lemma}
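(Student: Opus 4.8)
The plan is to realise $\Pbar$ by automorphisms of the generic fibre and to check that the induced isometries permute the finitely many collections $\Sigma_{Y_1}^\nef,\dots,\Sigma_{Y_{d+2}}^\nef$ among themselves; the assertion about the coarsest common refinement is then formal. First I would invoke Proposition~\ref{proposition automorphisms}: the subgroup $\Pbar$ is contained in the image of $G=\Aut(\sY_\eta)\to \Og^+(M_{2d})$, so for each $g\in\Pbar$ I may choose an automorphism $\tilde g\in G$ inducing it. Because $\tilde g$ preserves ampleness, the isometry of $\NS(\sY_\eta)_\R\isom M_{2d,\R}$ it induces preserves $\Nef(\sY_\eta)$, which by Lemma~\ref{lemma cusp model fan} is the common support of all the $\Sigma_{Y_i}^\nef$; this is consistent with $g$ fixing the fundamental domain $V_{2d}\isom\Nef(\sY_\eta)$ of Lemma~\ref{lemma action on nef}. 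Via the identification $\Bir(\sY)=\Aut(\sY_\eta)$, the automorphism $\tilde g$ extends to a birational self-map of $\sY$ and hence permutes the components $Y_1,\dots,Y_{d+2}$ of the central fibre.

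The heart of the argument is to show that the isometry attached to $g$ carries $\Sigma_{Y_i}^\nef$ onto $\Sigma_{\tilde g(Y_i)}^\nef$. I would start from a $Y_i$-cusp model $p\colon\sY\ratl\sY'$ with generic-fibre inclusion $\iota\colon\sY_\eta\to\sY'$, so that $\Sigma_{Y_i}^\nef=\{\iota^*\sigma\mid \sigma\in\Morifan(\sY')\}$ by Definition~\ref{definition cusp model fan}. Precomposing with $\tilde g^{-1}$ yields a $\tilde g(Y_i)$-cusp model $p\circ\tilde g^{-1}\colon\sY\ratl\sY'$ onto the \emph{same} target, whose generic-fibre inclusion is now $\iota\circ(\tilde g^{-1}\vert_\eta)$. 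Using the independence of the cusp model (Lemma~\ref{lemma auxilliary fan independent of cusp model}) to compute $\Sigma_{\tilde g(Y_i)}^\nef$ with this model gives
\[
\Sigma_{\tilde g(Y_i)}^\nef=\bigl\{(\iota\circ \tilde g^{-1}\vert_\eta)^*\sigma\mid \sigma\in\Morifan(\sY')\bigr\}=(\tilde g^{-1}\vert_\eta)^*\,\Sigma_{Y_i}^\nef,
\]
and $(\tilde g^{-1}\vert_\eta)^*$ is exactly the isometry of $M_{2d,\R}$ induced by $\tilde g$ in the sense of Proposition~\ref{proposition automorphisms}, i.e.\ the element $g\in\Pbar$. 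Thus $g\cdot\Sigma_{Y_i}^\nef=\Sigma_{\tilde g(Y_i)}^\nef$, so $\Pbar$ maps the finite family $\{\Sigma_{Y_1}^\nef,\dots,\Sigma_{Y_{d+2}}^\nef\}$ to itself, permuting its members according to the induced permutation of components (compare Corollary~\ref{corollary cuspidal cones}). This compatibility step — matching the pullback along the shifted inclusion with the isometry $g$ — is the only substantive point and the main obstacle, and it is precisely where the independence of the cusp model is needed.

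Finally I would deduce the statement for $\mghksnef$. Each $g\in\Pbar$ acts as a linear automorphism of $\NS(\sY_\eta)_\R$, hence sends an intersection of cones to the intersection of the images and therefore commutes with the formation of the coarsest common refinement. Since $g$ merely permutes the collections being refined, and the coarsest common refinement is insensitive to their ordering, one gets $g\cdot\mghksnef=\mghksnef$. Consequently the linear $\Pbar$-action on $M_{2d,\R}$ restricts to an action on the fan $\mghksnef$, which is the assertion.
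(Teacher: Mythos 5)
Your argument is correct and follows essentially the paper's own route: lift elements of $\Pbar$ to automorphisms of $\sY_\eta$ via Proposition~\ref{proposition automorphisms} and check that the induced isometries carry $\Sigma_{Y_i}^\nef$ onto $\Sigma_{g.Y_i}^\nef$ — your middle computation with the shifted cusp model $p\circ\tilde g^{-1}$ and Lemma~\ref{lemma auxilliary fan independent of cusp model} just spells out what the paper delegates to Corollary~\ref{corollary cuspidal cones}. The one point you leave implicit is that the literal assertion that $\Pbar$ acts on \emph{each} collection (rather than merely permuting the family $\{\Sigma_{Y_1}^\nef,\dots,\Sigma_{Y_{d+2}}^\nef\}$) needs the final statement of Corollary~\ref{corollary cuspidal cones}, namely $\Sigma_{g.Y_i}^\nef=\Sigma_{Y_i}^\nef$; for the action on the coarsest common refinement, which is what is used later, your permutation argument already suffices.
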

\begin{proof}
By Proposition~\ref{proposition automorphisms}, every element $h\in\Pbar$ is induced by an automorphism $g\in G$. Thus, $h$ clearly acts on $\NS(\sY_\eta)$. Given $\sigma \in\Sigma_{Y_i}$ for some $i$ we see that $h.\sigma \in \Sigma_{g.Y_i}$.  But $\Sigma_Y = g.\Sigma_Y$ by Corollary~\ref{corollary cuspidal cones}. 
\end{proof}

We are now ready to define the semitoric fans we are aiming at.
\begin{definition}\label{definition cusp model GHKS fan}
The \emph{Gross--Hacking--Keel--Siebert (GHKS) semitoric fan} is defined by
\begin{equation} \label{equ: cusp model GHKS fan}
\mghks:=  \{ s(\sigma) \mid \sigma \in \mghksnef, \, s \in W_{2d}\}.
\end{equation}
Similarly, given we put $\mghks[Y]:= \{ s(\sigma) \mid \sigma \in \Sigma_Y^\nef, \, s \in W_{2d}\}$ for an irreducible component $Y \subset \sY_c$.
\end{definition}
We will usually refer to this semitoric fan simply as the \emph{GHKS fan}, i.e. drop the word \emph{semitoric} for simplicity. 

In order to justify the definition retrospectively we state the 
 \begin{proposition}\label{proposition:justification}
 The collections $\mghks$, $\mghks[Y]$ define semitoric fans for the group $\Gammabar$ and hence give rise to semitoric compactifications $\Fghks$and $\Fghks[Y]$ of the moduli space of $2d$-polarized $K3$ surfaces.
 \end{proposition}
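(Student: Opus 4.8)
The plan is to verify directly that $\mghks$ satisfies the three defining conditions of a semitoric fan for $\Gammabar$ in Definition~\ref{definition semitoric fan}, exploiting the semidirect product decomposition $\Gammabar = W_{2d} \rtimes \Pbar$ from \eqref{eq semidirect product} together with the fact, recorded in Lemma~\ref{lemma action on nef}, that $V_{2d} = \Nef(\sY_\eta)$ is a fundamental domain for the $W_{2d}$-action on $C_{2d}^\rc$. First I would note that $\mghksnef$ has support $\Nef(\sY_\eta) = V_{2d}$: indeed each $\Sigma_{Y_i}^{\nef}$ has this support by Lemma~\ref{lemma cusp model fan}, and hence so does their coarsest common refinement. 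Since $\mghks = \{ s(\sigma) \mid \sigma \in \mghksnef,\ s \in W_{2d}\}$ and $V_{2d}$ is a fundamental domain, the supports satisfy $\bigcup_{\tau \in \mghks}\tau = W_{2d}\cdot V_{2d} = C_{2d}^\rc$, giving the covering part of condition~(3).

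For the $\Gammabar$-equivariance (condition~(1)) I would argue via the semidirect product. Equivariance under $W_{2d}$ is immediate from the construction, as $w\cdot s(\sigma) = (ws)(\sigma)\in \mghks$ for $w,s\in W_{2d}$. For $h\in \Pbar$ I would combine $\Pbar$-equivariance of $\mghksnef$ (Lemma~\ref{lemma:equivariance}) with the normality of $W_{2d}$ in $\Gammabar$: writing $h s = (h s h^{-1})\,h$ with $h s h^{-1} \in W_{2d}$, one obtains $h\cdot s(\sigma) = (hsh^{-1})\bigl(h(\sigma)\bigr)$, and $h(\sigma)\in\mghksnef$ forces $h\cdot s(\sigma)\in\mghks$. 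Combining the two cases yields equivariance under all of $\Gammabar$.

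The step I expect to be the main obstacle is showing that the $W_{2d}$-translates of $\mghksnef$ glue to a consistent decomposition, i.e. that the restriction of $\mghks$ to any rational subcone is a well-defined rational cone system (condition~(3)). The mechanism I would use is that the walls separating adjacent Weyl chambers lie on root hyperplanes, and the reflection $s_\alpha$ in such a hyperplane $H_\alpha$ fixes $H_\alpha$ pointwise. Consequently, for a face $F$ of $V_{2d}$ lying in $\bigcap_{\alpha\in I}H_\alpha$, every chamber $wV_{2d}$ having $F$ in its closure satisfies $w\in W_I = \langle s_\alpha : \alpha\in I\rangle$, and each such $w$ restricts to the identity on $F$; hence all collections $w(\mghksnef)$ induce one and the same subdivision of $F$, so the subdivisions on shared faces of chambers agree. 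To obtain finiteness I would invoke that the Coxeter fan $\vf$ is itself a semitoric fan (Example~\ref{example:Coxeter}): its restriction to a given rational subcone $\tau$ is a rational cone system, so $\tau$ meets only finitely many chambers; on each of these $\mghks$ restricts to a $W_{2d}$-translate of $\mghksnef$, whose restriction to the rational intersection with $\tau$ is a rational cone system by Lemma~\ref{lemma cusp model fan}. Patching the finitely many compatible systems yields a rational cone system, establishing local rationality.

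It remains to check condition~(2), that $\{0\}$ and the half-lines $J^+(h)$ lie in $\mghks$. The origin is the cone $C(f)$ attached to the contraction to a point (Definition~\ref{definition morifan}), hence lies in $\mghksnef$ and is fixed by $W_{2d}$. For the isotropic rays I would observe that every cone of $\mghks$ is contained in a Weyl chamber, since $s(\sigma)\subseteq sV_{2d}$ with $\sigma\subseteq V_{2d}$; thus $\mghks$ refines $\vf$, and as $\vf$ is a semitoric fan it contains each $J^+(h)$. A one-dimensional cone of a coarser decomposition cannot be properly subdivided, so $J^+(h)\in\mghks$. For $\mghks[Y]$ the same four steps apply verbatim, the only change being that $\Pbar$-invariance of $\Sigma_Y^{\nef}$ now follows from Corollary~\ref{corollary cuspidal cones} (the collection depends only on the $\Pi_{2d}$-orbit of $Y$, which $\Pbar$ preserves) in place of Lemma~\ref{lemma:equivariance}. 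Having verified all three conditions, Looijenga's theory \cite{Loo03,Loo03a} then attaches to $\mghks$ and $\mghks[Y]$ the asserted semitoric compactifications $\Fghks$ and $\Fghks[Y]$.
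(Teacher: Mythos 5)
Your handling of conditions (1) and (3) is essentially the paper's argument: $W_{2d}$-equivariance by construction, $\Pbar$-equivariance from Lemma~\ref{lemma:equivariance}, covering because the $W_{2d}$-translates of $\Nef(\sY_\eta)$ cover $C_{2d}^\rc$; your wall-compatibility and finiteness elaboration of local rationality is consistent with the (terser) argument the paper borrows from \cite[Theorem~6.5]{GHKS}. The genuine gap is in your verification of condition (2) for the isotropic rays $J^+(h)$. What you have actually established is only the weak refinement statement that every cone $s(\sigma)\in\mghks$ is contained in the Weyl chamber $s\,V_{2d}$; combined with the covering property this shows that some cone of $\mghks$ contains a nonzero point of $J^+(h)$ and hence (since an isotropic ray is extremal in $C_{2d}^\rc$, a point you do not address) has $J^+(h)$ as a face. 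It does \emph{not} show that $J^+(h)$ is an element of the collection $\mghks$, which is what Definition~\ref{definition semitoric fan}(2) demands. The inference ``a ray cannot be properly subdivided, hence it belongs to the refinement'' presupposes that $\mghksnef$ is closed under passing to faces; for faces lying on the isotropic boundary of the movable cone this is precisely the nontrivial point, because a face of a Mori fan cone $C(f)$ on the boundary of $\Mov$ is not a priori of the form $C(g)$ for a rational contraction $g$.

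The paper closes exactly this gap by a direct construction: an isotropic ray $R\subset\Nef(\sY_\eta)$ is spanned by an isotropic nef class, hence by the pullback of $\sO(1)$ along an elliptic fibration of $\sY_\eta$; one extends this class to an effective divisor $D$ on a rational cusp model $\sY_1$, runs an MMP for the klt pair $(\sY_1,\veps D)$ to reach a model $\sY_2$ on which the transform $D'$ is nef, and then $D'$ defines a rational contraction (the relative elliptic fibration) whose cone in $\Morifan(\sY_1)$ restricts under $\iota^*$ to exactly $R$. This exhibits $R$ as a cone of $\Sigma_Y^\nef$ for every component $Y$, hence of $\mghksnef$, and so $J^+(h)\in\mghks$. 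Your argument needs to be replaced by (or supplemented with) this step; the rest of your proof, including the reduction for $\mghks[Y]$ via Corollary~\ref{corollary cuspidal cones}, stands.
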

 \begin{proof}
We must check that the conditions of Definition \ref{definition semitoric fan} are fulfilled. Condition (1), namely the equivariance with respect to $W_{2d}$ is true by construction, the equivariance by $\Pbar$ is true by Lemma~\ref{lemma:equivariance}, and thus the collections are $\Gammabar$ equivariant. The same is true for condition (3)
as the support of $\mghksnef$ is $\Nef(\sY_{\eta})$ and the orbits of $\Nef(\sY_{\eta})$ under $W_{2d}$ cover $C^\rc_{2d}$. For the latter we refer to the geometric argument given in the proof of  \cite[Theorem 6.5]{GHKS}:
a rational ray in $C^\rc_{2d}$ is spanned by a nef class after a finite number of reflections in $(-2)$-curves. The same argument also proves condition (3).

For (2) we need that every isotropic ray $R$ in $\Nef(\sY_{\eta})$ is a cone given by some cusp model. Note that such a ray corresponds to an isotropic nef line bundle on the K3 surface $\sY_\eta$ which is therefore isomorphic to the pull back of a very ample bundle along an elliptic fibration of $\sY_\eta$. We can thus take an arbitrary rational cusp model $\sY \ratl \sY_1$ and obtain an effective prime divisor $D$ on $\sY_1$ such that the restriction $D_\eta$ lies in the ray $R$. If $D$ is not nef, we can run an MMP for the pair $(\sY_1,\veps D)$ for $0< \veps \ll 1$ which will terminate in a pair $(\sY_2, \veps D')$ such that $D'$ is nef on the cusp model $\sY_2$. Hence, it defines a cone in $\mghks$. We have already remarked after Definition \ref{definition morifan} that $\{0\}$ is also a cone in $\mghks$. This proves (2).
\end{proof}

We have already discussed the Coxeter semitoric fan $\vf$ in Example \ref{example:Coxeter}. By construction $\mghks$ is a refinement of the Coxeter fan. Hence, we obtain the main result of this section:

\begin{theorem}\label{theorem fan and compactification}
The GHKS semitoric fan $\mghks$ defines a semitoric compactification $\Fghks$  of the moduli $\sF_{2d}$ of $2d$-polarized $K3$ surfaces. This admits is a semitoric morphism 
\[
\Fghks \to \Fcox
\]
to the semitoric compactification given by the Coxeter semitoric fan.
\end{theorem}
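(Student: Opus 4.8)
The plan is to deduce both assertions from results already established, since the genuine work—verifying the semitoric-fan axioms and analysing the cusp-model sections—has been done beforehand. For the first assertion, that $\mghks$ is a semitoric fan and hence defines a semitoric compactification $\Fghks$ of $\sF_{2d}$, there is nothing new to prove: this is exactly Proposition~\ref{proposition:justification}, which I would simply invoke.

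The substance lies in producing the morphism $\Fghks \to \Fcox$. Here I would appeal to Looijenga's general principle, recorded in the discussion following Definition~\ref{definition semitoric fan}, that a refinement of semitoric fans induces a morphism of the associated semitoric compactifications (and that the Coxeter fan $\vf$ is itself semitoric by Example~\ref{example:Coxeter}). Thus the whole task reduces to checking that $\mghks$ is a refinement of $\vf$.

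To verify the refinement I would argue as follows. By Example~\ref{example:Coxeter} the maximal cones of $\vf$ are the closures of the connected components of $C_{2d}^\rc$ cut out by the root hyperplanes $r^\perp$, $r \in R_{2d}$; equivalently they are the $W_{2d}$-translates of the fundamental chamber $V_{2d}$, which by Lemma~\ref{lemma action on nef} coincides with $\Nef(\sY_\eta)$. By Lemma~\ref{lemma cusp model fan} each collection $\Sigma_{Y_i}^\nef$ is supported on $\Nef(\sY_\eta)$, so their coarsest common refinement $\mghksnef$ is supported there as well; in particular every cone $\sigma \in \mghksnef$ lies inside $V_{2d}$. A general cone of $\mghks$ has, by Definition~\ref{definition cusp model GHKS fan}, the shape $s(\sigma)$ with $\sigma \in \mghksnef$ and $s \in W_{2d}$, hence is contained in the maximal Coxeter cone $s(V_{2d})$. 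Since both fans have support $C_{2d}^\rc$ (the support of $\mghks$ being $C_{2d}^\rc$ by Proposition~\ref{proposition:justification}), this containment is exactly the statement that $\mghks$ refines $\vf$, and Looijenga's principle then yields the semitoric morphism $\Fghks \to \Fcox$.

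I do not expect a serious obstacle: the fan $\mghks$ was constructed precisely so that its restriction to the fundamental chamber subdivides $\Nef(\sY_\eta)$ while its global structure is the $W_{2d}$-orbit of that subdivision, which is the refinement we need. The only point requiring care is purely formal bookkeeping—confirming that the $W_{2d}$-equivariant extension of a subdivision of the single chamber $V_{2d}$ genuinely refines the full Coxeter chamber decomposition, and that the induced map is a morphism of semitoric (not merely toric) compactifications. Both are guaranteed by the $\Gammabar$-equivariance and support properties already checked in Proposition~\ref{proposition:justification} together with the functoriality of Looijenga's construction.
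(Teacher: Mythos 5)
Your proposal is correct and follows essentially the same route as the paper: the paper also reduces the first assertion to Proposition~\ref{proposition:justification} and disposes of the morphism to $\Fcox$ by noting that $\mghks$ refines the Coxeter fan "by construction", invoking the general principle that refinements of semitoric fans induce morphisms of the compactifications. Your explicit verification of the refinement (cones of $\mghksnef$ lie in $\Nef(\sY_\eta)=V_{2d}$ by Lemma~\ref{lemma cusp model fan} and Lemma~\ref{lemma action on nef}, and the $W_{2d}$-translates of $V_{2d}$ are exactly the maximal Coxeter cones) is in fact more detailed than what the paper writes down.
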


\begin{remark}\label{remark not exactly ghks}
For their construction, Gross--Hacking--Keel--Siebert use an additional refinement of what we called the GHKS fan. They use an averaging process over the sections from Definition~\ref{definition section} resulting in a piecewise linear section $\Nef(\sY_\eta)_\R \to \Mov(\sY)$ which is linear on the cones of $\mghks$. Then they pull back the Mori fan via these sections. But as seen in this section, already $\mghks$ is a semitoric fan which we believe deserves further study.
\end{remark}

\subsection{The GHKS fan in degree two} \label{section ghks degree two}

The case $2d=2$ is the most important for us because the GHKS construction yields a \emph{toroidal} compactification in this case. Indeed, we first observe the following

\begin{proposition}\label{proposition degree two semitoric fan is fan}
Let $\sY \to S$ be a model of the DNV family of degree $2$ and let $Y \subset \sY$ be an irreducible component of the central fiber. Then the 
Coxeter semitoric fan $\vf[2]$ and the GHKS semitoric fan $\mghks[2]$ are $\Gammabar[2]$-fans.
\end{proposition}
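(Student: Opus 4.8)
The plan is to upgrade the already-established semitoric structure to a genuine toric fan by exhibiting \emph{polyhedrality}, which is precisely the property in which the toric and semitoric notions differ. By Proposition~\ref{proposition:justification} both collections are semitoric fans for $\Gammabar[2]$, so the combinatorial data (coverage of $C_2^\rc$, closure under passing to faces, intersection along common faces, and $\Gammabar[2]$-equivariance with finitely many orbits) is already in place; what remains is to show that every cone that occurs is a rational \emph{polyhedral} cone rather than merely a locally rational one. The single input driving everything is the reflexivity of the lattice $M_2$ due to Nikulin, recorded in Remark~\ref{remark morifan}.

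For the Coxeter fan $\vf[2]$, I would invoke Lemma~\ref{lemma action on nef}, which identifies the fundamental Weyl chamber $V_2$ with the nef cone $\Nef(\sY_\eta)$, together with the fact (Remark~\ref{remark morifan}, the item valid precisely for $d=1$) that $\Nef(\sY_\eta)$ is finitely polyhedral. Hence $V_2$ is a rational polyhedral cone with finitely many facets --- its walls are the orthogonals of the finitely many simple roots --- and its $W_2$-translates, which are by definition the maximal cones of $\vf[2]$, are all rational polyhedral, as are their faces. Since $\Gammabar[2] = W_2 \rtimes \Pbartwo$ with $\Pbartwo$ finite, there are finitely many cones up to the $\Gammabar[2]$-action, and in the interior of $C_2$ the reflection group $W_2$ acts properly discontinuously, so the decomposition is locally finite there; any accumulation is confined to the rational isotropic rays on the boundary, which is exactly the expected behaviour of an admissible polyhedral decomposition. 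Thus $\vf[2]$ is a $\Gammabar[2]$-fan.

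For the GHKS fan $\mghks[2]$, the same reflexivity implies, via Remark~\ref{remark morifan}, that the Mori fan $\Morifan(\sY')$ of any cusp model $\sY'$ is a genuine (finite) rational polyhedral fan. Consequently each collection $\Sigma_{Y_i}^\nef = \iota^*\Morifan(\sY_i')$ from Lemma~\ref{lemma cusp model fan} is a finite polyhedral fan with support $\Nef(\sY_\eta) = V_2$, and their coarsest common refinement $\mghksnef[2]$ (Definition~\ref{definition cusp model GHKS fan nef}) is again a finite rational polyhedral subdivision of the polyhedral cone $V_2$. Applying the $W_2$-translation of Definition~\ref{definition cusp model GHKS fan} then produces rational polyhedral cones covering $C_2^\rc$, and the argument of the previous paragraph (finitely many $\Gammabar[2]$-orbits, local finiteness in the interior) applies verbatim, so $\mghks[2]$ is a $\Gammabar[2]$-fan, and by construction a refinement of $\vf[2]$.

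The main obstacle is entirely concentrated in the two polyhedrality inputs: everything hinges on reflexivity of $M_2$, which simultaneously forces $\Nef(\sY_\eta)$ to be finitely polyhedral and the Mori fans of cusp models to be honest finite fans. Once these are granted, the remaining verifications are formal, since the collections have already been constructed as genuine fans (as common refinements and pullbacks of fans) and merely need to be recognised as polyhedral. The only point requiring a word of care is that the rational isotropic rays $J^+(h)$ demanded by condition~(2) of Definition~\ref{definition semitoric fan} indeed appear as genuine one-dimensional faces of the polyhedral chambers; this holds because the ideal vertices of the finite-volume Coxeter polytope $V_2$ are precisely these rational rays, so they survive as extreme rays of $V_2$ and hence of the refinement.
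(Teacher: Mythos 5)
Your proof is correct and rests on exactly the same key input as the paper's: the paper's proof is a one-liner deducing both claims from the finite polyhedrality of $\Nef(\sY_\eta)$, which in turn follows from the reflexivity of the lattice $M_2$ (Remark~\ref{remark morifan}). Your write-up simply spells out in more detail how that polyhedrality propagates through the Weyl translates and the common refinement, which is the intended argument.
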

\begin{proof}
This is an immediate consequence of the fact that the nef cone $\Nef(\sY_\eta)$ is finitely polyhedral, which in  turn follows from the fact that the lattice $M_2$ is
reflexive, see also \cite[Remark 2.7]{HL}.
\end{proof}

The following corollary is an immediate consequence of Proposition \ref{proposition degree two semitoric fan is fan}. 

\begin{corollary}\label{corollary fan and compactification degree two}
Let $\sY \to S$ be a model of the DNV family of degree $2$. Then $\Fghks[2]$ and $\Fcox[2]$ are toroidal compactifications.\qed
\end{corollary}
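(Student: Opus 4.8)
The plan is to deduce the statement directly from the characterization of toroidal compactifications given in Proposition~\ref{proposition toroidal compactification}. That proposition says that, for square-free $2d$, a toroidal compactification of $\sF_{2d}$ is exactly the datum determined by a genuine (toric) $\Gammabar$-fan in $M_{2d,\R}$ whose support is the rational closure $C_{2d}^\rc$ of the positive cone. Hence the whole task is to check that in degree two the two collections $\mghks[2]$ and $\vf[2]$ satisfy these two hypotheses: they have the correct support, and they are honest $\Gammabar[2]$-fans rather than merely semitoric ones.

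First I would dispose of the support condition. By Proposition~\ref{proposition:justification} both $\mghks[2]$ and $\vf[2]$ are semitoric fans for $\Gammabar[2]$, and by Definition~\ref{definition semitoric fan}(3) the cones of any semitoric fan decompose $C_{2d}^\rc$; thus both collections have support $C_2^\rc$. This carries no real content. The decisive input is Proposition~\ref{proposition degree two semitoric fan is fan}, which I would invoke verbatim: in degree two both $\vf[2]$ and $\mghks[2]$ are actual $\Gammabar[2]$-fans. The point worth stressing is that this genuinely upgrades their status, since a semitoric fan is only a locally rational decomposition and may a priori contain cones that are not rational polyhedral, whereas a $\Gammabar[2]$-fan in the sense of \cite{AMRT} requires every cone to be rational polyhedral with only finitely many orbits under $\Gammabar[2]$. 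This rational polyhedrality is precisely what the finite polyhedrality of $\Nef(\sY_\eta)$ --- equivalently, the reflexivity of $M_2$ --- provides.

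With both hypotheses verified, Proposition~\ref{proposition toroidal compactification} applies and identifies the compactifications $\Fghks[2]$ and $\Fcox[2]$ attached to these $\Gammabar[2]$-fans as toroidal; this is consistent with their a priori description as semitoric compactifications via Theorem~\ref{theorem fan and compactification}, since a genuine fan is in particular a semitoric fan and the two constructions agree in that case. I do not expect any real obstacle here: once Proposition~\ref{proposition degree two semitoric fan is fan} is granted, the corollary is a formal consequence of the dictionary between $\Gammabar$-fans and toroidal compactifications, and all the substantive work sits upstream in that proposition and ultimately in the reflexivity of the lattice $M_2$.
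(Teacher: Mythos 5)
Your argument is correct and is exactly the paper's route: the corollary is stated there as an immediate consequence of Proposition~\ref{proposition degree two semitoric fan is fan} (both collections are genuine $\Gammabar[2]$-fans, ultimately by reflexivity of $M_2$), combined with the dictionary of Proposition~\ref{proposition toroidal compactification}. Your additional verification of the support condition via Proposition~\ref{proposition:justification} is a harmless spelling-out of what the paper leaves implicit.
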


Finally, we would like to point out that the refinement process described in Definition  \ref{definition cusp model GHKS fan nef} is unnecessary in the case $d=1$,
Indeed, thanks to the section $s_Y: \Nef(\sY_\eta) \to \Mov(\sY)$ from Definition~\ref{definition section}, we 
can now interpret the collections $\Sigma_Y^\nef$ from Definition~\ref{definition cusp model fan} as living in the Mori fan of $\sY$. Thus we can use the symmetry of the $2d=2$ situation to show:

\begin{proposition}\label{proposition degree two all fans equal}
Suppose $d=1$ and denote by $\sY_c=Y_1\cup Y_2 \cup Y_3$ the decomposition of the central fiber into irreducible components. Then the collections 
from Definition~\ref{definition cusp model fan}
associated to these components coincide (as subsets of $\Nef(\sY_\eta)$):
\[
\Sigma_{Y_1}^\nef=\Sigma_{Y_2}^\nef=\Sigma_{Y_3}^\nef(=\mghksnef[2]).
\]
\end{proposition}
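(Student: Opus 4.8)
The plan is to read the equality off directly from the symmetry of the degree two situation. The key input is that for $d=1$ the image $\Pi_2$ of the permutation representation $G = \Aut(\sY_\eta) \to S_{d+2} = S_3$ is all of $S_3$, as recorded before Lemma~\ref{lemma group action} on the authority of \cite[Corollary~5.39]{HL}. Being the full symmetric group on the three components $Y_1, Y_2, Y_3$ of the central fiber, $\Pi_2$ acts transitively on them, so that $\{Y_1, Y_2, Y_3\}$ forms a single $\Pi_2$-orbit.

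Next I would feed this into Corollary~\ref{corollary cuspidal cones}, whose final assertion is that the collection $\Sigma_Y^\nef$ depends only on the $\Pi_2$-orbit of $Y$. Concretely, if $g \in G$ satisfies $g.Y_i = Y_j$, then by Proposition~\ref{proposition automorphisms} the induced isometry $g^* \in \Pbar \subset \O^+(M_2)$ preserves the fundamental domain $V_2 = \Nef(\sY_\eta)$ and carries $\Sigma_{Y_i}^\nef$ onto $\Sigma_{g.Y_i}^\nef = \Sigma_{Y_j}^\nef$ as subsets of $\NS(\sY_\eta)_\R$; this is precisely the strong form of the corollary that is already invoked in the proof of Lemma~\ref{lemma:equivariance}. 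Since all three components lie in one orbit, this yields $\Sigma_{Y_1}^\nef = \Sigma_{Y_2}^\nef = \Sigma_{Y_3}^\nef$.

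The final step is purely formal: by Definition~\ref{definition cusp model GHKS fan nef} the collection $\mghksnef[2]$ is the coarsest common refinement of $\Sigma_{Y_1}^\nef, \Sigma_{Y_2}^\nef, \Sigma_{Y_3}^\nef$, and the coarsest common refinement of finitely many identical fans is that fan itself, since intersecting cones of equal fans returns the same cones. Hence $\mghksnef[2]$ equals the common value, giving the parenthetical claim. I do not expect a genuine obstacle here: all the substance lies in the two cited inputs, and the present statement merely assembles them. The one point meriting care, rather than a true difficulty, is that Corollary~\ref{corollary cuspidal cones} must be used in its strong form --- equality of the collections as \emph{literal} subsets of $\NS(\sY_\eta)_\R$ rather than merely up to the $G$-action --- since it is exactly this that collapses the three a priori distinct refinements into one.
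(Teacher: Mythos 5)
Your proposal is correct and follows essentially the same route as the paper: both arguments rest on the $S_3$-symmetry of the degree two model coming from \cite[Corollary~5.39]{HL} together with the fact that $\Sigma_{Y}^\nef$ does not depend on the choice of $Y$-cusp model. The only cosmetic difference is that the paper composes a $Y_i$-cusp model $p$ with the honest automorphisms $\sigma\in S_3\subset \Aut(\sY_{\scrP}/S)$ to produce a $Y_{\sigma(i)}$-cusp model with the \emph{same} target and then invokes Lemma~\ref{lemma auxilliary fan independent of cusp model}, whereas you route the identical composition argument through Corollary~\ref{corollary cuspidal cones}. The one point you rightly flag --- that the symmetry a priori only yields $\Sigma_{Y_j}^\nef=g^*\Sigma_{Y_i}^\nef$ and one must pass to literal equality of the collections --- is present in both versions; it is resolved by the observation (implicit in Lemma~\ref{lemma group action} and Lemma~\ref{lemma:equivariance}) that $G$ acts by pullback on the Mori fan of any single cusp model, so that each $\Sigma_{Y_i}^\nef$ is individually $\Pbar$-invariant, and you would do well to make that invariance explicit rather than leaning on the phrase ``only depends on the orbit''.
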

\begin{proof}
We can, without loss of generality, work with the model $\sY=\sY_{\scrP}$. 
By \cite[Corollary 5.39]{HL} there is a subgroup $S_3$ of $\Aut(\sY_{\scrP/S})$ which acts as group of permutations of the components of $\sY_c$. 
If $\sigma$ is an element of this group and $p:\sY \ratl \sY'$ is a rational $Y_i$-cusp model for some $1\leq i \leq 3$, it is clear that $p\circ \sigma$ is a $Y_{\sigma(i)}$-cusp model. The claim now follows from the independence of 
$\Sigma_{Y_i}^\nef$ of the rational ${Y_i}$-cusp model, see Lemma~\ref{lemma auxilliary fan independent of cusp model}.
\end{proof}

\section{Cuspidal Cones}\label{sec:cuspidal_cones}

In order to prove our main result, Theorem~\ref{theorem main}, we need to better understand cusp models. The purpose of this section is to show that the existence of a cusp model $\sY\to\sY'$ puts severe restrictions on the central fiber $\sY_c$. The idea is that as maximality of a degeneration is preserved by a birational contraction, the non-contracted component of $\sY_c$ has to account for most of its Picard group. The results of this section are important in the proof of Theorem~\ref{theorem main} which is achieved in Section~\ref{section counting cones}.

\begin{proposition}\label{proposition picard noncontracted}
Let $\sY\to S$ be a model of the DNV family of degree $2$, and let $f:\sY \to \sY'$ be a cusp model with $Y \subset \sY_c$ the non-contracted component of the central fiber. Let $Y^\nu \to Y$ be the normalization. 
Then the pullback map satisfies
\[
\rk\left(\Pic(\sY')\to\Pic(Y^\nu)\right) \geq 19.
\]
In particular, $\vrho(Y^\nu) \geq 19$.
\end{proposition}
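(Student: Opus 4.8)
The plan is to factor the restriction map through the central fibre of the cusp model and then to check that passing to the normalisation does not lower the Picard number, the latter being a formal intersection-theoretic statement. The only geometric input is the behaviour of $f$ on the non-contracted component.

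First I would record the basic geometry of the morphism $f\colon\sY\to\sY'$. Since $f$ is a cusp model, hence a morphism, Lemma~\ref{lemma picard rank cusp model} gives that both restrictions $\Pic(\sY')\to\Pic(\sY_\eta)$ and $\Pic(\sY')\to\Pic(\sY'_c)$ are isomorphisms; in particular $\vrho(\sY'_c)=\vrho(\sY')=19$. The central fibre $Z:=\sY'_c$ is irreducible by Definition~\ref{definition cusp model}, and the contracted components $Y_i\subset\sY_c$ (those with $Y_i\neq Y$) map to subsets of dimension $\le 1$, so $f$ is an isomorphism over the complement of a curve in $Z$. Hence $f$ restricts to a birational morphism $Y\to Z$, the fibre $Z$ is reduced, and the normalisations agree, $Y^\nu=Z^\nu$. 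Writing $\nu\colon Z^\nu\to Z$, the map in the statement then factors as
\[
\Pic(\sY')\xrightarrow{\isom}\Pic(Z)\xrightarrow{\nu^*}\Pic(Z^\nu)=\Pic(Y^\nu).
\]

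The heart of the argument is that $\nu^*$ is injective modulo numerical equivalence. If $D\in\Pic(Z)$ satisfies $\nu^*D\equiv 0$, then for every curve $\gamma\subset Z^\nu$ the projection formula gives $0=\nu^*D\cdot\gamma=D\cdot\nu_*\gamma$; since $\nu$ is finite and birational, every curve $C\subset Z$ arises as $\nu_*\gamma$ for its strict transform, so $D\cdot C=0$ for all $C$ and therefore $D\equiv 0$. Thus $\NS(Z)_\Q\to\NS(Z^\nu)_\Q$ is injective, and as $\vrho(Z)=19$ the image of $\Pic(Z)$ in $\NS(Z^\nu)$ has rank $19$. Finally, $Y^\nu$ is a rational surface (the components of a type~III central fibre are rational, Definition~\ref{definition typeiiik3}), so $\Pic(Y^\nu)=\NS(Y^\nu)$; combining the displayed factorisation with the injectivity just proved yields $\rk\left(\Pic(\sY')\to\Pic(Y^\nu)\right)=\rk\left(\Pic(Z)\to\Pic(Z^\nu)\right)\ge 19$, and in particular $\vrho(Y^\nu)\ge 19$.

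The one step that is more than formal, and which I expect to be the main point to nail down, is the claim that $Z=\sY'_c$ is reduced and that $f$ restricts to a birational morphism $Y\to Z$ --- that is, that the unique non-contracted component maps birationally onto the irreducible central fibre of the cusp model. Once this is in place, the remainder is the projection-formula computation, which is degree-independent and in fact shows the rank is exactly $19$.
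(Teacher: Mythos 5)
Your route is genuinely different from the paper's, and most of it works, but it rests on one false identification: $Y^\nu=Z^\nu$ for $Z=\sY'_c$. While $f|_Y\colon Y\to Z$ is indeed birational, it is in general not finite: it contracts every curve of $Y$ lying in the contracted extremal face, for example the double curve $Y\cap Y_i$ whenever a contracted component $Y_i$ is mapped to a point (this happens for the explicit cusp models constructed later in the paper, e.g.\ in Propositions~\ref{proposition cuspidal cones model p one regular} and~\ref{proposition p2}). Hence $Y^\nu\to Z^\nu$ is a nontrivial birational morphism of normal surfaces and $\Pic(Z^\nu)\neq\Pic(Y^\nu)$, so your displayed factorization and the concluding equality of ranks are not justified as stated. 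The repair is short: since $Y^\nu$ is normal, $Y^\nu\to Z$ factors through $Z^\nu$, and $\Pic(Z^\nu)\to\Pic(Y^\nu)$ is injective because $Y^\nu\to Z^\nu$ is a proper birational morphism of normal varieties. The image of $\Pic(\sY')$ in $\Pic(Y^\nu)$ therefore has the same rank as its image in $\Pic(Z^\nu)$, and your projection-formula argument gives $19$ for the latter. Two smaller points you should make explicit: reducedness of $Z$ and $H^1(Z,\sO_Z)=0$ are supplied by Lemma~\ref{lemma properties cusp model}, and the passage from ``numerically trivial'' to ``finite kernel of $\Pic(Z)\to\Pic(Z^\nu)$'' uses $\Pic^0(Z)=0$ together with the theorem of the base.

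For comparison, the paper argues on the source rather than the target: it combines the injectivity of $f_c^*\colon\Pic(\sY'_c)\to\Pic(\sY_c)$ (Proposition~\ref{proposition picard group central fiber}) and of $\Pic(\sY_c)\to\Pic(\sY_c^\nu)$ (\cite[Lemma~1.10]{HL}) with the observation that a line bundle on $\sY'_c$ whose pullback to $Y^\nu$ is trivial restricts trivially to every other component of $\sY_c^\nu$, because those components are rational surfaces mapping into images of double curves of $Y$. Both arguments yield rank exactly $19$; yours trades the paper's component-by-component analysis of $\sY_c$ for a direct numerical analysis of the singular surface $\sY'_c$ and its normalization, at the cost of having to control the geometry of $f|_Y$ carefully.
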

\begin{proof}
Let us denote by $f_c:\sY_c \to \sY'_c$ the restriction of $f$ to the central fiber. Observe that by Proposition~\ref{proposition picard group central fiber} restriction gives an isomorphism $\Pic(\sY')\isom \Pic(\sY'_c)$ and $f_c^*:\Pic(\sY'_c) \to \Pic(\sY_c)$ is injective. By Lemma~\ref{lemma picard rank cusp model} we have $\vrho(\sY'_c) =19$ and from \cite[Lemma 1.10]{HL} we infer that the pullback $\Pic(\sY_c) \to \Pic(\sY_c^\nu)$ is injective where $\nu:\sY_c^\nu \to \sY_c$ is the normalization. Let $K:=\ker\left(\Pic(\sY_c) \to \Pic(Y^\nu)\right)$. It suffices to show that $K\cap \img f_c^*=0$.

Let $C \subset Y$ be an irreducible component of the double locus and denote by $C' \subset \Y'_c$ its image under $f_c$. Suppose that $L$ is a line bundle on $\sY_c'$ such that $f_c^* L \in K$. We have to show that the pullback of $L$ to every component of $\sY_c^\nu$ is trivial. 
By assumption, its pullback to $Y^\nu$ is trivial and hence so is the pullback to $C$. We conclude that the restriction $L\vert_{C'}$ is numerically trivial. For any component $X \subset \sY_c^\nu$ different from $Y^\nu$, the induced map $h:X \to \sY_c'$ factors through $C'$ for some double curve $C$ as above. Thus, $h^*L$ is numerically trivial as well and therefore trivial as $X$ is a rational surface. This concludes the proof.
\end{proof}

We will improve upon this result in Proposition~\ref{proposition picard noncontracted improved} below, this will however need the finer analysis. For this purpose we need so-called curve structures as developed in \cite{HL}.

\subsection{Curve structures}\label{section curve structures}

As in Example~\ref{example dnv family}, we denote by $\scrP$ and $\scrT$ the two possible triangulations of the $2$-sphere with two triangles, see Figure~\ref{ZETA}, and by $\YP$ and $\YT$ the corresponding $d$-semistable $K3$ surface of type III in $(-1)$-form with trivial Carlson map. 
Whenever $\sY\to S$ is a model of the DNV family in degree $2$, the central fiber $\sY_c$ fits into a sequence  of elementary modifications of type~I  
\begin{equation}\label{eq type one sequence}
Y_\scrG \dashrightarrow Y_1 \dashrightarrow \dots\dashrightarrow Y_i \dashrightarrow \dots \dashrightarrow Y_n=\sY_c
\end{equation}
where $\scrG\in \{\scrP,\scrT\}$. Recall that the components of $\YP$ or $\YT$ are isomorphic to $\gothY_1$, $\gothY_2$, or $\gothY_4$,  respectively, as introduced in Example~\ref{example dnv family}. For every irreducible component $Y \subset \sY_c$, its curve structure $\Gamma_Y$ is a combinatorial object that remembers how $Y$ the sequence of type I flops was built from some $\gothY_i$ for $i\in\{1,2,4\}$. The definition is by induction over the length of \eqref{eq type one sequence}. 

Denoting by $D$ the anticanonical divisor of $\mathfrak{Y}_i$ we set 
\[ C(\mathfrak{Y}_i):=\{ C\subset \mathfrak{Y}_i \mid C \text{ is an integral curve with } C^2<0, C\not\subset D \}. \] 
For $Y$ as above, \eqref{eq type one sequence} induces a sequence
\[
\gothY_i=:W_0 \ratl \ldots \ratl W_{n-1} \ratl W_n=:Y^\nu
\]
of birational maps and we suppose that $C(W_{n-1})$ has already been defined. Then the set of curves $C(W_n)$ on $W_n=Y^\nu$ is given by images of non-contracted curves in $C(W_{n-1})$ if $W_{n-1} \to W_n$ is a blow up or by either strict transforms of curves in $C(W_{n-1})$ or the exceptional curve if $W_{n-1} \ratl W_n$ is the inverse of a blow up.

\begin{definition}\label{definition curve structure} 
The \emph{curve structure} $\Gamma_Y$ is the dual graph of $C(Y)$ whose vertices are labelled with the self-intersection numbers. We say that $\Gamma_Y$ has \emph{type} $d_i$ if $Y$ maps to $\mathfrak{Y}_i$ under a sequence of type I flops. 

Let $Y^\nu\to Y$ be the normalization and $D=\sum D_j$ the anticanonical divisor of $Y^\nu$. The \emph{augmented} curve structure $\Gamma_Y^a$ is obtained by appending a vertex $v_{D_j}$ to $\Gamma_{Y}$ for each $j$ and an edge between $v_{D_j}$ and $v\in \Gamma_Y$ if and only if $D_j.C_v  \neq 0$ for the curve $C_v$ corresponding to $v$. 
\end{definition}

Let us review some basic properties of curve structures.

\begin{enumerate}
\item The curve structure is well defined, i.e. independent of the chosen sequence \eqref{eq type one sequence} by \cite[Lemma~3.8]{HL}.
\item Given $\sY_c$, the type of $\Gamma_Y$ is well-defined as type~I flops do not change the number of components of the anticanonical divisor.
	\item If $\Gamma_Y$ has type $d_i$ with $i\in \{1,2,4\}$, then  $\{C_v\mid v \in \Gamma_Y\}$ is a $\QQ$-basis of $\Pic(Y^\nu)$, see \cite[Proposition~3.16]{HL}.
	\item If $C_v \subset Y^\nu$ denotes the curve corresponding to a vertex $v\in\Gamma_Y$, the intersection number $C_v.C_w$ for $v, w\in \Gamma_Y$ is either $1$ or $0$, see the paragraph after \cite[Remark~3.11]{HL}.
\end{enumerate} 

Next we recall the definition of an exceptional vertex, see \cite[Definition~3.12]{HL}.

\begin{definition}\label{definition exceptional vertex}
A vertex $v\in \Gamma_Y$ is called  \emph{exceptional} if
\begin{enumerate}
	\item $v^2=-1$,  
	\item  there is a unique vertex in $\Gamma_Y \ohne\{ v\}$ intersecting $v$ nontrivially, and 
	\item if $D_0$ denotes the unique (by \cite[Proposition~3.2]{HL}) component of the anticanonical cycle met by $v$, no other vertex in $\Gamma_Y \ohne\{ v\}$ meets $D_0$.
\end{enumerate} 
\end{definition}

The \emph{leg} of an exceptional vertex $v$ is defined as the maximal connected (full) subgraph $L:=L(v)$ of (the graph underlying) $\Gamma_Y$ containing $v$ such that
\begin{enumerate}
	\item all vertices are adjacent to at most two vertices of $L$,
	\item all vertices except possibly the end $e$ of $L$ (i.e. the unique vertex different from $v$ that is adjacent to exactly one other vertex in $L$) are adjacent to at most two vertices of $\Gamma_Y$, and
	\item no vertex in $L\ohne \{v, e\}$ meets the anticanonical divisor.
\end{enumerate} 

Informally speaking, the leg of an exceptional vertex is obtained by joining adjacent vertices until one reaches a fork or a component of the anticanonical cycle.

\begin{definition} A curve structure $\Gamma_Y$ is called \emph{degenerate} if  there is no exceptional vertex, or if for some  exceptional vertex $v_e$ the leg $L(v_e)$ ends in a vertex $v$ with $v.D_1\geq 1$ for some smooth irreducible double curve $D_1\subset Y^\nu$ which maps to a smooth curve $Y$. 
Curve structures that are not degenerate will be called \emph{non-degenerate}.
\end{definition}

The following notion refines the division into degenerate and non-degenerate curve structures. It has been defined in \cite[Definition~3.17]{HL} for curve structures of type $d_2$ only. We drop this assumption here because it is not needed even though we mainly use it for type $d_2$.

\begin{definition}\label{definition regular curve structure}
We say that a curve structure $\Gamma_Y$ is \emph{regular} if  $|\Gamma_Y|>1$ and no leg $L(e)$ of an exceptional vertex $e$ ends in a vertex $v$ with $v^2=0$. 
A curve structure which is not regular  is called {\em very degenerate} and a curve structure which is  regular and degenerate is called \em{tamely degenerate}.
\end{definition}

We have the implications  ``very degenerate'' $\Rightarrow$``degenerate'' $\Rightarrow$ ``tamely degenerate''. 
Indeed, the second implication holds by definition. If a curve structure is very degenerate then either $\abs{\Gamma_Y}=1$ in which case there is no exceptional vertex and there is an exceptional vertex whose leg ends in a vertex $v$ with $v^2=0$. As $C_v$ is a rational curve, it must intersect $D$ with $C_v.D=2$ by \cite[Proposition~3.2]{HL}, hence the curve structure is degenerate. 
In conclusion, the set of degenerate curve structures is divided into very degenerate and tamely degenerate ones.

\begin{proposition}\label{proposition picard noncontracted improved}
Let $\sY\to S$ be a model of the DNV family of degree $2$ and $f:\sY \to \sY'$ be a cusp model. Let $Y \subset \sY_c$ be the non-contracted component of the central fiber with normalization $Y^\nu \to Y$. 
If $\Gamma_Y$ is of type $d_2$ or $d_4$, then $\vrho(Y^\nu) \geq 20$.
\end{proposition}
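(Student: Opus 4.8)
The plan is to promote the rank estimate of Proposition~\ref{proposition picard noncontracted} to a statement about lattices and then extract the inequality from a discriminant obstruction. The point is that the rank-$19$ map $\Pic(\sY')\to\Pic(Y^\nu)$ occurring there is not merely a homomorphism of groups but an \emph{isometry} onto its image for the respective intersection pairings, and that this image is isometric to the lattice $M_{2}$ of \eqref{eq lattice m2d}. Since $M_2$ has discriminant $2$, which is not a perfect square, it cannot sit as a finite-index sublattice of a unimodular lattice; because $\Pic(Y^\nu)$ is unimodular exactly when $\vrho(Y^\nu)=19$, this rules out $\vrho(Y^\nu)=19$ and hence yields $\vrho(Y^\nu)\geq 20$.

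The first step is the isometry assertion. By Lemma~\ref{lemma picard rank cusp model} the restriction $\Pic(\sY')\to\Pic(\sY_\eta)$ to the generic fibre is an isomorphism of groups, and as intersection numbers of line bundles are constant in the flat family $\sY'\to S$, it is in fact an isometry onto $\bigl(\Pic(\sY_\eta),\,\cdot\,\bigr)\cong M_2$; concretely, for $L,M\in\Pic(\sY')$ one has $L_\eta\cdot M_\eta=L_c\cdot M_c$ computed on the Cartier fibre $\sY'_c$, which is Cohen--Macaulay by Lemma~\ref{lemma properties cusp model}. On the other hand, the morphism $g\colon Y^\nu\to Y\hookrightarrow\sY'_c$ obtained by normalising the non-contracted component is birational of degree one, so by the projection formula $g^\ast$ preserves intersection numbers, giving $(L|_{Y^\nu})\cdot(M|_{Y^\nu})=L_c\cdot M_c$. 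Combining the two identities, the composite $\Pic(\sY')\to\Pic(Y^\nu)$ of Proposition~\ref{proposition picard noncontracted} satisfies $(L|_{Y^\nu})\cdot(M|_{Y^\nu})=L_\eta\cdot M_\eta$. As that composite is injective of rank $19$ (its source has rank $19$), its image $V\subset\Pic(Y^\nu)$ is a sublattice isometric to $M_2$, and in particular $|\mathrm{disc}(V)|=2$.

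The second step is the discriminant count. Since $Y$ arises from $\gothY_i$ with $i\in\{2,4\}$ by a sequence of type~I flops, its normalisation $Y^\nu$ is a smooth projective rational surface; hence $H^2(Y^\nu,\Z)$ is torsion-free and unimodular, and $\Pic(Y^\nu)=\NS(Y^\nu)=H^2(Y^\nu,\Z)$ is a unimodular lattice of rank $\vrho(Y^\nu)$ with nondegenerate pairing. Suppose toward a contradiction that $\vrho(Y^\nu)=19$. Then $V$ is a full-rank, hence finite-index, sublattice of the unimodular lattice $\Pic(Y^\nu)$, so $|\mathrm{disc}(V)|=[\Pic(Y^\nu):V]^2$ is a perfect square, contradicting $|\mathrm{disc}(V)|=2$. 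Since $\vrho(Y^\nu)\geq 19$ by Proposition~\ref{proposition picard noncontracted}, we conclude $\vrho(Y^\nu)\geq 20$.

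The step I expect to require the most care is the isometry assertion, specifically the compatibility of the three pairings: the constancy of intersection numbers in $\sY'\to S$ (standard once one knows $\sY'_c$ is a Cohen--Macaulay Cartier fibre, so that intersection theory is available there) and the degree-one projection-formula identity through $g$ even when $\sY'_c$ is non-normal. Once the embedding $M_2\hookrightarrow\Pic(Y^\nu)$ is known to be isometric, the discriminant obstruction is immediate. I note that this argument uses only the rationality of $Y^\nu$ and the fact that $|\mathrm{disc}(M_2)|=2$ is not a square; the hypothesis on the type enters merely to guarantee that $Y^\nu$ is the normalisation of a surface built from some $\gothY_i$, and the same reasoning would apply whenever the degree $2d$ is not a perfect square.
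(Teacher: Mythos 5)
Your proof is correct, but it takes a genuinely different route from the paper's. The paper starts from Proposition~\ref{proposition picard noncontracted} and then exhibits one explicit extra class on $Y^\nu$ outside the image of $\Pic(\sY')$: it analyses whether a component of the anticanonical cycle $D_1+D_2$ is contracted or the two are glued, uses an exceptional vertex of $\Gamma_Y$ to produce a curve meeting $D_1$ but not $D_2$, and disposes of the remaining cases via the classification of degenerate curve structures of type $d_2$ and the corresponding statement for type $d_4$ in \cite{HL}. You instead upgrade the rank bound to a lattice statement: the pullback $\Pic(\sY')\to\Pic(Y^\nu)$ is an isometric embedding of $M_2$, and since $\abs{\operatorname{disc}(M_2)}=2$ is not a perfect square while $\Pic(Y^\nu)$ is unimodular ($Y^\nu$ being a smooth projective rational surface), the image cannot have full rank, forcing $\vrho(Y^\nu)\geq 20$. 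The two inputs you flag as delicate do hold: constancy of Snapper intersection numbers in the flat projective family $\sY'\to S$ is standard, and the degree-one projection formula along $Y^\nu\to\sY'_c$ is valid because $\sY'_c$ is reduced and irreducible (Lemma~\ref{lemma properties cusp model}) and $Y\to\sY'_c$ is birational, so $g_*[Y^\nu]=[\sY'_c]$. Your route avoids the curve-structure case analysis entirely, does not in fact use the hypothesis on the type of $\Gamma_Y$ (so it also covers type $d_1$, which would shortcut part of the later argument in Proposition~\ref{proposition special t not contracted}), and, as you note, generalizes to any square-free degree since $\abs{\operatorname{disc}(M_{2d})}=2d$ is then never a square. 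What the paper's approach buys in exchange is the explicit identification of which classes on $Y^\nu$ fail to descend, information that feeds directly into the subsequent classification of cusp models.
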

\begin{proof}
In view of Proposition~\ref{proposition picard noncontracted}, to prove the claim it is sufficient to exhibit a line bundle on $Y^\nu$ that is not a pullback from $\sY_c'$. 

Let us first treat the case where the type of $\Gamma_Y$ is $d_2$. In this case, $Y^\nu=Y$ is smooth. Let $D_1 + D_2$ be the anticanonical divisor on $Y$. If a component $D_i$ is contracted to a point under $f$, the corresponding line bundle $\sO_{Y}(D_i)$ is not a pullback from $\sY'_c$. If no $D_i$ is contracted, then $D_1$ and $D_2$ are glued under $f$. In case $\Gamma_Y$ has an exceptional vertex, by definition there is a curve $C$ in $\Gamma_Y$ that intersects $D_1$ but not $D_2$, so $\sO_{Y}(C)$ cannot be a pullback. If $\Gamma_Y$ has no exceptional vertex, it must be degenerate. The degenerate curve structures of type $d_2$ with no exceptional vertex are classified, see \cite[Figure~15]{HL}, and necessitate $\vrho(Y) \leq 2$, which is impossible.

If $\Gamma_Y$ is of type $d_4$, the claim follows similarly from \cite[Proposition~4.1]{HL}.
\end{proof}

\begin{corollary}\label{corollary degenerate}
Let $\shY$ be a model of the DNV family of degree $2$ and let $f\colon\shY\to \sY'$ be a  cusp model. Let  $Y\subset \shY_c $ be a component that is contracted. Then  $|\Gamma_Y|\leq 4$ and $|\Gamma_Y|= 4$ only if the non-contracted component is of type $d_1$ and $\Gamma_Y$ is of type $d_4$.  If $\Gamma_Y$ is of type $d_2$,  then $\Gamma_Y$ is  degenerate.
\end{corollary}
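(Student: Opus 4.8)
The plan is to reduce all three assertions to a single numerical identity for the Picard numbers of the normalizations of the three components of $\shY_c$. By Lemma~\ref{lemma irreducible components} the central fiber has exactly three components; I would write $\shY_c = Y_0\cup Y\cup Z$, where $Y_0$ is the unique non-contracted component of the cusp model $f$, $Y$ is the contracted component of the statement, and $Z$ is the remaining contracted one. The first input is the basis property of curve structures (item (3) above, \cite[Proposition~3.16]{HL}), giving $|\Gamma_{Y_i}| = \vrho(Y_i^\nu)$ for each component. The key input is the identity
\[
\vrho(Y_0^\nu) + \vrho(Y^\nu) + \vrho(Z^\nu) = 24,
\]
which I would establish by noting that the left-hand side is unchanged under the type~I modifications of \eqref{eq type one sequence} (each such modification contracts a curve on one component and creates one on an adjacent component, so the total is preserved) and then evaluating it on the two base models of Example~\ref{example dnv family} using $\vrho(\gothY_s)=10-s$ for a weak del Pezzo of degree $s$: for $\scrP$ one gets $3(10-2)=24$, and for $\scrT$ one gets $(10-4)+(10-1)+(10-1)=24$. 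This is consistent with the value $\vrho(\shY_c)+3$ predicted by \eqref{eq picard rank maximal} (with $n=3$) together with the three double curves from the proof of Lemma~\ref{lemma irreducible components}.

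Granting this, the first two statements become purely numerical. Since $Y$ and $Z$ are contracted, Proposition~\ref{proposition picard noncontracted} gives $\vrho(Y_0^\nu)\ge 19$, while $\vrho(Z^\nu)\ge 1$ trivially, so $|\Gamma_Y| = 24 - \vrho(Y_0^\nu) - \vrho(Z^\nu) \le 4$. If equality holds, then $\vrho(Y_0^\nu)=19$ and $\vrho(Z^\nu)=1$. By Proposition~\ref{proposition picard noncontracted improved} a non-contracted component of type $d_2$ or $d_4$ satisfies $\vrho\ge 20$, so $Y_0$ must be of type $d_1$; this forces the model to be of type $\scrT$ (the only type possessing a $d_1$ component), whose components have types $d_4,d_1,d_1$. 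Hence the two contracted components are one of type $d_4$ and one of type $d_1$. A type $d_4$ component cannot have $\vrho=1$, for then its normalization would be $\PP^2$, whereas the four effective curves of its anticanonical cycle have degree $\ge 1$ and so cannot sum to $-K_{\PP^2}=\mathcal O_{\PP^2}(3)$. As $\vrho(Z^\nu)=1$, the component $Z$ is the $d_1$ one, and therefore $\Gamma_Y$ is of type $d_4$, exactly as asserted.

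For the last statement I would suppose $Y$ is of type $d_2$. A $d_2$ component occurs only in a model of type $\scrP$, where \emph{all} three components are of type $d_2$; in particular $Y_0$ is of type $d_2$, so Proposition~\ref{proposition picard noncontracted improved} gives $\vrho(Y_0^\nu)\ge 20$ and hence $|\Gamma_Y| = \vrho(Y^\nu)\le 3$. It then suffices to show that every type $d_2$ curve structure with $|\Gamma_Y|\le 3$ is degenerate. If $|\Gamma_Y|=1$ there is no exceptional vertex (Definition~\ref{definition exceptional vertex} requires a second vertex meeting it), so $\Gamma_Y$ is degenerate by definition. The cases $|\Gamma_Y|\in\{2,3\}$ I would dispatch using the classification of type $d_2$ curve structures of small Picard rank in \cite{HL}: those without an exceptional vertex already have $\vrho\le 2$ by \cite[Figure~15]{HL}, and the finitely many configurations carrying an exceptional vertex with $\vrho\le 3$ are matched directly against the definition of degeneracy, their exceptional leg ending on a smooth double curve. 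This last point is the main obstacle: unlike the numerical bounds above it cannot be read off from Picard numbers alone and genuinely requires the combinatorial description of the admissible graphs $\Gamma_Y$ from \cite{HL}.
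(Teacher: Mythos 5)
Your proposal is correct and follows essentially the same route as the paper: the identity $\sum_i|\Gamma_{Y_i}|=24$ (which the paper obtains directly from the basis property of curve structures rather than via invariance under type~I modifications) combined with Propositions~\ref{proposition picard noncontracted} and~\ref{proposition picard noncontracted improved} yields both bounds, and the Picard-number-one contracted component is ruled out as the $d_4$ one just as you argue. The only divergence is in the final claim, where the paper invokes the fact that a non-degenerate type $d_2$ curve structure has two legs ending on a fork and hence at least four vertices (citing \cite[Example~3.15]{HL}), which is a cleaner substitute for your proposed case-by-case inspection of the type $d_2$ curve structures with at most three vertices.
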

\begin{proof}Write $\shY_c=Y_1\cup Y_2\cup Y_3$ with $Y=Y_1$ and suppose $Y_3$ is not contracted.
We have $\sum |\Gamma_{Y_i}|=24$ as $\Gamma_{Y_i}$ is a basis for $\Pic(Y_i)_\QQ$.
So from Proposition~\ref{proposition picard noncontracted improved}, we have  $1\leq|\Gamma_Y|\leq 4$ and even $|\Gamma_Y|\leq 3$ if $Y_3$ is of type $d_2$ or $d_4$. So if $|\Gamma_Y|=4$, then $Y_3$ must be of type $d_1$ and $Y_2$ must have Picard number one, so it cannot be the type $d_4$ component. If $\Gamma_Y$ is of type $d_2$ non-degenerate, we necessarily have $|\Gamma_Y|\geq 4$ as there are $2$ legs that end on a fork by \cite[Example~3.15]{HL}. 
 \end{proof}

For the enumeration of cusp models, we have to provide a deeper analysis of the possible models. For this purpose, we examine which curves \emph{cannot} be contracted in a cusp model.

\begin{proposition}\label{proposition minus two not contracted}
Let $\shY$ be a model of the DNV family of degree $2$ and write $\shY_c=Y_1\cup Y_2 \cup Y_3$. Let $f\colon \shY\to \shY'$ be a $Y_3$-cusp model. 
Let $C \subset Y_1^\nu$ be a $(-2)$-curve on the normalization $Y_1^\nu \to Y_1$. Then $C$ is not contracted by $f$.
\end{proposition}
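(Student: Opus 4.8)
The plan is to argue by contradiction: assuming $C$ is contracted by $f$, I will show that the image cycle of $C$ is numerically trivial on $\sY$, which is absurd for an honest curve. Write $\sY_c=Y_1\cup Y_2\cup Y_3$ with $Y_3$ the non-contracted component, let $\nu\colon Y_1^\nu\to Y_1\subset\sY$ be the normalization, and consider the $1$-cycle class $\nu_*C\in N_1(\sY)_\R$. Since $\sY'_c$ is irreducible and $Y_3$ maps birationally onto it, the exceptional locus of $f$ is exactly $Y_1\cup Y_2$; hence $f$ is a birational morphism of relative Picard number $2$ and
\[
\NS(\sY)_\R = f^*\NS(\sY')_\R \oplus \R\,[Y_1]\oplus\R\,[Y_2],
\]
the rank count $21=19+2$ being furnished by Lemma~\ref{lemma picard rank cusp model} and Lemma~\ref{lemma irreducible components} together with the fact that every class differs from a pullback by a combination of the exceptional divisors $Y_1,Y_2$. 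It therefore suffices to show that $\nu_*C$ pairs to zero with $f^*\NS(\sY')_\R$ and with $[Y_1]$ and $[Y_2]$.

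First I would dispose of the pullback directions: if $C$ is contracted then $f\circ\nu$ maps $C$ to a point, so the projection formula gives $f^*\beta\cdot\nu_*C=\beta\cdot(f\circ\nu)_*C=0$ for every $\beta\in\NS(\sY')_\R$. The heart of the matter is the computation of $Y_i\cdot\nu_*C=\nu^*\sO_\sY(Y_i)\cdot C$. Because $\sY_c$ is a principal divisor we have $\nu^*\sO_\sY(Y_1)\cong\sO_{Y_1^\nu}\bigl(-\nu^*(Y_2+Y_3)\bigr)=\sO_{Y_1^\nu}(-D)$, where $D$ is the anticanonical cycle (the double locus on $Y_1^\nu$); and since $Y_1^\nu$ is an anticanonical pair, $\omega_{Y_1^\nu}\cong\sO_{Y_1^\nu}(-D)$. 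Hence $\nu^*\sO_\sY(Y_1)\cong\omega_{Y_1^\nu}$, so by adjunction on the smooth surface $Y_1^\nu$ the $(-2)$-curve $C$ satisfies $Y_1\cdot\nu_*C=K_{Y_1^\nu}\cdot C=-2-C^2=0$.

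It remains to see $Y_2\cdot\nu_*C=0$. For this I would use that $\sY_c=f^*\sY'_c$ is a pullback, so $\sY_c\cdot\nu_*C=0$ and hence $\sum_i Y_i\cdot\nu_*C=0$; as $\nu^*\sO_\sY(Y_j)|_{Y_1}\cong\sO_{Y_1^\nu}(D_{1j})$ is effective for $j=2,3$ and $C$ is \emph{not} a component of $D$, both $Y_2\cdot\nu_*C=D_{12}\cdot C$ and $Y_3\cdot\nu_*C=D_{13}\cdot C$ are $\geq 0$ and sum to $-\,Y_1\cdot\nu_*C=0$, forcing each to vanish. Combining the three steps, $\nu_*C$ annihilates a spanning set of $\NS(\sY)_\R$, so $\nu_*C=0$; but $\nu_*C$ is a positive multiple of the class $[\nu(C)]$ of an actual curve and cannot be numerically trivial. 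This contradiction proves the claim. The delicate points are the identification $\nu^*\sO_\sY(Y_1)\cong\omega_{Y_1^\nu}$ on the possibly non-normal $Y_1$, and the use of $C\not\subset D$: a $(-2)$-curve contained in the double locus lies in a second contracted component and is genuinely contracted, so the statement is to be read for the internal curves (those of the curve structure $\Gamma_{Y_1}$, which satisfy $C\not\subset D$ by definition).
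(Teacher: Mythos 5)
Your proof is correct, but it takes a genuinely different route from the paper's. The paper's argument is global-to-generic: using that $C$ does not meet the double locus (\cite[Proposition~3.2]{HL}), it glues $\sO_{Y_1^\nu}(C)$ with trivial bundles to a line bundle on $\sY_c$, lifts it to $\sL$ on $\sY$ by maximality, observes that $\sL_\eta$ is an effective $(-2)$-class on the generic K3 fiber, and concludes that contracting $C$ would force $f=\phi_{|B|}$ to contract a divisor on $\sY_\eta$, contradicting that $f$ is an isomorphism there. You instead stay entirely on the total space: writing $\NS(\sY)_\R$ as the span of $f^*\NS(\sY')_\R$ and the two exceptional components and pairing $\nu_*C$ against each generator, you conclude that a contracted $C$ would be numerically trivial, which is absurd. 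This avoids maximality and Riemann--Roch on the generic fiber, at the price of using $\Q$-factoriality of $\sY'$ (to see that $Y_1,Y_2$ span the cokernel of $f^*$) --- which is available, being part of Definition~\ref{definition cusp model} --- together with the fact that exactly $Y_1$ and $Y_2$ are contracted, which holds by definition of a $Y_3$-cusp model. Note that your computation $Y_1\cdot\nu_*C=K_{Y_1^\nu}\cdot C=0$, combined with the principal-fiber relation $\sum_i[Y_i]=0$, is precisely a rederivation of the input the paper imports from \cite[Proposition~3.2]{HL}, namely $C\cdot D=0$; for this you should justify $p_a(C)=0$ (it follows from $K_{Y_1^\nu}\cdot C=-D\cdot C\le 0$ and adjunction), rather than assume smoothness of $C$ outright.

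The two delicate points you flag are real and resolve as you suspect. First, the restriction to \emph{interior} $(-2)$-curves is genuinely necessary: a double curve of square $-2$ on a contracted component can be contracted (for instance in Proposition~\ref{proposition cuspidal cones model p one regular}, when $D_{21}^2=-2$ one has $L_2=\sO_{Y_2}$, so all of $Y_2$, including $D_{21}$, is contracted). The paper's proof makes the same tacit assumption by invoking $C\cdot D=0$, and the proposition is only ever applied to vertices of curve structures, so your reading agrees with the intended one. Second, for the non-normal component the correct identity is $\nu^*\sO_\sY(Y_1)\cong\omega_{Y_1^\nu}(D_{\omega 1}+D_{\omega 2})$, since the conductor is not seen by $\sO_\sY(Y_1)|_{Y_1}$ after pulling back to the normalization; but as $C\cdot D=0$ annihilates the correction term, your conclusion $Y_1\cdot\nu_*C=0$ is unaffected.
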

\begin{proof}
Recall that $C$ does not intersect the double locus by \cite[Proposition~3.2]{HL}. Thus, the line bundle $L:=\sO_{Y^\nu_1}(C)$ has degree zero on the double locus by \cite[Proposition~3.2]{HL}. Hence, by \cite[Lemma~1.10]{HL} there is a line bundle $\shL_c$ on $\shY_c$ that pulls back to $L$ under the canonical map. By maximality of the DNV family, one obtains a line bundle $\shL$ on $\shY$ whose restriction to $\Y_c$ is $\sL_c$. For the restriction $\shL_\eta$, we have $\shL_\eta^2=-2$, so being effective on the central fiber $\shL_\eta$ is effective and hence $\sL$ is effective. Let us write $\sL=\sO_{\sY}(\sC)$ for an effective divisor $\sC \subset \sY$. Let us write $f=\phi_{|B|}$ for some divisor $B$ on $\sY$. Then $f$ contracts $C$ if and only if $B.C=0$. The latter is equivalent to $B.\sC_\eta =0$ which would say that $f$ contracts a divisor on the generic fiber which is a contradiction.
\end{proof}

The following construction is needed to rule out certain constellations.

\begin{construction}\label{changemodel}
Let $\shY$ be a model of the DNV family of degree $2$, let $Y \subset \sY_c$ be an irreducible component of the central fiber, and let $f\colon \shY\to \shY'$ be a  $Y$-cusp model. 
Suppose $C \subset \sY_c$ is a curve contracted by $f$ and that one of the following holds:
\begin{enumerate}
\item $C$ is an interior\footnote{Here, \emph{interior} means it is not part of the anticanonical cycle.} $(-1)$-curve, and if $\shY_c\dashrightarrow Y_c^+$ denotes the elementary modification of type $I$ in $C$, then $Y_c^+$ is projective.
\item $C$ is a component of the double locus and $\nu^{-1}(C)=D_{ij}\cup D_{ji}$  and $D^2_{ij}=D^2_{ji}=-1$, where $\nu\colon \shY^\nu_c\to \sY_c $ denotes the normalization.
\end{enumerate}
By \cite[Corollaries 5.7, 5.8]{HL}, there is a flopping contraction $\phi\colon\shY\to \shZ$ contracting exactly $C$.  Hence, $f$ factors as $\shY \to \shZ \to \shY'$, and there is  model of the DNV family $\shY^+$ and a commutative diagram
\[
\xymatrix{ \shY\ar@{-->}[rr]\ar[dr]^\phi \ar[rdd]_f  & & \shY^+\ar[dl]\\
&\shZ\ar[d] & \\
& \shY'.&
}
\]
Moreover, in the first case the central fiber of $\sY^+$ is exactly $\sY_c^+=Y_c^+$. By construction $\shY^+\to \sY'$ is also a cusp model. 
\end{construction}

We record some consequences of Construction \ref{changemodel} in the next proposition.

\begin{proposition}\label{proposition not contracted}
Let $\shY$ be a model of the DNV family of degree $2$.  
 Write $\shY_c=Y_1\cup Y_2 \cup Y_3$. Let $f\colon \shY\to \shY'$ be a $Y_3$-cusp model. 
Let $C \subset \sY_c$ be a curve such that one of the following assertions holds:
\begin{enumerate}
\item\label{proposition not contracted one} There is $i=1$ or $2$ such that $C \subset Y_i$ is an interior $(-1)$-curve meeting $D_{i3}$, and if $\shY_c\dashrightarrow Y_c^+$ denotes the elementary modification of type $I$ in $C$, then $Y_c^+$ is projective. 
\item\label{proposition not contracted two}  $\nu^{-1}(C)=D_{21}\cup D_{12}$  and $D^2_{12}=-1$, where $\nu\colon \shY^\nu_c\to \shY_c$ denotes the normalization.
\end{enumerate}
Then $C$ is not contracted by $f$.
\end{proposition}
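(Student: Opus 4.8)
The plan is to argue by contradiction. Suppose $C$ is contracted by $f$; I will feed $C$ into Construction~\ref{changemodel} and then show that the resulting model is incompatible with $f$ being a cusp model. The first task is to check that the hypotheses of Construction~\ref{changemodel} are met. In case~\eqref{proposition not contracted one} this is immediate: $C$ is an interior $(-1)$-curve and $Y_c^+$ is assumed projective, so Construction~\ref{changemodel}(1) applies verbatim. In case~\eqref{proposition not contracted two} I only know $D_{12}^2=-1$, whereas Construction~\ref{changemodel}(2) also demands $D_{21}^2=-1$. To supply this I would invoke the triple point formula: by Lemma~\ref{lemma irreducible components} the central fibre in degree $2$ has three components and hence (as in its proof) exactly two triple points; since the three double curves are the edges and the two triple points the faces of a triangulation of $\IS^2$, each double curve carries two triple points, and $d$-semistability gives $D_{12}^2+D_{21}^2=-2$. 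Thus $D_{21}^2=-1$ and Construction~\ref{changemodel}(2) applies.

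Applying Construction~\ref{changemodel} produces a model $\shY^+$ of the DNV family, a flopping contraction $\phi\colon\shY\to\shZ$ of $C$ through which $f$ factors as $\shY\to\shZ\to\shY'$, and a cusp model $\shY^+\to\shY'$, where $\shY\dashrightarrow\shY^+$ is the flop of $C$ over $\shZ$. The next step is to track $C$ through the elementary modification. Since $\shY\dashrightarrow\shY^+$ is an isomorphism away from $C$ and $f$ factors through $\shZ$, the transform $C^+\subset\shY^+_c$ is again contracted by $\shY^+\to\shY'$. The key geometric input is that the type~I modification transfers $C$ across the double curve it meets onto the \emph{non-contracted} component: in case~\eqref{proposition not contracted one} the interior $(-1)$-curve $C\subset Y_i$ is flopped across $D_{i3}$ to an interior $(-1)$-curve $C^+\subset Y_3^+$, and in case~\eqref{proposition not contracted two} the flop of the double curve $D_{12}$ places its transform on (the transform of) $Y_3$ as well.

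The contradiction then comes from the position of $C^+$ relative to the cusp model $\shY^+\to\shY'$, whose non-contracted component is the transform of $Y_3$. Since both $\shY^+$ (regular, being a DNV model) and $\shY'$ ($\Q$-factorial, by Definition~\ref{definition cusp model}) are $\Q$-factorial, the exceptional locus of $\shY^+\to\shY'$ is pure of codimension one, namely the union of the two contracted components; hence every curve contracted by $\shY^+\to\shY'$ must lie in a contracted component. But in case~\eqref{proposition not contracted one} the curve $C^+$ is an interior curve of the non-contracted $Y_3^+$ and is not contained in either contracted component, so it cannot be contracted — contradicting the previous paragraph. Alternatively, one reduces to Proposition~\ref{proposition minus two not contracted} by exhibiting on $\shY^+_c$ a line bundle supported on a contracted component that cannot be a pullback from $\shY'_c$.

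I expect the main obstacle to be exactly this last incompatibility: one must pin down the effect of the type~I modification — which component the flopped curve lands on, its self-intersection, and its incidence with the double locus — and then argue rigorously that a $\Q$-factorial cusp model cannot contract the resulting curve. Case~\eqref{proposition not contracted two} looks the more delicate of the two, since there the flopped curve may end up on the anticanonical cycle of $Y_3^+$ rather than in its interior, so that the clean ``interior curve'' argument has to be replaced by the reduction to Proposition~\ref{proposition minus two not contracted}; a secondary point requiring care is the verification that $\shY^+\to\shY'$ is genuinely a cusp model with non-contracted component the transform of $Y_3$ after the modification.
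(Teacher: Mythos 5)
Your proposal follows essentially the same route as the paper: feed $C$ into Construction~\ref{changemodel}, observe that the flopped curve $C^+$ lands on (the transform of) $Y_3$ — as an interior curve in case~(1), as the double curve $D_\omega$ in case~(2) via the description of type~II flops — and derive a contradiction from the exceptional locus of $\shY^+\to\shY'$ being pure of codimension one. Your explicit check that $D_{21}^2=-1$ via the triple point formula is a correct filling-in of a hypothesis the paper leaves implicit, and your worry about case~(2) is unfounded: even when $C^+$ lies on the anticanonical cycle of $Y_3^+$ it is still not contained in a contracted divisor, so the same purity argument closes the proof without recourse to Proposition~\ref{proposition minus two not contracted}.
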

\begin{proof}Consider the situation in Construction \ref{changemodel}.
In case \eqref{proposition not contracted one} the flopped curve $C^+$ of $C$ is an interior  curve on $Y_3^+$. It is contracted by $\shY^+\to \sY'$, a contradiction to the exceptional locus being purely of codimension one, see e.g. \cite[Theorem~VI.1.5]{Kol96}. Case \eqref{proposition not contracted two} is proven similarly, using the description of type II flops, see e.g. \cite{FM83}. 
\end{proof}

\section{Models with dual intersection complex \texorpdfstring{$\mathscr{P}$}{P}}\label{section class p}

In this section, we characterize those models of the DNV family with dual intersection complex $\mathscr{P}$ that are total spaces of cusp models. 

\begin{proposition}\label{proposition leq2}
Let $\shY$ be a model of the DNV family of degree $2$ with dual intersection complex  $\mathscr{P}$ and let $f\colon\shY\to \sY'$ be a cusp model. If $Y\subset \sY_c $ is a contracted component, then $|\Gamma_Y|\leq 2$.
\end{proposition}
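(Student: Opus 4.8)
The plan is to suppose some contracted component $Y$ has $|\Gamma_Y|\geq 3$ and derive a contradiction. Write $\shY_c=Y_1\cup Y_2\cup Y_3$ with $Y_3$ the unique non-contracted component (it maps birationally onto the irreducible fiber $\sY'_c$) and take $Y=Y_1$. Since the dual complex is $\mathscr{P}$, all three components are of type $d_2$ (Example~\ref{example dnv family}; the type is preserved under type~I flops), so each $Y_i^\nu$ is a smooth rational surface and $|\Gamma_{Y_i}|=\vrho(Y_i^\nu)$. As $Y_3$ is of type $d_2$, Proposition~\ref{proposition picard noncontracted improved} gives $\vrho(Y_3^\nu)\geq 20$; combined with the relation $\sum_i|\Gamma_{Y_i}|=24$ from the proof of Corollary~\ref{corollary degenerate}, this yields $|\Gamma_{Y_1}|+|\Gamma_{Y_2}|\leq 4$. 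Hence $|\Gamma_{Y_1}|\geq 3$ is only possible if $|\Gamma_{Y_1}|=3$ and $|\Gamma_{Y_2}|=1$, i.e. $Y_2^\nu\cong\IP^2$, and it suffices to exclude this.

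Now I would use the rigidity of $\IP^2$. Since $\IP^2$ carries no non-constant morphism to a curve and $Y_2$ is contracted, $f$ maps $Y_2$ to a point; in particular the double curve $D:=Y_1\cap Y_2$, lying on $Y_2$, is contracted by $f$. The anticanonical cycle of $Y_2^\nu=\IP^2$ is a pair of rational curves summing to $-K_{\IP^2}=3H$ and meeting at the two triple points of $\mathscr{P}$, hence a line and a conic; thus $(D^2)_{Y_2}\in\{1,4\}$. As every double curve of $\mathscr{P}$ passes through both triple points, the triple point formula reads $(C^2)_{Y_i}+(C^2)_{Y_j}=-2$ for each double curve $C=Y_i\cap Y_j$, giving $(D^2)_{Y_1}=-2-(D^2)_{Y_2}\in\{-3,-6\}$. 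The contradiction I aim to extract is that this curve of self-intersection $\leq -3$ on $Y_1$ is nonetheless forced to be contracted.

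To finish I would analyse $Y_1$. Its anticanonical cycle is $D+D'$ with $D'=Y_1\cap Y_3$ and $D\cdot D'=2$, so $-K_{Y_1}\cdot D=(D^2)_{Y_1}+2\leq -1$ and $-K_{Y_1}$ is not nef. From $\sO_{\shY}(\shY_c)|_{Y_1}=\sO_{Y_1}$ one computes $N_{Y_1/\shY}=\sO_{Y_1}(-D-D')=\omega_{Y_1}$, so a contraction of $Y_1$ to a point would require $-Y_1|_{Y_1}=-K_{Y_1}$ to be ample; as it is not even nef, $f(Y_1)$ is a curve. Then $f^{*}A|_{Y_1}$, for an ample divisor $A$ on $\sY'$, is a non-zero nef class of square $0$, hence semiample on the rational surface $Y_1$, and defines a fibration $\pi\colon Y_1\to\IP^1$ for which $D$ is vertical (as $D$ is contracted). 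Since $F\cdot(-K_{Y_1})=F\cdot D'\geq 0$ for a general fiber $F$, adjunction gives $p_a(F)\leq 1$, and $\vrho(Y_1)=3<10$ rules out an elliptic fibration; hence $\pi$ is a ruling and $Y_1$ is a one-point blow-up of some $\mathbb{F}_n$. Every irreducible vertical curve then has self-intersection $0$ or $-1$, contradicting $(D^2)_{Y_1}\leq -3$.

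The step I expect to be most delicate is this last surface-geometry argument: separating the cases $\dim f(Y_1)=0$ and $\dim f(Y_1)=1$ (the former via the identification $N_{Y_1/\shY}=\omega_{Y_1}$ together with non-nefness of $-K_{Y_1}$, the latter via the ruling and the bound on vertical self-intersections). Getting the normalisation of the triple point formula exactly right is equally essential, since the entire contradiction rests on the inequality $(D^2)_{Y_1}\leq -3$.
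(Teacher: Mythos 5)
Your reduction to the case $|\Gamma_{Y_1}|=3$, $Y_2\cong\IP^2$ via Proposition~\ref{proposition picard noncontracted improved} and the count $\sum_i|\Gamma_{Y_i}|=24$ is correct, as is the computation $(D^2)_{Y_1}\in\{-3,-6\}$ and the concluding ruled-surface argument; this is a genuinely different route from the paper's, which instead classifies the degenerate curve structures of cardinality three and invokes the non-contraction statements of Propositions~\ref{proposition minus two not contracted} and~\ref{proposition not contracted}. However, the step you flag as delicate contains a genuine gap: it is not true that contracting a prime divisor $E$ to a point under a projective birational morphism forces $-E|_E$ to be ample, or even nef. That conclusion follows from the negativity lemma only when $E$ is the entire exceptional locus of a contraction of relative Picard rank one; here $f$ has relative Picard rank two and contracts $Y_1\cup Y_2$ with $Y_1\cap Y_2=D\neq\emptyset$, and the negativity lemma only yields that $-(a_1Y_1+a_2Y_2)|_{Y_1}$ is ample for some $a_1,a_2>0$, which is perfectly compatible with $-K_{Y_1}\cdot D<0$. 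The failure is not hypothetical in this setting: in the cusp model of Proposition~\ref{proposition p2} the component $Y_2$ with $|\Gamma_{Y_2}|=2$ is contracted to a point even though $-K_{Y_2}\cdot D_{21}=D_{21}^2+2\in\{-1,-4\}$ is negative, so $-K_{Y_2}$ is not nef. (A smooth-ambient example: blow up a point on the exceptional $\IP^2$ of a contraction to a smooth point; the strict transform $E_1\cong\mathbb{F}_1$ is contracted to a point, yet $-E_1|_{E_1}=h+e$ meets the $(-1)$-curve $e$ negatively.) Consequently the case $f(Y_1)=\mathrm{pt}$ is not excluded by your argument, and the remainder of your proof does not address it.

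The conclusion of that step is nevertheless correct and can be repaired by a rank count rather than a positivity statement. The restriction $N^1(\shY)\to N^1(Y_1)$ is surjective: by the gluing description of $\Pic(\shY_c)$ one can match degrees along the double curves for any prescribed class on $Y_1$, using that $D_{31}, D_{32}$ are linearly independent in $N^1(Y_3)$. Hence $N_1(Y_1)\to N_1(\shY)$ is injective, and if $Y_1$ were contracted to a point the three-dimensional cone $\NE(Y_1)$ would be contained in the extremal face $\NE(f)$, whose dimension is $\vrho(\shY)-\vrho(\shY')=2$ by Lemma~\ref{lemma picard rank cusp model}, a contradiction. With this substitution your argument goes through; as written, it does not.
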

\begin{proof}
We know that $|\Gamma_{Y}|\leq 3$ and that $\Gamma_Y$ is degenerate by Corollary~\ref{corollary degenerate}, so it is enough to exclude $|\Gamma_{Y}|=3$. Let us write $\sY_c=Y_1\cup Y_2 \cup Y_3$ and assume $Y_1=Y$. The restriction of $f$ to $Y$ can be written as the map $\pi:=\pi_{|L|}\colon Y\to K$ associated to a linear system for some base point free $L\in \Pic(Y)$. We will distinguish the cases $\Gamma_Y$ very degenerate or tamely degenerate.

Suppose $\Gamma_Y$ is very degenerate and $|\Gamma_{Y}|=3$. Then $\Gamma_{Y}$ has an exceptional vertex $v_1$ and is equal to its leg $L(v_1)=(v_1,v_2,v_3)$. By Proposition~\ref{proposition minus two not contracted}, the $(-2)$-curve $v_2$ is not contracted so that $K\isom \IP^1$. We will produce a contradiction by showing that $L^2\neq 0$.
Let us write $D_{12}+D_{13}$ for the double curve on $Y$ and assume that $D_{12}.{v_1}=1$. By \cite[Proposition~6.3]{HL}, the curve structure determines the surface uniquely, so using $|\Gamma_{Y_1}|=3$ and that $\Gamma_{Y_1}$ is very degenerate, one deduces $D_{13}^2=4$ and $D_{12}^2=-1$. Thus, \cite[Corollary~3.7]{HL} implies
\[
\NE(Y_1)=\langle v_2,v_1,D_{12} \rangle.
\]
Let us write $L= a v_1 + b v_2 + c D_{12}$ with $a,b,c\in \NN_0$. Then 
\[
0=L^2= a(L.v_1) + b (L.v_2) + c (L.D_{12}).
\]
For each generator of $\NE(Y_1)$ that is not contracted by $\pi$, the corresponding coefficient must vanish. It follows that $b=0$. If $Y_2$ is not contracted by $f$, the curve $v_1$ is not contracted by Proposition \ref{proposition not contracted}.
  Else, by the same proposition, $D_{12}$ is not contracted. 
  In any event, $F=dC$ for some $d\in \NN$ and  $C\in\{v_1,D_{12}\}$ and the claim follows.

Next we suppose $\Gamma_Y$ is tamely degenerate and $|\Gamma_{Y}|=3$. Then  $\Gamma_{Y}$ has an exceptional vertex $v_1$ and is equal to the leg $(v_1,v_2)$ generated by $v_1$ together with a vertex $v_3$ meeting only $v_{2}$. Let us write $D_{12}+D_{13}$ for the double curve on $Y=Y_1$ and assume that $D_{12}.{v_1}=1$. 
As the leg of $v_1$ ends in a vertex (namely $v_2$) that meets $D_{13}$,  coming from $\gothY_2$ we must have contracted at least two $(-1)$-curves meeting birational transforms of $D_{13}$ and thus we have $D_{13}^2>0$. 
The curves with negative self-intersection are contained in the set $\{v_1, v_2, v_{3}, D_{12}\}$. By \cite[Corollary~3.7]{HL}, we have
\[
\NE(Y)=\langle C\in \{v_1, v_2, v_3, D_{12}\} : C^2<0 \rangle .
\]
Note that by Corollary~\ref{corollary degenerate} and \cite[Proposition 4.2]{HL}, exactly one of the components of $\shY_c$ has a non-degenerate curve structure (namely the one not contracted by $f$).

 Suppose $\Gamma_{Y_2}$ is non-degenerate. Then $|\Gamma_{Y_3}|=1$ by Proposition~\ref{proposition picard noncontracted improved} and hence $Y_3\cong \PP^2$ by \cite[Corollary~3.7]{HL}. As $D^2_{31}<0$, this is a contradiction.
 So $\Gamma_{Y_3}$ is non-degenerate. Then the elementary modification in $v_2$, call it $\sY_c\dashrightarrow Y_{v_2}$,  defines a projective surface $Y_{v_2}$ by \cite[Proposition~4.4]{HL}. 
 Indeed, one checks that $D^2_{31}<0$ and $D_{32}^2<0$ so that the assumptions of loc. cit. are satisfied. Hence, $v_2$ is not contracted by $\pi$ by Proposition~\ref{proposition not contracted}. 
 Also, $Y_2\cong \PP^2$. As $D_{12}^2+D_{21}^2=-2$, it follows that $D^2_{12}\leq -2$.
 
Suppose $\pi$ contracts $v_1$. Then $\pi$ factors as $Y_1\to Y' \to K$ where $Y_1\to Y'$ denotes the contraction of the  $(-1)$-curve $v_1$. Consequently, $Y'$ is a smooth surface of Picard number $\vrho(Y')=2$. We calculate ${v'}^2_2=0$ and ${D'}^2_{12}\leq -1$ for  the strict transforms $v'_2$  and $D'_{12}$ of $v_2$ and $D_{12}$. So 
\[
\NE(Y')=\langle v'_2, D'_{12}\rangle.
\] 
Note that as $v_2$ is not contracted by $\pi$, also $Y'\to K$ does not contract $v'_2$. Thus, the fiber of $Y'\to K$ is a multiple of $D'_{12}$, hence square negative, a contradiction.
\end{proof}

The above proposition helps to characterize the set of possible total spaces of cusp models with dual intersection complex of the central fiber given by $\scrP$. In the remainder of this section, we show that any of these models of the DNV family admits a cusp model.

\begin{lemma}\label{lemma qfact}
Let $f:X\to X'$ be a projective birational morphism between complex algebraic varieties with rational singularities, and suppose that $\Exz(f)=E_1 \cup \ldots \cup E_m$ for prime divisors $E_i$ where $m=\vrho(X/X')$ is the relative Picard rank. If $X$ is $\Q$-factorial, then so is $X'$.
\end{lemma}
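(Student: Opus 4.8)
The plan is to prove $\Q$-factoriality of $X'$ directly, by showing that every prime Weil divisor $D'$ on $X'$ is $\Q$-Cartier. So let $D:=f_*^{-1}D'$ be the strict transform of $D'$ on $X$. Since $f$ is birational, each $E_i$ is $f$-exceptional and its image $f(E_i)$ has codimension at least two, so that $f_*E_i=0$ for all $i$ while $f_*D=D'$; moreover $D$ is $\Q$-Cartier because $X$ is $\Q$-factorial. It therefore suffices to correct $D$ by a rational combination of the $E_i$ into a divisor that is manifestly pulled back from $X'$.

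First I would show that the classes $[E_1],\ldots,[E_m]$ are linearly independent in the relative Néron--Severi space $N^1(X/X')_\R$. This follows from the Negativity Lemma, see e.g. \cite[Lemma~3.39]{KM98}: if $\sum a_iE_i\equiv_{X'}0$, then both $\sum a_iE_i$ and $-\sum a_iE_i$ are $f$-nef with trivial pushforward, hence both are effective, forcing $a_i=0$ for all $i$. As $\dim_\R N^1(X/X')=\vrho(X/X')=m$ by hypothesis, the $[E_i]$ thus form a basis. Using the perfect pairing between $N^1(X/X')$ and the space $N_1(X/X')$ spanned by the $f$-contracted curves, I can then solve the linear system $(D-\sum_i a_iE_i)\cdot C=0$ for all such curves $C$, obtaining unique coefficients $a_i$; these are rational since the relevant intersection matrix is integral and, by the basis property, invertible. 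Setting $G:=D-\sum_i a_iE_i$, I obtain a $\Q$-Cartier divisor that is $f$-numerically trivial and satisfies $f_*G=D'$.

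The main obstacle is the final descent step: I must show that the $f$-numerically trivial $\Q$-Cartier divisor $G$ is the pullback $f^*B$ of a $\Q$-Cartier divisor $B$ on $X'$, for then $B=f_*G=D'$ exhibits $D'$ as $\Q$-Cartier and the proof concludes. This is precisely where the hypothesis on rational singularities is used. Since $X$ and $X'$ both have rational singularities and $f$ is birational, factoring a resolution of $X$ as a resolution of $X'$ yields $R^if_*\sO_X=0$ for $i>0$. Clearing denominators so that $G$ becomes a Cartier divisor which is numerically trivial on every fibre of $f$, the relative exponential sequence together with this vanishing identifies the class of $\sO_X(G)$ modulo pullbacks with a class in $R^2f_*\Z$ supported on the exceptional locus; the negative-definiteness of the intersection form on the exceptional fibres (again the Negativity Lemma) forces this class to vanish once $G$ is $f$-numerically trivial, so that $\sO_X(G)\cong f^*\sO_{X'}(B)$. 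I expect the careful justification of this cohomological descent, rather than the linear algebra of the previous step, to be the technical heart of the argument.
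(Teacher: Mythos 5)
Your approach is genuinely different from the paper's. The paper proves the lemma by citing the cohomological criterion for $\Q$-factoriality of \cite[(12.1.6)~Proposition]{KM92}: passing to a compactified resolution $\ol Y \to Y \to X \to X'$, it reduces (via semi-simplicity of polarizable Hodge structures) to comparing the image of $\Pic(Y)_\Q$ in $H^0(X',R^2\pi'_*\Q)$ with the span of the exceptional divisors, and concludes by the direct sum decomposition $\Pic(Y)_\Q = \pi^*\Pic(X)_\Q \oplus \Q\langle F_j\rangle = (\pi')^*\Pic(X')_\Q \oplus \Q\langle E_i,F_j\rangle$, where the second equality is exactly the hypothesis $m=\vrho(X/X')$. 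You instead argue divisor by divisor: correct the strict transform of a Weil divisor on $X'$ by a rational combination of the $E_i$ (possible because, by the Negativity Lemma and the count $m=\vrho(X/X')$, the $[E_i]$ form a basis of $N^1(X/X')_\R$) and then descend the resulting $f$-numerically trivial $\Q$-Cartier divisor. Your first two steps are correct, and this Mumford-style route is a perfectly legitimate alternative; what the paper's approach buys is that all the analytic/cohomological work is outsourced to a single citation, while yours makes visible exactly where each hypothesis enters.

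The one place I would push back is the justification of the descent step. The part that is right: rational singularities of $X$ and $X'$ give $R^if_*\sO_X=0$ for $i>0$ via the Leray spectral sequence for a resolution, whence $\Pic(X)/f^*\Pic(X')$ injects into $H^0(X',R^2f_*\Z)$. But the Negativity Lemma and ``negative-definiteness of the intersection form on the exceptional fibres'' is not what kills the obstruction class. The stalk of $R^2f_*\Z$ at $x'$ is $H^2(f^{-1}(x'),\Z)$, and for a fibre of dimension $\geq 2$ the vanishing of $c_1(G|_{f^{-1}(x')})$ in $H^2(f^{-1}(x'),\Q)$ is the statement that a numerically trivial line bundle on a projective scheme has torsion first Chern class — i.e.\ that $\Pic^\tau/\Pic^0$ is finite and $c_1$ kills $\Pic^0$ (Matsusaka/SGA~6), not an intersection-form argument. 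One then passes to a multiple of $G$ using boundedness of torsion in the constructible sheaf $R^2f_*\Z$. With that substitution your proof closes. It is worth noting that the cone over an elliptic curve (resolution with a single exceptional divisor, relative Picard rank one, but $R^1f_*\sO_X\neq 0$ and the cone not $\Q$-factorial) shows that the failure mode sits precisely in the kernel of $R^1f_*\sO_X^\times \to R^2f_*\Z$, which is why the rational singularities hypothesis — and not any definiteness of intersection forms — is what makes the descent work.
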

\begin{proof}
Let $\pi:Y\to X$ be a resolution of singularities, denote $\pi':=f\circ \pi$, and let $Y \subset \ol Y$ be a smooth compactification. We denote by $F_1,\ldots, F_n$ the prime exceptional divisors of $\pi$. By \cite[(12.1.6)~Proposition]{KM92}, we have to show that
\begin{equation}\label{eq kollar mori qfact}
\img\left(H^2(\ol Y,\Q) \to H^0(X',R^2\pi'\Q)\right) = \img\left(\Q\left\langle E_i, F_j \mid i,j \right\rangle \to H^0(X',R^2\pi'\Q)\right)
\end{equation}
given that the corresponding equality holds for $X$ instead of $X'$ (and hence only the span of the $F_j$ on the right-hand side). The inclusion $\supseteq$ is clear, for the other direction one uses the semi-simplicity of the category of polarizable Hodge structures as in the proof of loc. cit. to reduce showing that the  right-hand side is equal to the image of $\Pic(\ol Y)_\Q \to H^0(X',R^2\pi'\Q)$ which in turn is equal to the image of $\Pic(Y)_\Q \to H^0(X',R^2\pi'\Q)$. Let us observe that
\[
\Pic(Y)_\Q = \pi^*\Pic(X)_\Q \oplus \Q\left\langle F_j \mid j \right\rangle = (\pi')^*\Pic(X')_\Q \oplus\Q\left\langle E_i, F_j \mid i,j \right\rangle
\]
where the first equality comes from $\Q$-factoriality and the second one holds by assumption on the relative Picard rank. Clearly, the pullback of $\Pic(X')$ maps to zero under the right-hand side of \eqref{eq kollar mori qfact} and so the claim follows.
\end{proof}

\begin{remark}
In the proof of the preceding lemma, we tacitly used that the classes of the $E_i$ are linearly independent in the (rational) Picard group. This can be seen by reducing to the surface case using hyperplane sections on $X'$. For surfaces, it is obvious e.g. from (the easy direction of) Grauert's criterion. 
\end{remark}

Using the usual algebraization arguments, we immediately deduce from Lemma~\ref{lemma qfact} the following

\begin{corollary}\label{corollary qfact}
Let $\shY$ be a model of the DNV family. Let $\contr_F\colon\shY\to \shY'$ be a contraction of an extremal face $F$ with $\dim F = 2$. Suppose $\Exz( \contr_F)=Y_1\cup Y_2$ where $Y_1$ and $Y_2$ are components of $\sY_c$.  Then $\shY'$ is $\QQ$-factorial. 	\qed
\end{corollary}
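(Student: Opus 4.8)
The plan is to reduce the statement to Lemma~\ref{lemma qfact} by passing to a finite-type model and then transport $\QQ$-factoriality back to $\shY'$. First I would algebraize the contraction. Since $\shY\to S$ is a maximal model of the DNV family and $\contr_F$ is a projective birational $S$-morphism with $\shY'\to S$ projective, Proposition~\ref{proposition algebraic model} together with Lemma~\ref{lemma algebraization of contractions} produces, after shrinking $C$ if necessary, a birational $C$-morphism $\gothp\colon\gothY\to\gothY'$ over a smooth affine curve $C$ whose base change along $\vphi\colon S\to C$ is $\contr_F$ and which is an isomorphism over $C\ohne\{c\}$; here $\gothY$ is smooth by Proposition~\ref{proposition algebraic model}\eqref{proposition algebraic model item one}. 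Lemma~\ref{lemma algebraization of contractions} also guarantees that the relative Picard rank of $\gothp$ equals that of $\contr_F$. As $\contr_F$ is the contraction of the two-dimensional extremal face $F$, this relative rank is $\dim F=2$. Since $\gothp$ is an isomorphism away from $c$, its exceptional locus is contained in the central fiber and equals $Y_1\cup Y_2$, which I now read as two prime divisors of $\gothY$.

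Next I would verify the hypotheses of Lemma~\ref{lemma qfact} for $\gothp\colon\gothY\to\gothY'$. The source $\gothY$ is smooth, hence $\QQ$-factorial with rational singularities. For the target, $\contr_F$ is in particular a birational contraction of the maximal Kulikov model $\shY$, so $\shY'$---and hence its algebraization $\gothY'$---has canonical, thus rational, singularities by Lemma~\ref{lemma properties cusp model}\eqref{lemma properties cusp model item one}, exactly as invoked for $\gothY'$ in the proof of Proposition~\ref{proposition picard group central fiber}. Finally, $\Exz(\gothp)=Y_1\cup Y_2$ consists of exactly $m=2$ prime divisors, which precisely matches the relative Picard rank $\vrho(\gothY/\gothY')=2$. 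With these inputs in place, Lemma~\ref{lemma qfact} yields that $\gothY'$ is $\QQ$-factorial.

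It then remains to transport $\QQ$-factoriality from the finite-type model $\gothY'$ to $\shY'$, using that $\shY'=\gothY'\times_C S$ by Proposition~\ref{proposition picard group algebraic model} and that $\vphi$ is \'etale at $c$; this is the kind of ``usual algebraization argument'' licensed by Remark~\ref{remark mmp}, which lets us read off the relevant properties of $\shY'$ over $S$ from the algebraic model near the central fiber. I expect this last step to be the only delicate point: $\QQ$-factoriality is a condition on Weil divisors that need not survive an arbitrary completion, so some care is required to see that it genuinely passes to the base change $\shY'$. The numerical heart of the argument, by contrast, is cheap and lies entirely in the hypothesis $\dim F=2$, which is what forces the relative Picard rank to coincide with the number of contracted prime divisors $Y_1,Y_2$ and thereby makes Lemma~\ref{lemma qfact} applicable.
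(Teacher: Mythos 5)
Your proposal is correct and follows essentially the same route as the paper, which states the corollary with only the one-line remark that it follows from Lemma~\ref{lemma qfact} by ``the usual algebraization arguments'': algebraize via Proposition~\ref{proposition algebraic model} and Lemma~\ref{lemma algebraization of contractions}, note that $\dim F=2$ forces the relative Picard rank to equal the number of contracted prime divisors $Y_1,Y_2$, check rational singularities via Lemma~\ref{lemma properties cusp model}, apply Lemma~\ref{lemma qfact}, and descend to $\shY'$. You in fact supply more detail than the paper does, and your flagging of the descent of $\QQ$-factoriality under base change to $\Spec\C[[t]]$ as the one point needing care is a fair observation about a step the paper leaves implicit.
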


\subsection{$\Gamma_{Y_i}$ regular, $|\Gamma_{Y_i}|=2$ for $i=1,2$.}

Let $Y$ be a component of a central fiber $\sY_c$ of a model of the DNV family $\shY$ of degree $2$. Suppose the dual intersection complex of $\shY$ is $\scrP$. In particular, $Y$ is smooth. Let $D=D_0+D_1$ be the anticanonical divisor given by the restriction of $\Sing(\sY_c)$ to $Y$. 

\begin{lemma}\label{lemma tamely degenerate type p cardinality two}
Suppose $\Gamma_{Y}$ is regular  and $|\Gamma_Y|=2$.  Then $\Gamma_Y$ consists of two vertices $v_1, v_0$ such that $D_0.v_1=n+2$, $D_1.v_1=0$, $D_1.v_0=D_0.v_0=1$, $v_0^2=0$, $v_1^2=n$, $D_0^2=4+n$, and $D_1^2=-n$ for some $n\geq -2$. 
\end{lemma}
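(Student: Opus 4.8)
The plan is to reduce to a rank-two surface, read off the combinatorial configuration, and finish with a short intersection computation. First I would record that, since the dual intersection complex is $\scrP$, the component $Y$ is smooth of type $d_2$, so $Y^\nu=Y$ and the anticanonical divisor is a cycle $D=D_0+D_1$ of two rational curves with $D_0.D_1=2$; since $K_Y^2=8$ this already forces $D_0^2+D_1^2=4$. By property (3) of curve structures the two vertex classes of $\Gamma_Y$ form a $\QQ$-basis of $\Pic(Y)$, so $\vrho(Y)=|\Gamma_Y|=2$ and $Y\cong\FF_m$ is a Hirzebruch surface. Moreover $\Gamma_Y$ cannot be non-degenerate: a non-degenerate type $d_2$ structure has two legs ending on a fork and hence at least four vertices, exactly as in the proof of Corollary~\ref{corollary degenerate}. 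Being regular by hypothesis, $\Gamma_Y$ is therefore tamely degenerate.

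The heart of the argument is to pin down the incidences of the two vertices $v_0,v_1$ with the anticanonical cycle. I would first exclude an exceptional vertex: if $v$ were exceptional with leg $(v,w)$, then $v^2=-1$ and $v$ meets a single component with $w$ not meeting it, while degeneracy forces the leg to end on a vertex $w$ meeting the anticanonical cycle and regularity forces $w^2\neq 0$; a direct check of the resulting rank-two intersection form $\begin{pmatrix} -1 & 1\\ 1 & w^2\end{pmatrix}$ against the discriminant $-1$ of $\Pic(\FF_m)$ (using property (4), $v.w\in\{0,1\}$, so $v.w=1$) then yields $w^2=0$, a contradiction. Hence $\Gamma_Y$ is a degenerate type $d_2$ curve structure without exceptional vertex, and these are classified in \cite[Figure~15]{HL}; the case $\vrho(Y)=2$ is the single edge $v_0\!-\!v_1$ in which $v_0$ is a fiber of the ruling of $\FF_m$, so $v_0^2=0$ and $v_0$ meets each of $D_0,D_1$ once, while $v_1$ meets exactly one component, say $D_0$, giving $D_1.v_1=0$. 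Once $v_0^2=0$ is known, non-degeneracy of the intersection form on the rank-two lattice $\Pic(Y)$ together with property (4) forces $v_0.v_1=1$.

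With these incidences the remaining identities are pure linear algebra in the basis $\{v_0,v_1\}$, so I would not belabour them. Setting $n:=v_1^2$ and writing $D_0=a_0v_0+b_0v_1$ and $D_1=a_1v_0+b_1v_1$, the conditions $D_0.v_0=D_1.v_0=1$ give $b_0=b_1=1$, the condition $D_1.v_1=0$ gives $a_1=-n$, and applying adjunction $-K_Y.v_1=v_1^2+2$ to $-K_Y=D_0+D_1$ gives $a_0=2$; substituting back yields $D_0.v_1=n+2$, $D_0^2=n+4$ and $D_1^2=-n$, with $D_0.D_1=2$ as a consistency check. Finally $n\geq -2$ because $v_1$ is effective and not contained in $D$, so $(D_0+D_1).v_1=v_1^2+2\geq 0$. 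The main obstacle is the combinatorial core of the second paragraph: proving that one vertex is a ruling fiber of square $0$ meeting both components while the other meets a single component. This is precisely where the regularity hypothesis and the curve-structure classification of \cite{HL} are indispensable, and excluding the exceptional-vertex case via the intersection-form constraint is the most delicate point.
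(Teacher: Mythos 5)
Your route is genuinely different from the paper's. The paper's proof is short: it invokes \cite[Proposition~3.18]{HL}, which says that a \emph{regular} curve structure is uniquely determined by the boundary self-intersections $D_0^2,D_1^2$, combines this with $D_0^2+D_1^2=4$ to classify the unique possibility, gets $D_0.v_1=n+2$ from adjunction, and gets $D_0.v_0=D_1.v_0=1$ from $v_0^2=0$ and \cite[Proposition~3.2]{HL}. You instead classify by hand: exclude an exceptional vertex, quote the classification of degenerate type-$d_2$ structures without exceptional vertex (\cite[Figure~15]{HL}), and finish with linear algebra in the basis $\{v_0,v_1\}$. Your final computation is correct and is a clean self-contained substitute for the paper's appeal to adjunction and Proposition~3.2 --- with the caveat that the incidence inputs $D_0.v_0=D_1.v_0=1$ and $D_1.v_1=0$ must really be extracted from the \emph{augmented} classification: ``$v_0$ is a fiber of the ruling'' does not by itself give the $1+1$ splitting of $D.v_0=2$, since in the very degenerate case of Lemma~\ref{lemma very degenerate type p cardinality two} an equally square-zero vertex meets one boundary component twice and the other not at all.

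The step that fails as written is the exclusion of an exceptional vertex via the discriminant. Property (3) of curve structures only guarantees that $\{C_v\}$ is a $\QQ$-basis of $\Pic(Y)$, not a $\ZZ$-basis, so the Gram matrix $\left(\begin{smallmatrix}-1&1\\1&w^2\end{smallmatrix}\right)$ has determinant $-k^2$, where $k$ is the index of $\ZZ\langle v,w\rangle$ in $\Pic(\FF_m)$; this yields $w^2=k^2-1\in\{0,3,8,\dots\}$, and regularity only eliminates $w^2=0$. The conclusion is nevertheless true and reparable with the data you already have: writing $D_0=av+bw$ and using $D_0.v=1$, $D_0.w=0$ (the latter is condition (3) in the definition of an exceptional vertex) and $D_1.w=w^2+2$ gives $D_0^2=-w^2/(w^2+1)$ and $D_1^2=(w^2+2)^2/(w^2+1)$, so your identity $D_0^2+D_1^2=4$ forces $(w^2)^2=w^2$, i.e.\ $w^2\in\{0,1\}$; the case $w^2=1$ is impossible because $D_0^2=-1/2$ is not an integer, and $w^2=0$ contradicts regularity. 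With that substitution your argument goes through, but the discriminant computation you single out as the delicate point is precisely where the proposal, as stated, has a genuine gap.
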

\begin{proof} 
Given that $\Gamma_Y$ is regular, it is uniquely determined by $D_0^2$ and $D_1^2$ by \cite[Proposition~3.18]{HL}. As $|\Gamma_Y|=2$, we must have $D_1^2+D_0^2=4$. Thus one easily classifies the unique such curve structure with $D_0^2=4+n$, and $D_1^2=-n$. As $v_1$ is a smooth rational curve, the adjunction formula gives the intersection number with $D_0$. The result is depicted in Figure~\ref{l2curvestruc} for $n\neq -2$. For $n=-2$ one obtains the same graph except that $D_0.v_1=0$. From $v_0^2=0$ and \cite[Proposition~3.2]{HL} one deduces that $D_1.v_0=D_0.v_0=1$.
\end{proof}

\begin{figure}\centering
\begin{tikzpicture}

\begin{scope}[shift= {(5,0)}]      
      
          \node[draw,shape=circle][fill=black] (D1) at (0,0){};      
  \node[draw,shape=circle] (1) at (1,1){};  
  \node(n1) at (1.5,1){{\small $n$}};  
\node[draw,shape=circle] (2) at (1,0){}; 
\node(n2) at (1,-0.5){{\small $0$}}; 
 \node[draw,shape=circle][fill=black] (D2) at (2,0){} ;     
\node(n3) at (2,-0.5){{$-n$}}; 
\node(n3) at (0,-0.5){{$4+n$}}; 
\draw[thick]  (1)--(2)
(1)--(D1)
   (D1)--(2)--(D2);

         \end{scope}        
\end{tikzpicture}
\caption{The augmented regular curve structures $\Gamma_Y$  with $|\Gamma_Y|=2$ for $n > -2$. For $n=-2$ the vertex of self intersection $-2$ does not meet the black nodes.}
\label{l2curvestruc}
\end{figure}
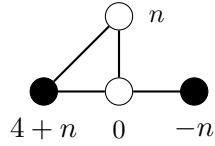

By \cite[Lemma 3]{FrieApp}, $v_0$ is smooth rational of genus $0$, thus isomorphic to $\PP^1$. It is a consequence of \cite[Proposition 3.5]{HL} that $Y$ is a Mori Dream space, so the nef bundle $\sO(v_0)$ is semiample, inducing a fibration $Y\to K$ with fiber $\PP^1$. 
Hence, $Y$ is the Hirzebruch surface $\mathbb{F}_n=\PP(\sO\oplus\sO(n))$.

We have the following lemma. 

\begin{lemma}\label{lemma model p two regular}
Let $\shY\to S$ be a model of the DNV family of degree $2$ with dual intersection complex $\mathscr{P}$ such that there are two components $Y_1, Y_2$ with $|\Gamma_{Y_i}|=2$ and $\Gamma_{Y_i}$ regular. Then there is a cusp model $f\colon\shY\to \sY'$ contracting $Y_1$ and $Y_2$. 
\end{lemma}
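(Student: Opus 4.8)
The plan is to construct an explicit nef line bundle on $\shY$ whose associated semiample morphism is the desired cusp model contracting $Y_1$ and $Y_2$. First I would fix the combinatorics. Since each $\Gamma_{Y_i}$ is a $\QQ$-basis of $\Pic(Y_i^\nu)$ and $\sum_i|\Gamma_{Y_i}|=24$ (as in the proof of Corollary~\ref{corollary degenerate}), the hypothesis $|\Gamma_{Y_1}|=|\Gamma_{Y_2}|=2$ forces $|\Gamma_{Y_3}|=20$; thus $Y_3$ is the unique component with $\vrho(Y_3^\nu)\geq 20$ and will be the non-contracted one, in accordance with Proposition~\ref{proposition picard noncontracted improved}. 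By Lemma~\ref{lemma tamely degenerate type p cardinality two} and the discussion following it, $Y_1$ and $Y_2$ are Hirzebruch surfaces $\mathbb{F}_{n_1}$, $\mathbb{F}_{n_2}$, so in particular their anticanonical cycles, which are the double curves $D_{12}+D_{13}$ on $Y_1$ and $D_{12}+D_{23}$ on $Y_2$, are completely explicit.

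Next I would build a line bundle $L$ on $\shY$ by prescribing its restriction to each component of $\shY_c$ and then invoking maximality. Concretely, I would take $L\restr{Y_1}$ and $L\restr{Y_2}$ trivial, and $L\restr{Y_3}$ a big and nef class on $Y_3$ contracting precisely the anticanonical cycle $D_{13}+D_{23}$ to a point. All three restrictions have degree zero on each double curve $D_{12}, D_{13}, D_{23}$, hence agree there, and by maximality of the DNV family (so that $\Pic(\shY)\isom\Pic(\shY_c)$ by Proposition~\ref{proposition picard group central fiber}) they glue to a class $L_c$ on $\shY_c$ lifting to $L$ on $\shY$. Checking that $L$ is nef on $\shY$ reduces to curves: a curve $C\subset Y_1$ or $Y_2$ has $L.C=0$ since $L$ is trivial there, while for $C\subset Y_3$ one has $L.C=(L\restr{Y_3}).C\geq 0$ by nefness on $Y_3$. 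As nef line bundles on $\shY$ are semiample (Remark~\ref{remark morifan}(4), see also \cite[Remark~2.6]{HL}), the class $L$ defines a morphism $f=\phi_{|mL|}\colon\shY\to \sY'$ for $m\gg 0$, and I would arrange $L\restr{\shY_\eta}$ to be ample so that $f$ is an isomorphism on the generic fibre.

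Finally I would identify $f$ as a cusp model. By construction the divisorial part of $\Exz(f)$ is exactly $Y_1\cup Y_2$ (the remaining contracted locus $D_{13}+D_{23}$ has codimension two and collapses to the cusp point), while $L\restr{Y_3}$ is big, so $f\restr{Y_3}$ is birational onto its image and the central fibre $\sY'_c=f(Y_3)$ is irreducible. Since $\vrho(\shY)=21$ by \eqref{eq picard rank maximal} and $\vrho(\sY')=19$ by Lemma~\ref{lemma picard rank cusp model}, the contraction $f$ has relative Picard rank two, hence is the contraction of a two-dimensional extremal face $F$ of $\NEP(\shY)$ with $\Exz(\contr_F)=Y_1\cup Y_2$; Corollary~\ref{corollary qfact} then gives that $\sY'$ is $\QQ$-factorial, and $\sY'$ is normal as the image of the normal $\shY$ under a contraction with connected fibres. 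The hard part will be the middle step: producing the big and nef class $L\restr{Y_3}$ contracting $D_{13}+D_{23}$ amounts to verifying that this cycle is negative definite on $Y_3$, which I would extract from the triple-point relation $(D_{ij}\restr{Y_i})^2+(D_{ij}\restr{Y_j})^2=-2$ of $d$-semistability combined with the explicit intersection numbers of Lemma~\ref{lemma tamely degenerate type p cardinality two}, and simultaneously checking that $L$ is nef on the whole threefold so that $f$ is a genuine morphism out of $\shY$ with no preliminary flops required; indeed the gluing analysis shows $f\restr{Y_3}$ must identify $D_{13}$ and $D_{23}$, which is exactly why the contraction of the cycle (rather than an embedding of $Y_3$) is forced.
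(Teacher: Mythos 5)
Your overall architecture (glue a line bundle across the components of $\sY_c$ using maximality, take the associated semiample morphism, identify the exceptional locus, and invoke Corollary~\ref{corollary qfact} for $\Q$-factoriality) is the same as the paper's, but your choice of the restrictions $L\vert_{Y_1}=L\vert_{Y_2}=\sO$ breaks the argument. Making $L$ trivial on $Y_1$ and $Y_2$ contracts each of these Hirzebruch surfaces to a \emph{point}, so the contracted extremal face contains the full images of $\NE(Y_1)$ and $\NE(Y_2)$ in $\NE(\sY)$. Each of these images is two-dimensional (the restriction $\Pic(\sY)_\Q\to\Pic(Y_i)_\Q$ is surjective, so $N_1(Y_i)_\Q\to N_1(\sY)_\Q$ is injective), so already the contraction of $Y_1$ alone kills a two-dimensional face while producing a single exceptional prime divisor; locally around $f(Y_1)$ the relative Picard rank is $2$ with one exceptional divisor, which is incompatible with $\Q$-factoriality of $\sY'$. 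Globally, the contracted face has dimension at least $3$, so $\vrho(\sY')\leq 18<19$, contradicting Lemma~\ref{lemma picard rank cusp model}; your appeal to that lemma to get ``relative Picard rank two'' is circular, since it presupposes that $f$ is a cusp model, and the hypothesis of Corollary~\ref{corollary qfact} ($\dim F=2$) fails. The paper avoids this by taking $L\vert_{Y_i}$ to be a positive multiple of the ruling class $v_0$ (resp.\ $w_0$) of $\mathbb{F}_{n_i}$ from Lemma~\ref{lemma tamely degenerate type p cardinality two}, so that each $Y_i$ is contracted along its ruling to a $\IP^1$ and the contracted face is exactly the two-dimensional cone $\langle v_0,w_0\rangle$.

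There is a second, independent failure: the big and nef class on $Y_3$ orthogonal to the anticanonical cycle that you need does not always exist. By the Hodge index theorem such a class requires the intersection matrix of $D_{31},D_{32}$ to be negative definite; with $D_{31}.D_{32}=2$ and $D_{31}^2+D_{32}^2=-14$ (the combinatorics underlying Proposition~\ref{proposition classification of models class p}, transported to $Y_3$ by the triple point formula), the matrix $\bigl(\begin{smallmatrix}a&2\\2&-14-a\end{smallmatrix}\bigr)$ has determinant $-a^2-14a-4$, which is negative for $a=D_{31}^2\in\{0,1\}$ (and symmetrically $\{-14,-15\}$). So for those models your $L\vert_{Y_3}$ cannot be constructed at all, whereas the paper's choice (an ample $G_3$ on $Y_3$, with the degeneracy pushed entirely into the rulings of $Y_1,Y_2$) works uniformly.
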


\begin{proof} 
Let us write $\Gamma_{Y_1}=\{v_0,v_1\}$ and $\Gamma_{Y_2}=\{w_0,w_1\}$ with $v^2_0=0$, $v_1^2=m$, $w_0^2=0$ and $w_1^2=n$ such that  $D_{12}.v_0=D_{13}.v_0=1=D_{21}.w_0=D_{23}.w_0$, see Lemma~\ref{lemma tamely degenerate type p cardinality two}. For a positive integer $p$ let $G_1$ be the divisor defined by $pv_0$ and $G_2$ the divisor defined by $pw_0$. If $p$ is suitably chosen, there is an ample divisor $G_3$ on $Y_3$ such that there is a $G\in \Pic(\shY)$ restricting to $G_i$ on $Y_i$ by \cite[Proposition 3.20]{HL}. The divisors $G_1$ and $G_2$ are nef: if $C_1$ is a curve on $Y_1$, then $G_1.C_1=p(v_0.C_1)\geq 0$.  So replacing $G$ by some multiple, we obtain a projective morphism $\phi:=\phi_{|G|}\colon \shY\to \shZ$ by abundance. 
  
If $C_1$ is a curve on $Y_1$ contracted by $\phi$, then $G_1.C_1=v_0.C_1=0$.  Write $cC_1=av_0+bv_1$ with $a,b\in \ZZ, c\in\NN$ using \cite[Proposition 3.12]{HL}. Then $b=0$ and thus $C_1\in \RR_+[v_0]$. Similarly, one shows that for a curve $C_2\subset Y_2$ contracted by $\phi$ we have $C_2\in \RR_+[w_0]$. 

The morphism $\phi$  is also an isomorphism on $\shY_\eta$:  Suppose there is a  curve $E\subset \shY_{\eta}$ contracted by $\phi_{|\shY}$.  Take the closure of $E$. This yields an effective Cartier divisor 
 $\bar{E}$ on $\shY$.  The divisor $\bar{E}$ is $2$ dimensional and irreducible, so it does not contain any component of the central fiber. 
  Let ${E}_i$ denote the restriction of $\bar{E}$ to $Y_i$. 
  Then $G_i.{E}_i=0$ as ${E}$ is contracted and hence ${E}_1=qv_0$, ${E}_2=rw_0$. But $G_3$ is ample so ${E}_3=0$ and hence from the gluing conditions for divisors \cite[Lemma 1.10]{HL},  one has $q=r=0$. 
	Thus, $E=0$ as well and the curves contracted by $\phi$ are precisely the curves in the cone $F$ generated by $C_{w_0}$ and $C_{v_0}$. It follows now from Corollary~\ref{corollary qfact} that $\phi$ is the sought-for cusp model.
\end{proof}

\begin{proposition}\label{proposition cuspidal cones model p two regular}
In the situation of Lemma~\ref{lemma model p two regular} the following holds:
\begin{enumerate}
	\item If neither $Y_1$ nor $Y_2$ is $\PP^1\times \PP^1$, the cone $C(f)$ is the only cuspidal cone contained in $\Nef(\sY)$.
	\item If $Y_1\cong \IP^1\times\IP^1$, then there are two distinct cusp models $f_j\colon\shY\to \shY'_j$, $j=1,2$ contracting precisely $Y_1 \cup Y_2$. The cones $C(f_1)$ and $C(f_2)$ are the only cuspidal cones contained in $\Nef(\sY)$.
\end{enumerate}
\end{proposition}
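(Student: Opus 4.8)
The plan is to reduce the statement to an enumeration of the cusp models $f\colon\sY\to\sY'$ that are genuine morphisms, and then to carry out that enumeration by analysing how each contracted Hirzebruch surface collapses.

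First I would observe that a cuspidal cone $C(g)$ lies in $\Nef(\sY)$ if and only if the associated contraction is a morphism. Indeed, a $19$-dimensional cuspidal cone (Remark~\ref{remark cuspidal cones}) contained in the nef cone must be one of its faces, and faces of $\Nef(\sY)$ are cut out by regular contractions; by Lemma~\ref{lemma flop is regular} a rational cusp model factors as $\sY\dashrightarrow\sX\to\sY'$ through a composition of flops, and its cone lands in the $\sX$-chamber of $\Mov(\sY)$, which meets $\Nef(\sY)$ in full dimension only when the flop is trivial. So it suffices to enumerate the cusp models $f\colon\sY\to\sY'$ that are morphisms. For any such $f$ the non-contracted component $Y$ satisfies $\vrho(Y^\nu)\geq 19$ by Proposition~\ref{proposition picard noncontracted}; since $Y_1,Y_2$ are smooth Hirzebruch surfaces with $\vrho=2$ (Lemma~\ref{lemma tamely degenerate type p cardinality two}), only $Y_3$ can be non-contracted. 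Hence every such $f$ is a $Y_3$-cusp model contracting $Y_1\cup Y_2$, of relative Picard rank $\vrho(\sY)-\vrho(\sY')=(18+3)-19=2$ by \eqref{eq picard rank maximal} and Lemma~\ref{lemma picard rank cusp model}.

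Next I would pin down how $f$ collapses each $Y_i$. By Corollary~\ref{corollary qfact} the target $\sY'$ is $\QQ$-factorial, so the relative cone $\overline{\NE}(\sY/\sY')$ is exactly $2$-dimensional, spanned by two extremal rays $R_1,R_2$. I would argue that each $Y_i$ is collapsed onto a curve rather than a point: a point-collapse of a Hirzebruch surface contracts both generators of its $2$-dimensional $N_1$, so it alone would fill the relative cone, leaving no room to also contract the other component, and moreover it would produce a non-$\QQ$-factorial singularity (the cone over $Y_i$). Therefore $f|_{Y_i}\colon Y_i\to\text{curve}$ is a fibration, which for $Y_i\cong\mathbb{F}_{k_i}$ is necessarily a ruling. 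The number of rulings is $1$ when $k_i\neq 0$ and $2$ when $Y_i\cong\IP^1\times\IP^1$, and $f$ is determined by the pair of rulings chosen on $Y_1$ and $Y_2$.

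To finish I would check that the two cases are exhaustive and complete the count. At most one of $Y_1,Y_2$ is $\IP^1\times\IP^1$: the shared double curve $D_{12}=D_{21}$ obeys $(D_{12}^2)_{Y_1}+(D_{21}^2)_{Y_2}=-2$ ($d$-semistability, as used in the proof of Proposition~\ref{proposition leq2}), whereas the boundary self-intersections on $\IP^1\times\IP^1$ are $\{0,4\}$ by Lemma~\ref{lemma tamely degenerate type p cardinality two} with $n=0$, and no two elements of $\{0,4\}$ sum to $-2$. In case (1) both rulings are unique, giving the single cusp model of Lemma~\ref{lemma model p two regular} and hence the unique cuspidal cone $C(f)$. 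In case (2) the surface $Y_1\cong\IP^1\times\IP^1$ has two rulings while $Y_2$ has one; each choice of ruling on $Y_1$ yields a valid cusp model $f_j$ by rerunning the construction of Lemma~\ref{lemma model p two regular} with the corresponding (nef, semiample) fibre class, and $C(f_1)\neq C(f_2)$ since $f_1,f_2$ contract distinct curve classes of $Y_1$, for otherwise Lemma~\ref{lemma cones determine contractions} would produce an isomorphism identifying the two rulings. These exhaust all cuspidal cones in $\Nef(\sY)$. The hard part will be the second paragraph: ruling out the point-collapse of a contracted component. The argument hinges on combining $\QQ$-factoriality of the cusp model with the relative Picard rank being exactly $2$, together with the independence in $N_1(\sY)$ of the two ruling classes $[v_0^{(1)}],[w_0^{(2)}]$ (which holds because $Y_1\cap Y_2$ is the single curve $D_{12}$); one must also confirm that the existence construction of Lemma~\ref{lemma model p two regular} applies verbatim to either ruling in the $\IP^1\times\IP^1$ case.
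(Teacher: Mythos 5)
Your reduction to regular contractions and the observation that only $Y_3$ can survive are fine, but the central step of your enumeration --- that each contracted component must be collapsed onto a curve, hence along a ruling of the Hirzebruch surface --- is false, and this breaks both the construction of $f_2$ and the exhaustiveness claim. Concretely, let $Y_1\cong\PP^1\times\PP^1$ with rulings $v_0,v_1$ and boundary $D_{12}+D_{13}$, and consider the subcase $D_{12}.v_1=0$ (so $D_{12}$ is a fibre of the $v_1$-ruling). If you try to build the second cusp model by contracting $Y_1$ along the $v_1$-ruling, i.e.\ with $L_1$ a multiple of $v_1$, the gluing condition for line bundles on $\sY_c$ forces $L_2.D_{21}=L_1.D_{12}=0$; for the ruling class $L_2=b\,w_0$ (which has $w_0.D_{21}=1$ by Lemma~\ref{lemma tamely degenerate type p cardinality two}) this gives $b=0$, so $L_2=\sO_{Y_2}$ and $Y_2$ is contracted to a \emph{point}, not along its ruling. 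This is exactly what the cusp model constructed in the paper does in that subcase. Both of your reasons for excluding point-collapses fail: the curves of $Y_2$ do not ``fill'' the relative cone because classes on different components can coincide in $N_1(\sY)$ (here $D_{21}=D_{12}$ is numerically a $v_1$-fibre, so the contracted face is still the two-dimensional $\langle w_0,v_1\rangle$), and a point-collapse does not obstruct $\Q$-factoriality of the target --- see Corollary~\ref{corollary qfact} and Proposition~\ref{proposition p2}, where \emph{both} contracted components go to points.

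Consequently the dictionary ``cusp model $=$ choice of a ruling on each of $Y_1,Y_2$'' is not correct, and even your case (1) is incomplete: without ruling out point-collapses and other fibrations you have not shown that every cusp model contracts precisely the $v_0$- and $w_0$-rulings. The paper argues instead via the cone of curves: when $Y_i\not\cong\PP^1\times\PP^1$ one has $\NE(Y_i)=\langle C,v_0\rangle$ with $C$ a negative curve that cannot be contracted by any cusp model (Propositions~\ref{proposition minus two not contracted} and~\ref{proposition not contracted}), so every cusp model must contract $v_0$ and therefore agrees with the one of Lemma~\ref{lemma model p two regular}; in the $\PP^1\times\PP^1$ case it produces the second model by an explicit semiample divisor whose shape depends on whether $D_{12}.v_1=2$ or $0$, with $Y_2$ collapsed to a point in the latter case. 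Your second and third paragraphs would need to be replaced by an argument of this kind.
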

\begin{proof}
In the first case, the existence of the cusp model follows from Lemma~\ref{lemma model p two regular} and Corollary~\ref{corollary qfact}. Let the curve structure of $Y_1$ be given by $v_1,v_0$, with $v_0^2=0$ and $D_{12}.v_0=D_{13}.v_0=1$. If $Y_1$ is not isomorphic to $\PP^1\times \PP^1$, either  $v_1^2=-1$ or  $D^2_{13}<0$ or $D_{12}^2<0$.  Write $C$ for this curve. Then $\NE(Y_1)=\langle C, v_0 \rangle$. It follows that any cusp model  $f'\colon\shY\to \shY'$ contracts $v_0$. The same reasoning applies to $Y_2$. Hence, any such model contracts the same curves as the one constructed in Lemma~\ref{lemma model p two regular} and the uniqueness claim follows. 

In the second case, let us write $\Gamma_{Y_1}=\{v_0,v_1\}$ and $\Gamma_{Y_2}=\{w_0, w_1\}$ with indexing as above. For $f_1$, we can take the cusp model constructed in the proof of Lemma \ref{lemma model p two regular}. As we have seen, the corresponding extremal face is spanned by $R_1=\RR_+w_0$ and $R_2=\RR_+v_0$.

 As $Y_1\cong \PP^1\times \PP^1$, we have $v_1^2=v_0^2=0$. Suppose first that $D_{12}.v_1=2$, see Lemma~\ref{lemma tamely degenerate type p cardinality two}.  There is an ample divisor $A$ with $A.D_{31}=A.D_{32}=2q$ for an appropriate $q\in \NN$. Note that $D^2_{31}=-2$. So $(A+qD_{31}).D_{31}=0$ and $(A+qD_{31}).D_{32}=4q$. 
 Write $L_3:=A+qD_{31}$ and note that $L_3$ is nef and vanishes only on $D_{31}$. 
  Consider  the divisors $L_1:=2qv_1$ on $Y_1$ and $L_2:=4qw_0$ on $Y_2$. We have 
 \[
D_{12}.L_1=4q=D_{21}.L_2,\quad  D_{13}.L_1=0=D_{31}.L_3,\text{ and } \quad D_{23}.L_2=4q=D_{32}.L_3.
\]
Hence, there is a nef divisor $L$ on $\shY$ restricting to  $L_i$ on $Y_i$. A suitable multiple of $L$ defines a contraction $g_1=\pi_{|rL|}\colon \shY\to \shZ$.   
  
   Suppose now that $D_{12}.v_1=0$. Then $D^2_{32}=-8$. Let $A$ be an ample divisor on $Y_3$ and set $L_3=8A+(A.D_{32})D_{32}$. Then $L_3$ is nef and $L_3.C=0$ for a curve $C$ on $Y_3$ if and only if $C$ is a multiple of $D_{32}$. 
   Also, $L_3.D_{31}=q$ for some $q\in \NN$ which is divisible by $2$. Let $L_1$ be the divisor defined by $\frac{1}{2}qv_1$ and write $L_2=\sO_{Y_2}$. We have 
 \[ L_1.D_{12}=0=L_2.D_{21},\quad  L_1.D_{13}=q=L_3.D_{31},\text{ and } \quad L_2.D_{23}=0=L_3.D_{32}.\] Thus there is a divisor $L\in \Pic(\sY)$ restricting to $L_i$ on $Y_i$. Let $g_2=\pi_{|rL|}$ for $r\gg 0$.
 Then $g_2\colon \shY\to Z$ is a contraction. 
 
Now, let  $f_2=g_1$ or $g_2$, depending on the case.  By a similar argument as in the proof of Lemma~\ref{lemma model p two regular}, one sees that $\Exz(g_2)=Y_1\cup Y_2$ and that $f_2$ is the contraction of the extremal face $\NE(f_2)=\langle w_0,v_1\rangle$. Hence, it follows from Corollary~\ref{corollary qfact} that $f_2\colon \shY\to \shZ $ is a cusp model.

Note that in any case,  $\NE(Y_1)=\langle v_0,v_1\rangle$ and $\NE(Y_2)=\langle w_0,D_{21}\rangle$ and as above, one sees that $f_1$ and $f_2$ are the only two cusp models.
\end{proof}

\begin{remark}
It follows from the analysis of the curve structure, that in the situation of Proposition~\ref{proposition cuspidal cones model p two regular} that we cannot have $Y_i\isom \PP^1\times\PP^1$ for both $i=1,2$.
\end{remark}

\subsection{$\Gamma_{Y_1}$ regular, $\Gamma_{Y_2}$ very degenerate and  $|\Gamma_{Y_i}|=2$ for $i=1,2$.}

Let $Y$ be a component of a central fiber $\sY_c$ of a model of the DNV family $\shY$ of degree $2$. Suppose the dual intersection complex of $\shY$ is $\scrP$, so $Y$ is smooth. Let $D=D_0+D_1$ be the anticanonical divisor given by the restriction of $\Sing(\sY_c)$ to $Y$.

\begin{lemma}\label{lemma very degenerate type p cardinality two}
Suppose $\Gamma_{Y}$ is very degenerate and $|\Gamma_Y|=2$.  Then $\Gamma_Y$ consists of two vertices $v_1, v_0$ with $v_0^2=0$, $v_1^2=-1$ and such that $D_0.v_0=2$, $D_1.v_1=1$, $D_0.v_1=D_1.v_0=0$, $D_1^2=0$, and $D_0^2=4$, see Figure~\ref{l2curvestruc2}.
\end{lemma}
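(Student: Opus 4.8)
The plan is to mirror the strategy of the regular case in Lemma~\ref{lemma tamely degenerate type p cardinality two}, but to replace the appeal to \cite[Proposition~3.18]{HL} (which classifies only \emph{regular} curve structures in terms of $D_0^2,D_1^2$) by a direct argument starting from the very-degeneracy hypothesis. First I would identify the two vertices. Since $|\Gamma_Y|=2>1$ and $\Gamma_Y$ is very degenerate, Definition~\ref{definition regular curve structure} provides an exceptional vertex $v_1$ whose leg ends in a vertex $v_0$ with $v_0^2=0$. As $v_1^2=-1$ by Definition~\ref{definition exceptional vertex}, we have $v_0\neq v_1$, and since the leg is contained in the two-element graph $\Gamma_Y$, it must consist of exactly $\{v_1,v_0\}$, which therefore exhausts $\Gamma_Y$. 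Connectedness of the leg forces an edge between them, so by \cite[after Remark~3.11]{HL} one has $v_0.v_1=1$. This already gives $v_0^2=0$, $v_1^2=-1$.

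Next I would pin down the intersections with the anticanonical cycle $D=D_0+D_1$. By \cite[Proposition~3.2]{HL} the $C_{v_i}$ are smooth rational curves, so adjunction ($C_v.K_Y=-C_v.D$ together with $C_v.(C_v+K_Y)=-2$) yields the \emph{total} intersection numbers $C_{v_1}.D=v_1^2+2=1$ and $C_{v_0}.D=v_0^2+2=2$. The crucial input, and the place where this case genuinely departs from the regular one, is the distribution of these numbers over $D_0$ and $D_1$: condition (2) of Definition~\ref{definition exceptional vertex} says $v_1$ meets a \emph{unique} component, which I label $D_1$, so $D_1.v_1=1$ and $D_0.v_1=0$; condition (3) says no other vertex meets $D_1$, whence $D_1.v_0=0$ and therefore $D_0.v_0=2$. (In the regular case $v_0$ instead met both components once, which is exactly the distinction being exploited here.)

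Finally I would conclude by elementary linear algebra. By \cite[Proposition~3.16]{HL} the classes $\{v_0,v_1\}$ form a $\Q$-basis of $\Pic(Y)_\Q$, and their Gram matrix $\left(\begin{smallmatrix}0&1\\1&-1\end{smallmatrix}\right)$ is nondegenerate, so a class is determined by its intersection numbers with $v_0,v_1$. The data above identify $D_1$ with $v_0$ (hence $D_1^2=v_0^2=0$) and force $D_0=2v_0+2v_1$ (hence $D_0^2=8\,(v_0.v_1)+4v_1^2=4$), giving precisely the numbers in the statement and the picture of Figure~\ref{l2curvestruc2}. As a consistency check, $D_0^2+D_1^2=4$ agrees with $D^2=K_Y^2=10-\vrho(Y)=8$ via $D^2=D_0^2+D_1^2+2\,(D_0.D_1)$ and $D_0.D_1=2$ (cycle of length two). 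I expect no serious obstacle: the only subtle point is reading off the intersection distribution from the exceptional-vertex and leg conditions rather than from a regularity classification, and after that the computation is pure bookkeeping in the unimodular rank-two lattice.
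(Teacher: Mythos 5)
Your proof is correct, but it takes a genuinely different route from the paper's. The paper settles this lemma in a few lines by importing the classification machinery of \cite{HL}: Proposition~3.18(iii) there determines $D_0^2$ and $D_1^2$ from very-degeneracy together with the relation $D_0^2+D_1^2=6-|\Gamma_Y|$, Proposition~3.18(i) gives uniqueness of $Y$, the shape of the graph is then read off from an explicit chain of blow-downs starting at $\gothY_2$, and the boundary intersection numbers follow from Proposition~3.2. You instead derive everything directly from the definitions in this paper: very-degeneracy with $|\Gamma_Y|=2$ forces an exceptional vertex $v_1$ whose leg ends in a square-zero vertex $v_0$ and exhausts $\Gamma_Y$, and --- this is the step the paper leaves implicit --- condition (3) of Definition~\ref{definition exceptional vertex} is exactly what forces the $(2,0)$ distribution of $C_{v_0}.D$ over the two boundary components, as opposed to the $(1,1)$ distribution of the regular case. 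Combined with adjunction and the fact that $\{v_0,v_1\}$ is a $\Q$-basis with unimodular Gram matrix, this pins down the classes of $D_0$ and $D_1$ and hence their squares; your numbers agree with the paper's Figure and with the explicit model ($Y\isom \mathbb{F}_1$ with $D_1$ a fiber and $D_0\in|2s+2f|$). Your argument is more self-contained and makes the source of the regular/very-degenerate dichotomy transparent, while the paper's is shorter given the cited classification. Two cosmetic points: the uniqueness of the boundary component met by $v_1$ comes from $C_{v_1}.D=v_1^2+2=1$ (equivalently from the parenthetical appeal to Proposition~3.2 inside condition (3)), not from condition (2) of the definition, which concerns adjacency to other vertices of $\Gamma_Y$; and Proposition~3.18 of \cite{HL} does in fact contain a clause covering very degenerate structures, so your stated reason for avoiding it is inaccurate, though avoiding it is harmless.
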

\begin{proof} 
As $\Gamma_Y$ is very degenerate, the self intersection numbers of the $D_i$ are uniquely determined by \cite[Proposition~3.18(iii)]{HL} and the requirement that $D_1^2+D_2^2 = 6-|\Gamma_Y|$. By \cite[Proposition~3.18(i)]{HL} there is a unique such $Y$. Constructing it from $\gothY_2$ by suitable blow-downs one sees that the curve structure has to be the one depicted in Figure~\ref{l2curvestruc2}. The claims about the intersection numbers of the vertices with the boundary now follow from  \cite[Proposition~3.2]{HL}.
\end{proof}

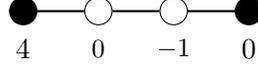
\begin{figure}\centering

\begin{tikzpicture}
\begin{scope}[shift= {(5,0)}]      
      
          \node[draw,shape=circle][fill=black] (D1) at (0,0){};      
   
\node[draw,shape=circle] (2) at (1,0){}; 
\node(n2) at (1,-0.5){{\small $0$}}; 
\node[draw,shape=circle] (3) at (2,0){}; 
\node(n2) at (2,-0.5){{\small $-1$}}; 

 \node[draw,shape=circle][fill=black] (D2) at (3,0){} ;     
\node(n3) at (3,-0.5){{$0$}}; 
\node(n3) at (0,-0.5){{$4$}}; 
\draw[thick]  
   (D1)--(2)-- (3)--(D2);

  \end{scope} 
\end{tikzpicture}
\caption{The augmented very degenerate curve structures $\Gamma_Y$  with $|\Gamma_Y|=2$.}
\label{l2curvestruc2}
\end{figure}

\begin{proposition}\label{proposition cuspidal cones model p one regular}
Let $\shY\to S$ be a model of the DNV family of degree $2$ with dual intersection complex $\mathscr{P}$ such that there are two components $Y_1, Y_2$ such that $|\Gamma_{Y_i}|=2$, $\Gamma_{Y_1}$ is very degenerate, and $\Gamma_{Y_2}$ is regular. Then there is a cusp model $f\colon\shY\to \sY'$ contracting precisely $Y_1$ and $Y_2$. The cone $C(f)$ is the only cuspidal cone contained in $\Nef(\sY)$.
\end{proposition}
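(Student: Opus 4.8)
Following the pattern of Lemma~\ref{lemma model p two regular} and Proposition~\ref{proposition cuspidal cones model p two regular}, I would first fix notation and record the relevant curve structures. Write $\shY_c = Y_1 \cup Y_2 \cup Y_3$. Since $\Gamma_{Y_i}$ is a $\QQ$-basis of $\Pic(Y_i^\nu)$ we have $\sum_i |\Gamma_{Y_i}| = 24$ (as in the proof of Corollary~\ref{corollary degenerate}), so $|\Gamma_{Y_1}| = |\Gamma_{Y_2}| = 2$ forces $|\Gamma_{Y_3}| = 20$; by Proposition~\ref{proposition picard noncontracted improved} the component $Y_3$ is then the unique non-contracted one, and $Y_1, Y_2$ are both contracted by $f$. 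By Lemma~\ref{lemma very degenerate type p cardinality two}, $\Gamma_{Y_1} = \{v_0, v_1\}$ with $v_0^2 = 0$, $v_1^2 = -1$, $v_0.v_1 = 1$, so $Y_1 \cong \mathbb{F}_1$ with ruling $v_0$ and $(-1)$-section $v_1$; here the boundary component $D_1$ meeting $v_1$ satisfies $D_1 \equiv v_0$ numerically. By Lemma~\ref{lemma tamely degenerate type p cardinality two}, $\Gamma_{Y_2} = \{w_0, w_1\}$ with $w_0^2 = 0$ and $w_0.w_1 = 1$, so $Y_2 \cong \mathbb{F}_n$ with ruling $w_0$. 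Applying the triple-point relation $D_{ij}^2 + D_{ji}^2 = -2$ on each double curve (as in the proof of Proposition~\ref{proposition leq2}) determines how the anticanonical cycles are glued; it shows that the curve glued to the negative boundary component $E_1$ of $Y_2$ is one of the two boundary components of $Y_1$, forcing $n > 0$ and leaving two numerical possibilities which are treated in parallel below.

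For existence I would construct a nef, hence semiample, divisor $G$ on $\shY$ whose restrictions are $G|_{Y_1} = p\,v_0$ for a suitably divisible $p$, a nef class $L_2$ on $Y_2$ proportional to the ruling $w_0$ (degenerating to $0$ in one sub-case), and a big and nef class $L_3$ on $Y_3$ that is allowed to contract the double curve met by $v_0$ but remains big (the analogue of the class $8A + (A.D_{32})D_{32}$ from Proposition~\ref{proposition cuspidal cones model p two regular}). The three restrictions are chosen to have matching degrees along every double curve, which is precisely the gluing hypothesis of \cite[Proposition~3.20]{HL} and \cite[Lemma~1.10]{HL} and yields a genuine class $G \in \Pic(\shY)$; abundance on our models (\cite[Remark~2.6]{HL}) turns a multiple of $G$ into a contraction $\phi = \phi_{|G|}\colon \shY \to \shZ$. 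I would then verify, exactly as in Lemma~\ref{lemma model p two regular}, that the curves contracted by $\phi$ are precisely the rulings $v_0$ and $w_0$, so that $Y_1$ and $Y_2$ are collapsed (to a curve or a point) while $\phi|_{Y_3}$ stays birational, and that no curve of the generic fibre $\shY_\eta$ is contracted (the divisor-closure argument of Lemma~\ref{lemma model p two regular}, using that $L_3$ is big), so $\phi$ is an isomorphism on $\shY_\eta$. Since $\Exz(\phi) = Y_1 \cup Y_2$ is a union of two prime divisors and $\phi$ has relative Picard rank two, Corollary~\ref{corollary qfact} shows $\shZ =: \sY'$ is $\QQ$-factorial, so $\phi$ is a $Y_3$-cusp model contracting precisely $Y_1$ and $Y_2$.

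For the uniqueness statement I would argue on the contracted extremal face. Any cusp model $f'\colon \shY \to \sY'$ contracts $Y_1$ and $Y_2$ as divisors, so its contracted cone $\NE(f')$ meets the cones of curves of these two components. On $Y_1$ one has $\NE(Y_1) = \langle v_0, v_1 \rangle$ by \cite[Corollary~3.7]{HL}; contracting only the $(-1)$-curve $v_1$ would blow it down and leave $Y_1$ two-dimensional, so in order that the divisor $Y_1$ be contracted the ruling $v_0$ must lie in $\NE(f')$. Since the gluing forces $n > 0$ (so $Y_2$ is not $\PP^1 \times \PP^1$), the ruling $w_0$ is the unique nef class of self-intersection $0$ on $Y_2$, and the same reasoning, with Propositions~\ref{proposition minus two not contracted} and~\ref{proposition not contracted} excluding the remaining contractions, forces $w_0 \in \NE(f')$. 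The classes $v_0$ and $w_0$ are numerically independent, and a cusp model has relative Picard rank $\vrho(\shY) - \vrho(\sY') = 21 - 19 = 2$ by Lemma~\ref{lemma picard rank cusp model}, so $\NE(f')$ is two-dimensional and therefore equal to $\langle v_0, w_0 \rangle$. Hence $C(f') = C(f)$, and $C(f)$ is the only cuspidal cone contained in $\Nef(\shY)$.

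The main obstacle will be the gluing step and the attendant case distinction according to whether the ruling class $v_0$ (equivalently the boundary component $D_1 \equiv v_0$) is glued to $Y_2$ or to $Y_3$. In the first sub-case the degree matching along $D_1 = Y_1 \cap Y_2$ forces $L_2$ to vanish on $E_1$, hence $L_2 = 0$ and $Y_2$ collapses to a point, and one must arrange $L_3$ to contract the double curve $Y_2 \cap Y_3$ while staying big; in the second sub-case $Y_2$ is collapsed to a curve by a positive multiple of $w_0$, and $L_3$ must instead contract the double curve $Y_1 \cap Y_3$. In both sub-cases one has to confirm that $\Exz(\phi) = Y_1 \cup Y_2$ exactly, that the contracted face stays two-dimensional (using $D_1 \equiv v_0$ to absorb the contracted double curve into the ray $\langle v_0 \rangle$), and that $\phi$ contracts no divisor on $\shY_\eta$; this bookkeeping is the delicate part of the argument.
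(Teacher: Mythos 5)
Your proposal matches the paper's proof in all essentials: the same two-case split (which boundary component of $Y_1\cong\mathbb{F}_1$ is glued to $Y_2$, equivalently $D_{21}^2\in\{-2,-6\}$), the same construction of a glued semiample divisor with restrictions $qv_0$ on $Y_1$, a multiple of $w_0$ or the trivial bundle on $Y_2$, and a nef $L_3$ on $Y_3$ killing exactly one double curve, followed by the same checks (no curve of $\sY_\eta$ contracted, $\Exz = Y_1\cup Y_2$, $\Q$-factoriality via Corollary~\ref{corollary qfact}) and the same uniqueness argument that any cusp model must contract the rays of $v_0$ and $w_0$. The only blemish is the passing phrase ``the double curve met by $v_0$'' for the curve contracted by $L_3$, which is inaccurate, but you correct it yourself in the final paragraph, where the assignment of contracted double curves to the two sub-cases agrees with the paper.
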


\begin{proof}
Write $\Gamma_{Y_1}=\{v_0,v_1\}$ and $\Gamma_{Y_2}=\{w_0,w_1\}$. By Lemmas~\ref{lemma tamely degenerate type p cardinality two} and \ref{lemma very degenerate type p cardinality two} and the triple point formula, we necessarily have $D^2_{21}\in \{-2,-6\}$.
Assume first that $D^2_{21}=-2$. Let $A$ be an ample divisor on $Y_3$. Again by the triple point formula we have $D^2_{32}=-8$ so $L_3=8A+(A.D_{32})D_{32}$ defines a nef divisor on $Y_3$. We have $L_3.C=0$ if and only if $C=D_{32}$ and $L_3.D_{31}=2q$ for some $q\in \NN$ because $D_{31}.D_{32}=2$. Let $L_1$ be the divisor in $\Pic(Y_1)$ defined by $qv_0$ and set $L_2=\sO_{Y_2}$. Then  
\[ L_1.D_{12}=0=L_2.D_{21},\quad  L_1.D_{13}=2q=L_3.D_{31},\text{ and } \quad L_2.D_{23}=0=L_3.D_{32}.\]
 Thus there is a divisor $L\in \Pic(\sY)$ restricting to $L_i$ on $Y_i$. Let $f=\pi_{|rL|}$ for suitable  $r\gg 0$.
 Then $f\colon \shY\to Z$ is a contraction. Suppose $f_{Y_\eta}\colon \shY_\eta\to \shZ_\eta$ contracts a curve $C_\eta$. Taking the closure of $C_\eta$ we obtain a divisor $C$ on $\Pic(\sY)$ with none of the $Y_i$ in the support of $C$. Write $C_i$ for the restriction of $C$ to $Y_i$. As the $C_i$ are contracted, one has that $C_1=av_0$ and $C_3=a'D_{32}$ with $a,a'\geq 0$. From $C_1.D_{13}=C_3.D_{31}$ one deduces that in fact $a=a'$. 
So $C_3.D_{32}=-8a$. The effective cone of $Y_2$ is given by $\Eff(Y_2)=\left\langle w_0, D_{21}\right\rangle$. Writing $C_2=bD_{21}+cw_0$ with $b,c\geq 0$ we have $-8a=C_2.D_{23}=2b+c$. It follows that $C$ is trivial, a contradiction. So $f$ is an isomorphism on $\shY_{\eta}$. It is immediate to check that $\Exz(f)=Y_1\cup Y_2$ and the extremal cone of $f$ is  $\NE(f)=\langle v_0, w_0\rangle$. It now follows from Corollary~\ref{corollary qfact} that $f\colon \shY\to \shZ$  is a cusp model. 
 
 Now suppose $D^2_{21}=-6$. Pick an ample divisor $A$ on $Y_3$. As $D^2_{31}=-2$, $L_3=2A+(A.D_{31})D_{31}$ defines a nef divisor such that $L_3.C=0$ for a curve $C$ on $Y_3$ if and only if $C=D_{31}$. 
 Set $q=\frac{1}{2}(L_3.D_{32})\in \NN$. Let $L_1$ be the divisor defined by $qv_0$ and  $L_2$ the divisor defined by $2qw_0$. As before, one checks that there is a divisor $L\in \Pic(\sY)$ restricting to $L_i$ on $Y_i$, 
 inducing a contraction $f\colon \shY\to \shZ$. If $C$ is a curve contracted by $f$, then $C$ is in the span of $w_0$ and $v_0$. By the same arguments as before, we conclude that $f$ is a cusp model. 
 
In both cases, $v_0$ and $w_0$, interpreted as curves on $\shY$, generate extremal rays that are contracted by $f$. We have  $\NE(Y_1)=\langle v_0, v_1\rangle$ and $\NE(Y_2)=\langle w_0, C\rangle$ where $C$ is a curve with $C^2<0$. So if $g$ is a cusp model, it necessarily contracts $w_0$ and $v_0$, so $g$ factors through $f$ and thus $g=f$.
\end{proof}

\subsection{$|\Gamma_{Y_1}|=1$,   $|\Gamma_{Y_2}|=2$.}

In this case, we write $\Gamma_{Y_1}=\{v_0\}$ with $v_0^2=1$. We also have $D^2_{1j}=4$ and $D_{1i}^2=1$, with $i,j\in \{2,3\}$, $i\neq j$, see Figure \ref{p1comp}. In particular, $Y_1\cong\PP^2$. Note that in this case, $\Gamma_{Y_2}$ is automatically regular.

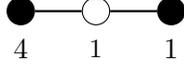
\begin{figure}\centering

\begin{tikzpicture}
\begin{scope}[shift= {(5,0)}]      
      
          \node[draw,shape=circle][fill=black] (D1) at (0,0){};      
   
\node[draw,shape=circle] (2) at (1,0){}; 
\node(n2) at (1,-0.5){{\small $1$}}; 
 \node[draw,shape=circle][fill=black] (D2) at (2,0){} ;     
\node(n3) at (2,-0.5){{$1$}}; 
\node(n3) at (0,-0.5){{$4$}}; 
\draw[thick]  
   (D1)--(2)--(D2);

  \end{scope} 
\end{tikzpicture}
\caption{The augmented  curve structure $\Gamma_Y$  with $|\Gamma_Y|=1$.}
\label{p1comp}
\end{figure}

\begin{proposition}\label{proposition p2}
Let $\shY\to S$ be a model of the DNV family of degree $2$ with dual intersection complex $\mathscr{P}$ such that there is a component $Y_1\cong \PP^2$ and a component $Y_2$ with $|\Gamma_{Y_2}|=2$ Then there is a cusp model $f\colon\shY\to \sY'$ contracting precisely $Y_1$ and $Y_2$. Moreover, $C(f)$ is the only cuspidal cone contained in $\Nef(\sY)$.
\end{proposition}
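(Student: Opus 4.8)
The plan is to argue exactly as in Propositions~\ref{proposition cuspidal cones model p two regular} and~\ref{proposition cuspidal cones model p one regular}, the genuinely new feature being that the component $Y_1\cong\PP^2$ has Picard rank one. Write $\sY_c = Y_1\cup Y_2\cup Y_3$. From $\sum_i|\Gamma_{Y_i}| = 24$ together with $|\Gamma_{Y_1}| = 1$ and $|\Gamma_{Y_2}| = 2$ we obtain $|\Gamma_{Y_3}| = 21$, so by Proposition~\ref{proposition picard noncontracted} neither $Y_1$ nor $Y_2$ can be the non-contracted component and $Y_3$ is the unique cusp component; every cusp model in sight is therefore a $Y_3$-cusp model. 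Writing $\ell$ for the line class on $Y_1$ and $\Gamma_{Y_2} = \{w_0,w_1\}$ as in Lemma~\ref{lemma tamely degenerate type p cardinality two} (so $w_0^2 = 0$ is the ruling of the Hirzebruch surface $Y_2$ and $D_{21}$ its negative section), I would first invoke the triple point formula $D_{ij}^2 + D_{ji}^2 = -2$ — valid for $\mathscr{P}$ since each double curve passes through both triple points — to pin down all boundary self-intersections. As $D_{12} + D_{13} = 3\ell$ with $\{D_{12}^2,D_{13}^2\} = \{4,1\}$, this produces two numerical cases, $(D_{21}^2,n) = (-6,6)$ and $(D_{21}^2,n) = (-3,3)$, and in both the boundary cycle $D_{31}+D_{32}$ of $Y_3$ (with $D_{31}.D_{32}=2$) has negative definite intersection matrix.

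The key structural point is that, since $\NS(Y_1) = \ZZ\ell$, any nef class $L$ contracting $Y_1$ must satisfy $L|_{Y_1} = \sO_{Y_1}$; the gluing relation along $D_{12}=D_{21}$ then forces $L|_{Y_2}.D_{21} = 0$, and as $Y_2$ is contracted we have $(L|_{Y_2})^2 = 0$, whence the only nef possibility is $L|_{Y_2} = \sO_{Y_2}$. Consequently the $Y_3$-cusp model must collapse the entire boundary cycle of $Y_3$ to a single point, producing a cusp singularity. To construct $f$ I would conversely take $L_1 = \sO_{Y_1}$, $L_2 = \sO_{Y_2}$, and on $Y_3$ a divisor $L_3 = mA + aD_{31} + bD_{32}$ with $A$ ample and $a,b$ determined by $L_3.D_{31} = L_3.D_{32} = 0$ (solvable since the boundary matrix is nondegenerate), which for $m\gg0$ is nef and vanishes exactly on $D_{31}\cup D_{32}$; semiampleness on the rational surface $Y_3$ (Remark~\ref{remark morifan}) yields the contraction of the negative definite cycle to a cusp. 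All matching degrees being zero, \cite[Lemma~1.10]{HL} and \cite[Proposition~3.20]{HL} glue $\sO_{Y_1},\sO_{Y_2},L_3$ to a nef $L\in\Pic(\sY)$, and $f := \phi_{|rL|}$ for $r\gg0$ contracts exactly $Y_1\cup Y_2$.

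To see that $f$ is an isomorphism on $\sY_\eta$ I would run the closure argument of Proposition~\ref{proposition cuspidal cones model p one regular}: a contracted horizontal prime divisor restricts to effective curves $C_1 = a\ell$, $C_2 = \gamma w_0 + \delta D_{21}$, $C_3 = \alpha D_{31} + \beta D_{32}$ with all coefficients $\geq 0$, and the three gluing identities $C_1.D_{12} = C_2.D_{21}$, $C_1.D_{13} = C_3.D_{31}$, $C_2.D_{23} = C_3.D_{32}$, combined with the negative self-intersections $D_{31}^2, D_{32}^2, D_{21}^2 < 0$, force $a = \alpha = \beta = \gamma = \delta = 0$, a contradiction. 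Hence $\Exz(f) = Y_1\cup Y_2$ consists of two prime divisors equal to the relative Picard rank, so Corollary~\ref{corollary qfact} gives that $\sY'$ is $\QQ$-factorial and $f$ is a cusp model.

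For uniqueness I would record $\NE(Y_1) = \RR_{\geq 0}[\ell]$ and $\NE(Y_2) = \langle w_0, D_{21}\rangle$ via \cite[Corollary~3.7]{HL}. Any $Y_3$-cusp model contracts $Y_1$; as $\PP^2$ admits no fibration to a curve it must contract $Y_1$ to a point, forcing $[\ell]$ into the contracted face, and since the section $D_{21} = D_{12}$ then maps to that same point, $Y_2$ must be contracted to a point as well, forcing all of $\NE(Y_2)$. Thus every $Y_3$-cusp model contracts the same $2$-dimensional extremal face, and by Lemma~\ref{lemma cones determine contractions} coincides with $f$; therefore $C(f)$ is the only cuspidal cone contained in $\Nef(\sY)$. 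The main obstacle is the bookkeeping across the two numerical cases — verifying negative definiteness of the $Y_3$-boundary and that the glued $L$ is genuinely nef and contracts precisely $Y_1\cup Y_2$ — but this is routine given the earlier propositions.
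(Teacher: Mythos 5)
Your proposal is correct and follows essentially the same route as the paper: the paper also sets $L_1=\sO_{Y_1}$, $L_2=\sO_{Y_2}$, splits into the two numerical cases $D_{12}^2\in\{1,4\}$ (giving $(D_{31}^2,D_{32}^2)=(-6,-9)$ resp. $(-3,-12)$), and builds a nef $L_3$ on $Y_3$ vanishing exactly on the negative definite cone $\langle D_{31},D_{32}\rangle$ — it just writes the coefficients of $L_3$ explicitly rather than solving the (nondegenerate) linear system as you do, and defers the closure and uniqueness arguments to "similar to the previous cases", which you have spelled out correctly.
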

\begin{proof}
Either we have $D_{12}^2=1$ or $D_{12}^2=4$. In the first case we have $D_{31}^2=-6$ and $D_{32}^2=-9$, in the second case $D_{31}^2=-3$ and $D_{32}^2=-12$. Let $A$ be ample on $Y_3$, then $L_3=50 A+ (6(A.D_{32})+2 (A.D_{31})) D_{32} + (9(A.D_{31})+2 (A.D_{32})) D_{31}$ in the first case and $L_3=32 A+ (3(A.D_{32})+2(A.D_{31})) D_{32} + (12(A.D_{31})+2(A.D_{32})) D_{31}$ in the second case are nef on $Y_3$ and $L_3.C=0$ for an effective curve if and only if $C=aD_{31}+bD_{32}$ with $a,b \geq 0$. Setting $L_1=\sO_{Y_1}$ and $L_2=\sO_{Y_2}$ we obtain a unique semi-ample $\sL$ as before such that $\sL\vert_{Y_i}=L_i$. The rest of the proof is similar to the previous cases.
\end{proof}

\begin{remark}\label{rmk:geographcusp}
Note that in all cases considered so far any cusp model $f\colon\shY\to \sY'$ corresponds to a cone $C(f)$ such that $C(f)$ is not contained in a facet $\sigma$ of $\Nef(\shY)$ corresponding to a small contraction. 	
\end{remark}

\section{Models with dual intersection complex \texorpdfstring{$\mathscr{T}$}{T}}\label{section class t}

Let $\shY\to S$ be a model of the DNV family of class $\mathscr{T}$. There is a unique 
component $Y_\omega\subset \sY_c$ such that $\Gamma_{Y_\omega}$ is of type $d_4$. We denote by $\nu\colon Y_\omega^\nu \to Y_\omega$ the normalization and by $D_\omega \subset Y_\omega$ the unique component of $\sY_c^\sing \cap Y_\omega$  such that $\nu^{-1}(D_\omega)$ 
is reducible.  

We have the following result.  

\begin{lemma}\label{lemma special t not contracted}
Let $\shY$ be a model of the DNV family of class $\mathscr{T}$. Let $\shY\to \shY'$ be a cusp model contracting $Y_\omega$. Then $D_\omega$ is not contracted.
\end{lemma}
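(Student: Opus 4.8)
The plan is to argue by contradiction: assume the cusp model $f\colon\sY\to\sY'$ contracts $D_\omega$ and derive a contradiction, following closely the template of Proposition~\ref{proposition not contracted}, case~\eqref{proposition not contracted two}. First I would record the local structure at $D_\omega$. Since $\nu^{-1}(D_\omega)=D_1\cup D_3$ consists of the two opposite components of the length-$4$ anticanonical cycle on $Y_\omega^\nu$, the triple point formula applied to the self-intersection curve $D_\omega$ gives $D_1^2+D_3^2=-2$. Moreover, pulling back an ample class $A'$ from $\sY'$ produces a nef class $L:=f^*A'$ on $\sY$ with $L\cdot D_\omega=0$, hence $L|_{Y_\omega^\nu}\cdot D_1=L|_{Y_\omega^\nu}\cdot D_3=0$ with $L|_{Y_\omega^\nu}$ nef. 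The crux of the argument is then to show that both branches are $(-1)$-curves, i.e.\ $D_1^2=D_3^2=-1$, so that $D_\omega$ falls under the second case of Construction~\ref{changemodel}.

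Granting $D_1^2=D_3^2=-1$, the contradiction is produced by a type~II flop exactly as in Proposition~\ref{proposition not contracted}\eqref{proposition not contracted two}. By Construction~\ref{changemodel} there is a flopping contraction $\phi\colon\sY\to\sZ$ contracting precisely $D_\omega$, through which $f$ factors, together with a model $\sY^+$ of the DNV family obtained by the type~II modification in $D_\omega$, such that $\sY^+\to\sY'$ is again a cusp model. Using the explicit description of type~II modifications (\cite{FM83}), the flopped curve $D_\omega^+$ is an \emph{interior} curve of the non-contracted component of $\sY_c^+$. Since $D_\omega^+$ maps to the point $f(D_\omega)\in\sY'$, it is contracted by the cusp model $\sY^+\to\sY'$ while not being contained in the contracted divisor; this violates the purity in codimension one of the exceptional locus of a birational morphism between varieties with canonical singularities (\cite[Theorem~VI.1.5]{Kol96}), giving the desired contradiction.

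It therefore remains to exclude the possibility that a branch has non-negative self-intersection, and this is the step I expect to be the main obstacle, since a priori the type~I modifications relating a general class-$\mathscr{T}$ model to $\YT$ can skew the two self-intersections, preserving only their sum $-2$. Here I would argue directly: if, say, $D_1^2\ge 0$ (forcing $D_3^2\le -2$), then $L|_{Y_\omega^\nu}\cdot D_1=0$ with $L|_{Y_\omega^\nu}$ nef and $D_1$ big and nef forces, by the Hodge index theorem, $L|_{Y_\omega^\nu}$ to be numerically trivial (or proportional to a fibre class), so that $f$ contracts $D_\omega$ together with essentially all of $Y_\omega$ to a point. I would rule this out using the \emph{reducibility} of $\nu^{-1}(D_\omega)$: contracting the self-intersection curve $D_\omega$ to a point pinches the two sheets $D_1$ and $D_3$ together, producing a non-rational, hence non-canonical, singularity on $\sY'$, which contradicts Lemma~\ref{lemma properties cusp model}\eqref{lemma properties cusp model item one}; alternatively, such a contraction would destroy $\Q$-factoriality against Corollary~\ref{corollary qfact}. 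Combining this exclusion with the clean $(-1,-1)$ flop case settles all possibilities and shows that $D_\omega$ cannot be contracted.
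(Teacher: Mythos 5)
There is a genuine gap in the second (and central) step of your argument. You claim that after the type~II flop in $D_\omega$ the flopped curve $D_\omega^+$ is an \emph{interior} curve of the non-contracted component of $\sY_c^+$, and you derive the contradiction from purity of the exceptional locus. This is not what happens. The curve $D_\omega$ corresponds to the loop edge at the vertex $\omega$ in the triangulation $\scrT$, and the type~II modification flips that edge; the flipped edge joins the two \emph{other} vertices, so $D_\omega^+$ is the new double curve $Y_2^+\cap Y_3^+$ of the resulting class-$\scrP$ central fiber (this is exactly why $\sY^+$ has dual intersection complex $\scrP$). Since the cusp model contracts $Y_\omega$ and all but one component, one of $Y_2^+$, $Y_3^+$ is contracted, so $D_\omega^+$ lies inside the contracted divisor and purity of the exceptional locus is \emph{not} violated. (Your template, case~(2) of Proposition~\ref{proposition not contracted}, works precisely because there the flipped edge is a loop at the non-contracted vertex, so the flopped curve lands outside the exceptional divisor; that configuration is the opposite of the present one.) The paper closes the argument differently: the induced cusp model $\sY^+\to\sY'$ of class $\scrP$ would factor through the flopping (small) contraction $\sY^+\to\sZ$, and this is excluded by the classification of class-$\scrP$ cusp models carried out in Section~\ref{section class p}, summarized in Remark~\ref{rmk:geographcusp} (no cuspidal cone of class $\scrP$ is contained in a facet corresponding to a small contraction). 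Your proof contains no substitute for this global input.

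A secondary point: the fact that $\nu^{-1}(D_\omega)$ consists of two $(-1)$-curves is quoted in the paper from \cite[Proposition~4.1]{HL} as a consequence of projectivity of $\sY$, so your first step is aiming at a true statement; but your derivation of it is not sound as written. The Hodge-index argument needs $D_1$ big, which fails in the case $D_1^2=0$ that you must also exclude, and the assertion that contracting $D_\omega$ to a point with both branches collapsed produces a non-rational (hence non-canonical) singularity is not justified. Since the target statement is available as a citation this is repairable, but the failure of the purity argument in the main step is not.
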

\begin{proof}
By projectivity of $\shY$, the preimage $\nu^{-1}(D_\omega)$ consists of two $(-1)$ curves by \cite[Proposition 4.1]{HL}. The elementary modification in $D_\omega$ defines a flop $\phi\colon\shY\dashrightarrow\shY^+$ such that $\shY^+$ has dual intersection complex $\scrP$.
If $D_\omega$ is contracted by $f$, then it follows from Construction~\ref{changemodel} that the there is a cusp model $f^+\colon \shY\to \sY'$ and a commutative diagram
\[
\xymatrix{ \shY\ar@{-->}[rr]^\phi\ar[dr]_f  & & \shY^+\ar[dl]^{f^+}\\
&\sY'.&
}
\]
It follows that the cusp model $\sY^+ \to \sY'$ factors through the flopping contraction associated to $\phi$, a contradiction, cf. Remark~\ref{rmk:geographcusp}.
\end{proof}

There is only one curve structure of type $d_1$ with $|\Gamma_Y|=3$, namely the one depicted in Figure~\ref{length4T}.

\begin{proposition}\label{proposition special t not contracted}
Let $\shY$ be a model of the DNV family of class $\mathscr{T}$. Let $f\colon\shY\to \sY'$ be a cusp model. Then  $Y_\omega$ is not contracted.	
\end{proposition}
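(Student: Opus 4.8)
The plan is to assume that $Y_\omega$ is contracted by $f$ and to derive a contradiction from the fact that a contraction has connected fibres. First I would invoke Lemma~\ref{lemma special t not contracted}: since $Y_\omega$ is contracted, $D_\omega$ is \emph{not} contracted, so $f(D_\omega)$ is a curve, and hence $B:=f(Y_\omega)$ is one-dimensional. Pulling back a relatively ample class along $\nu\colon Y_\omega^\nu\to Y_\omega$ and passing to the Stein factorisation of $Y_\omega^\nu\to B$, I obtain a fibration $\psi\colon Y_\omega^\nu\to \tilde B$ onto a smooth curve whose general fibre $F$ is smooth, connected, with $F^2=0$. Writing the anticanonical cycle of $Y_\omega^\nu$ as $D_1+D_2+D_3+D_4$ with $\nu(D_1)=\nu(D_3)=D_\omega$, I recall from the proof of Lemma~\ref{lemma special t not contracted} that $D_1$ and $D_3$ are $(-1)$-curves.

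The next step is a numerical computation on $Y_\omega^\nu$. Since $D_1$ and $D_3$ both map onto $D_\omega$, which dominates $B$, both are horizontal for $\psi$, so $F\cdot D_1\geq 1$ and $F\cdot D_3\geq 1$. As $F$ is a general fibre, $F\cdot(-K_{Y_\omega^\nu})=2-2p_a(F)\in\{0,2\}$; the value $0$ would force $F\cdot D_i=0$ for all $i$ and in particular make $D_\omega$ vertical, i.e. contracted, contradicting the previous paragraph. Hence $F\cdot(-K_{Y_\omega^\nu})=2$, and from $-K_{Y_\omega^\nu}=\sum_i D_i$ together with $F\cdot D_1,F\cdot D_3\geq 1$ I conclude $F\cdot D_1=F\cdot D_3=1$ and $F\cdot D_2=F\cdot D_4=0$. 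In particular, if $Y_2$ denotes the non-contracted component and $\Delta:=Y_\omega\cap Y_2=\nu(D_4)$ the double curve joining them, then $\Delta$ is $\psi$-vertical and is therefore contracted by $f|_{Y_\omega}$ to a single point $p\in B$.

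Finally I would exploit the combinatorics of $\scrT$: by the description of $\YT$ in Example~\ref{example dnv family}, the two type $d_1$ components $Y_1,Y_2$ are each glued only to $Y_\omega$, so $Y_1\cap Y_2=\emptyset$. Choose $b\in B$ general, so that $b\neq p$ and $b$ avoids the finitely many images of curves contracted by $f|_{Y_2}$; since $f$ is birational and $Y_2$ is the component dominating $\shY'_c$, the map $f|_{Y_2}$ is birational and $f^{-1}(b)\cap Y_2$ is then a single point $y$. On the other hand $f^{-1}(b)$ contains the whole fibre $F\subset Y_\omega$. As $Y_2$ meets $Y_\omega\cup Y_1$ only along $\Delta$ and $f^{-1}(b)\cap \Delta=\emptyset$ (because $f(\Delta)=p\neq b$), the point $y$ lies in a connected component of $f^{-1}(b)$ disjoint from the positive-dimensional $F$. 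Thus $f^{-1}(b)$ is disconnected, contradicting the connectedness of the fibres of the contraction $f$ (which holds since $\shY'$ is normal and $f_*\sO_{\shY}=\sO_{\shY'}$). The main point to secure is precisely that the general fibre $F$ misses $\Delta$ and that $b$ can be taken generic enough on the curve $B$ for its preimage in $Y_2$ to be a single reduced point; these rest respectively on the $(-1)$-curve description of $\nu^{-1}(D_\omega)$ and on the birationality of $f|_{Y_2}$, and it is the structural fact $Y_1\cap Y_2=\emptyset$, special to class $\scrT$, that makes the connectedness argument bite.
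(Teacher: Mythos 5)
Your proof is correct, but it takes a genuinely different route from the paper's. The paper first pins down the combinatorics: using Proposition~\ref{proposition picard noncontracted} it bounds $|\Gamma_{Y_\omega}|+|\Gamma_{Y_2}|\leq 5$ for the two contracted components, classifies the resulting curve structures (finding a unique type $d_4$ structure with four vertices, with the contracted smooth component isomorphic to $\PP^2$), and then, after invoking Lemma~\ref{lemma special t not contracted} to see that $Y_\omega$ maps onto $\PP^1$, expands the contracting nef class $L$ in the basis given by the curve structure and shows by an explicit intersection computation that $L^2>0$, contradicting $L^2=0$. You instead extract from Lemma~\ref{lemma special t not contracted} only the induced fibration $Y_\omega^\nu\to\tilde B$, use adjunction on the anticanonical cycle to force the double curves joining $Y_\omega$ to the two smooth components to be vertical, and then combine the combinatorial fact $Y_1\cap Y_2=\emptyset$ (special to class $\scrT$) with Zariski connectedness of the fibres of $f$. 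This bypasses the ``lengthy but straightforward analysis of the curve structures'' entirely and is more conceptual; the paper's computation, on the other hand, yields as a by-product the precise curve structure of the would-be contracted special component, which fits the classification agenda of Sections~8--10. Two small points of precision in your write-up: (i) the birationality of $f|_{Y_2}$ and the assertion that $f^{-1}(b)\cap Y_2$ is a single reduced point deserve a word (birationality follows from connectedness of the general fibre of $f$ over $\sY'_c$), but in fact neither is needed --- it suffices that $f^{-1}(b)\cap Y_2$ is nonempty (since $f(Y_2)=\sY'_c\supset B$ by irreducibility of $\sY'_c$) and disjoint from $\Delta$ (since $f(\Delta)=\{p\}$ and $b\neq p$), so that the connected component of any such point is contained in $Y_2$ and cannot meet the positive-dimensional $\nu(F)\subset Y_\omega$; (ii) the disjointness of $\nu(F)$ from $\Delta$ likewise follows directly from $f(\nu(F))=\{b\}$ versus $f(\Delta)=\{p\}$, rather than from the $(-1)$-curve description of $\nu^{-1}(D_\omega)$, which in your argument is only used through Lemma~\ref{lemma special t not contracted}.
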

\begin{proof} 
We will write $\sY_c=Y_1\cup Y_2 \cup Y_3$ for the irreducible components and assume that the special component is $Y_\omega=Y_1$. Then $Y_2$ and  $Y_3$ are the smooth components. Let us assume $Y_1$ and $Y_2$ are contracted by $f$. We will obtain a contradiction by a detailed analysis of the curve structure on $Y_1$.

Note that by the very nature of curve structures of type $d_4$, we always have $|\Gamma_{Y_1}|\geq 3$. By Proposition~\ref{proposition picard noncontracted}, we must have $\abs{\Gamma_{Y_1}}+\abs{\Gamma_{Y_2}} \leq 5$ and a lengthy but straightforward analysis of the curve structures shows that in this case $|\Gamma_{Y_1}|=4$, the curve structure is unique and depicted in Figure~\ref{figure d4 with four elements}. Moreover, we must have $D_{12}^2=-9$ and $D_{13}^2=9$. Let $C_1,\ldots, C_4$ be the curves corresponding to the vertices in $\Gamma_{Y_1}$ such that $C_1^2=0$ and $C_3^2=7$ and let $D_{\omega1}, D_{\omega2}$ be the preimages of the curve $D_\omega=Y_1^\sing$ under the normalization. From \cite[Corollary~3.7]{HL} we infer that $\NE(Y_1) = \left\langle D_{12}, D_{\omega1}, D_{\omega2}, C_2, C_4\right\rangle$. 

By Lemma~\ref{lemma special t not contracted}, the special component $Y_1$ is not contracted to a point. The pullback of a very ample line bundle from $\sY'$ restricts to a nef line bundle $L$ on $Y_1$ whose linear system gives the restriction $\vphi_L:Y_1 \to \PP^1$ of $f$. In particular, $L^2=0$. As $\Gamma_{Y_2}=1$ (and thus $Y_2\isom \PP^2$), the curve $D_{12}$ is contracted by $\vphi_L$. Hence, if we write $L=aC_1 + bC_2+cC_3+dC_4$ we must have $a=0$.  Testing nefness against $C_2, C_4, D_{\omega1}, D_{\omega2}$ gives the conditions 
\[
c\geq b,\quad c\geq d,\quad b\geq 0,\quad d\geq 0,
\]
so we can estimate
\[
0=L^2 = (2bc - b^2) + (2cd - d^2) + 7c^2  \geq b^2 + d^2 + 7c^2 > 0
\]
where the right-hand side is strictly positive because $L$ is not the trivial bundle. This is a contradiction.

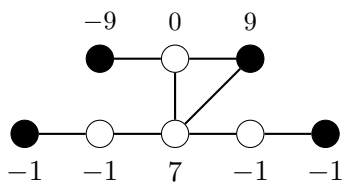
\begin{figure}\centering
\begin{tikzpicture}

\begin{scope}[shift= {(5,0)}]      
\node[draw,shape=circle][fill=black] (D1) at (0,0){};      
\node[draw,shape=circle][fill=black] (D2) at (4,0){} ;     
\node[draw,shape=circle][fill=black] (E1) at (1,1){};      
\node[draw,shape=circle][fill=black] (E2) at (3,1){} ;     

\node[draw,shape=circle] (1) at (2,1){};  
\node[draw,shape=circle] (2) at (1,0){}; 
\node[draw,shape=circle] (3) at (2,0){};  
\node[draw,shape=circle] (4) at (3,0){}; 

\node(n1) at (1,1.5){{\small $-9$}};  
\node(n2) at (2,1.5){{\small $0$}};  
\node(n3) at (3,1.5){{\small $9$}};  
\node(m1) at (0,-0.5){{$-1$}}; 
\node(m2) at (1,-0.5){{$-1$}}; 
\node(m3) at (2,-0.5){{$7$}}; 
\node(m4) at (3,-0.5){{$-1$}}; 
\node(m5) at (4,-0.5){{$-1$}}; 

\draw[thick]  (1)--(3)--(E2)
(E1)--(1)--(E2)
(D1)--(2)--(3)--(4)--(D2);

         \end{scope} 
       
\end{tikzpicture}
\caption{A curve structure of type $d_4$ of cardinality four. The black nodes labeled with $-1$ are the preimages of $D_\omega$ under the normalization.}
\label{figure d4 with four elements}
\end{figure}

\end{proof}

\begin{remark}\label{remark noflops}
One implication of Proposition~\ref{proposition special t not contracted} together with the observation in Remark~\ref{rmk:geographcusp} is that the situation described in Construction \ref{changemodel} does not occur. Cuspidal cones are not contained in interior facets. 
\end{remark}

\begin{proposition}\label{smoothcomp2}
Let $\shY\to S$ be a model of the DNV family of degree $2$ with dual intersection complex $\scrT$. Let $f\colon \shY\to \sY'$ be a cusp model. Let $Y_1$ be a component of $\shY_c$ contracted by $f$.  Then $|\Gamma_{Y_1}|\leq 2$.
\end{proposition}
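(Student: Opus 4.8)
The plan is to combine the non-contractibility of the special component with the Picard-number bound of Proposition~\ref{proposition picard noncontracted improved} and the counting relation $\sum_i \abs{\Gamma_{Y_i}}=24$, reducing to a single numerical configuration which I then eliminate by an explicit analysis of the curve structure in the spirit of Proposition~\ref{proposition special t not contracted}.

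First I write $\shY_c = Y_1 \cup Y_2 \cup Y_3$ and let $Y_\omega$ denote the unique component of type $d_4$. By Proposition~\ref{proposition special t not contracted}, $Y_\omega$ is not contracted by $f$; since the central fiber $\sY'_c$ of a cusp model is irreducible, $Y_\omega$ is precisely the non-contracted component, so the two contracted components---among them $Y_1$---are of type $d_1$. Because the non-contracted component $Y_\omega$ has type $d_4$, Proposition~\ref{proposition picard noncontracted improved} yields $\abs{\Gamma_{Y_\omega}} = \vrho(Y_\omega^\nu) \geq 20$. Substituting into the relation $\abs{\Gamma_{Y_1}} + \abs{\Gamma_{Y_2}} + \abs{\Gamma_{Y_\omega}} = 24$ from Corollary~\ref{corollary degenerate} gives $\abs{\Gamma_{Y_1}} + \abs{\Gamma_{Y_2}} \leq 4$, and in particular $\abs{\Gamma_{Y_1}} \leq 3$.

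It then remains to rule out $\abs{\Gamma_{Y_1}} = 3$. In that case the inequalities force $\abs{\Gamma_{Y_2}} = 1$ and $\abs{\Gamma_{Y_\omega}} = 20$, so $Y_2 \cong \PP^2$ by \cite[Corollary~3.7]{HL}, and $\Gamma_{Y_1}$ must be the unique three-vertex curve structure of type $d_1$ depicted in Figure~\ref{length4T}. I would then argue as in Proposition~\ref{proposition special t not contracted}: the pullback to $\shY$ of a very ample divisor on $\sY'$ restricts on $Y_1$ to a nef class $L$ with $L^2 = 0$, since $Y_1$ is contracted and its image has dimension at most one. Expanding $L$ in the $\QQ$-basis of $\Pic(Y_1)$ given by the three curves of $\Gamma_{Y_1}$ and imposing nefness against the generators of $\NE(Y_1)$ supplied by \cite[Corollary~3.7]{HL}, I compute $L^2$ and show the resulting expression is strictly positive unless $L=0$. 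Finally $L \neq 0$: by Proposition~\ref{proposition minus two not contracted} the $(-2)$-curve occurring in $\Gamma_{Y_1}$ is not contracted by $f$, so $Y_1$ is not contracted to a point and $L$ is nontrivial. The resulting contradiction forces $\abs{\Gamma_{Y_1}} \leq 2$.

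The hard part will be this last step: pinning down the exact self-intersection and intersection numbers in the three-vertex type-$d_1$ structure of Figure~\ref{length4T} (in particular the boundary self-intersections, via the triple-point formula together with the gluing constraints coming from $Y_2 \cong \PP^2$ and from $Y_\omega$), and then verifying that every nef class $L$ with $L^2 = 0$ meeting the nefness inequalities is trivial. This is the exact analogue of the closing estimate $0 = L^2 \geq b^2 + d^2 + 7c^2 > 0$ in Proposition~\ref{proposition special t not contracted}, and the combinatorial input from \cite{HL}---the shape of $\NE(Y_1)$ and the classification of degenerate $d_1$ curve structures---is what makes the computation tractable.
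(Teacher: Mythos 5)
Your reduction is sound and matches the paper up to the last step: by Proposition~\ref{proposition special t not contracted} the type $d_4$ component is the non-contracted one, Corollary~\ref{corollary degenerate} (equivalently your count via Proposition~\ref{proposition picard noncontracted improved}) gives $|\Gamma_{Y_1}|\leq 3$, and the unique three-vertex type $d_1$ curve structure is the one of Figure~\ref{length4T}. But the closing numerical argument you propose does not work. In that curve structure $\NE(Y_1)=\langle E_1,E_2,C\rangle$ where $E_1,E_2$ are $(-1)$-curves with $E_1.E_2=1$, each meeting the irreducible double curve $D$ (with $D^2=7$) once, and $C$ is a $(-2)$-curve with $C.E_2=1$, $C.E_1=C.D=0$. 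The class $F:=E_1+E_2$ then satisfies $F^2=0$, $F.E_1=F.E_2=0$, $F.C=1>0$ and $F.D=2>0$: it is a nontrivial nef isotropic class (the fiber class of a conic bundle on $Y_1$) which is strictly positive on the $(-2)$-curve. So the set of nef $L$ with $L^2=0$ and $L.C>0$ is nonempty, there is no analogue of the strict inequality $0=L^2\geq b^2+d^2+7c^2>0$ from Proposition~\ref{proposition special t not contracted}, and the intersection theory of the surface $Y_1$ alone cannot rule out the contraction.

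What is missing is a genuinely three-dimensional input: you must also exclude the contraction of the two $(-1)$-curves $E_1,E_2$. This is exactly what Proposition~\ref{proposition not contracted}\eqref{proposition not contracted one} provides --- an interior $(-1)$-curve on a contracted component meeting the double curve with the \emph{non-contracted} component cannot be contracted by a cusp model, because after the type I flop in that curve its flopped transform would be an interior curve of the non-contracted component killed by the induced contraction, contradicting purity of the exceptional divisor. Combined with Proposition~\ref{proposition minus two not contracted} for $C$, this shows that none of the three generators of $\NE(Y_1)$ is contracted, hence no effective curve class on $Y_1$ is, contradicting the assumption that $Y_1$ is a contracted component. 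This is the paper's argument; your proposal needs this replacement for its final step.
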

\begin{proof}By the previous proposition, $Y_1$ is a smooth component.
We only need to show that $|\Gamma_{Y_1}|=3$ is impossible.  There is only one curve structure of type $d_1$ with $|\Gamma_Y|=3$, depicted in Figure \ref{length4T}. As the special component $Y_\omega$ is not contracted, it follows from Proposition~\ref{proposition minus two not contracted}  and Proposition~\ref{proposition not contracted} that none of the curves in $\Gamma_{Y_1}$ is contracted. As these generate $\NE(Y_1)$, this is impossible.

\begin{figure}\centering

\begin{tikzpicture}

  \begin{scope}[shift= {(5,0)}]      
      
          \node[draw,shape=circle](D1) at (0,1){};      
  \node[draw,shape=circle] (1) at (0,0){};  
  \node(n1) at (0,1.5){{\small $-1$}};  
  \node(n4) at (0,-0.5){{\small $-1$}}; 
\node[draw,shape=circle] (2) at (1,0){}; 
\node(n2) at (1,-0.5){{\small $-2$}}; 
 \node[draw,shape=circle][fill=black] (D2) at (-1,0){} ;     
\node(n3) at (-1,-0.5){{$7$}}; 
\draw[thick] (1)--(D2)--(D1)--(1)--(2);

         \end{scope}

\end{tikzpicture}
\caption{The augmented curve structure $\Gamma_Y$  with $|\Gamma_Y|=3$ of type $d_1$.}
\label{length4T}
\end{figure}
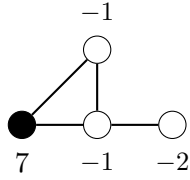

\end{proof}

We now determine which models of the DNV family with $\shY_c$ of dual intersection complex $\mathscr{T}$ admit a cusp model. Note that if $Y$ is a smooth component of $\shY_c$ with $|\Gamma_Y|=2$, then $\Gamma$ is one of the two graphs in Figure \ref{length2T}.

	\begin{figure}\centering

\begin{tikzpicture}
  \begin{scope}[shift= {(2.5,0)}]            
          \node[draw,shape=circle](D1) at (0,1){};      
  \node[draw,shape=circle] (1) at (0,0){};  
  \node(n1) at (0,1.5){{\small $0$}};  
  \node(n4) at (0,-0.5){{\small $-1$}}; 
 \node[draw,shape=circle][fill=black] (D2) at (-1,0){} ;     
\node(n3) at (-1,-0.5){{$8$}}; 
\draw[thick] (D2)--(1)--(D1)--(D2);

         \end{scope} 
         
   \begin{scope}[shift= {(-2.5,0)}]            
  \node[draw,shape=circle] (1) at (0,0){};  
  \node(n4) at (0,-0.5){{\small $0$}}; 
\node[draw,shape=circle] (2) at (1,0){}; 
\node(n2) at (1,-0.5){{\small $-2$}}; 
 \node[draw,shape=circle][fill=black] (D2) at (-1,0){} ;     
\node(n3) at (-1,-0.5){{$8$}}; 
\draw[thick] (2)--(1)--(D2);

         \end{scope}         
\end{tikzpicture}
\caption{The augmented curve structures $\Gamma_{Y}$  with $|\Gamma_{Y}|=2$ of type $d_1$.}
\label{length2T}
\end{figure}
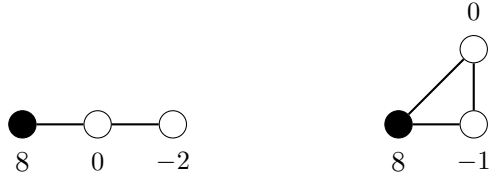

\begin{proposition}\label{modelT:unique}
Let $\shY\to S$ be a model of the DNV family of degree $2$ with dual intersection complex $\mathscr{T}$ such that for any smooth component $Y$ we have  $|\Gamma_{Y}|\leq 2$. Then there is a cusp model $f\colon\shY\to \sY'$ contracting precisely the smooth components. Moreover, $C(f)$ is the only cuspidal cone contained in $\Nef(\sY)$.
\end{proposition}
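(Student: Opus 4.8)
The plan is to reproduce, for the $\scrT$ configuration, the gluing construction used in the $\scrP$ case (Propositions~\ref{proposition cuspidal cones model p two regular},~\ref{proposition cuspidal cones model p one regular} and~\ref{proposition p2}): I will prescribe nef line bundles on the three components of $\sY_c$, glue them into a semi-ample bundle $L$ on $\shY$, and check that the induced contraction $\phi_{|rL|}$ is the asserted cusp model; uniqueness will then follow from the rigidity of the curve structures of the smooth components.

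First I would fix notation and reduce. Write $\sY_c = Y_\omega \cup Y_2 \cup Y_3$ with $Y_\omega$ the type $d_4$ component and $Y_2, Y_3$ the smooth valency-one components, so that $Y_i$ meets $Y_\omega$ in a single double curve $D_{\omega i}$ and meets no other component. By Proposition~\ref{proposition special t not contracted} no cusp model contracts $Y_\omega$, so a cusp model necessarily contracts exactly $Y_2$ and $Y_3$; the content of the statement is that such a model exists and is unique. By hypothesis $|\Gamma_{Y_i}|\le 2$, so each $\Gamma_{Y_i}$ is either a single vertex, in which case $Y_i\cong\PP^2$, or one of the two graphs of Figure~\ref{length2T}, in which case it has a distinguished vertex $v_0^{(i)}$ with $(v_0^{(i)})^2=0$ meeting the boundary curve. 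In the latter case $\sO_{Y_i}(v_0^{(i)})$ is nef and, by the Mori dream property \cite[Proposition~3.5]{HL}, semi-ample, inducing a fibration $Y_i\to\PP^1$ that collapses $Y_i$ onto the image of $D_{\omega i}$; in the former case I set $L_i:=\sO_{Y_i}$, so that $Y_i$ is collapsed to a point, and I must then also arrange that $D_{\omega i}$ is collapsed on the $Y_\omega$-side.

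Next I would construct the bundle and the morphism. Starting from an ample divisor $A$ on $Y_\omega$, I correct it by suitable non-negative multiples of $D_{\omega 2}, D_{\omega 3}$ to produce a nef divisor $L_\omega$ whose only $L_\omega$-trivial effective curves are the boundary curves that have to be collapsed, using $D_{\omega i}^2<0$ exactly as in Propositions~\ref{proposition cuspidal cones model p one regular} and~\ref{proposition p2}; the triple-point formula pins down these self-intersections and hence the precise coefficients. Choosing the multiples $p$ in $L_i=p\,v_0^{(i)}$ so that all degrees agree along the $D_{\omega i}$, the gluing conditions of \cite[Lemma~1.10]{HL} are met, so by \cite[Proposition~3.20]{HL} and maximality of $\shY$ the $L_i$ glue to $L\in\Pic(\shY)$; replacing $L$ by a high multiple yields a contraction $f=\phi_{|rL|}\colon\shY\to\sY'$. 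I would then verify $\Exz(f)=Y_2\cup Y_3$, that $f$ contracts the two-dimensional extremal face spanned by the classes of the collapsed fibers, and --- the essential point --- that $f$ is an isomorphism on $\sY_\eta$: a curve $E_\eta\subset\sY_\eta$ contracted by $f$ would close up to an effective Cartier divisor whose restrictions $E_i$ satisfy $L_i\cdot E_i=0$, and the ampleness of $A$ on $Y_\omega$ together with the gluing conditions of \cite[Lemma~1.10]{HL} would force every $E_i=0$, a contradiction. Corollary~\ref{corollary qfact} then gives that $\sY'$ is $\QQ$-factorial, so $f$ is a cusp model and $\sY'_c$ is irreducible.

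For uniqueness I would argue that any cuspidal cone contained in $\Nef(\shY)$ arises from a morphism $\shY\to\sY'$ which, since $Y_\omega$ cannot be contracted, collapses precisely $Y_2$ and $Y_3$. By \cite[Corollary~3.7]{HL} the cone $\NE(Y_i)$ is generated by the curves of $\Gamma_{Y_i}$ together with the negative boundary curves, and Propositions~\ref{proposition minus two not contracted} and~\ref{proposition not contracted} forbid contracting the $(-2)$- and exceptional $(-1)$-curves; this singles out $v_0^{(i)}$ (respectively, collapsing all of $\PP^2$) as the only admissible contractible direction, so every such cusp model contracts precisely the rays contracted by $f$ and hence gives the same cone $C(f)$, by Lemma~\ref{lemma cones determine contractions}. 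The main obstacle I expect is the combined bookkeeping of the construction: choosing the coefficients of $L_\omega, L_2, L_3$ so that they match simultaneously along all double curves --- including the self-gluing along $D_\omega$ that is peculiar to the $\scrT$ case --- while guaranteeing that the only $L$-trivial curves are the intended fibers and that nothing on the generic fiber is contracted.
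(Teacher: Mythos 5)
Your proposal is correct and follows essentially the same route as the paper's proof: glue a semi-ample bundle from an ample (or boundary-corrected nef) divisor on the special component $Y_\omega$ and the square-zero fiber classes (resp.\ trivial bundles) on the smooth components, check $\Exz(f)=Y_2\cup Y_3$ and that nothing is contracted on $\sY_\eta$, invoke Corollary~\ref{corollary qfact} for $\Q$-factoriality, and derive uniqueness from $\NE(Y_i)$ together with Propositions~\ref{proposition minus two not contracted} and~\ref{proposition not contracted}. The only cosmetic difference is that the paper splits into the three cases $(|\Gamma_{Y_1}|,|\Gamma_{Y_3}|)\in\{(2,2),(1,1),(1,2)\}$ and tailors the nef divisor on the special component to each, whereas you package this as a single construction; your explicit attention to the self-gluing along $D_\omega$ is a point the paper leaves implicit.
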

\begin{proof}
	Write $\shY_c=Y_1\cup Y_2\cup Y_3$ and assume $Y_2$ is the special component. 
	So $Y_1$ and $Y_3$ are the smooth components.
	Consider first the case where $|\Gamma_{Y_1}|=|\Gamma_{Y_3}|=2$. Write  $\Gamma_{Y_1}=\{v_0,v_1\}$  and $\Gamma_{Y_3}=\{w_0,w_1\}$ with $w^2_0=v^2_0=0$.  Let $A$ be an ample divisor on $Y_2$ and denote by $A^\nu$ its pullback to the normalization $Y_2^\nu \to Y_2$. 
	For suitable $n,k,m \in \NN$, the triple of  divisors $L_1=mv_0$, $kA^\nu$, and $L_3=nw_0$, interpreted as a divisor on the normalization $\sY_c^\nu$ of $\sY_c$, descends to a divisor on $\sY_c$ which then by maximality induces a divisor on $\sY$. This divisor is semiample because this is true for its restriction to the central fiber. 
	
	Let $f\colon\shY\to \shZ$ be the morphism induced by $|lL|$ for $l\gg 0$. By construction, $Y_1\cup Y_2 \subset \Exz(f)$. Note that if $C$ is a curve on $Y_1$ (respectively $Y_3$) contracted by $f$, then $C$ is a multiple of $v_0$ (respectively $w_0$). As $f$ does not contract any curve on $Y_2$,  it follows that $f$ does not contract any curve on $\shY_\eta$. Hence, we have  $Y_1\cup Y_2 = \Exz(f)$. It follows from Corollary~\ref{corollary qfact} that $\shZ$ is $\QQ$-factorial. So $f$ is a cusp model and $C(f)$ a cuspidal cone. As before one shows that $C(f)$ is the unique cuspidal cone contained in $\Nef(\sY)$.
	
The next case  $\Gamma_{Y_1}=\{v_0\}$, $\Gamma_{Y_3}=\{w_0\}$ is similar: as 
	$D_{21}^2=D_{23}^2<0$ it is straight forward to construct a nef divisor $L_2$ on 
	$Y_2$ such that $L_2.C=0$ if and only if $C\in\{D_{21}, D_{23}\}$. Extending 
	$L_2$ by the trivial divisors on $Y_1$ and $Y_3$, as above one obtains a 
	morphism  $f\colon\shY\to \shZ$. By construction, $Y_1\cup Y_3 = \Exz(f)$ and one 
	concludes that $f$ is a cusp model and $C(f)$ a cuspidal cone. 
	A similar proof 
	with $L_2$ chosen to be  a nef divisor  such that 
	$L_2.C=0$ if and only if $C=D_{21}$ shows the 
	 remaining case, where $|\Gamma_{Y_1}|=1$, $|\Gamma_{Y_3}|=2$.
 \end{proof}

\section{Counting cones}\label{section counting cones}

So far, we have examined curve structures of models of the DNV family in degree $2$ that admit cusp models. Building on this, we will in the present section classify the models and use the classification to count cuspidal cones.

\begin{proposition}\label{proposition classification of models class p}
Let $\shY\to \sY'$ be a cusp model such that $\shY_c=Y_1\cup Y_2 \cup Y_3$ has dual intersection complex $\mathscr{P}$. Suppose $|\Gamma_{Y_i}|=2$ for $i=2,3$ (and hence that $Y_2$, $Y_3$ are contracted under $\sY\to\sY'$). Then $\sY$ is uniquely determined by the regularity type of the $Y_i$ and $n:=D^2_{12}$. More precisely:
\begin{enumerate}
\item If $\Gamma_{Y_i}$ is regular for all $i=1,2,3$, then $\sY$ is uniquely determined by $n$ and there is one such model precisely for every $n\in [-7,1]$. 
\item If $\Gamma_{Y_i}$ is very degenerate for some $i$, then $\sY$ is uniquely determined by $n$ and there is one such model precisely for every $n\in \{-2,-6\}$.
\end{enumerate}
\end{proposition}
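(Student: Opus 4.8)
The plan is to reduce the classification to a linear bookkeeping of boundary self-intersection numbers and then to feed the outcome into the classification of curve structures from \cite{HL}. First I would record the numerical constraints. Since $|\Gamma_{Y_2}|=|\Gamma_{Y_3}|=2$ and $\sum_i|\Gamma_{Y_i}|=24$, the non-contracted component $Y_1$ satisfies $|\Gamma_{Y_1}|=20$, and as the unique non-contracted component it carries a non-degenerate, in particular regular, curve structure (cf.\ Corollary~\ref{corollary degenerate}). All three components are smooth because the dual complex is $\mathscr{P}$. Writing the anticanonical cycle of $Y_i$ as $D_{ij}+D_{ik}$, two families of equations hold: the triple point formula $D_{ij}^2+D_{ji}^2=-2$ for each of the three double curves, and $D_{ij}^2+D_{ik}^2=6-|\Gamma_{Y_i}|$ on each component (from $\vrho(Y_i)=|\Gamma_{Y_i}|$ and $K_{Y_i}^2=10-\vrho(Y_i)$, using $D_{ij}\cdot D_{ik}=2$).

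Setting $n:=D_{12}^2$, I would solve this linear system, obtaining
\[
D_{21}^2=-2-n,\quad D_{23}^2=6+n,\quad D_{32}^2=-8-n,\quad D_{31}^2=12+n,\quad D_{13}^2=-14-n,
\]
while the remaining relation $D_{12}^2+D_{13}^2=6-20=-14$ then holds automatically; the system is consistent for every $n$ and all boundary data are functions of $n$ alone. Consequently each component is pinned down: the cardinality-two components $Y_2,Y_3$ are determined by their regularity type together with their boundary self-intersections via Lemma~\ref{lemma tamely degenerate type p cardinality two} and Lemma~\ref{lemma very degenerate type p cardinality two} (and \cite[Proposition~3.18]{HL}), while the regular component $Y_1$ is determined by the pair $(D_{12}^2,D_{13}^2)=(n,-14-n)$, again by \cite[Proposition~3.18]{HL}. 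By \cite[Proposition~6.3]{HL} the curve structures determine the surfaces $Y_i$ up to isomorphism; the gluing along the double curves is rigid by maximality (triviality of the Carlson invariant), and a model of the DNV family is in turn determined up to isomorphism by its central fiber (cf.\ \cite[Proposition~1.15]{HL} and \S1.2 of \cite{HL}). This shows that $\sY$ depends only on the regularity types of the $Y_i$ and on $n$.

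It then remains to determine the admissible $n$. In case (1) the cardinality-two components impose no restriction: for every $n$ the pair $\{-2-n,6+n\}$ (resp.\ $\{12+n,-8-n\}$) sums to $4$ and its larger entry is at least $2$, so by Lemma~\ref{lemma tamely degenerate type p cardinality two} it is the boundary of a regular structure. The only constraint comes from the existence of a regular type $d_2$ curve structure of cardinality $20$ on $Y_1$ with boundary $(n,-14-n)$, which by \cite[Proposition~3.18]{HL} holds precisely for $n\in[-7,1]$. In case (2) a very degenerate cardinality-two component has rigid boundary $\{0,4\}$ by Lemma~\ref{lemma very degenerate type p cardinality two}; equating this with $\{D_{21}^2,D_{23}^2\}=\{-2-n,6+n\}$ forces $n\in\{-2,-6\}$, and for these values $Y_1$ and the remaining regular component are realizable by the case-(1) analysis since $-2,-6\in[-7,1]$. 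Finally, $Y_2$ and $Y_3$ cannot both be very degenerate: this would require $n\in\{-2,-6\}$ and simultaneously $\{12+n,-8-n\}=\{0,4\}$, i.e.\ $n\in\{-8,-12\}$, which is impossible.

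The main obstacle is twofold. The structural difficulty is that $\sY$ itself---not merely the unordered list of its components---is determined: this rests on the rigidity of the gluing forced by maximality and on the uniqueness of the maximal primitive smoothing, both of which draw on the machinery of \cite{HL}. The quantitative difficulty is the exact interval $[-7,1]$; here all the work is in the existence half of the classification of regular type $d_2$ curve structures of cardinality $20$ in \cite[Proposition~3.18]{HL}, the endpoints being exactly the values of $n$ at which the prescribed boundary data ceases to be realized by a non-degenerate regular structure.
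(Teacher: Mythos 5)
Your overall strategy is the same as the paper's: reduce everything to the boundary self-intersection numbers $D_{ij}^2$, observe that $n=D_{12}^2$ determines all of them, and then invoke \cite[Proposition~3.18]{HL} to pin down each component. Your derivation of $D_{13}^2=-14-n$ via the triple point formula and the relation $D_{ij}^2+D_{ik}^2=6-|\Gamma_{Y_i}|$ is correct and is only cosmetically different from the paper's route through leg lengths of exceptional vertices (which gives $l_1+l_2=12$, hence $n_1+n_2=-14$).

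There is, however, a genuine error in the step that produces the interval $[-7,1]$. You assert that a regular type $d_2$ curve structure of cardinality $20$ with boundary $(n,-14-n)$ exists ``precisely for $n\in[-7,1]$'' by \cite[Proposition~3.18]{HL}. That cannot be right: the two boundary components of the anticanonical cycle on $Y_1$ play symmetric roles, so the structure with boundary $(1,-15)$ is literally the same as the one with boundary $(-15,1)$, and existence cannot single out $[-7,1]$. What non-degeneracy actually gives is $n\le 1$ \emph{and} $-14-n\le 1$, i.e.\ $n\in[-15,1]$. The interval $[-7,1]$ in the statement arises because $n=D_{12}^2$ is only well defined up to the relabeling $Y_2\leftrightarrow Y_3$, which replaces $n$ by $-14-n$; one must therefore count unordered pairs $\{n,-14-n\}$ and normalize $n\ge -7$. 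The paper makes this identification explicitly (``we count unordered pairs $\{n_1,n_2\}$ with $n_1+n_2=-14$''), and without it your argument either overcounts the models (seventeen values of $n$ instead of nine) or fails to justify the stated range. The same omission surfaces in your case (2): a very degenerate $Y_3$ (rather than $Y_2$) forces $n\in\{-8,-12\}$, and only after the relabeling does one land in $\{-2,-6\}$; you treat only the $Y_2$ case. Since this proposition feeds directly into the cone count of Theorem~\ref{thm:count}, getting the enumeration of distinct models right is the whole point. A smaller issue: you assert existence of the global model $\sY$ once the individual curve structures exist, whereas one still has to produce a projective $d$-semistable central fiber with these components and trivial Carlson invariant; the paper does this by exhibiting an explicit sequence of elementary modifications of type I starting from $\YP$.
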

\begin{definition}
In the situation of the proposition, we write $\shY_{\operatorname{R}}(n):=\sY$ if all components $Y_i$ are regular and $\sY_\VD(n):=\sY$ if there is a very degenerate $Y_i$.
\end{definition}
\begin{proof}[Proof of Proposition~\ref{proposition classification of models class p}.]
As $|\Gamma_{Y_i}|=2$ for $i=2,3$, the components $Y_2, Y_3$ are degenerate by Corollary~\ref{corollary degenerate}. From projectivity of $\sY_c$ and \cite[Proposition~4.2]{HL} we deduce that $\Gamma_{Y_1}$ is non-degenerate. Write $n_1=D_{12}^2$ and $n_2=D_{13}^2$. Using non-degeneracy, we deduce $n_1\leq 1$ from \cite[Proposition~3.18]{HL}. Let $v_i$, $i=1,2$ be the 
exceptional vertices, with $v_1.D_{12}=1$. Write $l_i=|L(v_i)|-4$ where $L(v_i)$ is the leg of $v_i$. From the condition 
$|\Gamma_{Y_2}|=|\Gamma_{Y_3}|=2$ we get $|\Gamma_{Y_1}|=20$. By the definition of curve 
structure of type $d_2$, we have $|L(v_1)|+|L(v_2)|=|\Gamma_{Y_1}|-2$. So $l_1+l_2=12$. 
As a leg of any exceptional vertex has at least $2$ vertices, we have $l_i \in 
[-2,14]$.  
 Also, we have $n_i=-1-l_i$. So we have $n_2=-14-n_1$, and $n_1\in [-15,1]$. 
 Note that $n_1$ determines all self-intersection numbers $D_{ij}$. 
 Hence, as soon as we 
 know the regularity of $\Gamma_{Y_2}$ and $\Gamma_{Y_3}$, we have a complete 
 description of $\shY$ thanks to \cite[Proposition~3.18]{HL}. 
 Assuming all curve structures are regular, after possibly 
 changing the indexing of $Y_2$ and $Y_3$ of $\shY_c$ we find $
 \shY\cong\shY_{\operatorname{R}}(n)$ for $n\in [-7,1]$, where we have taken into account that we count unordered pairs $\{n_1,n_2\}$ with $n_1+n_2=-14$. 

By the general yoga of curve structures one shows that if $n_1\notin\{-2,-6,-8,-12\}$, all $\Gamma_{Y_i}$ are regular. If $n_1\in\{-2,-6,-8,-12\}$, there can be very degenerate curve structures. Note that by changing the indexing, we can reduce to the case $n_1\in \{-2,-6\}$. So assume $ \Gamma_{Y_2}$ is very degenerate. Then, $D^2_{23}\in \{4,-6\}$ so  $\Gamma_{Y_3}$
is regular.

The existence is straightforward to show by constructing a suitable sequence of elementary modifications of type I starting with $\YP$. 
\end{proof}

Let us now turn to models with dual intersection complex $\scrT$. It is also straightforward to show that for  $(n_1,n_2)\in \{ (-8,-8), (-8,-9), (-9,-9)\}$ there is a unique model $\shY(n_1,n_2)$ of the DNV family of degree $2$ such that the central fiber $\shY(n_1,n_2)_c$ has dual intersection graph $\mathscr{T}$ and, if $Y_3$ is the special component, $D_{3i}^2=n_i$. 
 These are precisely the models such that $|\Gamma_{Y}|\leq 2$ for any smooth component. We state this as a proposition.

\begin{proposition}\label{proposition classification of models class t}
Let $\shY\to \sY'$ be a cusp model such that $\shY_c$ has dual intersection complex $\mathscr{T}$. 
Then $\shY\cong\shY(n_1,n_2)$ for  $(n_1,n_2)\in \{ (-8,-8), (-8,-9), (-9,-9)\}$.\qed
\end{proposition}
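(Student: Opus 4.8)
First I would determine which components are contracted. Since $\shY\to\sY'$ is a cusp model, the central fiber $\sY'_c$ is irreducible, so exactly two of the three components of $\shY_c$ are contracted. By Proposition~\ref{proposition special t not contracted} the special component $Y_\omega$ of type $d_4$ is never contracted, so the contracted components are the two smooth components, which I relabel $Y_1,Y_2$; put $Y_3:=Y_\omega$. Proposition~\ref{smoothcomp2} gives $|\Gamma_{Y_1}|,|\Gamma_{Y_2}|\leq 2$, and since $|\Gamma_{Y_i}|\geq 1$ we have $|\Gamma_{Y_i}|\in\{1,2\}$.

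Next I would compute the invariants $n_i:=D_{3i}^2$. Each smooth $Y_i$ is adjacent to $Y_\omega$ only, so the double curve $D_{i3}\subset Y_i$ is the full anticanonical divisor; as $Y_i$ is smooth rational with $\vrho(Y_i)=|\Gamma_{Y_i}|$, adjunction gives $D_{i3}^2=(-K_{Y_i})^2=10-|\Gamma_{Y_i}|$, i.e.\ $9$ for $Y_i\cong\PP^2$ and $8$ for $|\Gamma_{Y_i}|=2$, in accordance with Figures~\ref{p1comp} and~\ref{length2T}. The triple point formula applied to $Y_i\cap Y_\omega$ shows that $n_i+D_{i3}^2$ equals a constant determined by the number of triple points on this curve; as $Y_1\cap Y_\omega$ and $Y_2\cap Y_\omega$ have the same combinatorial type in $\mathscr{T}$, this constant is the same for $i=1,2$, so $n_i$ is an affine function of $|\Gamma_{Y_i}|$ alone, the same for both $i$. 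To pin down this function I pass to the normalization $Y_\omega^\nu$: its anticanonical cycle is $D_{31}+D_{32}+D_{\omega 1}+D_{\omega 2}$, where $D_{\omega 1},D_{\omega 2}$ are the two preimages of the self-double curve $D_\omega$ and satisfy $D_{\omega 1}^2=D_{\omega 2}^2=-1$ (as recorded in the proof of Lemma~\ref{lemma special t not contracted}). Since $Y_\omega^\nu$ is rational with $\vrho(Y_\omega^\nu)=|\Gamma_{Y_\omega}|$, adjunction for this $4$-cycle yields
\[
n_1+n_2-2=(-K_{Y_\omega^\nu})^2-8=\bigl(10-|\Gamma_{Y_\omega}|\bigr)-8 .
\]
Using $|\Gamma_{Y_1}|+|\Gamma_{Y_2}|+|\Gamma_{Y_\omega}|=24$ (the curve structures are $\Q$-bases of the respective Picard groups, cf.\ the proof of Lemma~\ref{lemma irreducible components}) this becomes $n_1+n_2=|\Gamma_{Y_1}|+|\Gamma_{Y_2}|-20$. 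Combined with the affine dependence above, this determines the function completely as $n_i=|\Gamma_{Y_i}|-10$.

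Finally I would read off the classification. The three cases $(|\Gamma_{Y_1}|,|\Gamma_{Y_2}|)\in\{(1,1),(1,2),(2,2)\}$ give exactly $(n_1,n_2)\in\{(-9,-9),(-8,-9),(-8,-8)\}$. As the type $d_1$ curve structure of a given cardinality $\leq 2$ is unique by \cite[Proposition~3.18]{HL} (see Figures~\ref{p1comp} and~\ref{length2T}), the pair $(|\Gamma_{Y_1}|,|\Gamma_{Y_2}|)$ fixes both smooth components and, through the gluing data, the whole central fiber; by \cite[Proposition~6.3]{HL} together with the uniqueness of models of the DNV family this determines $\shY$ up to isomorphism, so $\shY\cong\shY(n_1,n_2)$ for the corresponding pair, and these are precisely the three asserted pairs.

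The main obstacle is the intersection theory on the non-normal component $Y_\omega$: one has to handle the self-double curve $D_\omega$ and its two $(-1)$-curve preimages correctly, both when applying adjunction on $Y_\omega^\nu$ and when using the triple point formula on the folded double curve $Y_i\cap Y_\omega$. Once everything is transported to the normalization and the two computations are made to agree, the remaining steps are formal.
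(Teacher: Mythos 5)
Your skeleton is the same as the paper's: Proposition~\ref{proposition special t not contracted} forces the contracted components to be the two smooth ones, Proposition~\ref{smoothcomp2} gives $|\Gamma_{Y_i}|\leq 2$, and the paper then simply asserts that the models with this property are exactly the three $\shY(n_1,n_2)$. Your computation of the invariants, which the paper dismisses as ``straightforward'', is correct and is the genuinely valuable part of the proposal: $D_{i3}^2=(-K_{Y_i})^2=10-|\Gamma_{Y_i}|$ by Noether's formula, $n_1+n_2=4-|\Gamma_{Y_\omega}|$ from adjunction on the anticanonical $4$-cycle of $Y_\omega^\nu$ (using $D_{\omega 1}^2=D_{\omega 2}^2=-1$), and hence $n_i=|\Gamma_{Y_i}|-10\in\{-9,-8\}$. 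Your decision not to commit to a value of the triple-point-formula constant and instead to determine it from the global computation is exactly right: the double curve $Y_i\cap Y_\omega$ is nodal (its node is the triple point), so the naive constant $-2$ from the $\scrP$ case would be wrong here -- the correct value is $0$, as your argument recovers, and as one can also check by noting that elementary modifications of type I leave $n_i+D_{i3}^2$ invariant and evaluating on $\YT$ itself.

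The one genuine flaw is in your closing uniqueness step: the claim that the type $d_1$ curve structure of cardinality $2$ is unique is false. Figure~\ref{length2T} exhibits two such curve structures (one with an interior $(-1)$-vertex meeting the boundary component, one with an interior $(-2)$-vertex disjoint from it), both with $D^2=8$ and hence both compatible with $n_i=-8$. So the pair $(|\Gamma_{Y_1}|,|\Gamma_{Y_2}|)$ does not by itself fix the smooth components, and the appeal to \cite[Proposition~6.3]{HL} only applies once the curve structure itself, not merely its cardinality, is known. This does not affect your conclusion that $(n_1,n_2)$ lies in the stated set, and the uniqueness of $\shY(n_1,n_2)$ is asserted (without proof) by the paper in the paragraph preceding the proposition; but if you intend to \emph{prove} that uniqueness rather than assume it, you need an additional argument ruling out one of the two cardinality-two curve structures for each smooth component, for instance by realizing the admissible models explicitly as sequences of type I modifications starting from $\YT$, which is the route the paper indicates.
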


As a last ingredient for counting cones, we show that in degree $2$, every cuspidal cone is contained in a unique maximal cone, cf. Remark~\ref{remark cuspidal cones}.

\begin{lemma}\label{lemma unique}
Assume $2d=2$. Let $\pi:\shY\to \shY'$ be a cusp model. Then $\sY'$ determines the model $\sY$ of the DNV family uniquely.
\end{lemma}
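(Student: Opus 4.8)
The plan is to prove the contrapositive-flavored statement directly: any two cusp models $\pi_1\colon\sY_1\to\sY'$ and $\pi_2\colon\sY_2\to\sY'$ with the same target have $\sY_1\cong\sY_2$. First I would collect the structural input already at hand. By Lemma~\ref{lemma properties cusp model} each $\pi_i$ is crepant with $K_{\sY_i}=0=K_{\sY'}$, and by Lemma~\ref{lemma picard rank cusp model} each $\pi_i$ is an isomorphism over $\eta$; hence the generic fibers are canonically identified and $\sY_1,\sY_2$ are two (smooth, $\Q$-factorial) models of the \emph{same} DNV family. The induced birational map $g:=\pi_2^{-1}\circ\pi_1\colon\sY_1\dashrightarrow\sY_2$ is then an isomorphism over $\eta$ and relates two crepant minimal models of $\sY'$; by the theory of minimal models over a base (cf.\ \cite{Kaw97,KM98}, and Remark~\ref{remark morifan}) such a $g$ is an isomorphism in codimension one, i.e.\ a small modification. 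Consequently $\NS(\sY_1)$ and $\NS(\sY_2)$ are canonically identified via $g$, the Mori fans coincide, and $\Nef(\sY_1),\Nef(\sY_2)$ are two maximal cones of $\Morifan(\sY_1)=\Morifan(\sY_2)$.

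Next I would fix an ample divisor $A$ on $\sY'$ and set $\xi:=\pi_1^*A=\pi_2^*A$, a class that is nef and big on both sides. Under the identification above it lies in the relative interior of the common cuspidal cone $\sigma:=C(\pi_1)=C(\pi_2)=\pi_i^*\Nefe(\sY')$, which is a face of both $\Nef(\sY_1)$ and $\Nef(\sY_2)$; in particular $\sigma$ is the minimal face of $\Nef(\sY_1)$ containing $\xi$. Running the relative MMP over $\sY'$, the small modification $g$ factors into a sequence of flops over $\sY'$, each of which is $\xi$-trivial, and on every intermediate model $\xi$ stays nef. My claim is that $g$ is in fact an isomorphism, which is exactly the assertion $\sY_1\cong\sY_2$.

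The crux is to rule out the existence of even a single flop. Suppose $g$ is not an isomorphism; then the relative MMP begins with a flopping (small) contraction $c\colon\sY_1\to\bar Z$ over $\sY'$, contracting an extremal ray $R$ with $\xi\cdot R=0$. The wall $W:=R^{\perp}\cap\Nef(\sY_1)$ is then a facet of $\Nef(\sY_1)$ corresponding to a small contraction, and $\xi\in R^{\perp}$ gives $\xi\in W$; since $\sigma$ is the minimal face of $\Nef(\sY_1)$ containing $\xi$, this forces $\sigma\subseteq W$. This contradicts Remark~\ref{rmk:geographcusp} (reaffirmed in Remark~\ref{remark noflops}), according to which the cuspidal cone $C(\pi_1)=\sigma$ is never contained in a facet of $\Nef(\sY_1)$ corresponding to a small contraction. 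Hence no flop occurs, $g$ is a small birational \emph{morphism} between $\Q$-factorial varieties, and therefore an isomorphism with $\pi_2\circ g=\pi_1$, so $\sY'$ determines $\sY$ uniquely.

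I expect the main obstacle to be the careful bookkeeping in the middle step: verifying that $g$ is genuinely realized as a sequence of flops \emph{over} $\sY'$ along which $\xi$ remains nef, so that the wall of the very first flop provably contains $\xi$ (and hence $\sigma$). Once this relative picture is secured, the contradiction with Remark~\ref{rmk:geographcusp} is immediate. Here the degree-two hypothesis is essential, since Proposition~\ref{proposition degree two semitoric fan is fan} ensures that all the cones involved are honest rational polyhedral cones, so that the face-theoretic argument about $\sigma\subseteq W$ applies without the pathologies of the non-polyhedral case.
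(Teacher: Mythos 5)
Your argument is correct and follows essentially the same route as the paper's: reduce the birational map between two putative models over $\sY'$ to a sequence of flops over $\sY'$ via the relative MMP, isolate a single flop, and derive a contradiction with Remark~\ref{remark noflops} (cuspidal cones are not contained in facets corresponding to small contractions). The only cosmetic difference is that the paper drives the MMP with the pullback of an ample divisor from the second model $\sX$ rather than from $\sY'$, but the wall-crossing contradiction is identical.
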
 
\begin{proof}
Suppose that $\sY \to \sY'$ and $\sX\to\sY'$ were two such models of the DNV family. 
Let $B$ be the pullback along $\sY \ratl \sX$ of an ample prime divisor on $\sX$. As $B$ is $\pi$-nef if and only if $B+\pi^*L$ is nef for a sufficiently ample divisor $L$ on $\sY'$ and clearly $B+\pi^*L \in f^*\Amp(\sX)$, we deduce that for $B$ to be $\pi$-nef, we must have that $\sY \ratl \sX$ is an isomorphism by Lemma~\ref{lemma cones determine contractions}. Let us assume this is not the case. Then we run a log MMP for the pair $(\sY,\veps B)$, where $\veps$ is small enough in order to make the pair klt, and obtain a sequence of flops over $\sY'$ connecting $\sY$ to $\sX$. Thus, we may reduce to the case where $\sY \ratl \sX$ is a flop. Now the lemma follows from Remark~\ref{remark noflops}.
\end{proof}

Thus it makes sense to speak of a cusp model $\sY'$ of class $\scrG\in \{\scrP,\scrT\}$, meaning that the unique model $\sY$ of the DNV family admitting a regular contraction $\sY \to \sY'$ is of class $\scrG$.

\begin{theorem}\label{thm:count}
Let $\shY$ be a model of the DNV family of degree $2$. There are $93$ cuspidal cones in $\Morifan(\sY)$. Moreover, $81$ of these correspond to cusp models of class $\scrP$ and $12$ to cusp models of class $\scrT$.
\end{theorem}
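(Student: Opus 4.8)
The plan is to count cuspidal cones by enumerating the cusp models that produce them, organised according to the two dual intersection complexes $\scrP$ and $\scrT$. First I would record that every cuspidal cone is of the form $C(f)$ for a rational cusp model $f\colon\shY\dashrightarrow\sY'$, and that by Lemma~\ref{lemma unique} the target $\sY'$ determines the model $\sX$ of the DNV family admitting a regular contraction onto $\sY'$ uniquely. Thus counting cuspidal cones amounts to counting pairs consisting of such an $\sX$ together with a cusp model $\sX\to\sY'$, and Lemma~\ref{lemma unique} guarantees that no pair is counted twice. By Corollary~\ref{corollary degenerate} together with the analysis of Sections~\ref{section class p} and~\ref{section class t}, each such $\sX$ has a unique non-contracted component (the one carrying a non-degenerate curve structure), so every cuspidal cone acquires a well-defined \emph{kept component} among the three components of $\shY_c$, via the flop identifying $\Morifan(\sX)$ with $\Morifan(\shY)$.

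Next I would exploit the symmetry. By Proposition~\ref{proposition automorphisms} and Proposition~\ref{proposition degree two all fans equal}, the group $\Pbar\isom S_3$ acts on $\Morifan(\shY)$ permuting the three components transitively, and the sets of cuspidal cones with prescribed kept component are pairwise disjoint and permuted among one another. Moreover this action preserves the class of the underlying model. Hence it suffices to count the cuspidal cones keeping a single fixed component $Y$, to multiply by $3$, and to track the class $\scrP$ versus class $\scrT$ split separately.

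For the fixed-component count I would feed in the two classifications. For class $\scrP$, Proposition~\ref{proposition classification of models class p} lists the models admitting a cusp model: the regular models $\shY_{\operatorname{R}}(n)$ for $n\in[-7,1]$ and the very degenerate models $\sY_\VD(n)$ for $n\in\{-2,-6\}$, to which one adds the $\PP^2$-models of Proposition~\ref{proposition p2}. The number of cuspidal cones carried by each is given by Propositions~\ref{proposition cuspidal cones model p two regular}, \ref{proposition cuspidal cones model p one regular}, and~\ref{proposition p2}, namely one, except in the $\PP^1\times\PP^1$ case of Proposition~\ref{proposition cuspidal cones model p two regular}, which carries two. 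Passing from isomorphism types to genuine cones in $\Morifan(\shY)$ requires recording, for each model, the labellings of its two contracted components (an ordered pair of self-intersection numbers of fixed sum $-14$ in the regular case, respectively the distinguished position of the very degenerate or of the $\PP^2$ component); carrying this out yields $27$ cuspidal cones of class $\scrP$ keeping $Y$. For class $\scrT$, Proposition~\ref{proposition classification of models class t} gives precisely the models $\shY(n_1,n_2)$ with $(n_1,n_2)\in\{(-8,-8),(-8,-9),(-9,-9)\}$, each carrying a single cuspidal cone by Proposition~\ref{modelT:unique}; counting the configuration $(-8,-9)$ in its two ordered forms yields $4$ cuspidal cones of class $\scrT$ keeping $Y$. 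Multiplying by $3$ produces $3\cdot 27=81$ cones of class $\scrP$ and $3\cdot 4=12$ of class $\scrT$, hence $93$ in total.

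The main obstacle is the combinatorial bookkeeping in the third step. One must verify that the parameter ranges in Propositions~\ref{proposition classification of models class p} and~\ref{proposition classification of models class t} are exhaustive — in particular that degenerate configurations such as two contracted copies of $\PP^2$ genuinely do not occur — translate each isomorphism type into the correct number of honest cones by accounting for the orderings of the contracted components and for the $\PP^1\times\PP^1$ doubling, and confirm through Lemma~\ref{lemma unique} that the three fixed-component enumerations never overlap. The symmetry reduction keeps the work tractable, but it is exactly the determination of these multiplicities — where the intermediate counts $27$ and $4$, and hence the final $81$ and $12$, are pinned down — that demands the most care.
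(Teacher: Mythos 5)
Your proposal is correct and follows essentially the same route as the paper: reduce via Lemma~\ref{lemma unique} to counting marked models admitting a cusp model, feed in the classification results of Sections~\ref{section class p}--\ref{section class t} together with the per-model cone counts, and use the $S_3$-symmetry — the only difference being that you count per kept component first (obtaining $27+4=31$) and multiply by $3$, whereas the paper sums $c_\sX\cdot\ell_\sX$ over isomorphism classes of models directly. The one point to make explicit is that your "number of labellings of the contracted components" equals the number of marked models per kept component only because the relevant orbit lengths are $6$ (resp.\ $3$ in the symmetric cases $n=-7$, $(-8,-8)$, $(-9,-9)$), which is exactly the content of \cite[Proposition~5.43]{HL} invoked in the paper.
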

\begin{proof}
Note first by Lemma~\ref{lemma unique} each cuspidal cone is contained in exactly one maximal cone of $\Morifan(\sY)$ so we are reduced to counting marked models of the DNV family which admit a cusp model. For a model $\sX\to S$ of the DNV family, we denote by $\ell_\sX$ the orbit length of $\Nef(\sX)$ under the action of the group $\Bir(\sY)$ 
of birational automorphisms and by $c_\X$ the number of cuspidal cones in $\Nef(X)$. The number of cuspidal cones in $\Morifan(\sY)$ is then given by $\sum_\sX c_\sX \cdot \ell_\sX$ where the sum runs over all (non-marked) models $\sX$ of the DNV family. 

Let us first count the models of class $\scrP$. Among those, models with $|\Gamma_Y|=2$ for every contracted component $Y$ are precisely given by the models $\shY_{\operatorname{R}}(n)$ for $-7 \leq n \leq 1$ and $\shY_{\operatorname{VD}}(n)$ for $n=-2$ or $-6$ by Proposition~\ref{proposition classification of models class p}. By \cite[Proposition 5.43]{HL}, the orbit length of $\shY_{\operatorname{R}}(n)$ is $6$ if $n\neq -7$ and $3$ if $n=-7$ and it is $6$ for the models $\shY_\VD(n)$. Also, one reads off from the curve structures that $\shY_c$ has a component isomorphic to $\PP^1\times \PP^1$ if and only if $\shY\cong\shY_{\operatorname{R}}(n)$ for $n\in\{-6,-2\}$. So by Propositions~\ref{proposition cuspidal cones model p two regular} and~\ref{proposition cuspidal cones model p one regular}, the nef cones of these last models contain $2$ cuspidal cones while all the other models contain $1$ such cone.  There is also the unique model characterized by having a component isomorphic to $\PP^2$, as in Proposition~\ref{proposition p2}. It has orbit length $6$ and a unique cuspidal cone inside its nef cone. 
Similarly, the two models $\shY_\VD(n)$, $n\in \{-2,-6\}$ have orbits of length $6$ and their nef cones contain a unique cuspidal cone. This gives another $12$ cuspidal cones. So we have 
$$
6\cdot 6 + 6\cdot 2 \cdot 2+ 3 \cdot 1 + 6 \cdot 1 + 6\cdot 2 = 81
$$
cuspidal cones defined by cusp models $\shY\to \sY'$ with $\shY_c$ having dual intersection complex $\mathscr{P}$. 

By Proposition~\ref{proposition classification of models class t}, there are $3$ models  $\shY(n_1,n_2)$ of the DNV family with $\shY_c$ having dual intersection complex $\mathscr{T}$. Two of these are symmetric, i.e. two components of $\sY_c$ are isomorphic (see \cite[Definition~5.40]{HL}), while $\shY(-8,-9)$ is not. Each cone $\Nef(\shY(n_1,n_2))$ contains a unique cuspidal cone by Proposition~\ref{modelT:unique}. 
So there are $6\cdot 1 + 3\cdot 2=12 $ cuspidal cones defined by cusp models $\shY\to \sY'$ with $\shY_c$ having dual intersection complex $\mathscr{T}$. Summing up, there are  $93=81+12$ cuspidal cones.
\end{proof}

By the $S_3$-symmetry of the set of cuspidal cones, we deduce our main result, Theorem~\ref{theorem main}.

\begin{corollary}\label{corollary morifan cusp degree 2}
Let $\sY'\to S$ be a cusp model. Then $\Morifan(\sY')$ has $31$ maximal cones, $27$ of these correspond to cusp models of class $\scrP$ and $4$ to cusp models of class $\scrT$. In particular, the toric fan $\mghks[2]$ has $31$ maximal cones inside the fundamental domain of the Coxeter fan. 
\end{corollary}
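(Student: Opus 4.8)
The plan is to deduce the statement from Theorem~\ref{thm:count} by exploiting the $S_3$-action on cuspidal cones together with the identification, for a fixed non-contracted component, of the attached cuspidal cones with the maximal cones of a single cusp model's Mori fan.

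First I would organize the $93$ cuspidal cones of $\Morifan(\sY)$ according to their non-contracted component. By Remark~\ref{remark morifan}, a cuspidal cone $C(f)$ determines its target cusp model up to isomorphism, hence its non-contracted component, so the sets $\Cusp_{Y_1}, \Cusp_{Y_2}, \Cusp_{Y_3}$ partition the $93$ cuspidal cones. Corollary~\ref{corollary cuspidal cones} provides, for each $g \in G = \Aut(\sY_\eta)$, a bijection $\Cusp_Y \to \Cusp_{g.Y}$, and since $\Pi_2 = S_3$ acts transitively on the three components, the three parts have equal cardinality $93/3 = 31$. As the $S_3$-action is induced by automorphisms of the whole degeneration it preserves the dual-complex type of the underlying model of the DNV family, so the $81$ cones of class $\scrP$ split as $27$ per component and the $12$ of class $\scrT$ as $4$ per component.

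Next I would identify $\Cusp_Y$ with the set of maximal cones of $\Morifan(\sY')$ for a fixed $Y$-cusp model $\sY'$. Every element of $\Cusp_Y$ is $C(f)$ for a rational $Y$-cusp model $f\colon \sY \ratl \sZ$, and by Remark~\ref{remark morifan} these correspond bijectively to isomorphism classes of $Y$-cusp models $\sZ$. By the argument in the proof of Lemma~\ref{lemma auxilliary fan independent of cusp model}, any two $Y$-cusp models differ by a small $\Q$-factorial modification; conversely an SQM of $\sY'$ is again a $Y$-cusp model, since it is an isomorphism on the generic K3 fibre, preserves the irreducible central fibre, and stays $\Q$-factorial and projective over $S$. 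Hence the $Y$-cusp models $\sZ$ are exactly the SQMs $\sY' \ratl \sZ$, which by Remark~\ref{remark maximal cones} are exactly the maximal cones $C(g) = g^*\Nefe(\sZ)$ of $\Morifan(\sY')$. Under this bijection the class of the cuspidal cone matches the class of $\sZ$, which is well defined by Lemma~\ref{lemma unique}. Therefore $\Morifan(\sY')$ has $31$ maximal cones, $27$ of class $\scrP$ and $4$ of class $\scrT$.

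For the final ``in particular'' assertion I would transport this count to the GHKS fan. By Proposition~\ref{proposition degree two all fans equal} we have $\mghksnef[2] = \Sigma_Y^\nef$, and by Definition~\ref{definition cusp model fan} the isomorphism $\iota^*\colon \NS(\sY') \to \NS(\sY_\eta)$ of Lemma~\ref{lemma picard rank cusp model} carries $\Morifan(\sY')$ onto $\Sigma_Y^\nef$, matching maximal cones. Since the fundamental domain of the Coxeter fan equals $V_2 \cong \Nef(\sY_\eta)$ by Lemma~\ref{lemma action on nef}, and the cones of $\mghks[2]$ lying in it are precisely those of $\mghksnef[2]$ (take $s = \id \in W_2$ in Definition~\ref{definition cusp model GHKS fan}), the maximal cones of $\mghks[2]$ inside the fundamental domain biject with the maximal cones of $\Morifan(\sY')$, giving $31$. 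I expect the main obstacle to be the middle step: checking cleanly that SQMs of a cusp model are again cusp models for the same component and that the $\scrP$/$\scrT$ label is both preserved and well defined, so that the bijection $\Cusp_Y \leftrightarrow \{\text{maximal cones of } \Morifan(\sY')\}$ respects the class decomposition.
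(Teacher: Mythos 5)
Your proposal is correct and follows essentially the same route as the paper: partition the $93$ cuspidal cones of Theorem~\ref{thm:count} by non-contracted component, use the $S_3$-action of Corollary~\ref{corollary cuspidal cones} to split them into three equal classes of $31=27+4$, and identify each class $\Cusp_{Y}$ with the set of maximal cones of $\Morifan(\sY')$ for a fixed $Y$-cusp model. The only (harmless) divergence is that you justify disjointness of the sets $\Cusp_{Y_i}$ via Remark~\ref{remark morifan}, where the paper instead invokes Proposition~\ref{proposition picard noncontracted} to rule out one model admitting cusp models for two different components; beyond that you simply make explicit the bijection with small $\Q$-factorial modifications of $\sY'$ that the paper leaves implicit.
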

\begin{proof}
For every irreducible component $Y_i \subset \sY_i$ we choose a  rational $Y_i$-cusp model $\sY \ratl \sY_i$ for $i=1,2,3$.
By Corollary~\ref{corollary cuspidal cones}, the Mori fans $\Morifan(\sY_i)$ are in bijective correspondence via the $S_3$ action. Moreover, thanks to Proposition~\ref{proposition picard noncontracted} a given model $\sX \to S$ of the DNV family in degree $2$ cannot admit cusp models for different components of the central fiber. Thus, cuspidal cones for different components do not intersect: $\Cusp_{Y_i}\cap \Cusp_{Y_j} =\emptyset$ for $i\neq j$. The claim now follows from Theorem~\ref{thm:count}.
\end{proof}

\begin{corollary}\label{corollary morifan cusp degree 2 orbits}
The $\Gammabar[2]$-action induces a residual $S_3$-action on 
the set of maximal cones of the toric fan $\mghks[2]$ inside the fundamental domain of the Coxeter fan with $17=14+3$ orbits of class $\scrP$ and $\scrT$, respectively.
\end{corollary}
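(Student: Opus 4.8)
The plan is to turn the statement into an orbit count for the residual group and then read it off from the classification underlying Theorem~\ref{thm:count}. First I would identify the residual action. By \eqref{eq semidirect product} one has $\Gammabar[2]=W_2\rtimes\Pbartwo$ with $\Pbartwo\isom S_3$ (Remark~\ref{remark antisymplectic involution}), and by Lemma~\ref{lemma action on nef} the cone $\Nef(\sY_\eta)$ is the maximal cone of the Coxeter fan $\vf[2]$ in which we work; the remaining maximal cones of $\vf[2]$ are its $W_2$-translates, on which $W_2$ acts simply transitively. Since $\mghks[2]$ refines $\vf[2]$ and is $\Gammabar[2]$-equivariant (Definition~\ref{definition cusp model GHKS fan}, Proposition~\ref{proposition:justification}), the $\Gammabar[2]$-orbits of maximal cones of $\mghks[2]$ are in bijection with the $\Pbartwo\isom S_3$-orbits of the $31$ maximal cones of $\mghksnef[2]$ contained in $\Nef(\sY_\eta)$. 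By Proposition~\ref{proposition automorphisms} this $S_3$ is realised by the symplectic automorphisms of $\sY_\eta$, which permute the three components $Y_1,Y_2,Y_3$ of $\sY_c$ as the full $\Pi_2=S_3$ (\cite[Corollary~5.39]{HL}).

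Next I would attach to each of the $31$ cones a model. By Proposition~\ref{proposition degree two all fans equal} these cones are the maximal cones of $\Sigma_{Y_i}^\nef=\mghksnef[2]$ for every $i$, and under $\iota_{Y_i}^*$ they are identified with the cuspidal cones $\Cusp_{Y_i}\subset\Morifan(\sY)$ (Lemma~\ref{lemma cusp model fan}, Corollary~\ref{corollary morifan cusp degree 2}). By Lemma~\ref{lemma unique} each such cone is $C(f)$ for a cusp model determining a unique unmarked model $\sX$ of the DNV family together with one of its $c_\sX$ cuspidal cones, and Propositions~\ref{proposition classification of models class p} and~\ref{proposition classification of models class t} list all admissible $\sX$ and their numbers $c_\sX$.

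I would then group the cones into $S_3$-orbits by the $\Bir(\sY)$-orbit length $\ell_\sX$ of the underlying model: a model $\sX$ places $\ell_\sX/3$ cones in $\Cusp_{Y_3}$ for each of its $c_\sX$ cuspidal cone types, and these should form a single $S_3$-orbit of size $\ell_\sX/3$. Reading the orbit lengths from \cite[Proposition~5.43]{HL} and the notion of symmetric model from \cite[Definition~5.40]{HL}, the three symmetric models $\shY_{\operatorname{R}}(-7)$ of class $\scrP$ and $\shY(-8,-8),\shY(-9,-9)$ of class $\scrT$ have $\ell=3$ and contribute three orbits of size one, while every other model has $\ell=6$ and contributes, for each cuspidal cone type, one orbit of size two. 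Summing $c_\sX$ over the classified models gives $14$ orbits of class $\scrP$ and $3$ of class $\scrT$; the cone totals $27=1\cdot1+13\cdot2$ and $4=2\cdot1+1\cdot2$ recover the $31=27+4$ of Corollary~\ref{corollary morifan cusp degree 2}.

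The main obstacle is justifying that each $S_3$-orbit really consists only of the cones coming from a single unmarked model, i.e.\ that the orbit sizes are exactly $\ell_\sX/3\in\{1,2\}$. Equivalently, the order-three subgroup $A_3\subset S_3$ must fix every maximal cone of $\mghksnef[2]$ setwise, so that the residual action factors through $S_3\to S_3/A_3\isom\Z/2$. This is the sharp point: the natural $3$-to-$1$ projection $\bigsqcup_i\Cusp_{Y_i}\to\mghksnef[2]$ is $S_3$-equivariant and induces a surjection from the set of $\Bir(\sY)$-orbits of cuspidal cones onto the set of residual orbits, so the number of residual orbits is at most the number of $\Bir(\sY)$-orbits of cuspidal cones, which is $\sum_\sX c_\sX=17$; equality forces $A_3$ to fix each cone. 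I would establish this fixing directly, using that by Proposition~\ref{proposition picard noncontracted} a single model cannot serve two components, so the three realisations of a given cone as a $Y_1$-, $Y_2$- and $Y_3$-cusp model lie on three distinct marked models forming one $\Bir(\sY)$-orbit (Corollary~\ref{corollary cuspidal cones}); the goal is then to show that a $3$-cycle carries the $Y_i$-realisation of a cone to the $Y_{i+1}$-realisation of the \emph{same} abstract cone. Making this compatibility precise, rather than merely consistent with the equivariance of the sections $s_{Y_i}$, is the delicate step, and if needed it can be verified against the explicit $S_3$-action on $\NS(\sY_\eta)\isom M_2$.
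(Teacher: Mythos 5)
Your overall strategy --- reduce to the $\Pbartwo\isom S_3$-action on the $31$ maximal cones of $\mghksnef[2]$, attach to each cone an unmarked model $\sX$ with a cuspidal contraction via the classification, and count $\sum_\sX c_\sX = 14+3$ --- is the paper's, and your numerics (orbit sizes $\ell_\sX/3\in\{1,2\}$, the totals $27=1\cdot 1+13\cdot 2$ and $4=2\cdot 1+1\cdot 2$) are correct. The problem is that the step you yourself flag as ``the delicate step'' is precisely the content of the statement, and you do not prove it. The surjection from the $17$ upstairs $\Bir(\sY)$-orbits of cuspidal cones onto the set of residual orbits only gives the bound $\leq 17$; nothing you write excludes that the three realisations $C_1,C_2,C_3$ of a fixed cone $\tau$, with $C_i\in\Cusp_{Y_i}$, fall into two or three distinct $\Bir(\sY)$-orbits, which would merge orbits downstairs. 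The numerics alone do not force the orbit structure ($31$ decomposes into $17$ $S_3$-orbit sizes in several ways), and Proposition~\ref{proposition degree two all fans equal} only says that a $3$-cycle permutes the \emph{collection} $\mghksnef[2]$, not that it fixes each cone; deferring to a ``verification against the explicit $S_3$-action on $M_2$'' leaves the proof incomplete at its crux.

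The gap can be closed with tools already in the paper, avoiding the direct verification entirely. Identify the maximal cones of $\mghksnef[2]$ with the maximal cones of $\Morifan(\sY')$ for one fixed cusp model $\sY'$ via the isomorphism $\iota^*\colon \NS(\sY')\to\NS(\sY_\eta)$ of Lemma~\ref{lemma picard rank cusp model}; under this identification the residual action becomes the action of $\Bir(\sY'/S)=\Aut(\sY_\eta)$ on $\Morifan(\sY')$ by $C(h)\mapsto C(h\circ g)$. By Lemma~\ref{lemma cones determine contractions} together with Remark~\ref{remark morifan}(2), two maximal cones $C(h_1)$ and $C(h_2)$ lie in the same orbit if and only if their targets $\sY_1''$ and $\sY_2''$ are isomorphic over $S$: for the ``if'' direction take $g=h_1^{-1}\circ\psi\circ h_2$ for an $S$-isomorphism $\psi\colon\sY_2''\to\sY_1''$, and note $C(h_1\circ g)=C(\psi\circ h_2)=C(h_2)$. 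Hence the residual orbits are exactly the $S$-isomorphism classes of cusp-model targets. By Lemma~\ref{lemma unique} such a target determines the regular model contracting onto it, so these classes are the isomorphism classes of pairs (unmarked model $\sX$, cuspidal contraction); since $\Aut(\sX/S)$ acts trivially on $\NS(\sY_\eta)$ whenever $\ell_\sX=6$ (it is generated by the non-symplectic involution) and $c_\sX=1$ whenever $\ell_\sX=3$, it fixes each of the $c_\sX$ cuspidal cones of $\Nef(\sX)$, and the number of classes is $\sum_\sX c_\sX=14+3=17$ with the stated split into classes $\scrP$ and $\scrT$. This also yields a posteriori the fact you wanted to verify, namely that $A_3$ fixes every maximal cone.
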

\begin{proof}
This follows from the counting arguments given in the proof of Theorem \ref{thm:count}. Taking the orbit length into account, we obtain $6+2\cdot 2 + 1 +1 + 2 =14$ maximal cones of class $\scrP$ and $1+2=3$ maximal cones of class $\scrT$.
\end{proof}

\bibliographystyle{myamsalpha}
\bibliography{morifanglobal}

\providecommand{\bysame}{\leavevmode\hbox to3em{\hrulefill}\thinspace}
\providecommand{\MR}{\relax\ifhmode\unskip\space\fi MR }
\providecommand{\MRhref}[2]{%
  \href{http://www.ams.org/mathscinet-getitem?mr=#1}{#2}
}
\providecommand{\href}[2]{#2}
\begin{thebibliography}{BHPVdV04}

\bibitem[AET19]{AET19}
Valery Alexeev, Philip Engel, and Alan Thompson, \emph{Stable pair
  compactification of moduli of {K3} surfaces of degree 2}, 2019.
  \href{http://arxiv.org/abs/1903.09742}{{arXiv:1903.09742}}

\bibitem[AMRT10]{AMRT}
Avner {Ash}, David {Mumford}, Michael {Rapoport}, and Yung-Sheng {Tai},
  \emph{{Smooth compactifications of locally symmetric varieties. With the
  collaboration of Peter Scholze. 2nd ed}}, Cambridge University Press, 2010
  (English).

\bibitem[Art69]{Art69}
Michael Artin, \emph{Algebraization of formal moduli. {I}}, Global {A}nalysis
  ({P}apers in {H}onor of {K}. {K}odaira), Univ. Tokyo Press, Tokyo, 1969,
  pp.~21--71.

\bibitem[BB66]{BB66}
Walter~L. Baily, Jr. and Armand Borel, \emph{Compactification of arithmetic
  quotients of bounded symmetric domains}, Ann. of Math. (2) \textbf{84}
  (1966), 442--528.

\bibitem[BHPVdV04]{BHPV}
Wolf~P. Barth, Klaus Hulek, Chris A.~M. Peters, and Antonius Van~de Ven,
  \emph{Compact complex surfaces, 2nd edition}, Ergebnisse der Mathematik und
  ihrer Grenzgebiete. 3. Folge, vol.~4, Springer-Verlag, Berlin, 2004.

\bibitem[BL16]{BL16}
Benjamin Bakker and Christian Lehn, \emph{A global {T}orelli theorem for
  singular symplectic varieties}, 2016.
  \href{http://arxiv.org/abs/1612.07894}{{arXiv:1612.07894}}

\bibitem[BL18]{BL18}
\bysame, \emph{The global moduli theory of symplectic varieties}, 2018.
  \href{http://arxiv.org/abs/1812.09748}{{arXiv:1812.09748}}

\bibitem[BR75]{BR75}
Dan Burns, Jr. and Michael Rapoport, \emph{On the {T}orelli problem for
  k\"{a}hlerian {$K3$} surfaces}, Ann. Sci. \'{E}cole Norm. Sup. (4) \textbf{8}
  (1975), no.~2, 235--273.

\bibitem[CMSP17]{CMP17}
James Carlson, Stefan M\"{u}ller-Stach, and Chris Peters, \emph{Period mappings
  and period domains}, Cambridge Studies in Advanced Mathematics, vol. 168,
  Cambridge University Press, Cambridge, 2017, Second edition.

\bibitem[Dol96]{Dol96}
Igor Dolgachev, \emph{Mirror symmetry for lattice polarised {K3} surfaces}, J.
  Math. Sci. \textbf{81} (1996), 2599--2630.

\bibitem[Elk81]{Elk81}
Ren\'{e}e Elkik, \emph{Rationalit\'{e} des singularit\'{e}s canoniques},
  Invent. Math. \textbf{64} (1981), no.~1, 1--6.

\bibitem[FM83]{FM83}
Robert Friedman and David~R. Morrison, \emph{The birational geometry of
  degenerations: an overview}, The birational geometry of degenerations
  ({C}ambridge, {M}ass., 1981), Progr. Math., vol.~29, Birkh\"{a}user, Boston,
  Mass., 1983, pp.~1--32.

\bibitem[Fri83a]{Fri83}
Robert Friedman, \emph{Global smoothings of varieties with normal crossings},
  Ann. of Math. (2) \textbf{118} (1983), no.~1, 75--114.

\bibitem[Fri83b]{FrieApp}
\bysame, \emph{Linear systems on anticanonical pairs}, The birational geometry
  of degenerations ({C}ambridge, {M}ass., 1981), Progr. Math., vol.~29,
  Birkh\"{a}user, Boston, Mass., 1983, pp.~162--171.

\bibitem[FS86]{FS86}
Robert Friedman and Francesco Scattone, \emph{Type {III} degenerations of {K3}
  surfaces}, Invent. Math \textbf{83} (1986), 1--39.

\bibitem[GHKS]{theta15}
Mark Gross, Paul Hacking, Sean Keel, and Bernd Siebert, \emph{Theta functions
  on {K3} surfaces}, in preparation.

\bibitem[GHKS16]{GHKS}
\bysame, \emph{Theta functions on varieties with effective anti-canonical
  class}, 2016. \href{http://arxiv.org/abs/1601.07081}{{arXiv:1601.07081}}

\bibitem[GHS07]{GHS07}
Valeri~A. Gritsenko, Klaus Hulek, and Gregory~Kumar Sankaran, \emph{The
  {K}odaira dimension of the moduli of {$K3$} surfaces}, Invent. Math.
  \textbf{169} (2007), no.~3, 519--567.

\bibitem[Gro95]{Gro95}
Alexander Grothendieck, \emph{G\'{e}om\'{e}trie formelle et g\'{e}om\'{e}trie
  alg\'{e}brique}, S\'{e}minaire {B}ourbaki, {V}ol. 5, Soc. Math. France,
  Paris, 1995, pp.~Exp. No. 182, 193--220, errata p. 390.

\bibitem[Har77]{Har77}
Robin Hartshorne, \emph{Algebraic geometry}, Springer Verlag, 1977, Graduate
  Texts in Mathematics, No. 52.

\bibitem[HL19]{HL}
Klaus {Hulek} and Carsten {Liese}, \emph{The {M}ori fan of the
  {D}olgachev-{N}ikulin-{V}oisin family in genus 2}, 2019.
  \href{http://arxiv.org/abs/1911.06862}{{arXiv:1911.06862}}

\bibitem[{Hum}90]{Hum}
James~E. {Humphreys}, \emph{{Reflection groups and Coxeter groups}}, vol.~29,
  Cambridge etc.: Cambridge University Press, 1990.

\bibitem[Huy16]{Huy16}
Daniel Huybrechts, \emph{Lectures on {K}3 surfaces}, Cambridge Studies in
  Advanced Mathematics, vol. 158, Cambridge University Press, 2016.

\bibitem[Jam68]{Jam68}
Donald~Gordon James, \emph{On {W}itt's theorem for unimodular quadratic forms},
  Pacific J. Math. \textbf{26} (1968), 303--316.

\bibitem[Kaw88]{Kaw88}
Yujiro Kawamata, \emph{Crepant blowing-up of {$3$}-dimensional canonical
  singularities and its application to degenerations of surfaces}, Ann. of
  Math. (2) \textbf{127} (1988), no.~1, 93--163.

\bibitem[Kaw97]{Kaw97}
\bysame, \emph{On the cone of divisors of {C}alabi-{Y}au fiber spaces},
  Internat. J. Math. \textbf{8} (1997), no.~5, 665--687.

\bibitem[Kaw11]{Kaw11}
\bysame, \emph{Remarks on the cone of divisors}, Classification of algebraic
  varieties, EMS Ser. Congr. Rep., Eur. Math. Soc., Z\"{u}rich, 2011,
  pp.~317--325.

\bibitem[KH00]{HK}
Sean Keel and Yi~Hu, \emph{Mori dream spaces and {GIT}}, Michigan Math. J.
  \textbf{48} (2000), 331--348.

\bibitem[KM92]{KM92}
J\'{a}nos Koll\'{a}r and Shigefumi Mori, \emph{Classification of
  three-dimensional flips}, J. Amer. Math. Soc. \textbf{5} (1992), no.~3,
  533--703.

\bibitem[KM98]{KM98}
\bysame, \emph{Birational geometry of algebraic varieties}, Cambridge Tracts in
  Mathematics, vol. 134, Cambridge University Press, 1998, With the
  collaboration of C. H. Clemens and A. Corti, Translated from the 1998
  Japanese original.

\bibitem[Kol89]{Kol89}
J\'{a}nos Koll\'{a}r, \emph{Flops}, Nagoya Math. J. \textbf{113} (1989),
  15--36.

\bibitem[Kol96]{Kol96}
\bysame, \emph{Rational curves on algebraic varieties}, Ergebnisse der
  Mathematik und ihrer Grenzgebiete. 3. Folge. A Series of Modern Surveys in
  Mathematics [Results in Mathematics and Related Areas. 3rd Series. A Series
  of Modern Surveys in Mathematics], vol.~32, Springer-Verlag, Berlin, 1996.

\bibitem[{Kon}89]{Kon89}
Shigeyuki {Kond\=o}, \emph{{Algebraic K3 surfaces with finite automorphism
  groups}}, {Nagoya Math. J.} \textbf{116} (1989), 1--15.

\bibitem[Kov94]{Kov94}
S\'{a}ndor~J. Kov\'{a}cs, \emph{The cone of curves of a {$K3$} surface}, Math.
  Ann. \textbf{300} (1994), no.~4, 681--691.

\bibitem[Kul77]{Kul77}
Viktor~Stepanovich Kulikov, \emph{Degenerations of {$K3$} surfaces and
  {E}nriques surfaces}, Mathematics of the {USSR}-Izvestiya \textbf{11} (1977),
  no.~5, 957--989.

\bibitem[Laz08]{Laz08}
Radu Laza, \emph{Triangulations of the sphere and degenerations of {K}3
  surfaces}, 2008. \href{http://arxiv.org/abs/0809.0937}{{arXiv:0809.0937}}

\bibitem[{\L{}}oj64]{Loj64}
Stanis{\l{}}aw {\L{}}ojasiewicz, \emph{Triangulation of semi-analytic sets},
  Ann. Scuola Norm. Sup. Pisa Cl. Sci. (3) \textbf{18} (1964), 449--474.

\bibitem[{Loo}03a]{Loo03}
Eduard {Looijenga}, \emph{{Compactifications defined by arrangements. I: The
  ball quotient case.}}, {Duke Math. J.} \textbf{118} (2003), no.~1, 151--187.

\bibitem[{Loo}03b]{Loo03a}
\bysame, \emph{{Compactifications defined by arrangements. II: Locally
  symmetric varieties of type IV.}}, {Duke Math. J.} \textbf{119} (2003),
  no.~3, 527--588.

\bibitem[{Nik}83]{Nik2el}
Viacheslav~V. {Nikulin}, \emph{{Factor groups of groups of automorphisms of
  hyperbolic forms with respect to subgroups generated by 2-reflections.
  Algebro-geometric applications}}, {J. Sov. Math.} \textbf{22} (1983),
  1401--1475.

\bibitem[Per77]{Per77}
Ulf Persson, \emph{On degenerations of algebraic surfaces}, Mem. Amer. Math.
  Soc. \textbf{11} (1977), no.~189, xv+144.

\bibitem[PP81]{PP81}
Ulf Persson and Henry Pinkham, \emph{Degeneration of surfaces with trivial
  canonical bundle}, Ann. of Math. (2) \textbf{113} (1981), no.~1, 45--66.

\bibitem[P{\v{S}}{\v{S}}71]{PSS71}
Ilya~I. Pjatecki\u{\i}-{\v{S}}apiro and Igor{'}~Rostislavovich
  {\v{S}}afarevi\v{c}, \emph{Torelli's theorem for algebraic surfaces of type
  {${\rm K}3$}}, Izv. Akad. Nauk SSSR Ser. Mat. \textbf{35} (1971), 530--572.

\bibitem[Sca87]{Sca87}
Francesco Scattone, \emph{On the compactification of moduli spaces for
  algebraic {$K3$} surfaces}, Mem. Amer. Math. Soc. \textbf{70} (1987),
  no.~374, x+86.

\bibitem[Ser06]{Ser06}
Edoardo Sernesi, \emph{Deformations of algebraic schemes}, Grundlehren der
  Mathematischen Wissenschaften, vol. 334, Springer Verlag, 2006.

\bibitem[{Sta}20]{SP20}
The {Stacks project authors}, \emph{The stacks project},
  \url{https://stacks.math.columbia.edu}, 2020.

\bibitem[Ver13]{Ver13}
Misha Verbitsky, \emph{Mapping class group and a global {T}orelli theorem for
  hyperk\"{a}hler manifolds}, Duke Math. J. \textbf{162} (2013), no.~15,
  2929--2986, Appendix A by Eyal Markman.

\bibitem[Vie90]{Vie90}
Eckart Viehweg, \emph{Weak positivity and the stability of certain {H}ilbert
  points. {III}}, Invent. Math. \textbf{101} (1990), no.~3, 521--543.

\bibitem[Vin75]{Vin75}
{\`E}rnest~Borisovich Vinberg, \emph{Some arithmetical discrete groups in
  {L}oba\v{c}evski\u{\i} spaces}, Discrete subgroups of {L}ie groups and
  applications to moduli ({I}nternat. {C}olloq., {B}ombay, 1973), 1975,
  pp.~323--348.

\bibitem[Vin85]{Vin85}
\bysame, \emph{Hyperbolic groups of reflections}, Uspekhi Mat. Nauk \textbf{40}
  (1985), no.~1(241), 29--66, 255.

\end{thebibliography}
\end{document}